\renewcommand{\theequation}{\thesection.\arabic{equation}}
\newcommand{\R}{{\mathbb R}}
\newcommand{\Z}{{\mathbb Z}}
\def\al{\alpha}
\def\eps{\varepsilon}
\def\Z{\mathbb{Z}}
\def\T{\mathbb{T}}
\newtheorem{theorem}{Theorem}[section]
\newtheorem{corollary}[theorem]{Corollary}
\newtheorem{definition}[theorem]{Definition}
\newtheorem{example}[theorem]{Example}
\newtheorem{remark}[theorem]{Remark}
\newtheorem{lemma}[theorem]{Lemma}
\newtheorem{proposition}[theorem]{Proposition}
\numberwithin{equation}{section}
\numberwithin{equation}{section}
\title[Floer homology in the cotangent bundle of a closed Finsler manifold]{Floer homology in the cotangent bundle of a closed Finsler manifold and noncontractible periodic orbits}
\author{Wenmin Gong }
\address{Laboratory of Mathematics and Complex Systems (Ministry of Education), School of Mathematical Sciences, Beijing Normal University,
    Beijing, 100875, China}
\email{ wmgong@bnu.edu.cn}
\author{Jinxin Xue}
\address{Jingzhai 310, Department of Mathematics  Yau Mathematical Sciences Center, Tsinghua University, Beijing, 100084, China}
\email{ jxue@tsinghua.edu.cn}
\begin{document}

\maketitle


\begin{abstract}
We show that the existence of noncontractible periodic orbits for compactly sup- ported time-dependent Hamiltonian on the disk cotangent bundle of a Finsler manifold provided that the Hamiltonian is sufficiently large over the zero section. We generalize the BPS capacities and earlier constructions of Weber (2006 \textit{Duke Math. J.} \textbf{133} 527--568) and other authors Biran et al (2003 \textsl{Duke Math. J.} \textbf{119} 65--118) to the Finsler setting. 

We then obtain a number of applications including: (1) generalizing the main theorem of Xue (2017 \textit{J. Symplectic Geom.} \textbf{15} 905--936) to the Lie group setting, (2) preservation of minimal Finsler length of closed geodesics in any given free homotopy class by symplectomorphisms, (3) existence of periodic orbits for Hamiltonian systems separating two Lagrangian submanifolds, (4) existence of periodic orbits for Hamiltonians on noncompact domains, (5) existence of periodic orbits for Lorentzian Hamiltonian in higher dimensional case, (6) partial solution to a conjecture of Kawasaki (2016 Heavy subsets and non-contractible trajectories (arXiv:1606.01964)), (7) results on squeezing/nonsqueezing theorem on torus cotangent bundles, etc.
\end{abstract}


\renewcommand{\theequation}{\thesection.\arabic{equation}}
\renewcommand{\thefigure}{\thesection.\arabic{figure}}

\tableofcontents

\renewcommand\contentsname{Index}

\section{Introduction}
\setcounter{equation}{0}
The main theme of this article is the existence of noncontractible periodic orbits for compactly supported time-dependent Hamiltonian systems on the unit disk cotangent bundle of a closed Finsler manifold $M$.
Closely related earlier results on finding noncontractible Hamiltonian periodic orbits on cotangent bundles go back to the papers by Gatien and Lalonde~\cite{GL} and by Biran, Polterovich and Salamon~\cite{BPS} etc.
Weber \cite{We0} extended the result in~\cite{BPS}, relaxing the condition that $M$ is either the Euclidean torus $\mathbb{T}^n$ or of negative sectional curvature, to the case that $M$ is just a closed connected Riemannian manifold. Applications of the results include the existence of periodic orbits for certain magnetic Hamiltonian systems on cotangent bundles \cite{Go,SaW}.

On the other hand, there are some other results \cite{Xu,K} on the existence of periodic orbits on cotangent bundle which do not fit into the general Riemannian framework of \cite{We0}. In particular, these results have important applications including the preservation of marked length spectrum under symplectomorphisms, the existence of noncontractible periodic orbits for Hamiltonian Lorentzian systems, etc.

The main result in this paper is a generalization of ~\cite{We0} to the Finsler setting, which provides a unified framework hence enables us to recover many results in the literature including \cite{BPS,We0,Xu} etc. Such a Finslerian generalization is by no means obvious. There are many Riemannian results that do not admit a Finslerian version, for instance, Katok's example~\cite{Ka} of a Finsler metric with only two closed geodesics for $\mathbb S^2$.

In the Riemannian case the key ingredient used by Weber to prove his main result is to construct an isomorphism between filtered Floer homology and singular homology of the level set of free loop space, which was already carried out by Salamon and Weber using heat flow method in~\cite{SW}. It seems very difficult to extend Salamon and Weber's construction to the Finsler setting, for in the later case we cannot rewrite naturally the Floer equation as a heat flow equation and estimate as in~\cite{SW}. The isomorphism between Floer homology and singular homology was firstly introduced by Viterbo~\cite{Vi} using the generating function method. 
In this article we apply a different approach to compute the filtered Floer homology of convex radial Hamiltonians (see Theorem~\ref{thm:convexradial}), in which the Legendre transform plays an essential role. One of our main observations is that on the cotangent bundle of a closed Finsler manifold $(M,F)$ the Hamiltonian $H_{F^*}={F^*}^2/2$ is naturally uniformly convex and grows quadratically in the fibers. This allows us to establish the necessary compactness result to define Floer homology (see Section~\ref{SLiouville}). However, the non-smoothness of $H_{F^*}$ on all of $T^*M$ causes the main obstruction to apply \cite{AS1}. To overcome this problem we apply the perturbation method by Lu~\cite{Lu} to produce a family of quadratic modifications which smoothen $F^2$ near the zero section maintaining the fiberwise convexity (see Section~\ref{convexq}). Another basic observation in this paper is that Hamiltonian $1$-periodic orbits $z$ of radial Hamiltonians $H$ on $T^*M$ with respect to a co-Finsler metric correspond to $1$-periodic Finslerian geodesics by the Legendre transform, and that the action $\mathscr{A}_{H} (z)$ of $z$ can be computed explicitly (see Lemma~\ref{lem:radialHamsyst}). This enables us to deform radial Hamiltonians elaborately (see the proof of Theorem~\ref{thm:symhomology}) in a given action interval without changing filtered Floer homology, and finally to compute it in terms of singular homology of the level set of free loop space.

Our result allows us to find periodic orbits for Hamiltonian systems supported in fiberwise convex domains of the cotangent bundle (Theorem \ref{thm:mainthm}) and compute the BPS capacity on such domains (Theorem \ref{ThmBPSCap}). The setting is so flexible that it leads to a number of applications. These include:
\begin{enumerate}
\item generalizing Theorem 2 of \cite{Xu} to the Lie group setting (Theorem~\ref{thm:Xue-Lie}),
\item preservation of minimal Finsler length of closed geodesics in any given free homotopy class by symplectomorphisms (Theorem \ref{coro:inv.F-length}),
\item existence of periodic orbits for Hamiltonian systems separating two Lagrangian submanifolds  (Theorem \ref{ThmLagrangian}),
\item existence of periodic orbits for Hamiltonians on noncompact domains (Theorem \ref{thm:Ncompact}),
\item existence of periodic orbits for Lorentzian Hamiltonian in higher dimensional case (Theorem \ref{ThmLorentz}),
\item partial solution to a conjecture of Kawasaki in \cite{K} (Theorem \ref{ThmKawasaki}),
\item results on squeezing/nonsqueezing theorem on torus cotangent bundles (Theorem \ref{thm:nonsqueezing}, \ref{thm:nonsqueezing'}, \ref{ThmCBPSY}).
\end{enumerate}

We expect that the list is not exhaustive and there are a lot more applications to come. Moreover, we expect that the method of this paper may have broader interests in further studying geometry, topology and dynamics on Finsler manifold using Floer theory. Finally, we refer the reader to, e.g., \cite{Ba,GG, Gu, KaO, Or} and the references therein for existence results concerning non-contractible Hamiltonian periodic orbits.

To state our result, let us first give a brief introduction to  Finsler geometry. For more comprehensive materials on Finsler geometry, we refer to the books~\cite{Ru,Sh}.

\subsection{Finsler geometry}\label{subsec:Finslergeo}
Let $M$ be a smooth manifold of dimension $n$. A \emph{Finsler metric} on $M$ is a continuous function $F:TM\to [0,\infty)$ satisfying the following properties:
\begin{enumerate}
	\item[(i)] $F$ is $C^\infty$ on $TM\setminus M\times\{0\}$.
	\item[(ii)] $F(x,\lambda v)=\lambda F(x,v)$ for every $\lambda>0$ and $(x,v)\in TM$.
	\item[(iii)] For any $(x,y)\in TM\setminus \{0\}$, the symmetric bilinear form
	$$g^F(x,y):T_xM\times T_xM\rightarrow \mathbb{R}, \quad (u,v)\to \frac{1}{2}\frac{\partial^2 }{\partial s\partial t}F^2(x,y+su+tv)\bigg|_{s=t=0}$$
	is positive definite.
\end{enumerate}
A smooth manifold $M$ endowed with a Finsler metric $F$ is called a \textit{Finsler manifold}. Let $\gamma:(a,b)\to M$ be a smooth curve. We define the $F$-length of $\gamma$ as
$${\rm len}_F(\gamma)=\int^b_aF(\gamma(t),\dot{\gamma}(t))dt.$$
Define the \emph{co-Finsler metric} $F^*$ on the cotangent bundle $T^*M$ as
$$F^*(x,\cdot):T^*_xM\to \mathbb{R},\quad F^*(x,p):=\max\limits_{F(x,v)\leq 1}\langle p,v\rangle,\quad \forall\ \ x\in M.$$
Denote by $D^FT^*M$ the unit open disk cotangent bundle, and by
$S^FT^*M$ its boundary:
$$D^FT^*M:=\big\{(x,p)\in T^*M\ \big|\ F^*(x,p)< 1\big\},$$
$$S^FT^*M:=\big\{(x,p)\in T^*M\ \big|\ F^*(x,p)=1\big\}.$$
It is obvious from (iii) that for every $x\in M$, the \textit{disk cotangent fiber}
$$(D^FT^*M)_x:=\big\{p\in T^*_xM\ \big|\ F^*(x,p)< 1\big\}$$
is a strictly convex set. Note that, on all $TM$, $L_0:=F^2$ is of class $C^{1,1}$, and is of class $C^2$ if and only if $F$ is the square of the norm of a Riemannian metric (cf.~\cite{Me}). In the latter case, every $(D^FT^*M)_x$ is an ellipsoid in $T_xT^*M$.

Conversely, given a fiberwise strictly convex domain $U\subset T^*M$ with smooth boundary and containing $M$ in its interior, a Finsler metric $F$ can be associated to it to realize $U$ as the unit disk cotangent bundle as follows
$$F(x,v):=\sup_{p\in U\cap T^*_xM}\langle p,v\rangle,\quad \forall\ v\in TM.$$
\subsection{Main result}
Let $(M,F)$ be a closed connected Finsler manifold,  and let $\pi:T^*M\to M$ denote the natural projection. Define $\lambda_0\in\Omega^1(T^*M)$ as
$$\lambda_0(\xi):=\langle p, d\pi(\xi)\rangle,\quad\forall\ \;x\in M,\;p\in T_x^*M,\; \xi\in T_{(x,p)} T^*M.$$
The cotangent bundle $T^*M$ admits a canonical symplectic form $\omega_0:=d\lambda_0$. For any $H\in C^\infty(S^1\times T^*M)$, denote $H_t:=H(t,\cdot)$. The time-dependent \textit{Hamiltonian vector field} $X_{H_t}$ is defined as $\omega_0(X_{H_t},\cdot)=-dH_t$. Set $S^1:=\mathbb{R}/\mathbb{Z}$.
Let $[S^1,M]$  denote the set of homotopy classes of free loops on $M$. Since $M$ is compact, for any non-trivial homotopy class $\alpha\in [S^1,M]$, we have
\begin{equation}\label{EqLength}l_\alpha^F:=\inf \big\{{\rm len}_F(\gamma)\ \big|\ \gamma\in C^\infty(S^1,M),\; [\gamma]=\alpha\big\}>0.\end{equation}

The set of Finslerian lengths of all periodic Finslerian geodesics in a Finsler manifold $(M,F)$ representing $\alpha$ is said to be the \emph{marked length spectrum} $\Lambda_\alpha$ of the Finsler manifold. It is closed and nowhere dense by Lemma~\ref{lem:mls}, and hence
$$l^F_\alpha=\inf\Lambda_\alpha\in \Lambda_\alpha.$$

The main result of this paper is the following.  It was conjectured by Kei Irie \cite{I}.

\begin{theorem}\label{thm:mainthm}
	Let $(M,F)$ be a closed connected smooth Finsler manifold. Let $\alpha\in [S^1,M]$ be a nontrivial free homotopy class. Then, for any $H\in C^\infty(S^1\times T^*M)$ which is compactly supported in $D^FT^*M$ and satisfies
	$$H(t,x,0)\geq l^F_\alpha\quad\forall\   t\in S^1,\; \forall\  x\in M$$
	there exists $z:S^1\to T^*M$ with $\dot{z}(t)=X_H(t,z(t))$ and $[z]=-\alpha$.
\end{theorem}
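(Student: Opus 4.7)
The plan is to argue by contradiction in the spirit of Weber~\cite{We0} and Biran--Polterovich--Salamon~\cite{BPS}, by sandwiching a suitable extension of $H$ between two convex radial Hamiltonians whose filtered Floer homologies in class $-\alpha$ can be computed via the cited Theorem~\ref{thm:convexradial}. Suppose that $X_H$ has no 1-periodic orbit representing $-\alpha$. The goal is to produce a nonzero monotone continuation map between the Floer homologies of two reference radial Hamiltonians that necessarily factors through $HF^{(a,b)}_{-\alpha}(\widetilde H)=0$, contradicting the assumption.

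Concretely, I would first extend $H$ to a Hamiltonian $\widetilde H$ on all of $T^*M$ that agrees with a convex radial profile $h(F^*)$ outside $D^FT^*M$, where the asymptotic slope of $h$ is chosen outside the length spectrum $\Lambda_\alpha$ (possible since by Lemma~\ref{lem:mls} this spectrum is closed and nowhere dense). The Liouville-type compactness of Section~\ref{SLiouville} and the smoothing of Section~\ref{convexq} make $\widetilde H$ admissible for Floer theory, and its 1-periodic orbits in class $-\alpha$ inside $D^FT^*M$ coincide with those of $X_H$, hence are absent by hypothesis. Next, sandwich $\widetilde H$ between two convex radial Hamiltonians $K_\pm=h_\pm(F^*)$ with $K_-\le\widetilde H\le K_+$, arranged so that $h_+(0)$ slightly exceeds $\max\widetilde H$ while $h_-(0)$ is very small; the hypothesis $H(t,x,0)\ge l_\alpha^F$ is what makes the sandwich compatible with an action window of width comparable to $l_\alpha^F$ around the critical action value. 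By Lemma~\ref{lem:radialHamsyst}, the 1-periodic orbits of $K_\pm$ in class $-\alpha$ correspond bijectively to closed Finslerian geodesics of lengths $h_\pm'(r)\in\Lambda_\alpha$ at the radial level $F^*=r$, with action given explicitly by $rh_\pm'(r)-h_\pm(r)$.

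The main computation is then to fix an action window $(a,b)$ around $-l_\alpha^F$ that avoids the action spectra of $K_\pm$ and of $\widetilde H$ (again possible by the nowhere-density of $\Lambda_\alpha$) and to apply Theorem~\ref{thm:convexradial} to identify $HF^{(a,b)}_{-\alpha}(K_\pm)$ with relative singular homology of sublevel sets of the Finsler length functional $\mathrm{len}_F$ on the $-\alpha$-component of the free loop space. Because $l_\alpha^F\in\Lambda_\alpha$ is realized by an honest closed geodesic, the inclusion-induced map between the corresponding sublevel sets is nonzero on singular homology, so the continuation map $HF^{(a,b)}_{-\alpha}(K_-)\to HF^{(a,b)}_{-\alpha}(K_+)$ is nonzero. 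But monotonicity factors this through $HF^{(a,b)}_{-\alpha}(\widetilde H)$, which vanishes under the no-orbit hypothesis. This yields the desired contradiction, and hence $X_H$ must possess a 1-periodic orbit in class $-\alpha$.

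The main obstacle I foresee is calibrating the radial profiles $h_\pm$ and the action window $(a,b)$ with enough precision that the Floer continuation map genuinely realizes the nontrivial topological inclusion map on singular homology, and that the bound $H(t,x,0)\ge l_\alpha^F$ is used sharply enough to guarantee $K_-\le\widetilde H\le K_+$ globally while keeping $(a,b)$ clear of the action spectra. A related technical point is to smooth $F^{*2}$ near the zero section while preserving fiberwise convexity, but this is already handled via Lu's perturbation method in Section~\ref{convexq}.
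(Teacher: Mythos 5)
Your high-level strategy---sandwich a suitable extension of $H$ between convex radial Hamiltonians, use Theorem~\ref{thm:convexradial} to read off the filtered Floer homology of the reference Hamiltonians as loop-space homology, and derive a contradiction by factoring a nonzero continuation map through the (trivially vanishing) Floer homology of $H$---is indeed the geometric engine underlying the paper's argument. It is essentially how the paper establishes the BPS capacity formula (Theorem~\ref{thm:BPS capacities}), which is then invoked to prove Theorem~\ref{thm:mainthm}. However, as written, your proposal has two genuine gaps.

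First, you work throughout with Floer homology in the class $-\alpha$ and therefore (via Theorem~\ref{thm:convexradial} applied to $-\alpha$) with the loop space component $\Lambda_{-\alpha}M$ and length spectrum $\Lambda_{-\alpha}$. But the hypothesis gives you information about $l_\alpha^F$, and a Finsler metric need not be reversible, so $l_\alpha^F$ and $l_{-\alpha}^F$ differ in general. Moreover, because $H\geq l_\alpha^F>0$ on the zero section, any radial lower bound $K_-\leq H$ that is compactly supported inside $D^FT^*M$ forces $K_-\leq 0$, and for such a convex increasing radial profile the nonconstant orbit actions in class $-\alpha$ are nonnegative; an action window ``around $-l_\alpha^F$'' would then contain no generators of ${\rm CF}_{-\alpha}(K_-)$ and the continuation map is vacuously zero, giving no contradiction. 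The paper sidesteps both issues with the time-reversal $\overline{H}(t,z):=-H(-t,z)$: orbits of $H$ in class $-\alpha$ correspond to orbits of $\overline{H}$ in class $\alpha$, and the hypothesis becomes $\sup_{S^1\times M}\overline{H}\leq -l_\alpha^F$, i.e.\ $\overline{H}\in\mathscr{K}_{l_\alpha^F}$. This is exactly the sign regime in which the sandwich/continuation argument with $\Lambda_\alpha$ makes sense.

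Second, even after time reversal, the conclusion requires the BPS capacity to be \emph{achieved}, not merely equal to $l_\alpha^F$ as an infimum, because the hypothesis is $H(t,x,0)\geq l_\alpha^F$ rather than a strict inequality. The paper invokes \cite[Proposition~3.3.4]{BPS}, which says that the set of thresholds defining the capacity is either empty or has a minimum; combined with $C_{\rm BPS}(D^FT^*M,M;\alpha,l_\alpha^F)=l_\alpha^F$ from Theorem~\ref{thm:BPS capacities}, this yields an orbit of $\overline{H}$ in class $\alpha$ at the boundary value $c=l_\alpha^F$. Your calibration of radial profiles and action windows, as sketched, would naturally give the result only for $H>l_\alpha^F$ on the zero section; the boundary case needs the extra limiting/attainment argument. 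In short: you have correctly identified the technical machinery (radial profile sandwiching, Theorem~\ref{thm:convexradial}, continuation maps), which is what proves Theorem~\ref{thm:BPS capacities}, but Theorem~\ref{thm:mainthm} additionally needs the time-reversal trick and the attainment of the capacity, and without those the argument as stated does not close.
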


\begin{remark}
{\rm
When $(M,g)$ is a Riemannian manifold,  Theorem~\ref{thm:mainthm} was proved by Weber (Theorem~A in~\cite{We0}).
}
	
\end{remark}


\vskip 0.2in

Following \cite{BPS}, we define the BPS capacity.  For $c>0$, an open set $W\subset T^*M$ and a compact set $A\subset W$,  denote
$$\mathscr{K}_c(W,A):=\big\{H\in C_0^\infty(S^1\times W)\ \big|\sup_{S^1\times A}H\leq -c\big\}.$$
Define the BPS capacity for the pair $(W,A)$ and a nontrivial free homotopy class $\alpha$
$$C_{\rm BPS}(W,A;\alpha):=\inf\big\{c>0\ \big|\ \forall\  H \in \mathscr{K}_c(W,A), \mathscr{P}_{\alpha}(H)\neq\emptyset\big\}.$$

Our second result computes the BPS capacity for fiberwise convex sets, see also Theorem~\ref{thm:BPS capacities} (compared to Theorem~3.2.1 in \cite{BPS} and Theorem~4.3 in \cite{We0}).
\begin{theorem}\label{ThmBPSCap}
Let $M$ be a closed connected manifold. Let $U$ be a fiberwise strictly convex and compact set of $T^*M$ with smooth boundary and containing $M$ in its interior, and let $F$ be the associated Finsler metric to $U$. Then for all nontrivial free homotopy class $\alpha$, we have  $$C_{\rm BPS}(U,M; \alpha)=l_\alpha^F.$$
\end{theorem}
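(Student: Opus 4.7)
The plan is to prove the two inequalities $C_{\rm BPS}(U,M;\alpha)\leq l_\alpha^F$ and $C_{\rm BPS}(U,M;\alpha)\geq l_\alpha^F$ separately. The first follows almost immediately from Theorem~\ref{thm:mainthm} after a combined sign-and-time reversal; the second requires constructing an explicit radial Hamiltonian that has no $\alpha$-periodic orbit.

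For the upper bound, fix $c>l_\alpha^F$ and an arbitrary $H\in\mathscr{K}_c(U,M)$. By definition $H$ is compactly supported in $D^FT^*M$ and satisfies $H(t,x,0)\leq -c$ on the zero section. Set $\tilde H(t,x,p):=-H(-t,x,p)$, which is still compactly supported in $D^FT^*M$ and now satisfies $\tilde H(t,x,0)\geq c>l_\alpha^F$. Theorem~\ref{thm:mainthm} applied to $\tilde H$ produces a $1$-periodic orbit $\tilde z$ with $[\tilde z]=-\alpha$; setting $z(t):=\tilde z(-t)$ yields a $1$-periodic orbit of $H$ whose free homotopy class is the reverse of $[\tilde z]$, namely $\alpha$. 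Hence $C_{\rm BPS}(U,M;\alpha)\leq c$ for every $c>l_\alpha^F$.

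For the lower bound, fix $0<c<l_\alpha^F$ and build an explicit $H\in\mathscr{K}_c(U,M)$ without $\alpha$-periodic orbits. Choose $\delta>0$ small enough that $c/(1-2\delta)<l_\alpha^F$, and pick a smooth nondecreasing function $f:[0,\infty)\to\mathbb{R}$ with $f\equiv -c$ on $[0,\delta]$, $f\equiv 0$ on $[1-\delta,\infty)$, and $\max f'<l_\alpha^F$; the bound on $\delta$ makes this last condition achievable since $\int_0^1 f'\,dr=c$. Define $H(x,p):=f(F^*(x,p))$. The flatness of $f$ near $0$ absorbs the non-smoothness of $F^*$ at the zero section, the vanishing of $f$ near $r=1$ places $H$ in $C_0^\infty(D^FT^*M)$, and $H|_M\equiv -c$, so $H\in\mathscr{K}_c(U,M)$. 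Since $X_H=f'(F^*)\,X_{F^*}$ and $F^*$ is preserved along $X_{F^*}$, every $1$-periodic orbit $z$ stays on one level set $\{F^*=r\}$. If $f'(r)=0$ then $z$ is constant and $[z]=0\neq\alpha$. If $f'(r)>0$, Lemma~\ref{lem:radialHamsyst} identifies $z$, via the Legendre transform, with the cotangent lift of a closed Finslerian geodesic $\pi\circ z$ of total $F$-length $f'(r)<l_\alpha^F=\min\Lambda_\alpha$, so $[\pi\circ z]\neq\alpha$, and since $\pi:T^*M\to M$ is a homotopy equivalence we also get $[z]\neq\alpha$. Thus $C_{\rm BPS}(U,M;\alpha)\geq c$ for every $c<l_\alpha^F$.

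The central technical input is Lemma~\ref{lem:radialHamsyst}, which ties $1$-periodic orbits of radial Hamiltonians on $T^*M$ to closed Finsler geodesics via the Legendre transform together with an explicit action formula; once it is available the argument reduces to bookkeeping. The two subtleties to handle carefully are the combined time-and-sign reversal needed to apply Theorem~\ref{thm:mainthm} in the upper bound (because the sign conventions of the BPS capacity and of Theorem~\ref{thm:mainthm} are opposite), and the need to impose flatness of $f$ near the zero section to ensure $H=f(F^*)$ is globally smooth despite the singularity of $F^*$ along $M$.
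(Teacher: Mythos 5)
Your lower bound is correct and is essentially the paper's own construction (following \cite{BPS}): a radial profile $f$ that is flat near $0$ (absorbing the non-smoothness of $F^*$ along the zero section), vanishes for $F^*\geq 1-\delta$, and has $\max f'<l_\alpha^F$, combined with Lemma~\ref{lem:radialHamsyst} which identifies every $1$-periodic orbit $z$ on the level $\{F^*=r_z\}$ with a closed $F$-geodesic of length $f'(r_z)<l_\alpha^F$, hence not in class $\alpha$. The bookkeeping that makes $\max f'<l_\alpha^F$ achievable (choosing $\delta$ with $c/(1-2\delta)<l_\alpha^F$) is fine.

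The upper bound, however, is circular as written. In this paper Theorem~\ref{ThmBPSCap} is the second equation of Theorem~\ref{thm:BPS capacities}, and Theorem~\ref{thm:mainthm} is \emph{deduced} in Section~\ref{SProofs} from Theorem~\ref{thm:BPS capacities} (specifically from the refined capacity $C_{\rm BPS}(D^FT^*M,M;\alpha,l_\alpha)=l_\alpha$). Invoking Theorem~\ref{thm:mainthm} to establish $C_{\rm BPS}(U,M;\alpha)\le l_\alpha^F$ therefore assumes a statement that the paper obtains as a consequence of what you are trying to prove. Your time-and-sign reversal is correct as an implication — it is precisely the inverse of the reduction the paper runs in the other direction in the proof of Theorem~\ref{thm:mainthm} — but it contains none of the Floer-theoretic content. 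The actual upper bound is obtained by bounding $C_{\rm BPS}$ from above by the homological relative capacity $\widehat{C}_{\rm BPS}$ (Proposition~4.9.1 of \cite{BPS}), computing the truncation map $T^{(a,\infty);c}_\alpha$ between the inverse and direct limits of filtered Floer homology via Theorem~\ref{thm:symhomology}, and showing it is nonzero when $a\in(0,c]$. That computation is the real theorem, and nothing in your proposal supplies it. Note also that the refined statement with the action lower bound $\mathscr{A}_H(z)\ge a$, which the paper needs to derive Theorem~\ref{thm:mainthm}, is unobtainable by your route since Theorem~\ref{thm:mainthm} gives no control on the action of the orbit it produces.
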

We remark that when $U$ is noncompact, for properly chosen $\alpha$, the capacity may also be estimated as in the proof of Theorem \ref{thm:Ncompact}.

The technical result that we obtain in this paper is the following generalization of the results of \cite[Theorem~2.9]{We0} to the Finsler setting.

\begin{theorem}[Floer homology of convex radial Hamiltonians]\label{thm:convexradial}
Let $(M,F)$ be a closed connected smooth Finsler manifold, and let $\alpha\in [S^1,M]$.
Let $f:[0,\infty)\to\mathbb{R}$ be a smooth function such that
 $f'\geq 0$, $f''\geq 0$, and $f= f(0)$ on $[0,\epsilon_f)$ for some constant $\epsilon_f>0$. Assume that
 $\lambda\in(0,\infty)\setminus\Lambda_\alpha$ and $f'(r)=\lambda$ for some $r>\epsilon_f$. Let $c_{f,\lambda}:=rf'(r)-f(r)$. Then the following holds.

\noindent(i) There exists a natural isomorphism
$$\Psi_{f}^{\lambda}: {\rm HF}^{(-\infty,c_{f,\lambda})}_*(f\circ F^*;\alpha)\longrightarrow {\rm H}_*(\Lambda_\alpha^{\lambda^2/2} M)$$
where $\Lambda_\alpha^{\lambda^2/2} M:=\{x\in C^\infty(S^1,M)|\int_{S^1}F^2(x(t),\dot{x}(t))dt<\lambda^2,\; [x]=\alpha\}$.

\noindent(ii) If $\mu\in(0,\lambda]\setminus\Lambda_\alpha$ is another slope of $f$, then the following diagram commutes:

\begin{eqnarray}
\begin{CD}\label{DC:diag0.1}
{\rm HF}^{(-\infty,c_{f,\mu})}_*(f\circ F^*;\alpha)  @>{[i^F]}>> {\rm HF}^{(-\infty,c_{f,\lambda})}_*(f\circ F^*;\alpha)\\
@V{\Psi_{f}^{\mu}}V\cong V  @V\cong V{\Psi_{f}^{\lambda}}V \\
{\rm H}_*(\Lambda_\alpha^{\mu^2/2} M) @>{[I]}>> {\rm H}_*(\Lambda_\alpha^{\lambda^2/2} M)
\end{CD}.
\end{eqnarray}

\noindent(iii) Let $g$ be another such function, then there exists an isomorphism $\Psi^\lambda_{gf}$ such that the following diagram commutes:

\begin{equation}\label{trangle:diag0.2}
\xymatrix{{\rm HF}^{(-\infty,c_{f,\lambda})}_*(f\circ F^*;\alpha)
\ar[rr]^{\Psi^\lambda_{gf}}_{\cong}\ar[dr]^{\cong}_{\Psi^\lambda_{f}}& & {\rm HF}^{(-\infty,c_{g,\lambda})}_*(g\circ F^*;\alpha)\ar[dl]_{\cong}^{\Psi^\lambda_{g}}\\ & {\rm H}_*(\Lambda_\alpha^{\lambda^2/2} M) & }.
\end{equation}

\end{theorem}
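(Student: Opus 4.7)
The plan is to interpret both sides of the asserted isomorphism through a common reduction to closed Finslerian geodesics in class $\alpha$, via the Legendre transform together with a sequence of deformations of the radial profile $f$. Because $F^2$ is only $C^{1,1}$ on $TM$, the co-Finsler square $(F^*)^2$ fails to be $C^2$ along the zero section, so strictly convex quadratic Hamiltonians built from it are not smooth there. We therefore first invoke Lu's smooth quadratic modifications from Section~\ref{convexq} to produce a sequence of smooth, fiberwise uniformly convex approximations $F_k^*$ that agree with $F^*$ outside a shrinking neighborhood of the zero section and so that any radial profile composed with $F_k^*$ stays fiberwise convex. Compactness of Floer trajectories then follows from the Liouville-flow estimates of Section~\ref{SLiouville}, so ${\rm HF}^{(-\infty, c_{f,\lambda})}_*(f \circ F_k^*; \alpha)$ is well defined, and for $k$ large enough it stabilizes to ${\rm HF}^{(-\infty, c_{f,\lambda})}_*(f \circ F^*; \alpha)$.

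By Lemma~\ref{lem:radialHamsyst}, $1$-periodic orbits $z(t)=(x(t),p(t))$ of $X_{f\circ F^*}$ with $[x]=\alpha$ correspond bijectively via the Legendre transform to closed $F$-geodesics in class $\alpha$ of constant speed $f'(r)$, where $r:=F^*(z)$ is constant along $z$, and the action is $\mathscr{A}_{f\circ F^*}(z)=rf'(r)-f(r)$. The hypothesis $\lambda\notin\Lambda_\alpha$ then forbids any $1$-periodic orbit from having action exactly $c_{f,\lambda}$, so the filtered complex is generated precisely by orbits corresponding to closed $F$-geodesics in class $\alpha$ of length strictly less than $\lambda$. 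To obtain parts (ii) and (iii) I would connect any two admissible profiles $(f,\lambda)$ and $(g,\lambda)$ by a monotone family $\{f_s\}$ of admissible functions such that no $1$-periodic orbit in class $\alpha$ crosses the action level $c_{f_s,\lambda}$ along the homotopy; the resulting continuation maps give isomorphisms $\Psi^\lambda_{gf}$, and the commutativity of both diagrams follows from the usual naturality of continuation under stacked homotopies in a two-parameter family relating the slopes $\mu$ and $\lambda$.

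For part (i) the idea is to further deform $f$, within its slope-$\lambda$ equivalence class, to a distinguished representative that is quadratic at infinity. The resulting Hamiltonian coincides outside a compact set with $\tfrac12(F_k^*)^2+\text{const}$, for which a Finsler analogue of the Abbondandolo--Schwarz isomorphism identifies ${\rm HF}^{(-\infty, c_{f,\lambda})}_*(\cdot;\alpha)$ with the singular homology of the sublevel set $\Lambda_\alpha^{\lambda^2/2}M$ of the energy functional $E_F(x)=\tfrac12\int_{S^1}F^2(x,\dot x)\,dt$ on the class-$\alpha$ component of the free loop space; the translation of thresholds is natural because under the Legendre transform an orbit of action below $c_{f,\lambda}$ projects to a loop of energy below $\lambda^2/2$. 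The principal technical obstacle lies exactly here: the Salamon--Weber heat-flow proof of the AS isomorphism does not transfer to the Finsler setting, because $F^2$ is only $C^{1,1}$ so the $L^2$-gradient of $E_F$ is not smooth, and one cannot rewrite the Floer equation as a heat equation. The Abbondandolo--Schwarz isomorphism must therefore be built separately for each smoothing $F_k$, with uniform compactness shown for both the Floer and the Morse moduli spaces, and one must check that the resulting isomorphism is filtration-preserving and stable under $k\to\infty$ to produce $\Psi^\lambda_f$.
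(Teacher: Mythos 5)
Your proposal follows the same overall route as the paper: use Lu's convex quadratic modifications from Section~\ref{convexq} to produce a family of smooth, fiberwise uniformly convex approximations $Q_\eta$ of $F^*$; use Lemma~\ref{lem:radialHamsyst} to translate $1$-periodic orbits of radial Hamiltonians into closed $F$-geodesics with computable action; deform the profile $f$ within its slope-$\lambda$ class; apply the Abbondandolo--Schwarz isomorphism of Theorem~\ref{Thm:AS-isomorphism} (via Corollary~\ref{coro:AS-isomorphism}) to a fiberwise convex, quadratic-at-infinity representative; and finally pass between sublevel sets of Lagrangian action functionals on the free loop space. Parts (ii) and (iii) indeed follow from stacking these maps in commuting blocks, as you describe.

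There are, however, two points that would fail as written. First, you describe the Floer side as ``stabilizing for $k$ large''; in fact once $2\eta\le\epsilon_f^2$ one has $Q_\eta\le\sqrt{(F^*)^2+\eta}<\epsilon_f$ on $D_{\sqrt\eta}T^*M$ and $Q_\eta=F^*$ outside, so because $f\equiv f(0)$ on $[0,\epsilon_f)$ one has $f\circ Q_\eta=f\circ F^*$ \emph{exactly} on all of $T^*M$; no limiting or stabilization argument is needed or available on the Floer side. Second, and more seriously, your claim that ``the translation of thresholds is natural because the AS isomorphism is filtration-preserving'' is a gap. The AS isomorphism of Theorem~\ref{Thm:AS-isomorphism} preserves the filtration by the \emph{modified} Lagrangian action $\mathscr{L}^\eta_V$, not by the Finsler energy $E_F=\frac12\int F^2$. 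Passing from the sublevel sets $\{\mathscr{L}^\eta_0\le\lambda^2/2\}$ to $\Lambda_\alpha^{\lambda^2/2}M=\{E_F<\lambda^2/2\}$ requires a separate deformation argument: the paper establishes this in Lemma~\ref{lem:loophomology}, which hinges on the Palais--Smale property of $\mathscr{L}^\eta_0$, the fact that $\lambda\notin\Lambda_\alpha$ forces a uniform lower bound on $\|d\mathscr{L}^\eta_0\|$ near level $\lambda^2/2$ for all sufficiently small $\eta$, and the Cingolani--Degiovanni deformation Lemma~\ref{lem:deformlemma}. Without this step the map you construct lands in ${\rm H}_*(\Lambda_\alpha^{\eta;\lambda^2/2}M)$, not the desired ${\rm H}_*(\Lambda_\alpha^{\lambda^2/2}M)$, and the isomorphism between these two is precisely what is nontrivial in the Finsler setting.
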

The proof is given in Section~\ref{sec:FHconvexradial}.

Our next result shows that a symplectomorphism in the identity component ${\rm Symp}_0(T^*M)$ of the symplectomorphic  group ${\rm Symp}(T^*M)$ does not change the minimal length of the Finsler closed geodesic,  though the Finsler metric is deformed.

\begin{theorem}\label{coro:inv.F-length}
Let $F_1,F_2$ be two Finsler metrics on a closed connected manifold $M$ with nontrivial free homotopy group. If $\psi\in {\rm Symp}_0(T^*M)$ is a symplectomorphsim from $D^{F_1}T^*M$ to $D^{F_2}T^*M$ such that $\psi(M\times \{0\})$ is the graph of an exact one-form on $M$, then for every non-trivial $\alpha\in [S^1,M]$, we have $l_\alpha^{F_1}=l_\alpha^{F_2}$. 
\end{theorem}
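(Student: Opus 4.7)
The strategy is to reduce the claim to the symplectic invariance of the BPS capacity, then invoke Theorem~\ref{ThmBPSCap}. Applied to $\overline{D^{F_i}T^*M}$, that theorem gives $l_\alpha^{F_i}=C_{\rm BPS}(D^{F_i}T^*M,M;\alpha)$ for $i=1,2$, so it suffices to prove these two capacities coincide. The obstruction is that $\psi$ does not carry the zero section of $T^*M$ to itself; by hypothesis, $\psi(M\times\{0\})$ is the graph $\Gamma_{df}$ of an exact one-form $df$ with $f\in C^\infty(M)$.

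To remedy this, I would introduce the fiberwise translation $\tau\colon T^*M\to T^*M$, $\tau(x,p)=(x,p-df(x))$. One checks $\tau^*\lambda_0=\lambda_0-d(f\circ\pi)$, hence $\tau^*\omega_0=\omega_0$; moreover $\tau$ is isotopic to the identity through $\tau_s(x,p)=(x,p-s\,df(x))$, so $\tau\in{\rm Symp}_0(T^*M)$. The composition $\Phi:=\tau\circ\psi\in{\rm Symp}_0(T^*M)$ then maps $D^{F_1}T^*M$ onto $V:=\tau(D^{F_2}T^*M)$ and sends the zero section to itself. Because each fiber $V_x=(D^{F_2}T^*M)_x-df(x)$ is a translate of a strictly convex set, $\overline{V}$ is a fiberwise strictly convex compact domain with smooth boundary; and because $\psi$ is an open map, $\Gamma_{df}\subset\mathrm{int}(D^{F_2}T^*M)$, so $M\subset\mathrm{int}(V)$. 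Let $F_3$ be the Finsler metric associated to $\overline{V}$ as in Section~\ref{subsec:Finslergeo}. Theorem~\ref{ThmBPSCap} applied to the pair $(\overline{V},M)$ then yields $C_{\rm BPS}(V,M;\alpha)=l_\alpha^{F_3}$.

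The symplectomorphism $\Phi$ identifies admissible Hamiltonians for the two pairs via $H\mapsto H\circ\Phi^{-1}$, preserving the threshold $\sup_{S^1\times M}H\leq -c$ (since $\Phi(M)=M$), the Hamiltonian dynamics, and---because $\Phi$ is isotopic to the identity---the free homotopy class of every $1$-periodic orbit. Hence $C_{\rm BPS}(D^{F_1}T^*M,M;\alpha)=C_{\rm BPS}(V,M;\alpha)=l_\alpha^{F_3}$. Computing the support function of $V$,
$$F_3(x,v)=\sup_{p\in V_x}\langle p,v\rangle=\sup_{q\in (D^{F_2}T^*M)_x}\langle q-df(x),v\rangle=F_2(x,v)-df_x(v),$$
so for every loop $\gamma\in C^\infty(S^1,M)$ the exactness of $df$ gives $\mathrm{len}_{F_3}(\gamma)=\mathrm{len}_{F_2}(\gamma)-\int_{S^1}df_{\gamma(t)}(\dot\gamma(t))\,dt=\mathrm{len}_{F_2}(\gamma)$, forcing $l_\alpha^{F_3}=l_\alpha^{F_2}$ and closing the chain $l_\alpha^{F_1}=l_\alpha^{F_3}=l_\alpha^{F_2}$. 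The main technical point to verify carefully is that $\overline{V}$ satisfies all hypotheses of Theorem~\ref{ThmBPSCap} and that BPS invariance under $\Phi$ really preserves the prescribed free homotopy class; both reduce to $\tau$ being a fiberwise affine symplectomorphism and $\Phi$ lying in ${\rm Symp}_0(T^*M)$.
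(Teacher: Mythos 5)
Your proof is correct and follows the paper's overall strategy: translate the zero section back into place with the fiberwise map $\tau$, invoke invariance and monotonicity of the BPS capacity under the ambient symplectomorphism $\Phi=\tau\circ\psi\in\mathrm{Symp}_0(T^*M)$, and feed the result into Theorem~\ref{ThmBPSCap}. Where you diverge is in how you identify the auxiliary Finsler metric $F_3$ (the paper's $F_\psi$) with $F_2$ at the level of minimal lengths. The paper passes to the unit cosphere bundles, writes $\tilde\nu_S^*\alpha_\psi-\alpha_2=df$ for the restricted Liouville contact forms, and argues that $\tilde\nu_S$ carries closed Reeb (geodesic) orbits to closed Reeb orbits of equal action, so the marked length spectra coincide. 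You instead compute the support function of the translated fiberwise domain directly to get the pointwise identity $F_3(x,v)=F_2(x,v)-df_x(v)$ on $TM$, whence $\mathrm{len}_{F_3}(\gamma)=\mathrm{len}_{F_2}(\gamma)$ for \emph{every} loop $\gamma$ by Stokes, not merely for closed geodesics. Your route is more elementary (no contact geometry, no Legendre lift) and gives a slightly stronger intermediate statement; the paper's contact-geometric phrasing is closer in spirit to the Liouville/Reeb framework used throughout Section~\ref{SLiouville}. Both are sound. The points you flagged to verify -- that $\overline{V}$ is a fiberwise strictly convex compact body with smooth boundary containing $M$ in its interior, and that $\Phi\in\mathrm{Symp}_0$ fixes the free homotopy class -- indeed hold for the reasons you give, and your observation that $\Phi(M)=M$ (since $\tau(\Gamma_{df})=M\times\{0\}$) is exactly what is needed to match the thresholds $\sup_{S^1\times M}H\leq -c$ on both sides of the capacity identity.
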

\begin{remark}\label{rmk:inv.F-length}
{\rm
The assumption that $\psi(M\times \{0\})$ is a graph of an exact one-form on $M$ is only technical and it can be replaced by some other assumptions, for instance, a stronger condition that $\psi$ is a Hamiltomorphism with $\psi(D^{F_1}T^*M)=D^{F_2}T^*M$, see~\cite{MS,Vi1}. Also, inspired by Theorem \ref{coro:inv.F-length}, Jun Zhang obtained the generalization which states that the Finsler lengths of all closed geodesics are preserved by Liouville diffeomorphisms. Though his assumption Liouville diffeomorphism is in principle stronger than our assumption here, his conclusion is clearly much stronger. The proof is to combine Theorem~\ref{thm:symhomology} with the tool of persistent homology~\cite{Zh}.}
\end{remark}

Inspired by the above result, it is interesting to ask the following question:

{\bf Problem 1:} {\it  Let $\psi\in {\rm Symp}_0(T^*M)$ be a symplectomorphsim between the unit co-disk bundles of two Finsler metrics $F_1$ and $F_2$ on a closed connected manifold $M$ with nontrivial free homotopy group. Is it true that the manifolds $(M,F_1)$ and $(M,F_2)$ are isometric?}

Here we remark that if $F_1,F_2$ are any two flat Finsler metrics on $M=\T^n$, by Remark~\ref{rmk:inv.F-length} the answer is obviously yes. This is also implied by a rigidity theorem of Benci and Sikorav~\cite{Si}.

\vskip 0.2in

The paper is organized as follows.

In Subsection \ref{SSApp}, we give a number  of applications of our main result. In Section \ref{SLiouville}, we introduce Floer theory for the Liouville domain. In Section \ref{sec:Finslergeometry}, we introduce radial Hamiltonian systems associated to a Finsler metric and compute the action of periodic orbits. In Section \ref{SFilter}, we introduce filtered Floer homology groups and study their properties such as homotopical invariance, direct and inverse limits, etc following \cite{BPS}. In Section \ref{SAS}, we show how to define Floer homology groups in the Finsler setting and solve the main technical problem--the nonsmoothness of the Finsler metric on the zero section. In Section \ref{sec:FHconvexradial}, we give the proof of Theorem \ref{thm:convexradial}. In Section \ref{SCap}, we compute the BPS capacities. In Section \ref{SProofs}, we prove Theorem \ref{thm:mainthm} and its applications in Section \ref{SSApp}. In Section \ref{STechnical}, we present the proof of all the main technical results.

\subsection{Applications}\label{SSApp}
In this section, we give a number of applications of our main result.
\subsubsection{A generalization of \cite{Xu} to the Lie group setting}

Let $G$ be an $n$-dimensional compact Lie group with Lie algebra $\mathfrak{g}$. Recall that a subgroup $T$ of $G$ is a \emph{torus} if it is   isomorphic to $(S^1)^k$ for some $k\in\mathbb{N}$, and it is called a \emph{maximal torus} if it is not properly contained in any other torus in $G$. The Lie algebra $\mathfrak{t}$ of $T$ is a maximal commutative subalgebra $\mathfrak{g}$. Let $\Gamma$ be the \emph{kernel of the exponential map} of $\mathfrak{t}$ defined by
$$\Gamma:=\{h\in \mathfrak{t}\ |\ \exp h=I\}.$$
Denote by $\mathfrak{C}$ the \emph{coroot lattice}, namely, the set of all integer linear combination of coroots (cf.~\cite{Ha}). Let
$$[\mathfrak{g},\mathfrak{g}]^0:=\{\xi\in \mathfrak{g}^*\ |\ \langle\xi,X\rangle=0,\quad \forall X\in [\mathfrak{g},\mathfrak{g}]\}$$
be the annihilator of the \emph{commutator ideal} $[\mathfrak{g},\mathfrak{g}]$,
that is, the space of elements $Z$ in $\mathfrak{g}$ that can be expressed as
$$Z=\sum_{i=1}^n c_i[X_i,Y_j]$$
for some constants $c_i$ and vectors $X_i,Y_j\in \mathfrak{g}$.

To state our result, we need the following well known fact.
\begin{lemma}[{\cite[Corollary~13.18]{Ha}}]
The fundamental group of $G$ is commutative and isomorphic to the quotient group $\Gamma/ \mathfrak{C}$,  i.e., $\pi_1(G)\cong \Gamma/ \mathfrak{C}$.
\end{lemma}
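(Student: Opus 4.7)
The plan is to reduce the statement to the standard commutative diagram relating the maximal torus $T$ to $G$ via the exponential map, and then to use the fact that the flag variety $G/T$ is simply connected together with a direct identification of coroots as the boundary of Schubert $\mathbb{CP}^1$'s.

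First, I would set up the fibration $T\hookrightarrow G\to G/T$ and invoke the classical fact that $G/T$ is simply connected (one proof: Bruhat decomposition writes $G/T$ as a CW complex with only even-dimensional cells; alternatively, one can argue using the fact that any loop in $G/T$ lifts to a path in $G$ joining two points in the same fiber, and any two points of $T$ are joined by a path in $T$, which is a retract of a larger neighborhood). The homotopy long exact sequence
\begin{equation*}
\pi_2(G/T)\;\xrightarrow{\;\partial\;}\;\pi_1(T)\;\longrightarrow\;\pi_1(G)\;\longrightarrow\;\pi_1(G/T)=1
\end{equation*}
then yields that the inclusion induced map $\pi_1(T)\twoheadrightarrow \pi_1(G)$ is surjective with kernel $\operatorname{im}(\partial)$. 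Using the exponential map $\exp\colon\mathfrak{t}\to T$ to identify $\pi_1(T)$ canonically with $\Gamma$, the lemma is reduced to showing $\operatorname{im}(\partial)=\mathfrak{C}$ inside $\Gamma$.

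Next, I would identify $\operatorname{im}(\partial)$ concretely. For each root $\alpha$ of $(G,T)$ there is a three-dimensional subalgebra $\mathfrak{su}(2)_\alpha\subset\mathfrak{g}$; integrating gives a compact subgroup $G_\alpha\subset G$ isomorphic to $SU(2)$ or $SO(3)$, whose maximal torus $T_\alpha=G_\alpha\cap T$ is generated by $\exp(sH_\alpha)$ with $H_\alpha$ the coroot. Because $SU(2)$ is simply connected and $G_\alpha/T_\alpha\cong\mathbb{CP}^1$, the generating $2$-sphere $G_\alpha/T_\alpha\hookrightarrow G/T$ bounds, under $\partial$, exactly the class of the circle $T_\alpha$ in $\pi_1(T)=\Gamma$, i.e.\ the coroot $H_\alpha$. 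Hence $\mathfrak{C}\subseteq\operatorname{im}(\partial)$.

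For the reverse inclusion $\operatorname{im}(\partial)\subseteq\mathfrak{C}$ I would appeal to the Bruhat decomposition of $G/T$: the $2$-cells are precisely the Schubert cells associated to the simple reflections, one for each simple root, each of them closing up to a $\mathbb{CP}^1$ of the above form. Therefore $H_2(G/T;\mathbb{Z})$, and a fortiori $\pi_2(G/T)$ (which maps onto $H_2$ and is generated by the classes of these spheres, since $\pi_1(G/T)=1$), is spanned by the simple-coroot spheres, and the previous paragraph shows these hit exactly the simple coroots in $\Gamma$. Quotienting then gives $\pi_1(G)\cong\Gamma/\mathfrak{C}$, which is commutative as it is a quotient of the lattice $\Gamma$. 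The main obstacle I anticipate is the verification that $\partial$ sends the Schubert $\mathbb{CP}^1_\alpha$ to the coroot $H_\alpha$ with the correct sign and multiplicity; this is a bookkeeping argument inside the $SU(2)$-subgroup $G_\alpha$, but it is what forces the precise appearance of the coroot lattice (as opposed to, say, the root lattice) in the statement.
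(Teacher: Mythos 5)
The paper does not prove this lemma; it cites it directly to Hall's book (Corollary 13.18), so there is no proof in the paper for your argument to track. That said, your proposed proof is essentially correct and is one of the two standard routes to this result. You argue through the fibration $T\hookrightarrow G\to G/T$, simple connectedness of the flag manifold via the Bruhat CW-structure, and identification of $\operatorname{im}(\partial)$ with the coroot lattice by pushing the problem into rank-one subgroups $G_\alpha$ and into the Schubert $\mathbb{CP}^1$'s. Hall's own argument (which is what the paper relies on) is different in flavor: it stays entirely inside the compact real group, using the torus theorem, surjectivity of the exponential, and the geometry of Weyl alcoves, and never complexifies. Your approach is cleaner if one is willing to invoke the Bruhat decomposition of $G_{\mathbb C}/B$ as a black box; Hall's is heavier on elementary bookkeeping but more self-contained, which is presumably why the paper cites a textbook reference rather than re-deriving it.

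Two small imprecisions worth flagging. First, the ``alternative'' argument you sketch for $\pi_1(G/T)=1$ (lift a loop to a path, close up in $T$, invoke a retraction) is not coherent as stated and does not by itself yield contractibility; the Bruhat CW-structure argument you give alongside it is the one that actually works, so nothing is lost. Second, when $G_\alpha\cong SO(3)$ the boundary map $\partial:\pi_2(G_\alpha/T_\alpha)\to\pi_1(T_\alpha)$ is multiplication by $2$ rather than an isomorphism, so the Schubert sphere does not bound ``exactly the class of the circle $T_\alpha$'' but twice its generator. This does not affect the conclusion: twice the generator of $\pi_1(T_\alpha)\subset\Gamma$ is precisely $H_\alpha$ in that case, so $\partial(\mathbb{CP}^1_\alpha)=\pm H_\alpha$ holds in both the $SU(2)$ and $SO(3)$ cases, and since $\mathfrak C$ is a lattice the sign is immaterial.
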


Now let us fix a basis $\{e_i\}_{i=1}^n$ in $\mathfrak{g}$. Let $\mathcal{C}$ be a cone in $\mathfrak{g}$ given by
$$\mathcal{C}=\bigg\{\sum_{i=1}^n v_ie_i\ \bigg|\ v_i\geq 0,\quad i=1,2,\ldots, n \bigg\}.$$

Denote its dual cone $\mathcal{C}^*$ in $\mathfrak{g}^*$ (the dual Lie algebra of $\mathfrak{g}$) by
$$\mathcal{C}^*=\big\{w\in \mathfrak{g}^*\ \big|\ \langle w, v\rangle \geq 0\quad \forall v\in \mathcal{C} \big\}.$$

We may identify the cotangent bundle $T^*G$ with $G\times \mathfrak{g}^*$ by means of the diffeomorphism that sends $(g,\xi)\in G\times \mathfrak{g}^*$ to $dL_{g^{-1}}^*\xi\in T^*_gG$, where $L_g:G\to G$ denotes the left-translation $L_g(x)=gx$. Under this trivialization of $T^*G$, the canonical symplectic form $\omega$ on $T^*G$ is given by:
$$\omega_{(g,\xi)}((X_1,Y_1^*),(X_2,Y_2^*))=Y_2^*(X_1)
-Y_1^*(X_2)+\xi\big([X_1,X_2]\big).$$

By taking an appropriate left-invariant Finsler (non-Riemannian)  metric on $G$,
Theorem~\ref{thm:mainthm} implies the following.
\begin{theorem}\label{thm:Xue-Lie}
Let $G$ be a compact Lie group, and $T^*G$ its cotangent bundle with the canonical symplectic form $\omega$. Let $\mathcal{C}$ be the closed cone as above and $\mathcal{C}^*$ its dual cone. Let $\alpha$ be a non-trivial homotopy class of free loops on $G$. Suppose that there exists a point $p^*\in {\rm int}~\mathcal{C}^*\cap [\mathfrak{g},\mathfrak{g}]^0$, a number $c>0$ and a vector $X\in \Gamma\cap \mathcal{C}$ such that $\alpha=[X]\in \Gamma/ \mathfrak{C}$ and
$$\langle p^*, X\rangle\leq c.$$
Then for every $H\in C_0^\infty(S^1\times T^*G)$ with compact support in $S^1\times G\times {\rm int}~\mathcal{C}^*$ satisfying
$$H(t,x,p^*)\geq c\quad \forall\  t\in S^1,\; \forall\  x\in G,$$
	 there exists
	 $z:S^1\to T^*G$ with $\dot{z}(t)=X_H(t,z(t))$ and $[z]=\alpha$.
\end{theorem}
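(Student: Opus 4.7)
The strategy is to reduce Theorem~\ref{thm:Xue-Lie} to Theorem~\ref{thm:mainthm} by translating in the fiber by $p^*$ and then constructing a left-invariant Finsler metric $F$ on $G$ tailored to the translated Hamiltonian. The translation is permissible because $p^*\in[\mathfrak{g},\mathfrak{g}]^0$ forces the associated left-invariant $1$-form $\tilde p^*$ on $G$ to be closed (indeed $d\tilde p^*(Y_1,Y_2)=-\langle p^*,[Y_1,Y_2]\rangle=0$), so in the trivialization $T^*G\cong G\times\mathfrak{g}^*$ the fiber translation $\psi_{p^*}(g,\xi):=(g,\xi+p^*)$ satisfies $\psi_{p^*}^*\lambda_0=\lambda_0+\pi^*\tilde p^*$ and is a symplectomorphism isotopic to the identity. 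I set $\tilde H:=H\circ\psi_{p^*}$, which is compactly supported in $S^1\times G\times(\mathrm{int}\,\mathcal{C}^*-p^*)$ and satisfies $\tilde H(t,g,0)=H(t,g,p^*)\geq c$.

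Next I build a left-invariant Finsler metric on $G$ adapted to $\tilde H$. Let $K\subset\mathrm{int}\,\mathcal{C}^*$ denote the fiber projection of $\mathrm{supp}(H)$ and $V:=K-p^*$. The hypotheses $p\in\mathcal{C}^*$, $X\in\mathcal{C}$, $\langle p^*,X\rangle\leq c$ yield
$$\langle q,X\rangle=\langle p,X\rangle-\langle p^*,X\rangle\geq -c\quad\text{for every }q=p-p^*\in V,$$
with strict inequality because $K$ is compact in $\mathrm{int}\,\mathcal{C}^*\subset\{\langle\cdot,X\rangle>0\}$. Hence $V\cup\{0\}$ lies in the interior of the closed half-space $H_{-c}:=\{q\in\mathfrak{g}^*:\langle q,X\rangle\geq -c\}$. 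Inside $H_{-c}$ I thicken and smoothen $\mathrm{conv}(V\cup\{0\})$ to obtain a bounded, smooth, strictly convex body $B^*\subset\mathfrak{g}^*$ with $V\cup\{0\}\subset\mathrm{int}(B^*)\subset B^*\subset H_{-c}$. Its Minkowski functional is a co-Finsler norm $F^*$, Legendre-dual to a norm $F$ on $\mathfrak{g}$, which I spread by left translation to a left-invariant Finsler metric on $G$. By design,
$$\mathrm{supp}(\tilde H)\subset D^FT^*G,\qquad F(-X)=\sup_{q\in B^*}\langle q,-X\rangle\leq c.$$

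Since $X\in\Gamma$, the loop $\gamma(t):=\exp(-tX)$ is closed, and the commutativity of $\pi_1(G)=\Gamma/\mathfrak{C}$ gives $[\gamma]=-[X]=-\alpha$; by left invariance its $F$-length equals $F(-X)\leq c$, so $l^F_{-\alpha}\leq c$. Theorem~\ref{thm:mainthm} applied to $\tilde H$ and the nontrivial free homotopy class $-\alpha$ then delivers $\tilde z:S^1\to T^*G$ with $\dot{\tilde z}=X_{\tilde H}(t,\tilde z)$ and $[\tilde z]=-(-\alpha)=\alpha$. Undoing the shift, $z:=\psi_{p^*}\circ\tilde z$ is a $1$-periodic orbit of $X_H$, since $\psi_{p^*}$ symplectic together with $\tilde H=H\circ\psi_{p^*}$ forces $(\psi_{p^*})_*X_{\tilde H}=X_H$, and $[z]=[\tilde z]=\alpha$ because $\psi_{p^*}$ is isotopic to the identity in $T^*G$. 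The main obstacle is the construction of $B^*$: it must be large enough to swallow the possibly large compact set $V$ in its interior, yet thin enough in the $X$ direction that $\sup_{B^*}\langle\cdot,-X\rangle\leq c$; the conical compatibility $p^*\in\mathrm{int}\,\mathcal{C}^*$, $X\in\mathcal{C}$, $\langle p^*,X\rangle\leq c$ is precisely what reconciles these two competing demands.
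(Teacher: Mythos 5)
Your argument is correct and essentially coincides with the paper's proof: both fiber-translate by $p^*$ (justified because $p^* \in [\mathfrak{g},\mathfrak{g}]^0$ makes this a symplectomorphism, which you verify via closedness of the left-invariant $1$-form $\tilde p^*$ while the paper checks it from the explicit formula for $\omega$ in the left trivialization), both enclose the translated support in a strictly convex body $K^*-p^*$ whose support function in the $-X$ direction is at most $c$, take the associated left-invariant Finsler metric, observe that $\gamma(t)=\exp(-tX)$ realizes $l^F_{-\alpha}\le F(-X)\le c$, and then invoke Theorem~\ref{thm:mainthm} with the class $-\alpha$. The only difference is expository: you spell out the thickening and smoothing of $\mathrm{conv}(V\cup\{0\})$ inside the half-space, whereas the paper simply asserts the existence of a suitable compact strictly convex $K^*$.
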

Whenever $G=\mathbb{T}^n$, it is obvious that $\Gamma=\mathbb{Z}^n$, $[\mathfrak{g},\mathfrak{g}]=0$ (and thus $[\mathfrak{g},\mathfrak{g}]^0=\mathfrak{g}^*$).
Therefore Theorem~\ref{thm:Xue-Lie} implies the following known result.

\begin{theorem}[{\cite[Theorem~2]{Xu}}]\label{thm:Xue'theorem}
	 Let $\mathcal{C}$ be a closed cone in $\mathbb{R}^n$, and $\mathcal{C}^*$ denotes its dual cone.
	 Let $0\neq\alpha\in [S^1,T^*\mathbb{T}^n]\cong \mathbb{Z}^n\subseteq\mathbb{R}^n$. Suppose that $p^*$ belongs to the interior of the cone $\mathcal{C}^*$ (denoted by ${\rm int}~ \mathcal{C}^*$), $\alpha \in \mathcal{C}$ and that $c>0$ satisfies
	 $$\langle p^*,\alpha\rangle\leq c. $$
	 Then, for any $H\in C^\infty_0(S^1\times T^*\mathbb{T}^n)$ which is supported in $S^1\times \mathbb{T}^n\times {\rm int}~\mathcal{C}^*$ and satisfies
	 $$H(t,x,p^*)\geq c\quad \forall\  t\in S^1,\; \forall\  x\in \mathbb{T}^n,$$
	 there exists
	 $z:S^1\to T^*\mathbb{T}^n$ with $\dot{z}(t)=X_H(t,z(t))$ and $[z]=\alpha$.
\end{theorem}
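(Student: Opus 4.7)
The plan is to reduce to Theorem~\ref{thm:mainthm} by first translating $T^*G$ so that $p^*$ moves onto the zero section, and then constructing a left-invariant Finsler metric $F$ on $G$ whose co-disk bundle both absorbs the translated support of $H$ and forces $l_{-\alpha}^F\leq c$. Applying Theorem~\ref{thm:mainthm} with the class $-\alpha$ will then produce an orbit of class $\alpha$ after pulling back.

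The first step is the symplectic translation. Regard $p^*\in\mathfrak{g}^*$ as a left-invariant $1$-form on $G$ and consider $\tau\colon T^*G\to T^*G$, $\tau(g,\xi)=(g,\xi+p^*)$. Using the trivialized formula for $\omega$ stated above, $\tau^*\omega-\omega$ evaluates to $p^*([\cdot,\cdot])$ on tangent vectors, so $\tau$ is a symplectomorphism precisely when $p^*\in[\mathfrak{g},\mathfrak{g}]^0$; this is our hypothesis. Set $\tilde{H}:=H\circ\tau$. Then $\tilde{H}$ is compactly supported in $S^1\times G\times(\mathrm{int}\,\mathcal{C}^*-p^*)$, satisfies $\tilde{H}(t,x,0)=H(t,x,p^*)\geq c$, and has the same projections (hence the same free homotopy classes of orbits) as $H$.

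Next I would construct the Finsler metric. Let $K\subset\mathfrak{g}^*$ be the compact projection of $\mathrm{supp}(\tilde{H})$. Because $p^*\in\mathrm{int}\,\mathcal{C}^*$, the set $K\cup\{0\}$ lies in the open convex set $\mathrm{int}(\mathcal{C}^*-p^*)$. Choose a compact strictly convex $U_e\subset\mathfrak{g}^*$ with smooth boundary satisfying $K\cup\{0\}\subset\mathrm{int}(U_e)\subset U_e\subset\mathcal{C}^*-p^*$ (obtained, e.g., by smoothing the convex hull of a thin neighborhood of $K\cup\{0\}$ inside $\mathrm{int}(\mathcal{C}^*-p^*)$ and adding a small round ball to get strict convexity and $C^\infty$ boundary). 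Left-translate $U_e$ to obtain a fiberwise strictly convex smooth compact $U\subset T^*G$ containing the zero section in its interior, and let $F$ be the associated left-invariant Finsler metric as in Subsection~\ref{subsec:Finslergeo}. The loop $\gamma(t)=\exp(-tX)$, $t\in[0,1]$, is closed since $-X\in\Gamma$ and represents $-\alpha\in\Gamma/\mathfrak{C}=\pi_1(G)$; by left-invariance, $\mathrm{len}_F(\gamma)=F(e,-X)=\sup_{q\in U_e}\langle q,-X\rangle$. For any $q\in U_e$ we have $q+p^*\in\mathcal{C}^*$, and $X\in\mathcal{C}$ gives $\langle q+p^*,X\rangle\geq 0$, so $\langle q,X\rangle\geq -\langle p^*,X\rangle\geq -c$; taking the supremum yields $F(e,-X)\leq c$, and hence $l_{-\alpha}^F\leq c$.

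Finally, $\tilde{H}$ is compactly supported in $D^FT^*G$ (since $K\subset\mathrm{int}(U_e)$) and $\tilde{H}(t,x,0)\geq c\geq l_{-\alpha}^F$. Applying Theorem~\ref{thm:mainthm} to the nontrivial class $-\alpha$ produces $\tilde{z}\colon S^1\to T^*G$ with $\dot{\tilde{z}}=X_{\tilde{H}}(t,\tilde{z})$ and $[\tilde{z}]=\alpha$; then $z:=\tau\circ\tilde{z}$ is a $1$-periodic orbit of $X_H$ of class $\alpha$. The delicate step is the simultaneous satisfaction of the two requirements on $U_e$: it must be \emph{large} enough to contain $K\cup\{0\}$ in its interior (so that $\tilde{H}$ remains compactly supported inside $D^FT^*G$) and yet \emph{small} enough to lie inside the shifted cone $\mathcal{C}^*-p^*$ (so that the length estimate survives). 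The cone/dual-cone geometry is precisely what converts the hypothesis $\langle p^*,X\rangle\leq c$ into the bound $F(e,-X)\leq c$, and this is exactly why the class $\alpha$ must be represented by some $X\in\Gamma\cap\mathcal{C}$ rather than an arbitrary element of $\pi_1(G)$; all other ingredients (smoothing the convex body, the length formula via left-invariance, the identification $\pi_1(G)=\Gamma/\mathfrak{C}$) are standard.
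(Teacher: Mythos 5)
Your proposal is correct and follows essentially the same route as the paper: both reduce to Theorem~\ref{thm:mainthm} by shifting $p^*$ to the origin via the symplectic fiber translation, choosing a compact strictly convex fiber domain squeezed between the support of the shifted Hamiltonian and the translated cone $\mathcal{C}^*-p^*$, and then using cone duality to bound $\mathrm{len}_F(\exp(-tX))\leq c$ so that the class $-\alpha$ is accessible. The only cosmetic difference is that you perform the translation first and then build the Finsler body $U_e$, whereas the paper constructs $K^*\subset\mathcal{C}^*$ first and subtracts $p^*$ afterward; you also spell out the verification that the translation is symplectic precisely because $p^*\in[\mathfrak{g},\mathfrak{g}]^0$, which the paper merely asserts.
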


\begin{remark}
{\rm The proof of Theorem \ref{thm:Xue-Lie} is based on an argument proving Theorem~\ref{thm:Xue'theorem} from Theorem~\ref{thm:mainthm} communicated to us by Irie \cite{I}.
Note that the method in the proof of \cite[Theorem~2]{Xu} based on Pozniak's Theorem~\cite{Poz} can not be used to prove Theorem~\ref{thm:Xue-Lie}. This is because the sets of critical points of the Floer action functionals of the profile functions constructed in~\cite{Xu}, in general, are not Morse-Bott for the case that $G$ is a general compact Lie group .
}
\end{remark}

\subsubsection{Periodic orbits for Hamiltonians separating two Lagrangian submanifolds}


The problem of existence of periodic orbits for Hamiltonian systems separating two Lagrangian submanifolds were studied in \cite{GL,Lee, Xu}, etc. As an application of our main theorem, we have the following result. 

\begin{theorem}\label{ThmLagrangian}
Let $M$ be a closed connected smooth manifold. Let $\sigma$ be a smooth closed one-form on $M$ whose graph does not intersect the zero section. Suppose that there exists a compact set $U\subset T^*M$ with $C^\infty$-boundary, containing the graph~$\sigma=\{(x,\sigma(x))|x\in M,\;\sigma(x)\in T^*_xM\}$ in its interior, not containing the zero section and satisfies that $U\cap T_x^*M$ is strictly convex for all $x\in M$. Then for any nontrivial free homotopy class $\alpha$, there exists a number $c(U,\alpha)>0$ such that for any Hamiltonian $H:\ S^1\times T^*M\to \R$ compactly supported in $U$ and satisfying $\min_{t,x} H(t, x,\sigma(x))\geq c(U,\alpha)$, there exists a 1-periodic orbit with homotopy class $-\alpha$.
\end{theorem}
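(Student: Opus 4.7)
The plan is to reduce Theorem~\ref{ThmLagrangian} to Theorem~\ref{thm:mainthm} by a fiberwise translation that carries the Lagrangian graph of $\sigma$ to the zero section. The key observation is that, because $\sigma$ is closed, the fiberwise translation
$$\tau_\sigma: T^*M\to T^*M,\qquad (x,p)\mapsto (x,p+\sigma(x))$$
satisfies $\tau_\sigma^*\lambda_0 = \lambda_0 + \pi^*\sigma$, hence $\tau_\sigma^*\omega_0 = \omega_0 + \pi^*d\sigma = \omega_0$, and is therefore a symplectomorphism of $(T^*M,\omega_0)$. Moreover, the path $\{\tau_{t\sigma}\}_{t\in[0,1]}$ provides an isotopy from the identity to $\tau_\sigma$, so $\tau_\sigma$ induces the identity on $[S^1,T^*M]\cong[S^1,M]$.

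Consider $\tilde U:=\tau_{-\sigma}(U)$. Since $\tau_{-\sigma}$ preserves fibers, $\tilde U$ is compact with smooth boundary and fiberwise strictly convex, and the assumption ${\rm graph}(\sigma)\subset\Int(U)$ becomes $M\times\{0\}\subset\Int(\tilde U)$. By the construction at the end of Subsection~\ref{subsec:Finslergeo}, there is a unique Finsler metric $F$ on $M$ with $\tilde U=\overline{D^FT^*M}$. I define
$$c(U,\alpha):=l_\alpha^F,$$
which is positive by \eqref{EqLength}. Now given $H$ as in the statement, set $\tilde H(t,z):=H(t,\tau_\sigma(z))$. Then $\tilde H$ is compactly supported in $\Int(\tilde U)=D^FT^*M$, and
$$\tilde H(t,x,0)=H(t,x,\sigma(x))\ge c(U,\alpha)=l_\alpha^F\qquad\forall\,(t,x)\in S^1\times M.$$
Theorem~\ref{thm:mainthm} applied to $\tilde H$ and the nontrivial class $\alpha$ produces a $1$-periodic orbit $z:S^1\to T^*M$ of $X_{\tilde H}$ with $[z]=-\alpha$.

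To finish, I push $z$ forward by $\tau_\sigma$. Since $\tau_\sigma^*\omega_0=\omega_0$ and $\tilde H=H\circ\tau_\sigma$, one has $(\tau_\sigma)_*X_{\tilde H}=X_H$, so $\tau_\sigma\circ z$ is a $1$-periodic orbit of $X_H$; the isotopy $\tau_{t\sigma}$ exhibits a free homotopy between $z$ and $\tau_\sigma\circ z$, so $[\tau_\sigma\circ z]=[z]=-\alpha$ in $[S^1,M]$. This yields the desired orbit.

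The reduction is conceptually transparent, so I do not anticipate a serious obstacle: the entire analytic content is already contained in Theorem~\ref{thm:mainthm}. The only point that requires genuine care is verifying that $\tilde U$ actually is the unit co-disk bundle of a \emph{smooth} Finsler metric, but this is exactly the content of the converse statement at the end of Subsection~\ref{subsec:Finslergeo}, whose hypotheses (fiberwise strict convexity, smoothness of $\partial\tilde U$, and $M\times\{0\}\subset\Int(\tilde U)$) are transported unchanged from $U$ to $\tilde U$ by the fiber-preserving diffeomorphism $\tau_{-\sigma}$.
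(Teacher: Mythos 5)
Your proof is correct and is essentially the paper's own argument: the paper also reduces to Theorem~\ref{thm:mainthm} via the fiberwise translation $\Phi=\tau_{-\sigma}$ and sets $c(U,\alpha)=l_\alpha^F$ for the Finsler metric $F$ associated to $\Phi(U)$. You have simply spelled out the details (the symplectomorphism property of $\tau_\sigma$, the transport of convexity/smoothness, and the homotopy-class preservation via the isotopy $\tau_{t\sigma}$) that the paper leaves implicit.
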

We introduce the symplectic map $\Phi: (x,p)\mapsto (x, p-\sigma(x))$. Then apply the main theorem to the Hamiltonian $H\circ \Phi^{-1}$ and the domain $\Phi (U)$. The constant $c(U,\alpha)$ equals to $l_\alpha^F$ where $F$ is the Finsler metric associated to the  fiberwise convex set $\Phi U$.

In case when $\sigma$ is non closed, it represents a special class of magnetic Hamiltonian systems in the following way. Given a manifold $M$, we endow its cotangent bundle with the twisted symplectic form $\omega_\sigma=\omega_0+d\pi^*\sigma$, where $\omega_0$ is the standard symplectic form and $\sigma$ is a $C^\infty$ non closed 1-form.  Given a Hamiltonian $H:\ T^*M\to \R$, the Hamiltonian flow determined by the twisted symplectic form is the same as the Hamiltonian flow of the Hamiltonian $H(x, p-\sigma(x)):\ T^*M\to \R$ with the standard symplectic form. Existences of periodic orbits for this kind of magnetic systems are studied in~\cite{FS}.

Theorem~\ref{ThmLagrangian} partially generalizes~\cite[Theorem~2]{Xu}. However, there is a special feature in Theorem 2 of \cite{Xu} which is not generalized by Theorem \ref{ThmLagrangian}. Indeed, the domain $\mathcal C^*$ in Theorem \ref{thm:Xue'theorem} is noncompact while the BPS capacity $\langle \alpha,p^*\rangle$ is bounded. To recover this feature, we have the following result, which is inspired by Irie \cite{I}.
\begin{theorem}\label{thm:Ncompact}
Let $M$ be a closed connected $C^\infty$-manifold. Let $K_0\subset K_1\subset\ldots$ be a sequence of compact fiberwise strictly convex sets with $C^\infty$-boundaries in $T^*M$ and $F_0,F_1,\ldots$ be the associated Finsler metrics. Let $\alpha\in [S^1,M]$ be a nontrivial free homotopy class. Suppose that $\{K_i\}$, $\{F_i\}$ and $\alpha$ satisfy the following property:
\begin{enumerate}
\item $K_0$ contains a neighborhood of the zero section;
\item There exists a compact set $A\subset T^*M$ such that the Legendre transform of the lift of length minimizing closed $F_i$-geodesics in class $\alpha$ to $TM$ lies in $A$ for all $i=0,1,\ldots$
\end{enumerate}
Then there exists a constant $c>0$ such that for any $C^\infty$-Hamiltonian $H: S^1\times T^*M\to \R$ with compact support in $K=\lim K_i$, satisfying $\min_{t,x}H(t,x,0)\geq c$, there exists a 1-periodic orbit representing $-\alpha$.
\end{theorem}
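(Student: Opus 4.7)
The plan is to reduce Theorem \ref{thm:Ncompact} directly to Theorem \ref{thm:mainthm} applied with the Finsler metric $F_i$ for a sufficiently large $i$; the technical heart is a uniform upper bound on the minimal lengths $l_\alpha^{F_i}$ extracted from hypothesis~(2). Since $\mathrm{supp}(H)$ is compact and $K=\lim K_i$ is exhausted by the nested compact sets $K_i$ (with $K_0$ already a neighborhood of the zero section), one has $\mathrm{supp}(H)\subset {\rm int}(K_i)=D^{F_i}T^*M$ for all sufficiently large $i$. Theorem \ref{thm:mainthm} applied to $(M,F_i)$ then yields a $1$-periodic orbit of $X_H$ representing $-\alpha$ provided $\min_{t,x}H(t,x,0)\geq l_\alpha^{F_i}$, so it is enough to produce a constant $c>0$, independent of $i$, with $l_\alpha^{F_i}\leq c$.

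To obtain such a bound, I would exploit the monotonicity $K_0\subset K_i$: since each co-Finsler metric $F_j^*$ is the Minkowski functional of the corresponding fiberwise convex set $K_j$, this inclusion gives $F_i^*\leq F_0^*$ pointwise on $T^*M$, and hypothesis~(1) ensures that $F_0^*$ is a finite, continuous gauge. Let $\gamma_i$ be a length-minimizing closed $F_i$-geodesic in class $\alpha$, parametrized with constant speed on $S^1$, and denote by $\tilde\gamma_i(t)\in T^*M$ the Legendre transform of $(\gamma_i(t),\dot\gamma_i(t))$. The standard Finsler duality identity $F_i^*(\tilde\gamma_i(t))=F_i(\dot\gamma_i(t))$ combined with constant-speed parametrization yields
$$l_\alpha^{F_i}=\int_0^1 F_i(\dot\gamma_i(t))\,dt = F_i^*(\tilde\gamma_i(t))\leq F_0^*(\tilde\gamma_i(t))\quad\text{for every }t\in S^1.$$
Hypothesis~(2) places $\tilde\gamma_i$ inside the fixed compact set $A$, so the choice $c:=\sup_A F_0^*<\infty$ provides the desired uniform bound, and Theorem \ref{thm:mainthm} with Finsler metric $F_i$ produces the required $1$-periodic orbit.

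The main obstacle is precisely this uniform length estimate, and the roles of the two hypotheses become transparent in the argument: hypothesis~(1) makes $F_0^*$ (and hence every $F_i^*$) a well-defined continuous gauge with which to measure, while hypothesis~(2) is the compactness input that prevents the minimal lengths $l_\alpha^{F_i}$ from diverging as $K_i$ expands and potential minimizers push deeper into the fibers, by pinning their Legendre lifts inside a single compact set on which the fixed co-Finsler metric $F_0^*$ is bounded. A minor technical point is the verification that $\mathrm{supp}(H)\subset {\rm int}(K_i)$ for some $i$, which is standard given compactness of $\mathrm{supp}(H)$ together with the natural exhaustion interpretation of $K=\lim K_i$.
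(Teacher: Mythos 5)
Your proposal is correct and follows essentially the same route as the paper's proof: bound $l_\alpha^{F_i}$ uniformly by $\sup_A F_0^*$ using the nesting $K_0\subset K_i$ (hence $F_i^*\leq F_0^*$) together with the Legendre duality $F_i(\dot\gamma_i)=F_i^*(\ell^{F_i}(\dot\gamma_i))$ and hypothesis (2) confining the lifts to $A$, then apply the main existence result once $\mathrm{supp}(H)$ fits inside some $D^{F_i}T^*M$. The only cosmetic difference is that the paper phrases the final step through the equality $C_{\rm BPS}(D^{F_i}T^*M,M;\alpha)=l_\alpha^{F_i}$ from Theorem~\ref{thm:BPS capacities} rather than invoking Theorem~\ref{thm:mainthm} directly, but these are the same argument.
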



\subsubsection{Periodic orbits for higher dimensional Minkowskian systems}
We next generalize Theorem 5 of \cite{Xu} to higher dimensional cases.

Consider the Lorentzian Hamiltonian system $H:\ (T^*\T^n,\omega_0)\to \R$, $n\geq 2$, via $$H(q,p)=\frac{1}{2}(p_1^2-(p_2^2+\ldots+p_n^2))+V(q),$$
where $V\in C^\infty(\T^n,\R)$. We normalize $V$ such that $\max V=0$.

We introduce the cone $\mathcal C^*:=\{-p^2_1+p_2^2+\ldots+p_n^2<0,\quad  p_1>0\}\subset \R^n$ and its dual cone
$$\mathcal C:=\{v\in \R^n\ |\ \langle v,p\rangle>0,\quad \forall\ \ p\in \mathcal C^*\}.$$

\begin{theorem}\label{ThmLorentz}
Let $H$ and $\mathcal C$ be as above. Then for any homology class $\al\in \mathcal C\cap H_1(\T^n,\Z)$, there exists a dense subset $S_\al\subset(0,\infty)$ such that on each energy level in $ S_\al$, the Hamiltonian system $H$ admits a periodic orbit in the homology class $\al$.
\end{theorem}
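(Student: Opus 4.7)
The plan is to reduce Theorem~\ref{ThmLorentz} to Theorem~\ref{thm:Xue'theorem} by a Hofer--Zehnder-style profile-function argument, exploiting the Minkowski duality between $\mathcal{C}$ and $\mathcal{C}^*$. Writing $\alpha=(\alpha_1,\alpha')\in\mathcal{C}\cap H_1(\T^n,\Z)$ with $\alpha_1>|\alpha'|$, I would set $c_\alpha:=\alpha_1^2-|\alpha'|^2>0$ and, for each $\beta>0$, consider the constant section $p^*_\beta:=(\beta\alpha_1,-\beta\alpha')\in {\rm int}\,\mathcal{C}^*$. This is the Minkowski-dual choice forced by the Lorentzian Hamilton equations: the quadratic part $H_0=\tfrac12(p_1^2-|p'|^2)$ sends $p=p^*_\beta$ to $\dot q=(\beta\alpha_1,\beta\alpha')=\beta\alpha$, i.e.\ linear motion in class $\alpha$, and one computes $\langle\alpha,p^*_\beta\rangle=\beta c_\alpha>0$. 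Thus $(p^*_\beta,c)$ satisfies the hypothesis of Theorem~\ref{thm:Xue'theorem} for every $c\geq\beta c_\alpha$.

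To obtain density of $S_\alpha$ it suffices to produce, for each open interval $I=(E_1,E_2)\subset(0,\infty)$, some $E\in I$ on which $H$ admits a periodic orbit in class $\alpha$. First I would fix $\beta>0$ so large that $\tfrac12\beta^2 c_\alpha+\min V>E_2$, placing the constant section $\{q\mapsto p^*_\beta\}$ fiberwise ``above'' the slab $\{E_1\leq H\leq E_2\}\cap{\rm int}\,\mathcal{C}^*$. Then I would pick a fiberwise strictly convex compact set $U\subset{\rm int}\,\mathcal{C}^*$ with smooth boundary containing both $\{(q,p^*_\beta):q\in\T^n\}$ and a large bounded portion of the slab. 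Finally, I would construct $\tilde H\in C^\infty_0(S^1\times U)$ so that: (i) $\tilde H\equiv f(H)$ on an open neighborhood $V\subset U$ of the bounded portion of the slab, for a bump function $f\in C_c^\infty(I)$ with $\max f>\beta c_\alpha+1$ and nowhere-flat slopes; (ii) $\tilde H(t,q,p^*_\beta)\geq\beta c_\alpha+1$ identically; (iii) $\tilde H$ interpolates smoothly in the gap while remaining compactly supported in $U$.

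Theorem~\ref{thm:Xue'theorem}, applied to $\tilde H$ with $c=\beta c_\alpha+1$, then delivers a $1$-periodic orbit $\gamma:S^1\to T^*\T^n$ with $[\gamma]=\alpha$, and compactness of ${\rm supp}\,\tilde H$ automatically traps $\gamma\subset U$. The key observation is that whenever $\gamma\subset V$, we have $X_{\tilde H}=f'(H)X_H$ along $\gamma$, so both $\tilde H$ and $H$ are constant along $\gamma$; in particular $H(\gamma(t))\equiv E\in {\rm supp}\,f\subset I$, and $\gamma$ traces a closed orbit of $X_H$ of period $1/|f'(E)|>0$ lying in the homology class $\alpha$. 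Running this construction over a countable base of open intervals of $(0,\infty)$ then yields density of $S_\alpha$.

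The hard part will be the a priori confinement $\gamma\subset V$. Since $[\gamma]=\alpha\neq 0$, $\gamma$ cannot live in the plateau $\{\tilde H\equiv\beta c_\alpha+1\}$, where $X_{\tilde H}=0$ allows only constant orbits; the real danger is that $\gamma$ is supported in the interpolation region joining $V$ to that plateau, where $\tilde H$ is neither $f(H)$ nor constant and orbits bear no direct relation to $H$. I expect to rule this out via the action upper bound $\mathscr{A}_{\tilde H}(\gamma)\leq\beta c_\alpha$ intrinsic to the Floer-theoretic proof of Theorem~\ref{thm:Xue'theorem}: one tunes $f$ and the interpolation so that orbits supported outside $V$ would carry strictly greater action, forcing $\gamma$ into $V$. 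An alternative route is to bypass the compact truncation by applying Theorem~\ref{thm:Ncompact} to an exhaustion $K_0\subset K_1\subset\cdots$ of ${\rm int}\,\mathcal{C}^*$ by fiberwise strictly convex compact sets; the required boundedness of the Legendre transforms of length-minimizing closed $F_i$-geodesics in class $\alpha$ is then precisely the Minkowski-dual identification $p\mapsto p^*_\beta$ computed above.
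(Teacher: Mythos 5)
Your high-level plan — apply a profile function to $H$, truncate so the result is compactly supported in a fiberwise convex domain of $T^*\T^n$, apply Theorem~\ref{thm:Xue'theorem}, and argue that the resulting orbit is a reparametrized orbit of $H$ — is indeed the same reduction strategy the paper uses (the paper's auxiliary Hamiltonian is the explicit product $G(q,p)=c\,\phi\bigl(H(q,p)/(b-a)\bigr)\,\varphi\bigl(\|p\|/R\bigr)$, with $\phi$ a nondecreasing profile and $\varphi$ a radial cutoff). You have also correctly identified the genuine difficulty: the orbit produced by Theorem~\ref{thm:Xue'theorem} might live in the interpolation region where $\tilde H$ is neither $f(H)$ nor constant, in which case it has no relation to the dynamics of $H$.

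Where your proposal has a real gap is precisely in this confinement step. You say you ``expect'' to rule out orbits in the interpolation region via the action bound $\mathscr{A}_{\tilde H}(\gamma)\le \beta c_\alpha$ intrinsic to the Floer proof, ``tuning $f$ and the interpolation so that orbits supported outside $V$ would carry strictly greater action.'' This is not worked out and it is not clear it can work: the 1-periodic orbits of $\tilde H$ supported in the interpolation region are not under control (they are created by the transition from $f(H)$ to the constant plateau and can sit anywhere in that annular band), and the Floer action $\int z^*\lambda_0-\int\tilde H\,dt$ of such an orbit mixes an area term with a Hamiltonian average in a way that you cannot simply push above or below $\beta c_\alpha$ by the shape of $f$ alone. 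Moreover, the sign of the bound you quote needs care: the bound coming from $C_{\rm BPS}$ in the proof of Theorem~\ref{thm:mainthm} is a \emph{lower} bound on $\mathscr{A}_{\overline{H}}$, hence an \emph{upper} bound $\mathscr{A}_{H}(x)\le -l_\alpha$ on the time-reversed orbit, not the bound $\le\beta c_\alpha$ you cite. The paper avoids this entirely: it writes the Hamiltonian ODE for $G$ explicitly, splits into the two cases $\gamma\cap D=\emptyset$ (forcing $\phi'=0$, hence $\dot p=0$ and $\|\dot q\|=O(1/R)$, too small to carry nonzero homology) and $\gamma\subset D$ (so $\gamma$ hugs the light cone $p_1\approx\sqrt{p_2^2+\cdots+p_n^2}$ near $\|p\|\approx R$, forcing $|p_1-r(p)|=O(1/R)$ while $|p_1+r(p)|\gtrsim R$, which is incompatible with the fixed homology class $\alpha$ having both $\alpha_1-r(\alpha)$ and $\alpha_1+r(\alpha)$ of order one). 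This elementary ODE/cone-geometry argument is the actual content of the proof and cannot be replaced by the unproven action-bound heuristic you propose. Your alternative via Theorem~\ref{thm:Ncompact} does not bypass this either, since $H$ itself is not compactly supported, so the truncation — and with it the interpolation region — is unavoidable.
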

\begin{remark}
{\rm
 We consider here only Minkowski type kinetic energy with signature $(1,-1,\ldots,-1)$. The same result holds for the signature $(-1,1,\ldots,1)$ by setting $H\mapsto -H$. The main reason is that the cone $\mathcal C^*$ is convex. For other signatures, we do not have this property. The argument can be generalized to allow all possible $(1,-1,\ldots,-1)$ type Lorentzian metrics as the kinetic energy part $K(p)$. Moreover, using the argument of \cite{SaW}, we expect that the result holds for almost all positive energy levels. We do not pursue this generality here. We learned from Stefan Suhr that the result can be proved by first converting to Maupertuis Lorentzian metric $K(p)/(E-V(q))$, then apply the global hyperbolicity technique in Lorentz geometry (see \cite{T}). Moreover, the latter approach gives existence of periodic orbits for every positive energy levels.
}
\end{remark}

\subsubsection{A conjecture of Kawasaki}
The following result partially confirms Conjecture 1.3 of \cite{K}.
\begin{theorem}\label{ThmKawasaki}
Given a homology class $\al=(\al_1,\ldots,\al_n)\in H_1(\T^n,\Z)\setminus\{0\}$, for any Hamiltonian function $H:S^1\times  T^*\T^n\to \R$ with support in $S^1\times \T^n\times (\prod_{i=1}^n(-R_i,R_i))$ and satisfying that $$\min_{q,t}H(t,q,0)\geq \sum_{i=1}^n R_i|\al_i|,$$ there exists a 1-periodic orbit in the homology class $\al$.
\end{theorem}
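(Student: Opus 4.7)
The plan is to fit a smooth, fiberwise strictly convex unit cotangent disk bundle $U$ containing $\operatorname{supp}(H)$ inside $\T^n\times B$, where $B:=\prod_{i=1}^n(-R_i,R_i)$, and then apply Theorem~\ref{thm:mainthm}. As motivation, consider first the translation-invariant singular metric $F_0$ on $\T^n$ dual to the box, namely $F_0(v)=\sum_{i=1}^n R_i|v_i|$ (the weighted $\ell^1$-norm). Lifting any smooth loop $\gamma:S^1\to\T^n$ in class $\al=(\al_1,\ldots,\al_n)$ to $\R^n$ and integrating componentwise,
\[
\operatorname{len}_{F_0}(\gamma)=\int_0^1\sum_{i=1}^n R_i|\dot\gamma_i(t)|\,dt\ \geq\ \sum_{i=1}^n R_i\left|\int_0^1\dot\gamma_i(t)\,dt\right|=\sum_{i=1}^n R_i|\al_i|,
\]
with equality on affine loops, so $l_\al^{F_0}=\sum_i R_i|\al_i|$. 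However, $F_0$ is neither smooth nor strictly convex on the cotangent side, so Theorem~\ref{thm:mainthm} is not directly applicable.

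Since $H$ has compact support in $\T^n\times B$, pick $\delta>0$ with $\operatorname{supp}(H)\subset\T^n\times B_\delta$, where $B_\delta:=\prod_{i=1}^n[-R_i+\delta,R_i-\delta]$, and construct a centrally symmetric, smooth, strictly convex open set $V\subset\R^n$ with $B_\delta\subset V\subset B$; such a $V$ can be produced by rounding the edges and corners of a mild enlargement of $B_\delta$. Let $F$ be the (smooth, reversible) Finsler metric on $\T^n$ whose unit cotangent disk bundle is $U:=\T^n\times V$. Because $V\subseteq B$, dualization gives $F\leq F_0$ pointwise on $T\T^n$, whence
\[
l_{-\al}^F=l_\al^F\ \leq\ l_\al^{F_0}=\sum_{i=1}^n R_i|\al_i|\ \leq\ \min_{(t,q)\in S^1\times\T^n}H(t,q,0),
\]
the first equality using reversibility of $F$ (which follows from the central symmetry of $V$).

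With this setup every hypothesis of Theorem~\ref{thm:mainthm} is met for the Finsler manifold $(\T^n,F)$ and the nontrivial free homotopy class $-\al$: $H$ is compactly supported in $D^FT^*\T^n=U$ and $H(t,x,0)\geq l_{-\al}^F$ everywhere. The theorem produces a $1$-periodic orbit $z$ of $X_H$ with $[z]=-(-\al)=\al$; since $\pi_1(\T^n)\cong H_1(\T^n,\Z)$ is abelian, the resulting free homotopy class coincides with the prescribed homology class. The only subtle point is that the smoothing of $B$ must be inscribed in rather than circumscribed about it---enlarging the dual unit ball past $B$ would push $l_\al^F$ strictly above $\sum_i R_i|\al_i|$ and break the hypothesis on $H$---but the inscribed construction of $V$ between $B_\delta$ and $B$ is entirely routine, so no further obstacle remains.
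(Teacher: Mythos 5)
Your proof is correct and takes essentially the same route as the paper: construct a smooth, strictly convex, fiberwise convex cotangent disk bundle sitting inside the box $\T^n\times\prod_i(-R_i,R_i)$ and containing $\operatorname{supp}(H)$, then invoke Theorem~\ref{thm:mainthm}. You are more careful than the paper's proof about the one delicate point: the paper argues via a $C^0$-approximation $F_n\to F_0$ and obtains only $l_\alpha^{F_n}\le\sum_i R_i|\alpha_i|+\varepsilon$, which as literally written does not beat the hypothesis $\min H\ge\sum_i R_i|\alpha_i|$ unless the approximating co-disk bundles are taken to lie inside the box; your inscribed construction $B_\delta\subset V\subset B$ makes this inclusion explicit, gives $F\le F_0$ and hence $l_\alpha^F\le\sum_i R_i|\alpha_i|$ with no $\varepsilon$, and your remark about central symmetry correctly disposes of the $\alpha$ versus $-\alpha$ bookkeeping.
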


\subsubsection{Symplectic squeezing vs. nonsqueezing on $T^*\mathbb{T}^n$}

Symplectic nonsqueezing problems on $T^*\T^n$ were studied by many authors, c.f. \cite{Si,MMT} etc. In this section, we show that BPS capacity provides obstructions for symplectic embeddings on $T^*\T^n$. We have the flexibility to choose the fiber convex domain as well as the free homotopy type to compute the BPS capacity. We first state two nonsqueezing type results explaining the way of applying BPS capacity to the nonsqueezing problem. We do not claim the originality of the results since they can be also proved using Sikarov's rigidity theorem \cite{Si}. After that, we show that the BPS capacity may be infinity simultaneously for all free homotopy classes in which case it does not provide any obstruction to the embedding problem and squeezing can indeed occur.

\vskip 0.2in

Let $\Delta^n(r)$ denote the interior of the $n$-dimensional simplex with the $n+1$ vertices $(0,\ldots,0)$, $(r,0,\ldots,0)$, $\dots$, $(0,\ldots,0,r)$. Denote open subsets $B^n,Z^n\subset \mathbb{R}^n$ and $P^{2n},Y^{2n}\subset (T^*\mathbb{T}^n,\omega_0)$:
\begin{equation}
\begin{aligned}
B^n(r):&=\{(x_1,\ldots,x_n)\in \mathbb{R}^n\ |\ x_1^2+x_2^2+\ldots+x_n^2\leq r\},\\
Z^n(r):&=\{(x_1,\ldots,x_n)\in \mathbb{R}^n\ |\ x_1^2+x_2^2\leq r\},\\
P^{2n}(r):&=\mathbb{T}^n\times \Delta^n(r),\\
Y^{2n}(r):&=\mathbb{T}^n\times(0,r)\times (\mathbb{R}^+)^{n-1},
\end{aligned}
\end{equation}
where $r\in\mathbb{R}^+:=(0,\infty)$ and $T^*\mathbb{T}^n$ is naturally symplectically identified to $\mathbb{T}^n \times \mathbb{R}^n$.

Let $U,V$ be two open subset of $T^*M$. Recall that a symplectic embedding $\psi:U\to V$ is called \textit{$\tilde{\pi}_1(M)$-trivial} if $\psi_*\alpha=\alpha$ for any $\alpha\in[S^1,M]$.

\begin{theorem}\label{thm:nonsqueezing}
There is a $\tilde{\pi}_1(M)$-trivial symplectic embedding $\phi:P^{2n}(s)\to Y^{2n}(r)$ such that for every $u\in \Delta^n(r)$, $\phi(\mathbb{T}^n\times \{u\})$ is a $C^\infty$ section in $T^*\mathbb{T}^n$ if and only if $s\leq r$.
\end{theorem}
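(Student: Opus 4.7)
The plan is to handle the two implications separately. For sufficiency, suppose $s\leq r$. Then every $u\in \Delta^n(s)$ satisfies $0<u_1<s\leq r$ and $u_i>0$ for $i\geq 2$, so $\Delta^n(s)\subset (0,r)\times(\mathbb{R}^+)^{n-1}$, and the identity inclusion $\mathrm{id}:P^{2n}(s)\hookrightarrow Y^{2n}(r)$ is a $\tilde{\pi}_1(M)$-trivial symplectic embedding sending each slice $\mathbb{T}^n\times\{u\}$ to itself, i.e.\ to the constant $C^\infty$ section at height $u$.

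For necessity, assume such an embedding $\phi$ exists. For every $u\in \Delta^n(s)$, $L_u:=\phi(\mathbb{T}^n\times\{u\})$ is Lagrangian (as $\mathbb{T}^n\times\{u\}$ is, and $\phi$ is symplectic) and a $C^\infty$ section (by hypothesis), so $L_u=\mathrm{graph}(\sigma_u)$ for a uniquely determined smooth closed $1$-form $\sigma_u$ on $\mathbb{T}^n$. Setting $c(u):=[\sigma_u]\in H^1(\mathbb{T}^n,\mathbb{R})\cong \mathbb{R}^n$, the central step will be to prove
\begin{equation*}
c(u)=u+\beta \qquad \text{for some constant } \beta\in\mathbb{R}^n \text{ independent of } u.
\end{equation*}
My plan is to exploit that $\phi^*\lambda_0-\lambda_0$ is a closed $1$-form on $P^{2n}(s)$ (since $\phi^*\omega_0=\omega_0$), with a fixed cohomology class $\beta\in H^1(P^{2n}(s),\mathbb{R})=\mathbb{R}^n$. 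Evaluating on the loops $\gamma_u^{(i)}(t):=(te_i,u)$ of class $e_i$, a direct calculation gives $\int_{\gamma_u^{(i)}}\lambda_0=u_i$, while the $\tilde{\pi}_1(M)$-trivial hypothesis together with $\lambda_0|_{L_u}=\sigma_u$ yields $\int_{\phi\circ\gamma_u^{(i)}}\lambda_0=c(u)_i$. Their difference equals $\beta\cdot[\gamma_u^{(i)}]=\beta_i$, and the displayed formula follows componentwise.

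To conclude, the inclusion $L_u\subset Y^{2n}(r)$ forces $(\sigma_u)_1(q)\in(0,r)$ for every $q\in\mathbb{T}^n$; averaging along any loop in class $e_1$ gives $c(u)_1\in(0,r)$, hence $u_1+\beta_1\in(0,r)$ for every $u\in\Delta^n(s)$. Letting $u_1\to 0^+$ forces $\beta_1\geq 0$, and letting $u_1\to s^-$ forces $s+\beta_1\leq r$; together these give $s\leq r$. The main obstacle is the cohomological identity $c(u)=u+\beta$; I expect this argument to require no Floer-theoretic input, since the section hypothesis together with $\tilde{\pi}_1(M)$-triviality reduce the problem to a direct computation with the Liouville $1$-form, and the constraint on $p_1$ in the definition of $Y^{2n}(r)$ then supplies the sharp bound via averaging.
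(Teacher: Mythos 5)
Your proof is correct, and it takes a genuinely different and more elementary route than the paper's. The paper deduces the result from the BPS capacity machinery developed throughout: it constructs Finsler metrics $F_K$, $F_T$ from fiberwise convex domains close to $\mathbb{T}^n\times\Delta^n(s)$ and containing the image $\phi(\mathbb{T}^n\times K)$, reduces to the case where a Lagrangian section is sent to the zero section via a vertical translation, and then invokes symplectic invariance and monotonicity of $C_{\mathrm{BPS}}$ together with Theorem~\ref{thm:BPS capacities} ($C_{\mathrm{BPS}}=l_\alpha^F$) applied to the class $e_1$. Your argument bypasses Floer theory entirely: since $\phi^*\lambda_0-\lambda_0$ is a closed $1$-form on $P^{2n}(s)$ with a fixed cohomology class $\beta\in H^1(P^{2n}(s),\mathbb{R})\cong\mathbb{R}^n$, evaluating periods on the slices $\mathbb{T}^n\times\{u\}$ and using the section hypothesis plus $\tilde\pi_1(M)$-triviality yields $[\sigma_u]=u+\beta$, and the pointwise constraint $(\sigma_u)_1\in(0,r)$ from $L_u\subset Y^{2n}(r)$ gives $u_1+\beta_1\in(0,r)$ after averaging over a loop in class $e_1$; letting $u_1\to 0^+$ and $u_1\to s^-$ forces $s\le r$. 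This is essentially a Sikorav-type rigidity argument via the Liouville class, which the paper already acknowledges as an alternative proof route. What the paper's approach buys is that it illustrates the use of the BPS capacity as a systematic nonsqueezing invariant (applicable when the embedded tori are not sections, via approximation), whereas your argument is sharper and self-contained for this particular statement but relies essentially on the hypothesis that each slice maps to a graph, so it does not directly generalize to the capacity computations of Theorem~\ref{ThmCBPSY}. (Minor: the theorem statement has a typo, $\Delta^n(r)$ should read $\Delta^n(s)$; your proof, like the paper's, correctly uses $u\in\Delta^n(s)$.)
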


Under a weaker topological condition, namely, the induced map $\phi_*:{\rm H}_*(P^{2n}(s);\mathbb{Z}) \to {\rm H}_*(Y^{2n}(r);\mathbb{Z})$ is an isomorphism,
Maley, Mastrangeli and Traynor~\cite{MMT} proved the above nonsqueezing theorem for any symplectic embedding $\phi:P^{2n}(s)\to Y^{2n}(r)$ and used it to study symplectic packing problems.

\begin{theorem}\label{thm:nonsqueezing'}
There is a $\tilde{\pi}_1(M)$-trivial symplectic embedding $\phi:\T^n \times B^{n}(s)\to \T^n \times Z^{n}(r)$ such that for a sequence $\{ u_i\}\subseteq B^{n}(s)$ with $\lim u_i=(s,0\ldots,0)$ every $\phi(\mathbb{T}^n\times \{u_i\})$ is a $C^\infty$ section in $T^*\mathbb{T}^n$ if and only if $s\leq r$.
\end{theorem}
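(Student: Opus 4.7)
The plan is to handle the two directions separately: the ``if'' direction is essentially the identity embedding, and the ``only if'' direction will be a contradiction argument based on BPS capacity monotonicity together with Theorem~\ref{ThmBPSCap}. For $s\le r$, I simply take the inclusion $\T^n\times B^n(s)\hookrightarrow \T^n\times Z^n(r)$: it is symplectic, $\tilde{\pi}_1(\T^n)$-trivial, and carries every slice $\T^n\times\{u\}$ to itself as the constant $C^\infty$ section.

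For the converse, I assume $\phi$ is as in the statement, with $u_i$ converging to the boundary of $B^n(s)$ along the $e_1$-axis (so that $u_{i,1}\to \sqrt s$ and $u_{i,j}\to 0$ for $j\ge 2$), and aim to derive $s\le r$. Writing $\phi(\T^n\times\{u_i\})=\mathrm{graph}(\sigma_i)$ with $\sigma_i$ a closed $1$-form on $\T^n$ whose values lie in $Z^n(r)$, the pointwise inequality $\sigma_{i,1}(q)^2+\sigma_{i,2}(q)^2\le r$ forces $|\tau_{i,1}|\le \sqrt r$ for the harmonic component $\tau_i\in H^1(\T^n;\R)\cong\R^n$ of $\sigma_i$. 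I fix the free homotopy class $\alpha=-e_1$. On the source, the $\tilde{\pi}_1$-trivial fiberwise translation $(q,p)\mapsto(q,p-u_i)$ transports $(\T^n\times B^n(s),\T^n\times\{u_i\})$ to $(\T^n\times(B^n(s)-u_i),\T^n\times\{0\})$; Theorem~\ref{ThmBPSCap} then gives the source BPS capacity as $l_\alpha^{F_{1,i}}=\sqrt s+u_{i,1}$, where $F_{1,i}(v)=\sqrt s\,|v|-\langle u_i,v\rangle$ is the Minkowski norm of the translated ball.

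On the target I choose a compact fiberwise strictly convex set $V\subset\T^n\times Z^n(r)$ with smooth boundary containing both $\phi(\T^n\times B^n(s))$ and the zero section, obtained by smoothing $\T^n\times(Z^n(r)\cap B^n(R^2))$ for $R$ sufficiently large. The symplectic shift $\Phi_i(q,p)=(q,p-\sigma_i(q))$, which is $\tilde{\pi}_1$-trivial (it preserves fibers) and symplectic (since $\sigma_i$ is closed), carries $\mathrm{graph}(\sigma_i)$ to the zero section and $V$ to a fiberwise strictly convex set $\Phi_i(V)$ still containing the zero section. Theorem~\ref{ThmBPSCap} applied to the Finsler metric $\tilde F_i$ of $\Phi_i(V)$ identifies the target BPS capacity with $l_\alpha^{\tilde F_i}$, and a short computation --- using the de Rham formula $\int_\gamma \sigma_i=\tau_i\cdot[\gamma]$ for loops in class $\alpha$ in the $(q_1,q_2)$-subtorus and the Euclidean length bound $L_E(\gamma)\ge 1$ --- gives $l_\alpha^{\tilde F_i}=\sqrt r+\tau_{i,1}$ provided $R$ is large. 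The BPS monotonicity for $\tilde{\pi}_1$-trivial symplectic embeddings (push forward a compactly supported source Hamiltonian via $\phi$, extend by zero to the target, and observe that every non-constant periodic orbit of the extension is trapped in $\phi(\T^n\times B^n(s))$ and pulls back to a source orbit in the same homotopy class) then yields
\[\sqrt s+u_{i,1}\le\sqrt r+\tau_{i,1}\le 2\sqrt r;\]
letting $u_{i,1}\to\sqrt s$ gives $s\le r$, the desired contradiction.

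The main obstacle is ensuring that the truncation $V$ does not artificially shrink the minimal $\tilde F_i$-length in class $\alpha$. This will follow because length-minimizing closed $\tilde F_i$-geodesics stay inside the $(q_1,q_2)$-subtorus: there the cylindrical bound is tight and the cutoff $R$ plays no role, whereas off this subtorus $\tilde F_i$ grows like $\sqrt R$ and length-minimizers never escape. A secondary subtlety is that $\phi$ is required only to be symplectic and $\tilde{\pi}_1$-trivial (not Hamiltonian); however, the closedness of each $\sigma_i$ and the monotonicity argument go through without any isotopy assumption.
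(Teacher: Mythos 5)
Your approach is structurally the same as the paper's: shift the source slice $\T^n\times\{u_i\}$ to the zero section by a fiberwise translation, shift the target graph $\Sigma_i=\mathrm{graph}(\sigma_i)$ to the zero section by the closed $1$-form shear, invoke Theorem~\ref{ThmBPSCap} on each side, and use BPS monotonicity under a $\tilde\pi_1$-trivial symplectic embedding; the paper's own proof is a one-line reference back to the proof of Theorem~\ref{thm:nonsqueezing}, and you are supplying precisely those details. Two remarks on your write-up. First, the ``main obstacle'' you identify is a red herring: monotonicity gives $l_\alpha^{F_{1,i}}\le l_\alpha^{\tilde F_i}$, so what is needed is only an \emph{upper} bound $l_\alpha^{\tilde F_i}\le\sqrt r+\tau_{i,1}$, which follows immediately by integrating $\tilde F_i$ along the single loop $\gamma(t)=[-e_1 t]$ and using $\int_\gamma\sigma_i=\langle\tau_i,-e_1\rangle$; shrinking the truncation $V$ can only make $F_V$, and hence $l_\alpha^{\tilde F_i}$, smaller, which only strengthens the final inequality. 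The claimed \emph{equality} $l_\alpha^{\tilde F_i}=\sqrt r+\tau_{i,1}$ (and the attendant argument that $\tilde F_i$-geodesics stay in a subtorus) is not needed and is not obviously true for an arbitrary fiberwise convex truncation $V$ that merely contains $\phi(\T^n\times K_i)$. Second, a minor cleanup: to apply Theorem~\ref{ThmBPSCap} you should work with compact strictly convex $K_i\Subset B^n(s)$ containing $u_i$ in its interior (approximating the ball) and compact fiberwise convex $T_i\subset\T^n\times Z^n(r)$ containing $\phi(\T^n\times K_i)$, as the paper does; and $V$ need not contain the zero section --- what matters is $\Sigma_i\subset V$, which guarantees $\T^n\times\{0\}\subset\Phi_i(V)$ after the shear. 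With these adjustments your proof is correct and matches the intended argument.
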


In general we have infinitely many choices for the BPS capacity by varying the homotopy type. However, they may be infinity simultaneously and provide no obstruction for the symplectic embedding.

Let $v$ be a unit vector in $\R^n$, we define the following tilted cylinder
$$Y^{2n}(r, v):=\T^n\times (-r,r)v\times v^\perp.$$

We have the following theorem.
\begin{theorem}\label{ThmCBPSY}
Let $v$ be a unit vector in $\R^n$. Then it holds the following.
\begin{enumerate}
\item
If $v$  is the scalar multiple of an integer vector $\al\in \Z^n\setminus\{0\}$.
Then $$C_{\rm BPS}(Y^{2n}(r, v),\T^n,\pm\alpha)=r\|\alpha\|.$$
And for all $\beta\in \Z^n\setminus \mathrm{span}\{\alpha\}$ and all $r>0$, we have
$$C_{\rm BPS}(Y^{2n}(r, v),\T^n,\beta)=\infty.$$
\item
If $v$ is not a scalar multiple of any integer vector.
Then for all $\alpha\in H_1(\T^n,\Z)\setminus\{0\}$ and all $r>0$, we have
$$C_{\rm BPS}(Y^{2n}(r, v),\T^n,\alpha)=\infty.$$
\end{enumerate}
\end{theorem}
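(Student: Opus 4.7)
The plan is to deduce Theorem~\ref{ThmCBPSY} from Theorem~\ref{ThmBPSCap} by exhausting the open tilted cylinder $Y^{2n}(r,v)$ by compact fiberwise strictly convex domains and tracking the resulting Finsler lengths. Write $Y^{2n}(r,v)=\T^n\times\bigl((-r,r)v\oplus v^\perp\bigr)$ and choose an increasing sequence $\{K_k\}_{k\geq 1}$ of compact strictly convex subsets of $\R^n$ with smooth boundary, containing $0$ in their interiors and exhausting the open strip $(-r,r)v\oplus v^\perp$ (for instance, smoothed slabs of the form $\{|p\cdot v|\leq r-1/k\}\cap\{\|p-(p\cdot v)v\|\leq k\}$). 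Set $U_k:=\T^n\times K_k$ and let $F_k$ be the (translation invariant) Finsler metric on $\T^n$ associated with $U_k$.

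A direct inclusion argument shows that $\mathscr{K}_c(U_1,\T^n)\subseteq\mathscr{K}_c(U_2,\T^n)$ whenever $U_1\subseteq U_2$ (extend Hamiltonians by zero), and that every $H\in\mathscr{K}_c(Y^{2n}(r,v),\T^n)$ is compactly supported in some $U_k$. Consequently
$$C_{\rm BPS}(Y^{2n}(r,v),\T^n;\alpha)=\sup_{k}C_{\rm BPS}(U_k,\T^n;\alpha),$$
and Theorem~\ref{ThmBPSCap} rewrites the right-hand side as $\sup_k l^{F_k}_\alpha$.

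Since $K_k$ is independent of the base point, $F_k(x,w)$ coincides with the support function $h_{K_k}(w):=\sup_{p\in K_k}\langle p,w\rangle$. For any translation invariant Finsler metric $F$ on $\T^n$ and any $\alpha\in\Z^n=\pi_1(\T^n)$, Jensen's inequality applied to the lift $\tilde\gamma:[0,1]\to\R^n$ of a representative loop gives $\mathrm{len}_{F}(\gamma)\geq F\bigl(\int_0^1\dot{\tilde\gamma}\,dt\bigr)=F(\alpha)$, with equality for the straight-line lift; hence $l^{F_k}_\alpha=h_{K_k}(\alpha)$. Letting $k\to\infty$,
$$C_{\rm BPS}(Y^{2n}(r,v),\T^n;\alpha)=\sup_{p\in(-r,r)v\oplus v^\perp}\langle p,\alpha\rangle.$$
Writing $\alpha=(\alpha\cdot v)v+\alpha^\perp$, this supremum is $+\infty$ whenever $\alpha^\perp\neq 0$ and equals $r|\alpha\cdot v|=r\|\alpha\|$ when $\alpha\in\mathrm{span}(v)$. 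In case~(1), $v=\alpha/\|\alpha\|$, so $\pm\alpha\in\mathrm{span}(v)$ yielding $r\|\alpha\|$, while any $\beta\in\Z^n\setminus\mathrm{span}(\alpha)$ gives $+\infty$. In case~(2), the irrationality of $v$ keeps every nonzero $\alpha\in\Z^n$ out of $\mathrm{span}(v)$, so the capacity is always $+\infty$.

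The main obstacle is the first step: Theorem~\ref{ThmBPSCap} applies to compact, fiberwise \emph{strictly} convex domains with smooth boundary, while $Y^{2n}(r,v)$ is unbounded and its closure is merely convex. Once the exhaustion $K_k\nearrow(-r,r)v\oplus v^\perp$ is chosen together with the monotonicity-plus-exhaustion identity for $C_{\rm BPS}$, the remainder reduces to a routine support-function computation.
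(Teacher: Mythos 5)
Your argument is essentially the same as the paper's: you exhaust the open tilted cylinder by a nested sequence of compact fiberwise convex domains with smooth boundary, apply Theorem~\ref{ThmBPSCap} to each, and pass to the limit via the elementary identity $C_{\rm BPS}(Y,\T^n;\alpha)=\sup_k C_{\rm BPS}(U_k,\T^n;\alpha)$, which follows from monotonicity and the fact that any compactly supported Hamiltonian in $Y$ lives in some $U_k$. The paper uses ellipsoids tangent to the boundary strip at $\pm rv$ and an informal observation that constant-speed straight lines minimize length; you instead use slab-type exhausting sets and make the length computation cleaner by noting that for a translation-invariant Finsler metric Jensen's inequality gives $l^{F_k}_\alpha=h_{K_k}(\alpha)$, reducing everything to a uniform support-function calculation $\sup_{p\in(-r,r)v\oplus v^\perp}\langle p,\alpha\rangle$ that handles both cases at once.

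One small technical point: Theorem~\ref{ThmBPSCap} requires the $K_k$ to be fiberwise \emph{strictly} convex with smooth boundary. Merely ``smoothing the corners'' of the slabs $\{|p\cdot v|\leq r-1/k\}\cap\{\|p^\perp\|\leq k\}$ still leaves flat faces, so strict convexity fails; you should replace them with genuinely strictly convex approximations (e.g.\ the ellipsoids $\{(p\cdot v)^2/(r-1/k)^2+\|p^\perp\|^2/k^2\leq 1\}$, as the paper does, or level sets of $|p\cdot v|^{2m}+\varepsilon\|p\|^2$). The rest of the proposal, including the direct-limit identity for $C_{\rm BPS}$ and the support-function computation, is correct.
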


We remark that similar statements can be formulated for $Y$ of the form $\T^n\times D^{k}\times (D^{k})^\perp$, where $D^k$ is a disk of dimension $k$ lying in a $k$-dimensional plane in $\R^n$.  This theorem also explains the necessity of assumption 2 of Theorem \ref{thm:Ncompact}.

In the case of infinite capacity for all classes, squeezing may occur. We illustrate the squeezing by the following example.

{\bf Example: }

Let $A:=\left[\begin{array}{cc}
2&1\\
1&1\end{array}\right]$ and the symplectic map $\Phi_A:\ T^*\T^2\to T^*\T^2$ via $(x,y)\mapsto (Ax, A^{-1}y)$. We next denote by $v$ the eigenvector associated to the smaller eigenvalue of $A$.
Then introduce the cylinder $Y^4(r, v)$. Let $U$ be any bounded domain in $\R^2$, then the image of $U$ under $A^{-n} $ is expanded along $v$ and contracted along $v^\perp$. So for any $r>0$, there exists $N$ such that $\Phi^n(U)\subset Y^4(r,v)$ for all $n>N$.

\vskip 0.2in

Note that embeddings produced in this way can only be induced by a matrix $A\in \mathrm{PSL}_2(\Z)$ with tr$(A)>2$. Its eigenvalue has to be an algebraic number solving $x^2-tr(A)x+1=0$.
The vector $v$ has to be an eigenvector. 

We are naturally led to the following question.

{\bf Problem 2:} {\it Let $v$ be an irrational vector but not an eigenvector. Is it true that for all $r>0$, there exists a symplectic map $\Phi$ on $T^*\T^2$ such that $\Phi(P^4(1))\subset Y^4(r,v)?$ }

Since we have the capacity $C_{\rm BPS}(Y^4(r,v),\T^2,\alpha)=\infty$ for all $\alpha\in \Z^2\setminus\{0\}$, there is no obstruction provided by the BPS capacity.

\section*{Acknowledgments}
We greatly appreciate the communications \cite{I} with Professor Kei Irie, which is the main motivation of the work. We would like to thank Professor Stefan Suhr for the diskussion on Lorentz geometry. We would like to thank Professor Guangcun Lu for precious comments, and for pointing out his work about closed Finsler geodesics in~\cite{Lu}. We also thank Dr. Jun Zhang for helpful diskussions on the first draft of this paper and for providing us the proof of a generalized version of Theorem~\ref{coro:inv.F-length}.

W. G. is supported by  the grant NSFC 11701313 and the Fundamental Research Funds for the Central Universities grant 2018NTST18. J. X. is supported by NSFC in China (Significant project No.11790273) and Beijing Natural Science Foundation (Z180003).

\section{Floer homology on a Liouville domain $(X,\lambda)$}\label{SLiouville}
\setcounter{equation}{0}

\subsection{Basic definitions and convexity results}\label{subsect:convexity}

Let $(X,\lambda)$ be a \emph{Liouville domain}, meaning that $X$ is a $2n$-dimensional compact manifold with boundary $\partial X$, $\lambda$ is a $1$-form on $X$ such that $d\lambda$ is a symplectic form on $X$, and $\lambda\wedge  (d\lambda)^{n-1}>0$ on $\partial X$. There exists a vector field on $X$, called the \emph{Liouville vector field} $Z$ of $(X,\lambda)$, which points transversely outward at $\partial X$ and satisfies
$$\mathcal{L}_Zd\lambda=d\lambda.$$
Then $(\partial X,\theta:=\lambda|_{\partial X})$ is a contact manifold. The \emph{Reeb vector field} $R$ is defined by $d\theta(R,\cdot)=0$ and $\theta(R)=1$.
The \emph{action spectrum}
$${\rm Spec}(X,\lambda):=\bigg\{\int_\gamma\lambda\ \bigg|\
\gamma\;\hbox{is a periodic Reeb orbit of}\;R\bigg\},$$
is closed and nowhere dense in $\mathbb{R}$. Moreover, $\lambda\wedge  (d\lambda)^{n-1}>0$ implies ${\rm Spec}(X,\lambda)\subset (0,\infty)$.

The vector field $Z$ gives rise to an embedding
$\phi:(0,1]\times\partial X\to X$ satisfying
$$\phi(1,z)=z\quad\hbox{and}\quad \rho\partial_\rho\phi(\rho,z)=Z(\phi(\rho,z)).$$
It is easy to verify that $\phi^*\lambda=\rho\theta$, and thus $\phi^*d\lambda=d(\rho\theta)$. So a neighborhood of $\partial X$ in $X$
can be symplectically identified with the symplectic manifold $((0,1]\times\partial X, d(\rho\theta))$. By attaching a cylindrical end to
$(0,1]\times\partial X$ we obtain a \emph{completion} of $(X,\lambda)$ defined by
$$(\widehat{X},\widehat{\lambda}):=(X,\lambda)\cup_{\partial X}
([1,\infty)\times \partial X,\rho\theta).$$
Obviously, $(\widehat{X},d\widehat{\lambda})$ is an open symplectic manifold.
Let $H_t$ be a smooth time-dependent Hamiltonian on $\widehat{X}$. The \emph{Hamiltonian vector field} $X_H$ associated to $H$ is defined by $-dH_t=d\widehat{\lambda}(X_H,\cdot)$. Denote by $\phi_H^t$ the flow of $X_H$.
Let $\widehat{Z}$ be the extended Liouville vector field of $Z$ satisfying $\widehat{Z}=Z$ on $X$ and $\widehat{Z}=\rho\partial\rho$ on $(0,\infty]\times\partial X$. $\widehat{Z}$ is complete since its flow exists for all times.

Let $J_t$ be the $t$-dependent $1$-periodic smooth \emph{$d\widehat{\lambda}$-compatible almost complex structure} on $\widehat{X}$, that is, $\langle\cdot, \cdot\rangle:=d\widehat{\lambda}(J_t\cdot,\cdot)$ is a loop of Riemanian metrics on $\widehat{X}$ and $J^2=-I$. The corresponding norm is defined as $|\cdot|_{J_t}:=\sqrt{\langle\cdot, \cdot\rangle}$.   The set consisting of all such $J_t$ is denoted by $\mathcal{J}$. We call an almost complex structure $J_t\in \mathcal{J}$ \emph{of contact type} on $[\rho_0,\infty)\times\partial X$ for some $\rho_0>0$ if
$$d\rho\circ J_t=\widehat{\lambda}\quad\hbox{on}\;[\rho_0,\infty)\times\partial X.$$
Equivalently, $J_t$ preserves the symplectic splitting
$$T_{(\rho,z)}\widehat{X}=\hbox{ker}\lambda(z)\oplus \mathbb{R}R(z)\oplus \mathbb{R}\widehat{Z}(\rho,z)\quad \forall\  (\rho,z)\in [\rho_0,\infty)\times\partial X$$
and
\begin{equation}\label{e:contactJ}
J_tR=\widehat{Z},\quad J_t\widehat{Z}=-R.
\end{equation}
Denote by $\mathcal{J}(\widehat{X},\widehat{\lambda})$ the subset of $\mathcal{J}$ which consists of almost complex structures  of contact type at infinity.

Disk cotangent bundles of a closed Finsler manifold are examples of Liouville domains which we are mainly interested in throughout this paper. For more examples of Liouville domains, we refer readers to the survey article~\cite{Se} by Seidel.

\begin{example}\label{ex:diskcotanb}
{\rm Let $M$ be a closed $C^\infty$ manifold with a Finsler metric $F$, which induces a co-Finsler metric $F^*$ defined on the cotangent bundle $T^*M$ (see Subsection~\ref{subsec:Finslergeo}). Denote by $W:=D^FT^*M$ the unit cotangent disk bundle.
The restriction $\theta=\lambda_0|_{\partial W}$ of the standard Liouville form $\lambda_0:=pdx$ to the boundary $\partial W$ is a contact form on $\partial W$.
So $(W,\lambda_0)$ is a Liouville domain. The standard Liouville vector field
$Z=p\frac{\partial}{\partial p}$ is transverse to $\partial W$. The flow
$\varphi$ of $Z$ induces the diffeomorphism
$$(0,\infty)\times\partial W\rightarrow T^*M\setminus\{0\},\quad (\rho,z)\to \varphi_{\log \rho}(z)$$
which has the inverse
$$T^*M\setminus\{0\}\rightarrow (0,\infty)\times\partial W, \quad z\to \bigg(F^*(z),\frac{z}{F^*(z)}\bigg).$$
Then $T^*M$ is naturally identified with its completion
$$\widehat{W}:=W\cup_{\partial W}
[1,\infty)\times \partial W.$$
Under this identification, on
$T^*M\setminus D^F_rT^*M\cong (r,\infty)\times\partial W$ (where $D^F_rT^*M$ denotes the disk cotangent bundle of radius $r$,  see~(\ref{e:Finslerdisk})), a function $f\in C^\infty (T^*M,\mathbb{R})$ can be written as
$f(\rho,z)$
with respect to the variables $\rho$ and $z$.
}
\end{example}

To define Floer homology on $(\widehat{X},d\widehat{\lambda})$, we will need the $C^0$-bounds for solutions
 $u:\mathbb{R}\times S^1\to \widehat{X}$ to the  \emph{$s$-dependent Floer equation}
\begin{equation}\label{e:sFloereq}
\partial_su+J_{s,t}(u)(\partial_tu-X_{H_{s,t}}(u))=0.
\end{equation}

 Based on the method of the maximum principle, various $C^0$-bounds for the solutions of Floer equations given by the almost complex structures of contact type are established in the literature, see, for instance, \cite{Ir,Se,Vi}.
For our purpose we will show the following convexity results for those Hamiltonians which are constant, linear and/or superlinear (with respect to the $\rho$-variable) at infinity in a unified way.

\begin{lemma}\label{lem:convexity}
Let $(X,\lambda)$ be a Liouville domain.
Let $\{H_{s,t}\}_{s,t\in\mathbb{R}\times S^1}$ be a smooth family of Hamiltonians on $\widehat{X}$, and $\{J_{s,t}\}_{s,t\in\mathbb{R}\times S^1}$
be a smooth family of elements in $\mathcal{J}$. Assume that there exists a constant $\rho_0>0$ and two functions $f,g\in C^\infty(\mathbb{R})$ such that for every $(s,t)\in\mathbb{R}\times S^1$,
\begin{itemize}
  \item $H_{s,t}(\rho,z)=f(s)\rho^\mu+g(s)$ on $ [\rho_0,\infty)\times\partial X$, where $\mu\geq 1$ and $f'(s)\geq 0$, or $\mu=0$ without further restrictions on $f$ and $g$.
  \item  $J_{s,t}$ is of contact type on $[\rho_0,\infty)\times\partial X$.
\end{itemize}
If $u:\mathbb{R}\times S^1\to \widehat{X}$ satisfies (\ref{e:sFloereq}) and
 $u^{-1}([\rho_1,\infty)\times\partial X)$ is bounded for some $\rho_1>\rho_0$, then $u(\mathbb{R}\times S^1)\subset \widehat{X}\setminus ((\rho_1,\infty)\times\partial X)$.
\end{lemma}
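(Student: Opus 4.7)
I will apply the maximum principle to the function $\rho\circ u$, where $\rho$ denotes the cylindrical coordinate on $[1,\infty)\times \partial X\subset \widehat X$. Arguing by contradiction, suppose $V:=u^{-1}\bigl((\rho_1,\infty)\times\partial X\bigr)$ is nonempty. The hypothesis that $u^{-1}\bigl([\rho_1,\infty)\times\partial X\bigr)$ is bounded makes $\overline V\subset \R\times S^1$ compact; hence $\rho\circ u$ attains its maximum $M>\rho_1$ on $\overline V$, and by continuity $\rho\circ u\equiv\rho_1$ on $\partial V$, so this maximum is attained at some interior point $(s_0,t_0)\in V$.

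\textbf{Local identity.} On $V$ I plan to compute $\Delta(\rho\circ u)=(\partial_s^2+\partial_t^2)(\rho\circ u)$ directly. The contact-type hypothesis yields $JR=\widehat Z=\rho\partial_\rho$, $J\partial_\rho=-R/\rho$, and preservation by $J$ of the horizontal distribution $\ker\theta\cap\ker d\rho$, while the prescribed form of $H_{s,t}$ gives $X_{H_{s,t}}=\mu f(s)\rho^{\mu-1}R$ on the end. Substituting into the Floer equation $\partial_t u - X_{H_{s,t}}=J\partial_s u$ and applying $d\rho$ respectively $\widehat\lambda=\rho\theta$ should produce
\begin{equation*}
\partial_t(\rho\circ u)=(\rho\circ u)\,\theta(\partial_s u),\qquad \partial_s(\rho\circ u)=\mu f(s)(\rho\circ u)^\mu - (\rho\circ u)\,\theta(\partial_t u).
\end{equation*}
Differentiating once more and combining Cartan's formula $\partial_s\theta(\partial_t u)-\partial_t\theta(\partial_s u)=d\theta(\partial_s u,\partial_t u)$ with $\iota_{R}d\theta=\iota_{\partial_\rho}d\theta=0$, I expect to arrive at
\begin{equation*}
\Delta(\rho\circ u)=\mu f'(s)(\rho\circ u)^\mu+\mu(\mu-1)f(s)(\rho\circ u)^{\mu-1}\partial_s(\rho\circ u)+\frac{|\nabla(\rho\circ u)|^2}{\rho\circ u}+g\bigl((\partial_s u)^h,(\partial_s u)^h\bigr),
\end{equation*}
where $(\partial_s u)^h$ denotes the horizontal component and $g(\cdot,\cdot):=d\widehat\lambda(J\cdot,\cdot)$; nonnegativity of the last term will rely on the contact-type identity $d\theta(X,JX)=-g(X,X)/\rho$ for horizontal $X$.

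\textbf{Conclusion via Hopf.} Under the dichotomy in the hypothesis ($\mu\geq 1$ with $f'\geq 0$, or $\mu=0$ whereupon $X_H=0$ and the Floer equation degenerates to a pseudo-holomorphic one), the first, third, and fourth terms above are each $\geq 0$, so the identity rearranges as $L(\rho\circ u)\geq 0$ for the linear second-order elliptic operator $L:=\Delta-c(s,t)\partial_s$ with coefficient $c(s,t):=\mu(\mu-1)f(s)(\rho\circ u)^{\mu-1}$ bounded on $\overline V$. Hopf's strong maximum principle then forces $\rho\circ u$ to be constant equal to $M$ on the connected component of $V$ containing $(s_0,t_0)$, contradicting $\rho\circ u\equiv \rho_1<M$ on the boundary of that component. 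The hardest part will be the bookkeeping in the local identity: isolating the first-order term $\mu(\mu-1)f\rho^{\mu-1}\partial_s\rho$ so that it enters only through $L$ rather than destroy the sign of $\Delta(\rho\circ u)$, and noting that $f'\geq 0$ is used precisely where the $\mu f'\rho^\mu$ term would otherwise have the wrong sign.
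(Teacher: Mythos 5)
Your proposal is essentially the same argument as the paper's: working with $r=\rho\circ u$, deriving from the contact-type hypothesis the identities $\partial_t r=\widehat\lambda(\partial_s u)$ and $\partial_s r=-\widehat\lambda(\partial_t u)+\mu f(s)r^\mu$, differentiating to obtain $\Delta r-\mu(\mu-1)f(s)r^{\mu-1}\partial_s r=|\partial_s u|^2_{J_{s,t}}+\mu f'(s)r^\mu\ge 0$, and concluding by the maximum principle applied to the elliptic operator $\Delta-\mu(\mu-1)f(s)r^{\mu-1}\partial_s$ on the bounded set $\Sigma$ where $r>\rho_1$. The only cosmetic differences are that you split $|\partial_s u|^2_{J_{s,t}}$ into radial and horizontal parts and invoke the strong (Hopf) rather than the weak maximum principle; both variants are correct and neither changes the structure of the proof.
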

This lemma is proved in Section \ref{SSMP}.

\begin{remark}\label{rem:actionspectrum}
{\rm Suppose that the Hamiltonians $H$ are \emph{radial outside a compact set} of $(\widehat{X},d\widehat{\lambda})$, that is,
$$H(\rho,z)=h(\rho),\quad \forall\ (\rho,z)\in[\rho_0,\infty)\times\partial X$$
for some $\rho_0>0$, where $h:[\rho_0,\infty)\to \mathbb{R}$ is a smooth function. The Hamiltonian vector field $X_H$ has the form
$$X_H(\rho,z)=h'(\rho)R(z)\quad \hbox{on}\; [\rho_0,\infty)\times\partial X.$$
Thus a $T$-periodic Reeb orbit of $R$ gives rise to a $1$-periodic orbit of $X_H$ in $\{r_0\}\times \partial X$ if and only if $T=|h'(r_0)|$, and conversely every $1$-periodic orbit of $X_H$ in $\{r_0\}\times \partial X$ for $r_0\geq \rho_0$ is given by a $T$-periodic orbit of $R$ with $T=|h'(r_0)|$.
}
\end{remark}

\subsection{Floer homology of admissible Hamiltonians}\label{subsec:Floerhomology}

For $H\in C^\infty(S^1\times \widehat{X})$, we denote $\mathscr{P}(H)\subseteq \widehat{X}$ as the set of all $1$-periodic orbits of $X_{H_t}$:
$$\mathscr{P}(H)=\big\{z\in C^\infty(S^1,\widehat{X})\big| \dot{z}(t)=X_{H_t}(z(t))\}.$$

Define  the set of \emph{admissible Hamiltonians} $\mathcal{H}_{\rm ad}$ to consist of all smooth $H:S^1\times \widehat{X}$ with the following properties:
\begin{itemize}
 \item $H$ is linear at infinity, meaning that there exist
 $\rho_0>0$, $\mu_H>0$ and $a_H\in\mathbb{R}$ such that
 $$H_t(\rho,z)=\mu_H\rho+a_H\quad\hbox{on}\; [\rho_0,\infty)\times\partial X$$
 for all $t\in S^1$. Here $\mu_H$ is required to satisfy $\mu_H\notin {\rm Spec}(X,\lambda)$.
  \item All elements $z\in \mathscr{P}(H)$ are \emph{non-degenerate}, that is, the linear map
$$d\phi_H^1(z(0)):T_{z(0)}\widehat{X}\rightarrow T_{z(0)}\widehat{X}$$ does not have $1$ as an eigenvalue.
\end{itemize}

Observe that since $\mu_H\notin {\rm Spec}(X,\lambda)$, the linear behavior of $H_t:=H(t,\cdot)$ means that there are no $1$-periodic orbits in $[\rho_0,\infty)\times\partial X$, hence there are only finitely many $1$-periodic orbits in total.

For a free homotopy class $\alpha\in [S^1,\widehat{X}]$, denote $\Lambda_\alpha \widehat{X}:=\{z\in C^\infty(S^1,\widehat{X})|[z]=\alpha\}$ and $\mathscr{P}_\alpha(H):=\mathscr{P}(H)\cap \Lambda_\alpha \widehat{X}$. Note that if $\alpha\neq 0$, there is no canonical way to assign $\mathbb{Z}$-valued Conley-Zehnder index to elements $z\in \mathscr{P}_\alpha(H)$. However, for $(\widehat{X},d\lambda)=(T^*M,dp\wedge dx)$, one can follow \cite{AS0} to define $\mathbb{Z}$-valued Conley-Zehnder index. In this paper we only work on $(X,\lambda)=(D^FT^*M,pdx)$, and so let us assume that the $\mathbb{Z}$-valued Conley-Zehnder index of noncontractible Hamiltonian periodic orbits is well-defined.

The \emph{Floer action functional} $\mathscr{A}_{H}:\Lambda_\alpha \widehat{X}\to \mathbb{R}$ is defined by
\begin{equation}\label{e:AF0}
\mathscr{A}_{H}(z)=\int_{S^1}z^*\lambda-\int^1_0H(t,z)dt.
\end{equation}

It is easy to check that a loop $z\in\mathscr{P}_{\alpha}(H)$ if and only if $z$ is a critical point of $\mathscr{A}_{H}$ on $\Lambda_\alpha \widehat{X}$. The set of values of $\mathscr{A}_H$ on $\mathscr{P}_{\alpha}(H)$ is called the \emph{action spectrum} with respect to $\alpha$, and we denote it by
$$\hbox{Spec}(H;\alpha):=\{\mathscr{A}_{H}(x)\big|
x\in \mathscr{P}_{\alpha}(H)\}.$$

For every $k\in \mathbb{Z}$, we define the \emph{Floer chain group} ${\rm CF}_k(H;\alpha)$ to be the free $\mathbb{Z}_2$-module generated by the elements $z$ of $\mathscr{P}_{\alpha}(H)$ with Conley-Zehnder index $\mu_{CZ}(z)=k$.

Assume that $H\in \mathcal{H}_{\rm ad}$ and $J\in\mathcal{J}(\widehat{X},\widehat{\lambda})$. Given critical points $z_\pm\in \mathscr{P}_{\alpha}(H)$ ,  denote by
$$\widehat{\mathcal{M}}_\alpha(z_-,z_+,H,J)\subseteq C^\infty(\mathbb{R}\times S^1,\widehat{X})$$
the set of smooth maps $u:\mathbb{R}\times S^1\to \widehat{X}$ that satisfy the Floer equation
\begin{equation}\label{e:Feq0}
\partial_su+J(u)(\partial_tu-X_{H}(u))=0
\end{equation}
and the asymptotic conditions
\begin{equation}\label{e:AC0}
\lim\limits_{s\to-\infty}u(s,t)=z_-\quad\hbox{and}\quad \lim\limits_{s\to+\infty}u(s,t)=z_+
\end{equation}
uniformly in $t\in S^1$. Note that each moduli space $\widehat{\mathcal{M}}_\alpha(z_-,z_+,H,J)$ carries a free $\mathbb{R}$-action given by $(\tau\cdot u)(s,t):=u(s-\tau,t)$, denote by $\mathcal{M}_\alpha(z_-,z_+,H,J)$ the quotient space under this action.
For generic $J\in \mathcal{J}(\widehat{X},\widehat{\lambda})$, $\mathcal{M}_\alpha(z_-,z_+,H,J)$ carries a smooth manifold structure with dimension $\mu_{CZ}(z_-)-\mu_{CZ}(z_+)-1$. Such
$J$ is called a \emph{regular} almost complex structure for $H$. 

If $\mu_{CZ}(z_-)=\mu_{CZ}(z_+)+1$,
$\mathcal{M}_\alpha(z_-,z_+,H,J)$ is compact and thus consists of finitely many points. The boundary operator $$\partial_k=\partial_k(H,J):{\rm CF}_k(H;\alpha)\rightarrow {\rm CF}_{k-1}(H;\alpha)$$
is defined by
$$\partial_k(z_-)=\sum\limits_{z_+\in \mathscr{P}_{\alpha}(H)_{k-1}}n(z_-,z_+)z_+,\quad x\in \mathscr{P}_{\alpha}(H)_k$$
with $n(z_-,z_+):=\sharp_2\big(\mathcal{M}_\alpha(z_-,z_+,H,J)\big)$. The standard transversality and gluing arguments (see~ e.g. ~\cite{Sa}), combining with a $C^0$-bounds for solutions of the Floer equation (see Lemma~\ref{lem:convexity}) show that
$$\partial_{k-1}\circ\partial_{k}=0.$$
So $\{{\rm CF}_*(H;\alpha),\partial_*(H,J)\}$ is a chain complex, called the \emph{Floer chain complex}. The \emph{Floer homology} ${\rm HF}_*(H,J;\alpha)$ is defined to be the homology of such a chain complex. Moreover, the homology group of $\{{\rm CF}_*(H;\alpha),\partial_*(H,J)\}$ does not depend on $J$, and we denote it by ${\rm HF}_*(H;\alpha)$.

\section{Radial Hamiltonian systems related to Finsler structures }\label{sec:Finslergeometry}
\setcounter{equation}{0}
Let $(M,F)$ be a smooth $n$-dimensional Finsler manifold.
The Euler theorem implies that for any $(x,y)\in TM$ with $y\neq 0$ one has $F^2(x,y)=g^F(x,y)[y,y]$. Note that $F^2$ is only $C^1$ on the zero section of $TM$. In a local coordinate centered at $x\in M$, the Finsler metric can be represented by
$$g_{ij}(x,y)=\frac{1}{2}\frac{\partial ^2(F^2)}{\partial y^i\partial  y^j}(x,y)\quad \forall\ \;y\in T_xM\setminus \{0\}.$$
For $r\in(0,\infty)$, denote the Finsler cotangent disk $D^F_rT^*M$ with radius $r$ by
\begin{equation}\label{e:Finslerdisk}
D_r^FT^*M:=\big\{(x,p)\in T^*M\big|F^*(x,p)\leq r\big\}.
\end{equation}
Let $(g^{ij})$ be the inverse matrix of the matrix $(g_{ij})$.
$\nabla^F$ denotes the \emph{Chern connection} associated to the Finsler metric on $M$, see~\cite{SZ}.
\begin{definition}
{\rm
A smooth curve $c:(a,b)\to M$ in a Finsler manifold $M$ is called a \emph{$F$-geodesic} if ${\rm len}_F$ is stationary at $c$, that is, for any variation $c_s(t)$ of $c$ it holds that
$$\frac{d}{ds}{\rm len}_F(c_s)\big|_{s=0}=0.$$
Here $c_s(\cdot)=\tau(s,t)$,  $\tau:(-\varepsilon,\varepsilon)\times [a,b]\to M$ is a smooth function such that $\tau_0(t)=c(t)$ for all $t\in(a,b)$, and $c_s(a),c_s(b)$ are constants not depending on $s\in (-\varepsilon,\varepsilon)$.
}
\end{definition}


\begin{lemma}\label{lem:mls}
Let $M$ be a closed connected Finsler manifold, and let $\alpha\in [S^1,M]$ be a free homotopy class in $M$. Then the marked length spectrum $\Lambda_\alpha$ is a closed and nowhere dense subset of $\mathbb{R}$.
\end{lemma}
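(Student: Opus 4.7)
I would prove the two claims separately: closedness via an Arzel\`a--Ascoli compactness argument, and nowhere density via a finite-dimensional reduction combined with Sard's theorem.

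For closedness, suppose $l_n \in \Lambda_\alpha$ with $l_n \to l$, and pick closed $F$-geodesics $\gamma_n \colon S^1 \to M$ in class $\alpha$ with $\mathrm{len}_F(\gamma_n) = l_n$, parametrized so that $F(\gamma_n, \dot\gamma_n) \equiv l_n$. Since $\alpha$ is nontrivial, $l_n \geq l_\alpha^F > 0$ by \eqref{EqLength}, so the tangent lifts $(\gamma_n(0), \dot\gamma_n(0))$ lie in a compact subset of $TM \setminus (M \times \{0\})$ for $n$ large. Passing to a subsequence, $(\gamma_n(0), \dot\gamma_n(0)) \to (x_0, v_0)$ with $F(x_0, v_0) = l > 0$. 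Because the Finsler geodesic flow $\phi_t$ is smooth on $TM \setminus (M \times \{0\})$, we have $\gamma_n(t) = \pi \circ \phi_t(\gamma_n(0), \dot\gamma_n(0)) \to \pi \circ \phi_t(x_0, v_0) =: \gamma(t)$ in $C^\infty([0,1], M)$; passing the closure conditions $\gamma_n(0)=\gamma_n(1)$, $\dot\gamma_n(0)=\dot\gamma_n(1)$ to the limit gives a smooth closed $F$-geodesic $\gamma \colon S^1 \to M$ of length $l$, and $C^0$-closeness forces $[\gamma] = [\gamma_n] = \alpha$ for large $n$, so $l \in \Lambda_\alpha$.

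For nowhere density, consider the energy functional $E(x) := \tfrac{1}{2}\int_{S^1} F^2(x, \dot x)\,dt$ on the Hilbert manifold $W^{1,2}(S^1, M)_\alpha$. Its critical points are exactly the constant-speed closed $F$-geodesics in class $\alpha$, with critical-value set $\{l^2/2 : l \in \Lambda_\alpha\}$; it therefore suffices to show this set is nowhere dense in $[0,\infty)$. Fix a critical value $c_0$. By the standard Palais--Smale property of $E$ in the Finsler setting (essentially the compactness argument above), the critical set $\mathrm{Crit}_{c_0}(E)$ is compact modulo the $S^1$-reparametrization action, and since $\alpha \neq 0$ every critical point has nowhere-vanishing velocity, so $E$ is $C^\infty$ in a suitable neighborhood. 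At each $\gamma_0 \in \mathrm{Crit}_{c_0}(E)$, the Hessian $\nabla^2 E(\gamma_0)$ is Fredholm of index $0$ (identity plus compact), and a standard Lyapunov--Schmidt reduction yields a $C^\infty$ function $\widetilde{E}_{\gamma_0}$ on a neighborhood of $0$ in the finite-dimensional kernel $K_{\gamma_0}$, whose critical points and values correspond bijectively with those of $E$ near $\gamma_0$. Sard's theorem applied to $\widetilde{E}_{\gamma_0}$ gives nowhere density of its critical values, and covering $\mathrm{Crit}_{c_0}(E)/S^1$ by finitely many such reduction charts shows that the critical values of $E$ in a neighborhood of $c_0$ form a nowhere dense set. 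Since this argument applies at every $c_0$, the full critical-value set is nowhere dense.

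The principal obstacle is the regularity needed for the Lyapunov--Schmidt step: one must restrict to a neighborhood of $\gamma_0$ (e.g., in the $C^1$-topology) on which every candidate loop has $\dot x$ bounded away from the zero section of $TM$, so that the only-$C^{1,1}$ singularity of $F^2$ at $0$ is avoided and $E$ is genuinely $C^\infty$. A more hands-on alternative, bypassing the reduction, uses the first-variation identity: along any $C^1$-family $s \mapsto \gamma_s$ of closed $F$-geodesics one has $\tfrac{d}{ds}\mathrm{len}_F(\gamma_s) = 0$, so continuous one-parameter families of closed geodesics have constant length; together with the compactness and local smooth structure of the critical set this again confines $\Lambda_\alpha$ to a nowhere dense set.
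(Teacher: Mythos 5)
Your outline is correct and is essentially the argument the paper has in mind: the paper does not prove Lemma~\ref{lem:mls} directly but cites Weber's Riemannian proof (\cite[Lemma~3.3]{We0}), which proceeds exactly as you describe, namely closedness by compactness of constant-speed geodesics with bounded length and nowhere density by a Sard/Lyapunov--Schmidt argument applied to the energy functional; you correctly identify the only Finsler-specific wrinkle, the $C^{1,1}$-but-not-$C^2$ singularity of $F^2$ at the zero section. The one point that needs more care than your sketch provides is how to reconcile the $C^1$-restriction with the $W^{1,2}$-setting: the energy functional $E(x)=\tfrac12\int F^2(x,\dot x)\,dt$ is \emph{not} $C^2$ on a $W^{1,2}$-neighborhood of a nonconstant geodesic, since $W^{1,2}$-loops arbitrarily $W^{1,2}$-close to $\gamma_0$ can have velocity near $0$ on sets of positive measure, so one cannot invoke Lyapunov--Schmidt in the Hilbert-manifold topology directly. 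The standard fix in the Finsler literature (see Lu \cite{Lu}, cited in the paper precisely for this kind of regularity issue, or equivalently the finite-dimensional broken-geodesic model) replaces $\mathcal{L}_\alpha(M)$ locally by a finite-dimensional smooth manifold of broken geodesics on which $E$ genuinely restricts to a $C^\infty$ function with the same critical points and values near the level $c_0$; Sard is then applied to that finite-dimensional restriction rather than to an abstract Lyapunov--Schmidt reduction on $W^{1,2}$. Your ``hands-on alternative'' via the first-variation identity is not by itself sufficient (constancy of length on connected components of a compact critical set does not rule out the set of lengths containing an interval, e.g.\ if the critical set were Cantor-like with length surjecting onto an interval), so the Sard-type step cannot actually be bypassed; with the broken-geodesic reduction in place of the Banach-space reduction, though, your argument is complete.
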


 The proof of the above lemma is parallel to the Riemannian case, see, e.g. \cite[Lemma~3.3]{We0}.

The geometries $(T^*M,F^*)$ and $(TM,F)$ are related by the Legendre transforms $\ell$.
\begin{definition}\label{def:Ltransform}
{\rm The Legendre transform $\ell_x: T_xM\rightarrow T_x^*M$ is defined as
$$(\ell_x(v))_j=g_{ij}(x,v)v^i\quad \forall\  v\in T_xM\setminus\{0\}\quad \hbox{and}\quad \ell_x(0)=0$$
which has the inverse map $\ell^*_x: T_x^*M\rightarrow T_xM$ given by
$$(\ell_x^*(p))^j=g^{ij}(x,p)p_i\quad \forall\  p\in T^*_xM\setminus\{0\} \quad \hbox{and}\quad \ell^*_x(0)=0.$$

}
\end{definition}

\begin{lemma}\label{lem:radialHamsyst}
Let $h:[0,\infty)\to\mathbb{R}$ be a smooth function which is constant near $0$. Let $H$ be a $C^\infty$-radial function on the cotangent bundle $T^*M$ which is given by
$$H(x,y)=f(F^*(x,y))=h(F^{*2}(x,y)),$$
where $f(x)=h(x^2)$ is a smooth function on $[0,\infty)$.
Assume that $z(t)=(x(t),y(t))$ is a smooth loop on $T^*M$. Then $z$ is a critical point of the action functional $\mathscr{A}_{H}$ if and only if $x(t)$ is a $F$-geodesic loop with constant speed satisfying
\begin{equation}\label{e:F-geo&Hamorbit}
f'(r_z) y(t)=r_z\ell_x(\dot{x})(t),\quad
F(x(t),\dot{x}(t))\equiv\pm f'(r_z)\quad \forall\  t\in S^1
\end{equation}
for some constant $r_z=F^*(z(t))\geq 0$, and in this case the action of $\mathscr{A}_{H}$ at $z(t)$ is given by
$$\mathscr{A}_{H} (z(t))=f'(F^*(z(t)))F^*(z(t))-f(F^*(z(t))).$$
\end{lemma}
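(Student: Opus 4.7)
The plan is to first treat the generic case $r_z := F^*(z(t)) > 0$ (where $F^*$, and hence $H$, is smooth), reduce the Hamiltonian flow of $H$ to a reparametrization of the cogeodesic flow of $G^* := \tfrac12 F^{*2}$, and then read off both the $F$-geodesic equation and the action formula from the Legendre transform. Since $H = h(F^{*2})$ and $d(\tfrac12 F^{*2}) = F^*\,dF^*$, one computes
\[
dH = 2h'(F^{*2})\, d G^* = \frac{f'(F^*)}{F^*}\, dG^* \qquad\text{on } T^*M\setminus M,
\]
so $X_H = \frac{f'(F^*)}{F^*}\, X_{G^*}$. Because $H$ is constant along its own flow, $F^*(z(t))\equiv r_z$ is constant, and on the orbit the factor $f'(r_z)/r_z$ is a constant. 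Thus $z(t)$ is a $1$-periodic orbit of $X_H$ if and only if, after the linear time rescaling $\tilde t = \frac{f'(r_z)}{r_z}\, t$, the reparametrized curve $\tilde z(\tilde t) := z(t(\tilde t))$ is a (cogeodesic) orbit of $X_{G^*}$.

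Next I would translate this statement back to $TM$ via the Legendre transform. For $X_{G^*}$ the Hamilton equation reads $\dot x = \ell_x^*(y)$, and it is standard that its projection to $M$ is an $F$-geodesic with $F(x,\dot x) = F^*(x,y)$ (using the identity $F^*(x,\ell_x v) = F(x,v)$ that follows from $\langle \ell_x v, v\rangle = F^2(x,v)$). Plugging the reparametrization back, $x(t)$ satisfies $\dot x(t) = \frac{f'(r_z)}{r_z}\ell_{x(t)}^*(y(t))$, so $x(t)$ is an $F$-geodesic with constant speed
\[
F(x,\dot x) = \tfrac{|f'(r_z)|}{r_z}\cdot F^*(x,y) = |f'(r_z)|,
\]
which is the second half of \eqref{e:F-geo&Hamorbit}; applying $\ell_x$ to both sides of $\dot x = \frac{f'(r_z)}{r_z}\ell_x^* y$ and using $\ell_x\circ\ell_x^* = \mathrm{id}$ yields $f'(r_z)\, y = r_z\, \ell_x(\dot x)$, which is the first half. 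For the converse, given an $F$-geodesic with the prescribed speed and $y$ defined by this Legendre formula, the same identities verify $F^*(x,y)\equiv r_z$ and $\dot x = \ell_x^*y$ up to the constant factor $f'(r_z)/r_z$, so $\tilde z$ solves $X_{G^*}$ and hence $z$ solves $X_H$.

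For the action formula, the Legendre identity $\langle y,\ell_x^* y\rangle = F^{*2}(x,y)$ gives
\[
\int_{S^1}\!\langle y,\dot x\rangle\, dt = \frac{f'(r_z)}{r_z}\int_{S^1}\!\langle y,\ell_x^* y\rangle\, dt = \frac{f'(r_z)}{r_z}\cdot r_z^{\,2} = r_z f'(r_z),
\]
while $\int_{S^1} H\,dt = f(r_z)$, so $\mathscr{A}_H(z) = r_z f'(r_z) - f(r_z) = f'(F^*(z))F^*(z) - f(F^*(z))$, as claimed.

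Finally I would dispose of the zero-section case $r_z = 0$ separately, which is the only place where smoothness of $F^*$ fails. Because $h$ is constant on a neighborhood of $0$, the Hamiltonian $H$ is constant on a neighborhood of the zero section, so $X_H \equiv 0$ there; hence $z(t)\equiv (x_0,0)$ is a constant loop, which is trivially a constant-speed $F$-geodesic with $\dot x\equiv 0$. Since $f(r)=h(r^2)$ gives $f'(0)=0$, the identities in \eqref{e:F-geo&Hamorbit} hold vacuously, and the action reduces to $\mathscr{A}_H(z) = -f(0) = 0\cdot f'(0) - f(0)$, matching the claimed formula. The only nontrivial point in the whole argument is keeping the factor $f'(r_z)/r_z$ straight while moving between the Hamiltonian flow of $H$, the cogeodesic flow of $G^*$, and the $F$-geodesic on $M$; once $F^*$ is verified to be a first integral of $X_H$, everything else is a direct computation from the Legendre duality $F(x,\ell_x^*p) = F^*(x,p)$.
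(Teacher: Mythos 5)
Your proposal is correct and takes a genuinely different, and in my view cleaner, route than the paper. The paper proves the lemma by brute force in local coordinates: it writes $F^{*2}=g^{ij}y_iy_j$, computes $X_H$ componentwise, verifies $\tfrac{d}{dt}F^{*2}(z)=0$ by hand, pushes the Hamiltonian equations through the Legendre transform component by component, and then checks directly that $\nabla^F_{\dot x}\dot x=0$ using the Chern connection coefficients and their homogeneity. You instead observe the global identity $dH=\frac{f'(F^*)}{F^*}\,dG^*$ with $G^*:=\tfrac12F^{*2}$, so that $X_H$ is a (constant-on-orbits) multiple of the cogeodesic vector field $X_{G^*}$; conservation of $F^*$ then follows immediately from $\{G^*,H\}=0$, and the reduction to an $F$-geodesic, the speed formula, and the action formula all drop out of the single standard fact that $X_{G^*}$-orbits project to constant-speed Finsler geodesics under the Legendre duality $F(x,\ell_x^*p)=F^*(x,p)$. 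What the paper buys with its coordinate computation is a self-contained reproof of that Finsler fact via the Chern connection; what you buy is a much shorter argument that makes the structural reason for the lemma transparent (a radial Hamiltonian is a time-reparametrization of the cogeodesic flow on each level set). Your separate treatment of the zero section, using that $h$ is constant near $0$ so $X_H\equiv 0$ there, is exactly what one needs to avoid the non-smoothness of $F^*$ and is also implicit in the paper.

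One small caveat, which applies equally to the paper's own proof: the step $\ell_x\bigl(\tfrac{f'(r_z)}{r_z}\ell_x^*y\bigr)=\tfrac{f'(r_z)}{r_z}\,y$ uses that $\ell_x$ commutes with scalar multiplication. For a general (possibly non-reversible) Finsler metric, $\ell_x$ is only positively homogeneous of degree $1$, so this identity requires $f'(r_z)>0$; the paper's line $\ell(2Cv)=2C\ell(v)$ has the same hidden assumption. This is harmless for the applications in the paper, where the lemma is only invoked for convex radial profiles with $f'\geq 0$, but it is worth stating the restriction explicitly (or adding the word reversible) if you want the lemma in the generality it is phrased. The degenerate case $r_z>0$ with $f'(r_z)=0$ also deserves a one-line remark: then $X_H(z)=0$ so $z$ is constant, $\dot x\equiv0$, and both sides of \eqref{e:F-geo&Hamorbit} and the action formula reduce to trivial identities, so nothing breaks even though your reparametrization factor vanishes.
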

The proof is postponed to Section \ref{SSRadial}.
\section{Filtered Floer homology of cotangent bundles}\label{SFilter}
\setcounter{equation}{0}
Let $(M,F)$ be a closed connected Finsler manifold. In this section we introduce a filtration on the Floer chain complex via the action for a class of time-dependent $1$-periodic Hamiltonians defined on the cotangent bundle $T^*M$, which can be viewed as the completion of the Liouville domain $(D^FT^*M,\lambda)$ (see Example~\ref{ex:diskcotanb}).

\subsection{Radial and linear Hamiltonians at infinity}
Fix a free homology class $\alpha\in[S^1,T^*M]$. Let $a,b\in\mathbb{R}\cup\{\pm\infty\}$ with $a<b$.
For $\eta>0$, define
\begin{eqnarray}
\mathscr{H}_{\eta;\alpha}^{a,b}:=&\big\{&H\in C^\infty(S^1\times T^*M)\ \big|\ \exists\ \tau\geq0\;\exists\ c\in\mathbb{R}\;\hbox{such that}\;H_t(x,p)=\tau F^*(x,p)-c\;\hbox{for}\;\notag\\
&&F^*(x,p)\geq \eta,\;\{a,b\}\cap \hbox{Spec}(H;\alpha)=\emptyset,\;\hbox{and}\;\tau\notin\Lambda_\alpha\;
\hbox{or}\;c\notin[a,b]\big\}.\notag
\end{eqnarray}
Let us give some explanations about the above definition:
\begin{enumerate}
\item[(1)]
the condition that $a,b$ are not in the action spectrum allows for small perturbations of the Hamiltonian without changing Floer homology;
\item[(2)]  if $\tau\in\Lambda_\alpha$, by Lemma~\ref{lem:radialHamsyst}, such a Hamiltonian admits degenerate $1$-periodic orbits of action $c$ on all hypersurfaces $\{(x,p)\in T^*M|F^*(x,p)=r\}$ with $r\geq \eta$; if $\tau=0$, every point in the complement of $D_\eta^FT^*M$ is a degenerate $1$-periodic orbit with action $c$. So the condition $c\notin[a,b]$ excludes all these degenerate orbits;
\item[(3)]  if $\tau\notin\Lambda_\alpha$ and $\tau>0$, by Lemma~\ref{lem:radialHamsyst} again, there are no Hamiltonian $1$-periodic orbits on $T^*M\setminus D^F_\eta T^*M$, so
from Remark~\ref{rem:actionspectrum} one can see that $\tau\notin{\rm Spec}(D^FT^*M,\lambda)$.
\end{enumerate}

For each $H\in \mathscr{H}_{\eta;\alpha}^{a,b}$, denote
$$\mathscr{P}_\alpha^{(a,b)}(H):=\{z\in\mathscr{P}_\alpha (H)\ |\ \mathscr{A}_{H}(z)\in (a,b)\}.$$

Observe that one can perturb $H$ along the periodic orbits $z\in \mathscr{P}_\alpha^{(a,b)}(H)$ by a smooth  function $h\in C_0^\infty(S^1\times D^F_\eta T^*M)$ with sufficiently small $\|\cdot\|_{C^2}$-norm  such that $H+h\in\mathscr{H}_{\eta;\alpha}^{a,b}$ and all elements in $\mathscr{P}_\alpha^{(a,b)}(H+h)$ are nondegenerate, see, \cite[Section~9]{Sa} or \cite{We0}.

\subsection{The definition of filtered Floer homology}
Assume that $H\in \mathscr{H}_{\eta;\alpha}^{a,b}$ is nondegenerate.
Consider the $\mathbb{Z}_2$-vector space
$${\rm CF}^{(a,b)}(H,\alpha):=\bigoplus\limits_{x\in\mathscr{P}^{(a,b)}
_{\alpha}(H)}\mathbb{Z}_2x$$
which is graded by the index $\mu_{CZ}$. Here we use the convention that the complex generated by the empty set is zero.

Given $z_{\pm}\in {\rm CF}^{(a,b)}(H;\alpha)$, every solution $u$ of (\ref{e:Feq0}) connecting $z_-$ to $z_+$ satisfies
$$\mathscr{A}_{H}(z_-)-\mathscr{A}_{H}(z_+)=\int_{\mathbb{R}\times S^1}\big|\partial_su(s,t)\big|^2_{J_t}dsdt\geq 0.$$

Similar to Section~\ref{subsec:Floerhomology}, if $\mu_{CZ}(z_-)=\mu_{CZ}(z_+)+1$,  then for generic $J\in \mathcal{J}(T^*M,\lambda_0)$ the moduli space $\mathcal{M}_\alpha(z_-,z_+,H,J)$, in consideration of the fact that $\mathscr{A}_{H}(z_-)\geq\mathscr{A}_{H}(z_+)$ if it is not empty,  gives rise to the boundary operator $$\partial_*:{\rm CF}_*^{(a,b)}(H;\alpha)\rightarrow {\rm CF}_{*-1}^{(a,b)}(H;\alpha)$$
satisfying $\partial_{*-1}\partial_*=0$. So $({\rm CF}_*^{(a,b)}(H;\alpha),\partial_*)$
is a chain complex. The above claims are proved by the usual transversality~\cite{FHS} and gluing arguments~\cite{Sa}, together with the $C^0$-estimate given by Lemma ~\ref{lem:convexity}. Its homology group ${\rm HF}_*^{(a,b)}(H;\alpha)$ is called the \emph{filtered Floer homology} of $H\in \mathscr{H}_{\eta;\alpha}^{a,b}$.

\subsection{Homotopy invariance}\label{ssec:Homotopy invariance}

Suppose that Hamiltonians $H^{\pm}\in \mathscr{H}_{\eta;\alpha}^{a,b}$ are nondegenerate. Then $H^\pm=\tau_\pm F^*(x,p)-c_\pm$ on $T^*M\setminus D^F_\eta T^*M$ for some constants $\tau_+,\tau_-\geq0$ and $c_\pm\in\mathbb{R}$. We assume with loss of generality that $\tau_+\geq \tau_-$.
Let $\beta:\mathbb{R}\to [0,1]$ be a smooth cut-off function such that
$\beta=0$ for $s\leq0$, $\beta(s)=1$ for $s\geq 1$ and $0\leq\dot{\beta}(s)\leq1$.
Let $H_s=\{H_{s,t}\}$ be a smooth homotopy from $\mathbb{R}$ to $\mathscr{H}_{\eta;\alpha}^{a,b}$ defined by
$$H_{s,t}:=(1-\beta(s))H^-_t+\beta(s)H^+_t.$$

Identifying $T^*M\setminus D^F_\eta T^*M$ with $(\eta,\infty)\times D^F T^*M$ yields
$$H_{s,t}(\rho,z):=H_{s,t}(x,p)=\beta(s)(\tau_+-\tau_-)\rho
+\tau_-\rho+\beta(s)(c_--c_+)-c_-$$
for any $(\rho,z)\in (\eta,\infty)\times S^F T^*M$, where $\rho=F^*(x,p)$ and $z=(x,p/F^*(x,p))\in S^F T^*M:=\partial D^F T^*M$.

Given $z_{\pm}\in \mathscr{P}_{\alpha}(H^{\pm})$,
consider the parameter-dependent Floer equation
\begin{equation}\label{e:pFloereq}
\partial_su+J_{s,t}(u)(\partial_tu-X_{H_{s,t}}(u))=0
\end{equation}
which satisfies the asymptotic boundary conditions
\begin{equation}\label{e:asympbc}
\lim\limits_{s\to\pm\infty}u(s,t)=z_{\pm}\quad\hbox{and}\quad
\lim\limits_{s\to\pm\infty}\partial_su(s,t)=0
\end{equation}
uniformly in $t\in S^1$. Here $J_{s,t}:\mathbb{R}\to \mathcal{J}$ is a \emph{regular homotopy} of smooth families of $d\lambda_0$-compactible almost complex structure on $T^*M$, meaning that
\begin{itemize}
  \item $J_{s,t}$ is of contact type on $(\eta,\infty)\times D^F T^*M$.
  \item $J_{s,t}=J^-_t$ is regular for $H^-_t$ for $s\leq0$.
  \item $J_{s,t}=J^+_t$ is regular for $H^+_t$ for $s\geq1$.
  \item The linearized operator for equation~(\ref{e:pFloereq}) is surjective for each finite-energy solution of (\ref{e:pFloereq}) in the homotopy class $\alpha$.
\end{itemize}

By our assumption,
$$\frac{\partial^2H_{s,t}}{\partial s\partial\rho}=\beta(s)(\tau_+-\tau_-)\geq 0.$$

Combining Lemma~\ref{lem:convexity} with the exactness of the canonical symplectic form $d\lambda_0$  implies that the moduli spaces
$\mathcal{M}_\alpha(z_-,z_+,H_{s,t},J_{s,t})$ of smooth solutions of (\ref{e:pFloereq}) satisfying the boundary conditions (\ref{e:asympbc}) are $C_{loc}^\infty$-compact. Observe that every solution $u$ of (\ref{e:pFloereq}) and (\ref{e:asympbc}) satisfies the energy identity
\begin{eqnarray}\label{e:energyid}
E(u):&=&\int^\infty_{-\infty}\int_0^1|\partial_s u|_{J_{s,t}}dsdt\nonumber\\
&=&\mathscr{A}_{H^-}(z_-)-\mathscr{A}_{H^+}(z_+)
-\int^\infty_{-\infty}\int^1_0(\partial_s H_{s,t})(u(s,t))dsdt.
\end{eqnarray}
Therefore, for $|H^+-H^-|$ sufficiently small we can define a chain map
from ${\rm CF}^{(a,b)}_*(H^-;\alpha)$ to ${\rm CF}^{(a,b)}_*(H^+;\alpha)$, (see~\cite[Section~4.4]{BPS}). This defines an homomorphism
$$\sigma_{H^+H^-}:{\rm HF}^{(a,b)}(H^-;\alpha)\to {\rm HF}^{(a,b)}(H^+;\alpha)$$
which is independent of the choice of homotopy by standard arguments, see, e.g., \cite{Sa,SZ}. Actually, we have the following.

\begin{figure}[H]
  \centering
  \includegraphics[scale=0.25]{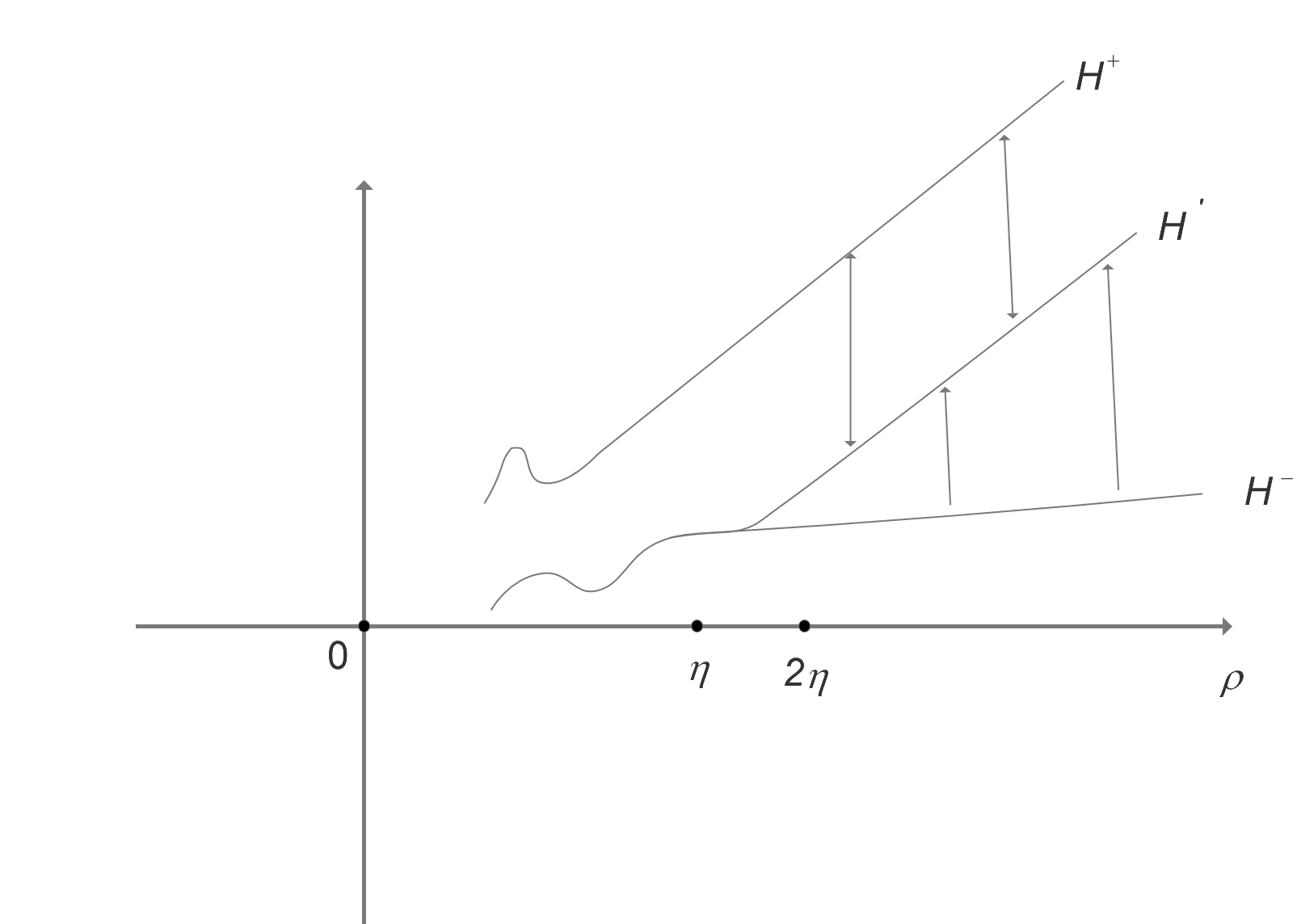}
  \caption{The functions $H^{\pm}$, $H'$ and homotopies} \label{fig:1}
\end{figure}


\begin{lemma}[Local isomorphisms]\label{lem:localiso}
If $H^\pm$ are sufficiently close to $H\in \mathscr{H}_{\eta;\alpha}^{a,b}$,
then the homomorphism $\sigma_{H^+H^-}$ is an isomorphism.
\end{lemma}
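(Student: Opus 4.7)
The plan is to prove that for $H^\pm$ sufficiently close to $H$, the continuation map $\sigma_{H^+H^-}$ is represented at chain level by the canonical bijection of periodic orbits, hence is an isomorphism. The argument combines implicit-function-theorem stability of nondegenerate orbits with a Gromov--Floer compactness argument that rules out non-diagonal continuation trajectories, reducing $\sigma_{H^+H^-}$ to (a chain-level version of) $\sigma_{HH}=\mathrm{Id}$.

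First I would invoke openness of nondegeneracy. Because $H$ is nondegenerate and $\{a,b\}\cap\mathrm{Spec}(H;\alpha)=\emptyset$, the set $\mathscr{P}_\alpha^{(a,b)}(H)$ is finite and each of its elements has hyperbolic linearized time-$1$ return map. Hence there exists $\epsilon_0>0$ such that any nondegenerate $H'\in \mathscr{H}_{\eta;\alpha}^{a,b}$ with $\|H'-H\|_{C^2}<\epsilon_0$ admits a canonical bijection
$$\Phi_{H'}:\mathscr{P}_\alpha^{(a,b)}(H')\longrightarrow \mathscr{P}_\alpha^{(a,b)}(H),$$
sending $z'$ to the unique $H$-orbit $C^1$-close to $z'$, and preserving Conley--Zehnder indices and (continuously in $H'$) actions. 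Let $\phi_{H'}:\mathrm{CF}_*^{(a,b)}(H';\alpha)\to \mathrm{CF}_*^{(a,b)}(H;\alpha)$ denote the induced grading-preserving chain isomorphism.

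Next I would argue by contradiction. Suppose there exist sequences $H_n^\pm\to H$ in $C^\infty$ and regular monotone homotopies $(H_{n,s},J_{n,s,t})$ with $J_{n,s,t}\to J_t$ for some regular $J_t$ for $H$, such that $\phi_{H_n^+}\circ \sigma_{H_n^+H_n^-}\circ \phi_{H_n^-}^{-1}$ is not the identity on homology. Then, up to subsequence, for each $n$ there exist orbits $z_\pm^n\in \mathscr{P}_\alpha^{(a,b)}(H_n^\pm)$ of matching Conley--Zehnder index with $\Phi_{H_n^-}(z_-^n)\ne \Phi_{H_n^+}(z_+^n)$ connected by a continuation trajectory $u_n\in \mathcal{M}_\alpha(z_-^n,z_+^n,H_{n,s},J_{n,s,t})$. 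By Lemma~\ref{lem:convexity} the $u_n$ remain in a fixed compact region of $T^*M$, and the energy identity~(\ref{e:energyid}) gives a uniform bound on $E(u_n)$ since the asymptotic actions converge to elements of $\mathrm{Spec}(H;\alpha)\cap(a,b)$. Standard Gromov--Floer compactness on the exact symplectic manifold $(T^*M,d\lambda_0)$ (no bubbling) then extracts a subsequential limit: a (possibly broken) Floer trajectory for the constant pair $(H,J_t)$ connecting two $H$-orbits $\Phi_{H_n^-}(z_-^n)$ and $\Phi_{H_n^+}(z_+^n)$ for $n$ large. The total index and energy being zero and the Floer equation being $s$-translation invariant, every piece of the limit is $s$-constant, forcing these two $H$-orbits to coincide, a contradiction.

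Consequently, for $H^\pm$ sufficiently close to $H$, the chain map $\phi_{H^+}\circ \sigma_{H^+H^-}\circ \phi_{H^-}^{-1}$ is the identity, so $\sigma_{H^+H^-}$ is an isomorphism. The main technical obstacle is the compactness step: one must track limits with \emph{both} varying Hamiltonian/almost-complex data and varying asymptotic orbits, controlling broken configurations and ensuring the action window $(a,b)$ is preserved throughout the limiting process. This is handled by: (i) the $C^0$-confinement of Lemma~\ref{lem:convexity}, (ii) exactness of $\lambda_0$ to preclude sphere bubbling, (iii) the $C^1$-convergence $z_\pm^n\to \Phi_{H_n^\pm}(z_\pm^n)\in \mathscr{P}_\alpha^{(a,b)}(H)$ supplied by the stability step, and (iv) the uniform energy bound from~(\ref{e:energyid}) together with monotonicity of the homotopy.
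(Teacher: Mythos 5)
Your proposal takes a genuinely different route from the paper. The paper introduces an intermediate Hamiltonian $H'$ (agreeing with $H^-$ on $D^F_\eta T^*M$ and linear with the slope $\tau_+$ of $H^+$ outside $D^F_{2\eta}T^*M$), factors $\sigma_{H^+H^-}=\sigma_{H^+H'}\circ\sigma_{H'H^-}$, and then uses the convexity confinement of Lemma~\ref{lem:convexity} twice: the homotopy $H^-\rightsquigarrow H'$ equals $H^-$ on the region where all trajectories live, so $\sigma_{H'H^-}$ is the identity at chain level; and the homotopy $H'\rightsquigarrow H^+$ has matching slopes at infinity, so it and its reverse both confine trajectories, allowing $\sigma_{H^+H'}$ to be inverted. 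Your proposal bypasses $H'$ and instead argues that the chain-level continuation map must equal the canonical orbit bijection, via an implicit-function-theorem stability step and a Gromov--Floer compactness argument.

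There is, however, a genuine gap. Your argument opens with ``Because $H$ is nondegenerate\ldots,'' but $H$ is \emph{not} assumed nondegenerate in Lemma~\ref{lem:localiso}---only $H^\pm$ are. In fact, the paragraph immediately following the lemma in the paper uses it precisely to \emph{define} $\mathrm{HF}^{(a,b)}_*(H;\alpha)$ for degenerate $H\in\mathscr{H}^{a,b}_{\eta;\alpha}$, so the degenerate case is the main point. When $H$ is degenerate the finite set $\mathscr{P}^{(a,b)}_\alpha(H)$ and the canonical bijection $\Phi_{H'}$ need not exist, and your compactness limit produces trajectories asymptotic to degenerate $H$-orbits, for which the regularity/index dichotomy forcing the limit to be constant fails (one cannot choose $J$ regular for degenerate asymptotics). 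The paper's intermediate-Hamiltonian argument sidesteps this entirely: it never uses nondegeneracy of $H$ nor any bijection of periodic-orbit sets, only that the slope of $H$ lies off $\Lambda_\alpha$ so that $(\tau_-,\tau_+)\cap\Lambda_\alpha=\emptyset$ for $H^\pm$ close to $H$.

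A second, smaller issue: ruling out off-diagonal continuation trajectories gives a matrix that is (filtration-)triangular with diagonal entries in $\{0,1\}$; you still need to establish that each diagonal count is $1\bmod 2$, via a gluing/implicit-function-theorem argument producing and counting the nearly $s$-constant cylinder. Relatedly, the phrase ``total index and energy being zero'' is misleading: the limit energy equals $\mathscr{A}_H(z)-\mathscr{A}_H(z')$, which is not a priori zero; it vanishes only \emph{as a consequence} of the index/regularity argument forcing $z=z'$.
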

The proof of the above lemma, essentially due to Weber (see~\cite[Lemma~2.5]{We0}),  is based on the convexity result in Section~\ref{subsect:convexity} and the properties of the marked length spectrum (see~Lemma~\ref{lem:mls}).
\begin{proof}[Sketch of the proof]
As in~\cite{We0}, consider an appropriate intermediate Hamiltonian $H'$ (see Figure~\ref{fig:1}), and two homotopies  $H^0_{s,t}:=(1-\beta(s))H^-_t+\beta(s)H'_t$ and $H^1_{s,t}:=(1-\beta(s))H'_t+\beta(s)H^+_t$. Here the $C^\infty$-Hamiltonian $H'$ is required to satisfy:
\begin{itemize}
	\item  $H'=H^-$ on $D^F_\eta T^*M$;
	\item $H'=\tau_+F^*-c_0$ for some constant $c_0\in \mathbb{R}$ outside $D^F_{2\eta} T^*M$;
	\item $\frac{\partial H'}{\partial\rho}\geq 0$ for $\rho\in (\eta,2\eta)$. 
\end{itemize}
  Then The standard arguments (see~\cite[Section~3.4]{Sa}) implies that the homomorphisms $\sigma_{H^+H^-}$ and $\sigma_{H^+H'}\circ\sigma_{H'H^-}$ are equal. Hence it suffices to prove that both $\sigma_{H^+H'}$ and $\sigma_{H'H^-}$ are isomorphisms whenever $H^\pm$ are sufficiently close to $H\in \mathscr{H}_{\eta;\alpha}^{a,b}$.

To prove this, let us first note that since $\mathbb{R}\setminus\Lambda_\alpha$ is open by Lemma~\ref{lem:mls},  $(\tau_-,\tau_+)\cap \Lambda_\alpha=\emptyset$ provided that $H^+$ and $H^-$ are  sufficiently close to $H\in \mathscr{H}_{\eta;\alpha}^{a,b}$. From Lemma~\ref{lem:radialHamsyst} we know that no $1$-periodic Hamiltonian orbits of $H^+$ and $H^-$ appear outside $D^F_\eta T^*M$. Then, by lemma~\ref{lem:convexity}, the solution $u$ of Floer equation~(\ref{e:pFloereq}) connecting $z^-\in \mathscr{P}_{\alpha}(H^-)$ to
$z\in \mathscr{P}_{\alpha}(H')$ can not escape from $D^F_\eta T^*M$.
Moreover, we have $H^0_s=H^-$ on $D^F_\eta T^*M$ for all $s\in\mathbb{R}$. Therefore, the homomorphism $\sigma_{H'H^-}$ is the identity map.

Secondly, observe that outside $D^F_{2\eta} T^*M$ it holds that
$\frac{\partial^2  H_{s,t}}{\partial s\partial\rho}=0.$ By lemma~\ref{lem:convexity} again all solutions of Floer equation~(\ref{e:pFloereq}) corresponding to the homotopy $H^1_{s,t}$ remains in $D^F_{2\eta} T^*M$, and this also holds for its inverse homotopy $G_{s,t}:=(1-\beta(s))H^+_t+\beta(s)H'_t.$ So the usual argument of reversing the homotopy achieves $\sigma_{H^+H'}=\sigma_{H'H^+}^{-1}$, and hence $\sigma_{H^+H'}$ is an isomorphism. This completes the proof of Lemma~\ref{lem:localiso}.
\end{proof}

A direct consequence of the above lemma is that one can still define ${\rm HF}^{(a,b)}(H^-;\alpha)$ for every $H\in \mathscr{H}_{\eta;\alpha}^{a,b}$ when $H$ is degenerate, by simply setting
$${\rm HF}^{(a,b)}(H;\alpha):={\rm HF}^{(a,b)}(\widetilde{H};\alpha)$$
for any nondenerate Hamiltonian $\widetilde{H}\in \mathscr{H}_{\eta;\alpha}^{a,b}$ which is sufficiently close to $H$.
Moreover, by composing the above local isomorphisms we deduce that in each component of $\mathscr{H}_{\eta;\alpha}^{a,b}$ all the functions have identical Floer homology groups.

\subsection{Monotone homotopies}\label{subsec:Monotone homotopies}

Let $H,K\in \mathscr{H}_{\eta;\alpha}^{a,b}$ be two functions with $H(t,z)\leq K(t,z)$ for all $(t,z)\in S^1\times T^*M$. Choose a smooth homotopy $s\to H_{s}=\{H_{s,t}\}$ from $H$ to $K$ such that $\partial_s H_{s}\geq0$ everywhere and $\frac{\partial^2H_{s,t}}{\partial s\partial\rho}\geq 0$ for every $\rho\geq \eta$. Here we do not require $H_{s,t}$ to be in $\mathscr{H}_{\eta;\alpha}^{a,b}$ for every $s\in [0,1]$. From the energy identity (\ref{e:energyid}) we deduce that such a homotopy induces a natural homomorphism, which is called \emph{monotone homomorphism}
\begin{equation}\label{e:MH}
\sigma_{KH}:{\rm HF}^{(a,b)}(H;\alpha)\to {\rm HF}^{(a,b)}(K;\alpha).
\end{equation}
(see, e.g., \cite{BPS,FH,CFH,SZ,Vi}):

\begin{lemma}\label{lem:mh}
These monotone homomorphisms are independent of the choice of the monotone homotopy of Hamiltonians and satisfy the following properties
\begin{equation}
\begin{aligned}
\sigma_{HH}&={\rm id}\quad\forall\ \;H\in \mathscr{H}_{\eta;\alpha}^{a,b},\\
\sigma_{KH}\circ\sigma_{HG}&=\sigma_{KG}
\end{aligned}
\end{equation}
whenever $ G,H,K\in \mathscr{H}_{\eta;\alpha}^{a,b}$ satisfy $G\leq H\leq K$.
\end{lemma}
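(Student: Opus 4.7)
The plan is to follow the standard template for functoriality of monotone continuation maps in Floer theory (as in \cite{BPS,CFH,SZ}), adapted to the filtered setting on $T^*M$ by using the convexity estimates of Section~\ref{subsect:convexity}. Throughout, the key ingredient that makes every moduli space compact is the observation that for any homotopy $\{H_{s,t}\}$ which equals $\tau(s)F^* - c(s)$ at infinity with $\partial_s\tau\geq 0$, Lemma~\ref{lem:convexity} confines all finite-energy Floer trajectories to a fixed compact set of $T^*M$; and the energy identity (\ref{e:energyid}) combined with $\partial_s H_{s,t}\geq 0$ forces all such trajectories to decrease action, so that the filtration $(a,b)$ is preserved at the chain level.

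To establish $\sigma_{HH}=\mathrm{id}$, I would take the constant homotopy $H_{s,t}\equiv H_t$, which is trivially monotone and lies fiberwise in $\mathscr{H}_{\eta;\alpha}^{a,b}$. Then (\ref{e:pFloereq}) becomes the $s$-independent Floer equation, and the energy identity gives $E(u)=\mathscr{A}_H(z_-)-\mathscr{A}_H(z_+)$. When $\mu_{CZ}(z_-)=\mu_{CZ}(z_+)$, dimension counting together with transversality after a small perturbation of $J$ forces every rigid trajectory to be $s$-independent, whence $z_-=z_+$ and the induced chain map is the identity.

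For independence of the monotone homotopy, suppose $\{H^0_s\}$ and $\{H^1_s\}$ are two monotone homotopies from $H$ to $K$. I connect them by a two-parameter family $\{H^\lambda_s\}_{\lambda\in[0,1]}$, e.g.\ by $H^\lambda_s := (1-\lambda)H^0_s + \lambda H^1_s$, which preserves the monotonicity $\partial_s H^\lambda_s\geq 0$ and keeps the linear behaviour at infinity with $\partial_s\tau(\lambda,s)\geq 0$. The parametric transversality argument (together with Lemma~\ref{lem:convexity} applied $\lambda$-slice by $\lambda$-slice, giving a $C^0$-bound uniform in the compact parameter $\lambda$) yields a $1$-dimensional moduli space whose boundary is the usual chain homotopy between the continuation maps of $H^0$ and $H^1$. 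Hence they induce the same map on homology.

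Finally, for $\sigma_{KH}\circ\sigma_{HG}=\sigma_{KG}$, I would use the catenation-and-stretch trick. Given monotone homotopies $H^{GH}_s$ and $H^{HK}_s$, define for $R>0$ the spliced homotopy $H^R_s$ equal to $H^{GH}_{s+R}$ for $s\leq -1$ and to $H^{HK}_{s-R}$ for $s\geq 1$, with a smooth monotone interpolation on $[-1,1]$. This is itself a monotone homotopy from $G$ to $K$, so by the independence already proved it represents $\sigma_{KG}$. Letting $R\to\infty$, the standard broken-trajectory analysis (compactness modulo breaking at a $1$-periodic orbit of $H$, enabled again by Lemma~\ref{lem:convexity} and by the action-filtration estimate which confines the break to periodic orbits of $H$ with action in $(a,b)$) identifies the rigid solutions of $H^R_s$ with pairs of rigid trajectories of $H^{GH}_s$ and $H^{HK}_s$ glued through such a periodic orbit. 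On the chain level this is precisely $\sigma_{KH}\circ\sigma_{HG}$, proving the identity on homology.

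The main obstacle is that the interpolating Hamiltonians $H^\lambda_s$ and the spliced Hamiltonians $H^R_s$ need not lie in $\mathscr{H}_{\eta;\alpha}^{a,b}$ for every fixed $s$; in particular they may create $1$-periodic orbits with action in $\{a,b\}$, which would break the filtered chain map. This is handled by arranging the two-parameter family and the splicing so that the asymptotic slope $\tau(\lambda,s)$ (respectively $\tau^R(s)$) is still non-decreasing in $s$ and does not cross $\Lambda_\alpha$ in a way that produces boundary orbits, together with the action-decrease estimate from (\ref{e:energyid}) which keeps the intermediate action values strictly inside $(a,b)$ whenever the endpoints are.
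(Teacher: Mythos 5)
Your proposal is correct and takes essentially the same approach as the paper. The paper itself defers the proof of this lemma to the standard references \cite{BPS,FH,CFH,SZ,Vi}, and your argument is precisely the standard one (constant homotopy for $\sigma_{HH}=\mathrm{id}$, a two-parameter family for independence of homotopy, and catenate-and-stretch for the composition law), correctly adapted to the filtered setting by invoking the convexity bound (Lemma~\ref{lem:convexity}) for $C^0$-compactness and the energy identity~(\ref{e:energyid}) for action decrease.
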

As a corollary we have
\begin{lemma}[{\rm see~\cite{Vi} or \cite[Section~4.5]{BPS}}]\label{lem:misom}
If $K_s$ is a monotone homotopy from $H$ to $K$ such that
$K_s\in \mathscr{H}_{\eta;\alpha}^{a,b}$ for every $s\in[0,1]$, then $\sigma_{KH}$ is an isomorphism.
\end{lemma}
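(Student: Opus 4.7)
The plan is to discretize the monotone homotopy $\{K_s\}_{s\in[0,1]}$ into finitely many small steps, apply the local isomorphism of Lemma~\ref{lem:localiso} on each step, and then compose via Lemma~\ref{lem:mh}.

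First, I would use compactness of $[0,1]$ together with the openness of the conditions defining $\mathscr{H}_{\eta;\alpha}^{a,b}$ (continuity of the slope at infinity, closedness of the action spectrum $\text{Spec}(K_s;\alpha)$ near $a$ and $b$, and closedness of $\Lambda_\alpha$ from Lemma~\ref{lem:mls}) to produce a partition $0=s_0<s_1<\cdots<s_N=1$ such that, for each $i$, both $K_{s_i}$ and $K_{s_{i+1}}$ are sufficiently close (in the appropriate $C^2$-sense) to the fixed Hamiltonian $K_{s_i}\in \mathscr{H}_{\eta;\alpha}^{a,b}$ to satisfy the hypothesis of Lemma~\ref{lem:localiso}. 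Continuity of $s\mapsto K_s$ together with uniformity over the compact parameter interval makes this partition elementary to produce; the key inputs are that $\Lambda_\alpha$ is closed and nowhere dense and that no action value of $K_{s_i}$ equals $a$ or $b$, so a small neighborhood still avoids these critical values.

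The main obstacle, and main content, is identifying the monotone homomorphism $\sigma_{K_{s_{i+1}} K_{s_i}}$ defined via the monotone restriction $\{K_s\}_{s\in[s_i,s_{i+1}]}$ with the \emph{a priori} non-monotone local isomorphism of Lemma~\ref{lem:localiso}. This is a standard ``homotopy of homotopies'' argument: both maps arise as continuation maps associated with homotopies between the same endpoints that lie in a common small neighborhood of $K_{s_i}$ where the arguments of Section~\ref{ssec:Homotopy invariance} apply. The required $C^0$-bounds for the two-parameter Floer equation are furnished by Lemma~\ref{lem:convexity}, and the fact that $K_s\in \mathscr{H}_{\eta;\alpha}^{a,b}$ for every $s$ guarantees that no action value crosses $a$ or $b$ during the interpolation, so the action filtration is preserved throughout. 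One concludes that $\sigma_{K_{s_{i+1}} K_{s_i}}$ agrees with the local isomorphism of Lemma~\ref{lem:localiso}, and is therefore itself an isomorphism.

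Finally, applying the composition rule in Lemma~\ref{lem:mh} telescopes
$$\sigma_{KH} \;=\; \sigma_{K_{s_N} K_{s_{N-1}}} \circ \sigma_{K_{s_{N-1}} K_{s_{N-2}}} \circ \cdots \circ \sigma_{K_{s_1} K_{s_0}},$$
exhibiting $\sigma_{KH}$ as a composition of isomorphisms, hence itself an isomorphism, as claimed.
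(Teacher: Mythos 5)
Your proposal is correct and follows essentially the same compactness-and-subdivision strategy that the paper delegates to the cited sources (\cite{Vi}, \cite[Section~4.5]{BPS}): refine $[0,1]$ so that each consecutive pair $K_{s_i}$, $K_{s_{i+1}}$ falls within the range of Lemma~\ref{lem:localiso}, identify the resulting local isomorphism with the monotone map $\sigma_{K_{s_{i+1}}K_{s_i}}$ (here one may simply invoke the independence-of-monotone-homotopy part of Lemma~\ref{lem:mh}, since the convex interpolation between $K_{s_i}\leq K_{s_{i+1}}$ is already a monotone homotopy, so a separate homotopy-of-homotopies argument is not strictly needed), and conclude by telescoping via the composition rule of Lemma~\ref{lem:mh}.
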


\subsection{Symplectic homology}
Following closely the diskussion in~\cite[Sections~4.6 and ~4.7]{BPS},
we consider the direct and inverse limits of Floer homology groups, see also~\cite[Section~3.1]{We0}.
\subsubsection{Partially ordered set $(\mathscr{H}_\alpha^{a,b},\preceq)$}
Fix a free homotopy class $\alpha\in[S^1,M]$. For $-\infty\leq a<b\leq +\infty$, we define
$$\mathscr{H}_\alpha^{a,b}:=\big\{H\in C^\infty_0(S^1\times D^FT^*M)\ \big|\ a,b\notin \hbox{Spec}(H;\alpha)
\big\}$$
of all Hamiltonians which are compactly supported in $D^FT^*M$ and do not contain $a$ and $b$ in their action spectrum. We introduce the \emph{partial-order relation} $\preceq$ on the set $\mathscr{H}_\alpha^{a,b}$ by
$$H_0\preceq H_1\quad \Longleftrightarrow \quad H_0(t,z)\leq H_1(t,z)\quad \hbox{for all}\; (t,z)\in S^1\times D^FT^*M.$$
\subsubsection{Inverse limits}
Note that $\alpha\neq 0$ implies $\mathscr{H}_\alpha^{a,b}\subseteq \mathscr{H}_{1;\alpha}^{a,b}$. When $\alpha$ is the homotopy class of constant loops, we ask the intervals $[a,b]$ to not contain $0$. In this case, we still have $\mathscr{H}_\alpha^{a,b}\subseteq \mathscr{H}_{1;\alpha}^{a,b}$.
So the monotone homomorphisms $\sigma_{H_1H_0}$ in Section~\ref{subsec:Monotone homotopies} yields the partially ordered system $(\hbox{HF},\sigma)$ of $\mathbb{Z}_2$-vector spaces over $\mathscr{H}_\alpha^{a,b}$, that is, $\hbox{HF}$ assigns to every $H\in \mathscr{H}_\alpha^{a,b}$ the $\mathbb{Z}_2$-vector space $\hbox{HF}^{(a,b)}_*(H;\alpha)$, and $\sigma$ assigns to any two elements $H_0,H_1\in \hbox{HF}^{(a,b)}_*(H;\alpha)$ with $H_0\preceq H_1$ the monotone homomorphism $\sigma_{H_1H_0}$ satisfying (\ref{lem:mh}). The partial order system $(\mathscr{H}_\alpha^{a,b},\preceq)$
is \emph{downward directed}, meaning that for any $H_1,H_2\in \mathscr{H}_\alpha^{a,b}$, there exists $H_0\in \mathscr{H}_\alpha^{a,b}$ such that $H_0\preceq H_1$ and $H_0\preceq H_2$. The functor $(\hbox{HF},\sigma)$
is an \emph{inverse system} of $\mathbb{Z}_2$ vector spaces over $\mathscr{H}_\alpha^{a,b}$.  We call its inverse limit the \emph{symplectic homology} of $D^FT^*M$ in the free homotopy class $\alpha$ with action interval $(a,b)$ and denote it by
\begin{equation}
\begin{aligned}
&\underleftarrow{\hbox{SH}}
^{(a,b)}_*(D^FT^*M;\alpha):=\lim
\limits_{\substack{\longleftarrow\\H\in \mathscr{H}_\alpha^{a,b}}}\hbox{HF}^{(a,b)}_*(H;\alpha)\notag\\
:&=\bigg\{\{e_H\}_{H\in \mathscr{H}_\alpha^{a,b}}\in \prod\limits_{H\in \mathscr{H}_\alpha^{a,b}}\hbox{HF}^{(a,b)}_*(H;\alpha)
\bigg|H_1\preceq H_2\Rightarrow \sigma_{H_2H_1}(e_{H_1})=e_{H_2}
\bigg\}.\notag
\end{aligned}
\end{equation}

For $H\in \mathscr{H}_\alpha^{a,b}$, one can define the natural projection
$$\pi_H:\underleftarrow{\hbox{SH}}
^{(a,b)}_*(D^FT^*M;\alpha)\rightarrow \hbox{HF}^{(a,b)}_*(H;\alpha)$$
which satisfies $\pi_{H_1}=\sigma_{H_1H_0}\circ\pi_{H_0}$ whenever $H_0\preceq H_1$.

\subsubsection{Direct limits}
Fix $c>0$. Next we introduce \emph{relative symplectic homology} of the pair $(D^FT^*M,M)$ at the level $c$ in the homotopy class $\alpha$ for the action interval $(a,b)$. Consider the subset
$$\mathscr{H}_\alpha^{a,b;c}:=\big\{H\in\mathscr{H}_\alpha^{a,b}\big|
\sup\limits_{S^1\times M}H< -c\big\}.$$
This set is \emph{upward directed}. Namely, for any $H_0,H_1\in \mathscr{H}_\alpha^{a,b;c}$, there exists $H_2\in \mathscr{H}_\alpha^{a,b;c}$ such that $H_0\preceq H_2$ and $H_1\preceq H_2$. The functor
 $(\hbox{HF},\sigma)$
can be viewed as an \emph{direct system} of $\mathbb{Z}_2$-vector spaces over $\mathscr{H}_\alpha^{a,b;c}$, whose \emph{direct limit} is defined as
\begin{eqnarray}
\underrightarrow{\hbox{SH}}
^{(a,b);c}_*(D^FT^*M,M;\alpha)&:=&\lim
\limits_{\substack{\longrightarrow\\H\in \mathscr{H}_\alpha^{a,b;c}}}\hbox{HF}^{(a,b)}_*(H;\alpha)\notag\\
&:=&\big\{(H,e_H)
\big|H\in \mathscr{H}_\alpha^{a,b;c},\; e_H\in \hbox{HF}^{(a,b)}_*(H;\alpha)
\big\}/\sim.\notag
\end{eqnarray}
Here the equivalence relation is defined as follows: $(H_0,e_{H_0})\sim (H_1,e_{H_1})$ if and only if there exists $H_2\in \mathscr{H}_\alpha^{a,b;c}$
such that $H_0\preceq H_2$, $H_1\preceq H_2$ and $\sigma_{H_2H_0}(e_{H_0})=\sigma_{H_2H_1}(e_{H_1})$. The direct limit is
a $\mathbb{Z}_2$-vector space with the operations
$$k[H_0,e_{H_0}]:=[H_0,ke_{H_0}], \quad [H_0,e_{H_0}]+[H_1,e_{H_1}]:=[H_2,\sigma_{H_2H_0}(e_{H_0})
+\sigma_{H_2H_1}(e_{H_1})],$$
where $k\in \mathbb{Z}_2$, and $H_2\in \mathscr{H}_\alpha^{a,b;c}$ with
$H_0\preceq H_2$ and $H_1\preceq H_2$. For $H\in \mathscr{H}_\alpha^{a,b;c}$, let
$$\iota_H:\hbox{HF}^{(a,b)}_*(H;\alpha)\longrightarrow \underrightarrow{\hbox{SH}}
^{(a,b);c}_*(D^FT^*M,M;\alpha),\quad e_H\mapsto [H,e_H]$$
be the natural homomorphism. Clearly, we have $\iota_{H_0}=\iota_{H_1}\circ\sigma_{H_1H_0}$.

\subsubsection{Exhausting sequences}
A sequence $\{H_i\in \mathscr{H}_\alpha^{a,b}|i\in \mathbb{N}\}$ is called \emph{downward exhausting} for $(\hbox{HF},\sigma)$ if it holds the following two properties
\begin{itemize}
  \item for every $i\in \mathbb{N}$ we have $H_{i+1}\preceq H_i$ and $\sigma_{H_iH_{i+1}}:\hbox{HF}^{(a,b)}_*(H_{i+1};\alpha)\to \hbox{HF}^{(a,b)}_*(H_i;\alpha)$ is an isomorphism
  \item for every $H\in \mathscr{H}_\alpha^{a,b}$ there exists a $i\in \mathbb{N}$ such that $H_i\preceq H$.
\end{itemize}

Correspondingly, a sequence $\{H_i\in \mathscr{H}_\alpha^{a,b;c}|i\in \mathbb{N}\}$ is called \emph{upward exhausting} for $(\hbox{HF},\sigma)$ if and only if it holds that
\begin{itemize}
  \item for every $i\in \mathbb{N}$ we have $H_i\preceq H_{i+1}$ and $\sigma_{H_{i+1}H_i}:\hbox{HF}^{(a,b)}_*(H_i;\alpha)\to \hbox{HF}^{(a,b)}_*(H_{i+1};\alpha)$ is an isomorphism
  \item for every $H\in \mathscr{H}_\alpha^{a,b;c}$ there exists a $i\in \mathbb{N}$ such that $H\preceq H_i$.
\end{itemize}

\begin{lemma}\label{lem:inv/dirlimit}
Let $({\rm HF},\sigma)$ be the partially ordered system of $\mathbb{Z}_2$-vector spaces over $(\mathscr{H}_\alpha^{a,b},\preceq)$.
\begin{itemize}
  \item [(i)] If $\{H_i\in \mathscr{H}_\alpha^{a,b}|i\in \mathbb{N}\}$ is a downward exhausting sequence for $({\rm HF},\sigma)$, then the homomorphism $\pi_{H_i}:\underleftarrow{{\rm SH}}
^{(a,b)}_*(D^FT^*M;\alpha)\rightarrow {\rm HF}^{(a,b)}_*(H_i;\alpha)$ is an isomorphism for every $i\in \mathbb{N}$.
  \item [(ii)] If $\{H_i\in \mathscr{H}_\alpha^{a,b;c}|i\in \mathbb{N}\}$ is a upward exhausting sequence for $({\rm HF},\sigma)$, then the homomorphism $\iota_{H_i}:\underrightarrow{{\rm SH}}
^{(a,b);c}_*(D^FT^*M,M;\alpha)\rightarrow {\rm HF}^{(a,b)}_*(H_i;\alpha)$ is an isomorphism for every $i\in \mathbb{N}$.
\end{itemize}
\end{lemma}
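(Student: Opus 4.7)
My plan is a formal limit-theoretic argument using only the composition and local invariance properties of Lemma~\ref{lem:mh}. The key preliminary observation is that for the exhausting sequence of part~(i), whenever $j\ge i$ the telescoping identity $\sigma_{H_iH_j}=\sigma_{H_iH_{i+1}}\circ\sigma_{H_{i+1}H_{i+2}}\circ\cdots\circ\sigma_{H_{j-1}H_j}$ shows that $\sigma_{H_iH_j}$ is an isomorphism, since each factor is by hypothesis; dually, in part~(ii), $\sigma_{H_jH_i}$ is an isomorphism for $j\ge i$. So each exhausting subsequence is a tower of isomorphisms, and the claim reduces to showing that condition~(2) in each definition makes this subsequence cofinal inside the ambient partially ordered system in the appropriate sense.

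For part~(i), to prove surjectivity of $\pi_{H_i}$ I take $e_i\in\hbox{HF}^{(a,b)}_*(H_i;\alpha)$ and build a compatible family $\{e_H\}_H$ as follows. For each $H\in\mathscr{H}_\alpha^{a,b}$, condition~(2) produces some $j_0$ with $H_{j_0}\preceq H$; choosing any $j\ge\max(i,j_0)$, I set $e_H:=\sigma_{HH_j}\bigl(\sigma_{H_iH_j}^{-1}(e_i)\bigr)$. Applying the composition identity $\sigma_{H_iH_{j'}}=\sigma_{H_iH_j}\circ\sigma_{H_jH_{j'}}$ for $j'\ge j$ shows the result is independent of $j$, and for $H\preceq H'$ picking a common $j$ produces $\sigma_{H'H}(e_H)=e_{H'}$; thus $\{e_H\}$ lies in the inverse limit and projects to $e_i$. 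For injectivity, if $(e_H)\in\ker\pi_{H_i}$, then for any $H$ I pick $j\ge i$ with $H_j\preceq H$; compatibility gives $\sigma_{H_iH_j}(e_{H_j})=e_{H_i}=0$, hence $e_{H_j}=0$ by the isomorphism property above, and therefore $e_H=\sigma_{HH_j}(e_{H_j})=0$.

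Part~(ii) is formally dual. Given $[H,e_H]\in\underrightarrow{\hbox{SH}}^{(a,b);c}_*(D^FT^*M,M;\alpha)$, I pick $j\ge i$ with $H\preceq H_j$ and $H_i\preceq H_j$ (possible by condition~(2) upward together with the monotonicity of the sequence) and define a preimage candidate $e_i:=\sigma_{H_jH_i}^{-1}\bigl(\sigma_{H_jH}(e_H)\bigr)$. The same composition identities force $e_i$ to be independent of $j$ and of the chosen representative $(H,e_H)$, and the equality $[H_i,e_i]=[H,e_H]$ follows from taking $H_j$ itself as common upper bound in the defining equivalence relation on the direct limit. Injectivity: if $\iota_{H_i}(e)=0$, then by definition there exists $k\ge i$ with $\sigma_{H_kH_i}(e)=0$; since $\sigma_{H_kH_i}$ is an isomorphism, $e=0$. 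I do not anticipate any genuine obstacle here -- the whole argument is diagram-chasing once the telescoping observation is in place -- and the only point worth flagging is that all auxiliary dominating (or dominated) Hamiltonians supplied by the exhausting condition must be taken inside the exhausting subsequence, which is automatic from its monotonicity, so that the isomorphism property of the transition maps is available throughout.
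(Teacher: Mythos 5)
Your argument is correct and is exactly the standard cofinality diagram-chase that the paper (following \cite{BPS}, Sections~4.6--4.7) intends but does not spell out. The key observations you isolate---the telescoping identity showing $\sigma_{H_iH_j}$ (resp.\ $\sigma_{H_jH_i}$) is an isomorphism whenever both indices lie in the exhausting subsequence, and the cofinality of that subsequence inside the ambient directed set---are precisely what is needed, and your constructions of the compatible family for surjectivity in~(i) and of the preimage $e_i$ in~(ii) are the natural ones. One small imprecision: in the injectivity step of part~(ii) you write that $\iota_{H_i}(e)=0$ means ``by definition'' there exists $k\ge i$ with $\sigma_{H_kH_i}(e)=0$. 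Strictly, the definition of the direct limit only produces some $K\in\mathscr{H}_\alpha^{a,b;c}$ with $H_i\preceq K$ and $\sigma_{KH_i}(e)=0$; one must then invoke the second exhausting condition to find $H_k$ with $K\preceq H_k$ (and $k\ge i$ by enlarging), and then $\sigma_{H_kH_i}(e)=\sigma_{H_kK}\circ\sigma_{KH_i}(e)=0$ forces $e=0$. You flag exactly this point in your closing sentence, so the awareness is there; I only note that the phrase ``by definition'' compresses two steps. Everything else, including the independence-of-$j$ verifications and the use of the equivalence relation on the direct limit to conclude $[H_i,e_i]=[H,e_H]$, is sound.
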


Before the end of this section, let us state the following proposition, which is an adaptation of {\cite[Proposition~4.8.2]{BPS}}.

\begin{proposition}[{\cite[Proposition~4.8.2]{BPS}}]\label{prop:T-hom}
Let $\alpha\in[S^1,M]$ be a free homotopy class and suppose that $-\infty\leq a<b\leq +\infty$. Then for any $c\in\mathbb{R}$ there exists a unique homomorphism
$$T^{(a,b);c}_\alpha:\underleftarrow{{\rm SH}}
^{(a,b)}_*(D^FT^*M;\alpha)\longrightarrow \underrightarrow{{\rm SH}}
^{(a,b);c}_*(D^FT^*M,M;\alpha)$$
such that for every $H\in \mathscr{H}_\alpha^{a,b;c}$, the following diagram commutes:
\begin{equation}\label{e:factorization}
\xymatrix{\underleftarrow{{\rm SH}}
^{(a,b)}_*(D^FT^*M;\alpha)
\ar[rr]^{T^{(a,b);c}_\alpha}\ar[dr]_{\pi_H}& & \underrightarrow{{\rm SH}}
^{(a,b);c}_*(D^FT^*M,M;\alpha)\\ & {\rm HF}^{(a,b)}_{\alpha}(H;\alpha)\ar[ur]_{\iota_H} & }
\end{equation}

\end{proposition}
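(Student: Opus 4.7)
The proposition is essentially a formal categorical statement, so my plan is to construct $T^{(a,b);c}_\alpha$ explicitly using an element in $\mathscr{H}_\alpha^{a,b;c}$ as a ``bridge'' between the two limits, and verify well-definedness via the upward directedness of $\mathscr{H}_\alpha^{a,b;c}$ together with the compatibility condition built into the inverse limit.

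First, I would fix any $H_0 \in \mathscr{H}_\alpha^{a,b;c}$ (such an $H_0$ exists: choose a smooth function compactly supported in $D^FT^*M$ which is identically $-c-1$ on $M$ and has no $1$-periodic orbits at action levels $a$ or $b$). Given $\{e_H\}_{H\in\mathscr{H}_\alpha^{a,b}} \in \underleftarrow{\rm SH}^{(a,b)}_*(D^FT^*M;\alpha)$, I would define
$$T^{(a,b);c}_\alpha\bigl(\{e_H\}\bigr) := [H_0, e_{H_0}] \in \underrightarrow{\rm SH}^{(a,b);c}_*(D^FT^*M,M;\alpha).$$
To see that this does not depend on the choice of $H_0$, pick another $H_1 \in \mathscr{H}_\alpha^{a,b;c}$. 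Using that $\mathscr{H}_\alpha^{a,b;c}$ is upward directed, select $H_2 \in \mathscr{H}_\alpha^{a,b;c}$ with $H_0 \preceq H_2$ and $H_1 \preceq H_2$. The inverse limit condition forces
$$\sigma_{H_2 H_0}(e_{H_0}) = e_{H_2} = \sigma_{H_2 H_1}(e_{H_1}),$$
so by the very definition of the equivalence relation $\sim$ defining the direct limit, we have $[H_0, e_{H_0}] = [H_1, e_{H_1}]$. Hence $T^{(a,b);c}_\alpha$ is well-defined. Linearity over $\mathbb{Z}_2$ follows immediately since the addition on the direct limit is defined by pushing both summands forward under monotone homomorphisms, and $\sigma_{KH}$ is $\mathbb{Z}_2$-linear by Lemma~\ref{lem:mh}.

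Next, for the commutativity of~\eqref{e:factorization} with a given $H \in \mathscr{H}_\alpha^{a,b;c}$, observe that $\pi_H$ is simply the projection $\{e_K\} \mapsto e_H$ and that $\iota_H(e_H) = [H, e_H]$. Hence
$$\iota_H\circ\pi_H\bigl(\{e_K\}\bigr) = [H, e_H] = T^{(a,b);c}_\alpha\bigl(\{e_K\}\bigr),$$
where the last equality uses the independence of the definition from the choice of reference Hamiltonian (taking $H_0 = H$). This establishes commutativity of the triangle for every $H \in \mathscr{H}_\alpha^{a,b;c}$.

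Finally, uniqueness: any homomorphism $T'$ making~\eqref{e:factorization} commute must satisfy $T'(\{e_K\}) = \iota_H(\pi_H(\{e_K\})) = [H,e_H]$ for some (equivalently every) $H \in \mathscr{H}_\alpha^{a,b;c}$, which coincides with our $T^{(a,b);c}_\alpha$. I do not anticipate a serious obstacle in the proof: all the analytic content---existence of the monotone homomorphisms $\sigma_{KH}$, their composition rule, their independence from the choice of homotopy, and the nonemptiness of the relevant function spaces---has been established in Lemmas~\ref{lem:mh}--\ref{lem:inv/dirlimit} and Section~\ref{ssec:Homotopy invariance}. The remaining argument is purely algebraic, essentially the universal property of inverse and direct limits combined with the fact that $\mathscr{H}_\alpha^{a,b;c}$ is a cofinal upward-directed subset on which the compatibility condition collapses.
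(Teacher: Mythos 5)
Your proof is correct and is the standard universal-property argument for factoring an inverse limit through a direct limit over an upward-directed subset: fix a reference $H_0\in\mathscr{H}_\alpha^{a,b;c}$, send $\{e_H\}\mapsto[H_0,e_{H_0}]$, and use upward directedness together with the inverse-limit compatibility condition to show the definition is independent of $H_0$, from which both commutativity and uniqueness follow immediately. The paper does not reproduce the argument and simply cites \cite[Proposition~4.8.2]{BPS}; your write-up supplies exactly the expected proof, with the only (harmless) informal step being the claim that $\mathscr{H}_\alpha^{a,b;c}\neq\emptyset$ — which holds after a small perturbation of your candidate $H_0$ to push $a,b$ off the action spectrum, and in the degenerate case $\mathscr{H}_\alpha^{a,b;c}=\emptyset$ the target direct limit is zero and the statement holds vacuously.
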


\section{The isomorphism between the Morse and the Floer complex}\label{SAS}
\subsection{The  isomorphism }
In order to compute the groups $\underleftarrow{{\rm SH}}
^{(a,b)}_*(D^FT^*M;\alpha)$ and $\underrightarrow{{\rm SH}}
^{(a,b);c}_*(D^FT^*M,M;\alpha)$, we use an isomorphism between the Morse and the Floer complex following \cite{AS1}.

Let $M$ be a smooth compact connected $n$-dimensional manifold without boundary.
Recall that a smooth Lagrangian $$L:S^1\times TM\to\mathbb{R}$$ is \emph{fiberwise convex and quadratic at infinity} if it satisfies:
\begin{enumerate}
\item[(\textbf{L1})] there exists $l_1>0$ such that
$$\partial_{vv}L(t,x,v)\geqslant l_1 \hbox{I}\quad \forall\ \; (t,x,v)\in S^1\times TM;$$
\item[(\textbf{L2})] there exists $l_2>0$ such that
\begin{gather}
\|\partial_{vv}L(t,x,v)\|\leqslant l_2,\quad \|\partial_{xv}L(t,x,v)\|\leqslant l_2(1+|v|), \notag\\ \|\partial_{xx}L(t,x,v)\|\leqslant l_2(1+|v|^2)\quad \forall\ \;(t,x,v)\in S^1\times TM\notag\\
\end{gather}
with respect to some Riemannian metric $g$ on $M$ with $|v|_x^2:=g_x(v,v)$.
\notag
\end{enumerate}
Equivalently, there exists a finite atlas on $M$ and two constants $0<c_0<c_1$ such that in every chart of this atlas the following conditions hold:
\begin{itemize}
  \item [(\textbf{L1})] $\sum_{i,j}\frac{\partial^2 }{\partial v_i\partial v_j}L(t,x,v)u_iu_j\geq c_0|u|^2$ \quad $\forall\  t\in S^1,\;\forall\  u=(u_1,u_2,\ldots,u_n)\in \mathbb{R}^n$.
  \item [(\textbf{L2})]$
\big|\frac{\partial^2 }{\partial v_i\partial v_j}L(t,x,v)\big|\leq c_1,\quad \big|\frac{\partial^2 }{\partial x_i\partial v_j}L(t,x,v)\big|\leq c_1(1+|v|)$ \quad and \\
$\big|\frac{\partial^2 }{\partial x_i\partial x_j}L(t,x,v)\big|\leq c_1(1+|v|^2),\quad \forall\ \;(t,x,v)\in S^1\times TM.
$
\end{itemize}

Denote by $\mathcal{L}(M)$ the space of absolutely continuous curves $\gamma:S^1\rightarrow M$ such that
$$\int_{S^1}g(\dot{\gamma}(t),\dot{\gamma}(t))dt<\infty,$$
and denote by $\mathcal{L}_\alpha(M)$ the connected component whose elements represent the free homotopy class $\alpha$. It is well known that $\mathcal{L}_\alpha(M)$ has Hilbert manifold structures and that its tangent space at $\gamma$ $T_\gamma \mathcal{L}_\alpha(M)$ can be identified with the Hilbert space $W^{1,2}(\gamma^*TM)$ equipped with the inner product
$$\langle\xi,\eta\rangle_{W^{1,2}}:=\int_0^1g(\xi(t),\eta(t))dt
+\int_0^1g(\nabla^g \xi(t),\nabla^g \eta(t))dt.$$
Here $W^{1,2}(\gamma^*TM)$ denotes the space of all $1$-periodic vector fields along $\gamma$ with finite $W^{1,2}$-norm.

\begin{remark}
Since $\mathcal{L}_\alpha(M)$ is homotopy equivalent to $\Lambda_\alpha(M)$, if no ambiguity is possible,
we sometimes just work on $\Lambda_\alpha(M)$ instead of the complete space $\mathcal{L}_\alpha(M)$ (see, e.g., using Lemma~\ref{lem:deformlemma} to show Theorem~\ref{thm:convexradial}).
\end{remark}

The \emph{Lagrangian action functional} $\mathscr{L}:\mathcal{L}_\alpha(M)\to \mathbb{R}$ is defined by
$$\mathscr{L}(x):=\int^1_0L(t,x(t),\dot{x}(t))dt\quad \forall\ \;x\in\mathcal{L}_\alpha(M).$$
Here we need to point out that in general the functional $\mathscr{L}$ is not of class $C^2$. 
To do Morse theory one may need at least $C^2$-regularity since the Morse lemma requires $C^2$-regularity (see~\cite{Cha}). The lack of $C^2$-regularity of $\mathscr{L}$ is the main difficulty that  prevents us from doing infinite dimensional Morse theory for $\mathscr{L}$. We introduce the following non-degeneracy hypothesis (see (\textbf{L0}) below).
\begin{itemize}
  \item [(\textbf{L0})] Every critical point $x$ of $\mathscr{L}$ is non-degenerate, that is, the symmetric bilinear form $d^2\mathscr{L}(x)$ on $T_x\mathcal{L}_\alpha(M)$ is non-degenerate.
\end{itemize}

We next cite the following result from~\cite{AS1}.
\begin{theorem}[\cite{AS1}]\label{Thm:AS-isomorphism}
Let $M$ be a closed manifold. Assume that $H\in C^\infty(S^1\times T^*M,\mathbb{R})$ is the Legendre transform of the Lagrangian $L\in C^\infty(S^1\times TM,\mathbb{R})$ satisfying (\textbf{L0}) -- (\textbf{L2}).
Then there exists a chain isomorphism $\Psi$ from the Floer complex of $H$ to the Morse complex of the Lagrangian action functional $\mathscr{L}$ associated to the Lagrangian $L$ with coefficients in $\mathbb{Z}_2$. Moreover, such an isomorphism preserves the action filtrations, that is, $\Psi$ induces an isomorphism $\Psi_*$ between ${\rm HF}^{<a}_*(H;\alpha)$ and ${\rm HM}^{<a}_*(\mathscr{L};\alpha)$ for every $a\in\mathbb{R}\cup \{\pm \infty\}$.
\end{theorem}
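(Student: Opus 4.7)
The plan is to follow the Abbondandolo--Schwarz construction, comparing the Floer complex of $H$ on $T^*M$ with the Morse complex of the Lagrangian action $\mathscr{L}$ on $\mathcal{L}_\alpha(M)$ by counting hybrid objects that interpolate between negative gradient lines of $\mathscr{L}$ and Floer cylinders for $H$.

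First I would set up the dictionary between the two sides. The Legendre transform $\ell_t(x,v)=(x,\partial_v L(t,x,v))$ is a diffeomorphism $S^1\times TM\to S^1\times T^*M$ by hypothesis (\textbf{L1}), and it conjugates the Euler--Lagrange equation for $L$ to the Hamiltonian flow of $H$. Thus $\ell$ gives a bijection between critical points of $\mathscr{L}$ on $\Lambda_\alpha M$ and elements of $\mathscr{P}_\alpha(H)$, and a standard computation shows that the actions agree: $\mathscr{L}(x)=\mathscr{A}_H(\ell\circ\dot x)$. This identification is what will make $\Psi$ automatically preserve the action filtration. Hypothesis (\textbf{L0}) guarantees that the critical points are Morse on the Lagrangian side, which corresponds to the Floer non-degeneracy of the associated Hamiltonian orbits. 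One also needs the Palais--Smale condition and the fact that $\mathscr{L}$ is bounded below and satisfies the standard deformation lemma on $\mathcal{L}_\alpha(M)$, which follow from (\textbf{L1})--(\textbf{L2}) via classical calculus of variations arguments.

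Next I would define the hybrid moduli spaces. For a critical point $x\in\mathrm{Crit}(\mathscr{L})$ with $[\![x]\!]=\alpha$ and a Hamiltonian orbit $z\in\mathscr{P}_\alpha(H)$, let $\mathcal{M}(x,z)$ be the space of pairs $(\gamma,u)$ where $\gamma:(-\infty,0]\to \mathcal{L}_\alpha(M)$ is a negative gradient half-trajectory of $\mathscr{L}$ with $\lim_{s\to-\infty}\gamma(s)=x$, and $u:[0,\infty)\times S^1\to T^*M$ solves the Floer equation for $H$ with $\lim_{s\to+\infty}u(s,\cdot)=z$, coupled by the matching condition $\pi\circ u(0,t)=\gamma(0)(t)$. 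A reverse moduli space $\mathcal{M}(z,x)$ is defined symmetrically. For a generic almost complex structure $J$ and generic metric on $\mathcal{L}_\alpha(M)$ one expects $\mathcal{M}(x,z)$ to be a smooth manifold of dimension $m(x;\mathscr{L})-\mu_{CZ}(z)$, where $m(x;\mathscr{L})$ is the Morse index of $x$. The identifications
\[
\Psi\,x\;=\;\sum_{\mu_{CZ}(z)=m(x)}\#_2\mathcal{M}(x,z)\,z,\qquad \Phi\,z\;=\;\sum_{m(x)=\mu_{CZ}(z)}\#_2\mathcal{M}(z,x)\,x
\]
then define chain maps between the Morse and Floer complexes; action monotonicity along both the gradient flow of $\mathscr{L}$ and the Floer flow of $\mathscr{A}_H$, combined with the matching of actions under Legendre duality, guarantees $\Psi$ and $\Phi$ preserve the filtrations $\{\cdot<a\}$.

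The hardest step will be the compactness and a priori estimates for the hybrid moduli spaces. On the Morse side, gradient trajectories of $\mathscr{L}$ in the $W^{1,2}$ metric are genuine nonlinear parabolic curves for which the quadratic-growth conditions (\textbf{L2}) are precisely what yield $W^{1,2}$ and then $C^{1,\beta}$ bounds (Palais--Smale plus regularity); on the Floer side, because $H$ is only of quadratic growth in the fibres, the standard maximum-principle bound from Lemma~\ref{lem:convexity} does not directly apply, so one must instead derive $L^\infty$ bounds on $u$ by combining the a priori energy bound (controlled by the action of the endpoints) with the convexity of $H$ in the fibre direction, as in \cite{AS1}; the coupling condition at $s=0$ requires compatible elliptic regularity at the boundary. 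Once compactness is established, the remaining steps (transversality via perturbation of $J$ and of the auxiliary Morse metric, gluing to show $\Psi$ and $\Phi$ are chain maps, and gluing of hybrid concatenations to prove $\Psi\Phi\simeq\mathrm{id}$ and $\Phi\Psi\simeq\mathrm{id}$ up to chain homotopy) are standard once the moduli-space framework is in place. The filtered statement then follows from the filtered versions of these gluing arguments together with the action identity $\mathscr{L}(x)=\mathscr{A}_H(\ell\dot x)$.
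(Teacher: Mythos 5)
The paper does not prove this statement; it is quoted from Abbondandolo--Schwarz \cite{AS1} and used as a black box, so there is no in-paper argument to compare against. Your outline correctly reproduces the Abbondandolo--Schwarz hybrid moduli-space framework: negative-gradient half-flowlines of $\mathscr{L}$ in $\mathcal{L}_\alpha(M)$ coupled at $s=0$ to Floer half-cylinders for $H$ via the matching condition $\pi\circ u(0,\cdot)=\gamma(0)$, the non-trivial $C^0$-bounds that replace the contact-type maximum principle because $H$ grows quadratically, transversality and gluing.

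The genuine gap is in the last step, where you plan to produce the filtered chain isomorphism by also constructing a reverse map $\Phi\colon{\rm CF}_*\to{\rm CM}_*$ from the reverse hybrid moduli space $\mathcal{M}(z,x)$ and then proving $\Psi\Phi\simeq\mathrm{id}$, $\Phi\Psi\simeq\mathrm{id}$ by gluing. That route cannot give the stated conclusion, because $\Phi$ does \emph{not} preserve the action filtration. Across the matching loop the Fenchel inequality $L(t,q,v)+H(t,q,p)\geq\langle p,v\rangle$ gives $\mathscr{L}(\gamma(0))\geq\mathscr{A}_H(u(0,\cdot))$; combined with action monotonicity along both half-flowlines this yields $\mathscr{A}_H(z)\leq\mathscr{L}(x)$ on the forward space $\mathcal{M}(x,z)$, but on the reverse space $\mathcal{M}(z,x)$ the Fenchel inequality points the wrong way and one gets no comparison between $\mathscr{A}_H(z)$ and $\mathscr{L}(x)$. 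Resolving exactly this obstruction is the point of \cite{AS1}. The correct mechanism uses the forward moduli space only: equality in the Fenchel bound forces the hybrid pair $(\gamma,u)$ to be constant with $z$ equal to the Legendre dual orbit $t\mapsto\mathfrak{L}(t,x(t),\dot x(t))$, and that particular $\mathcal{M}(x,z)$ consists of a single regular point. Ordering both complexes by action, the forward chain map is therefore upper-triangular with $1$'s on the diagonal; hence it is a chain isomorphism whose inverse is automatically also upper-triangular and filtration-preserving. This yields both the chain isomorphism and ${\rm HF}^{<a}_*\cong{\rm HM}^{<a}_*$ directly, without ever constructing $\Phi$ or any chain homotopies.
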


\subsection{Convex quadratic modifications}\label{convexq}
\setcounter{equation}{0}
The main problem of applying Theorem \ref{Thm:AS-isomorphism} to our setting is that $F^2$ is not smooth at the zero section. To solve this problem, we introduce the following quadratic modification which smoothens $F^2$ near the zero section maintaining the fiberwise convexity.
\begin{lemma}\label{lem:modification}
 Let $(M,F)$ be a closed connected Finsler manifold and $L_0=F^2:\ TM\to \R$. Then there exists a family of \emph{convex quadratic $\eta$-modifications} $L_\eta$ of $L_0$ which satisfies $(\mathbf{L1}), (\mathbf{L2})$ and the following on $TM$:
\begin{itemize}
  \item[(a)] $L_0(x,v)-\eta\leq L_\eta(x,v)\leq L_0(x,v)$ for all $(x,v)\in TM$,
  \item[(b)] $L_\eta(x,v)=L_0(x,v)$ for $L_0(x,v)\geq\eta$.
\end{itemize}
\end{lemma}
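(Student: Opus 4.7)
My approach is to construct $L_\eta$ by interpolating between $F^2$ outside a small fiberwise neighborhood of the zero section and an auxiliary smooth fiberwise strictly convex quadratic form, using a cutoff carefully designed so that the resulting function is globally smooth despite $F^2$ being only $C^{1,1}$ on the zero section. First I fix a smooth Riemannian metric $g$ on $M$ and, using the compactness of $M$ and $2$-homogeneity in $v$, rescale $g$ so that $G(x,v):=g_x(v,v)$ satisfies $G\leq F^2$ everywhere on $TM$. Next I choose a smooth non-increasing cutoff $\psi:[0,\infty)\to[0,1]$ with $\psi(s)=1$ for $s\leq 1/4$ and $\psi(s)=0$ for $s\geq 1$, and define
\[
L_\eta(x,v):=(1-\psi(F^2(x,v)/\eta))\,F^2(x,v)+\psi(F^2(x,v)/\eta)\,G(x,v).
\]

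\textbf{Smoothness and bounds.} The key observation is that $\psi(F^2/\eta)$ is $C^\infty$ on $TM$: it is identically $1$ on the open fiberwise neighborhood $\{F^2<\eta/4\}$ of the zero section (hence smooth despite $F^2$ being only $C^{1,1}$ there), while off the zero section $F^2$ is itself $C^\infty$ so the composition is smooth. Consequently $L_\eta\in C^\infty(TM)$, and by construction $L_\eta\equiv F^2$ on $\{F^2\geq\eta\}$ (yielding (b)) and $L_\eta\equiv G$ near the zero section. Writing $L_\eta=F^2-\psi(F^2/\eta)(F^2-G)$, the upper bound in (a) is immediate from $G\leq F^2$ and $\psi\geq 0$, and the lower bound follows since on the support $\{\psi>0\}\subseteq\{F^2<\eta\}$ one has $F^2-G\leq F^2<\eta$.

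\textbf{Main obstacle: strict convexity in the transition region.} Property (L2) will be immediate: outside a fiberwise bounded set $L_\eta=F^2$, whose required estimates follow from the $2$-homogeneity of $F^2$ together with its smoothness off the zero section. The delicate point is to verify (L1) in the transition annulus $A:=\{\eta/4\leq F^2\leq\eta\}$ where $\psi$ varies. Setting $\sigma:=\psi(F^2/\eta)$, the fiberwise Hessian of $L_\eta$ decomposes as
\[
\partial_{vv}L_\eta = (1-\sigma)\,\partial_{vv}F^2 + \sigma\,\partial_{vv}G + R,
\]
where $R$ consists of cross terms involving the first and second fiberwise derivatives of $\sigma$ multiplied by $F^2-G$ and $\partial_v(F^2-G)$. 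On $A$ the vector $v$ is bounded away from zero, so both $\partial_{vv}F^2$ and $\partial_{vv}G$ are uniformly positive definite; the main term $(1-\sigma)\partial_{vv}F^2+\sigma\partial_{vv}G$ is therefore uniformly positive definite on $A$. Meanwhile each summand in $R$ is linearly controlled by $|F^2-G|$ and $|\partial_v(F^2-G)|$. By choosing $\psi$ with sufficiently small $|\psi'|$ and $|\psi''|$, and if necessary rescaling $g$ further so that $F^2-G$ is uniformly small on $A$ (which is compatible with the preceding bounds and with the requirements (a), (b)), the error term $R$ is absorbed into the positive main term, yielding (L1). Combining all the above completes the construction of the claimed family $L_\eta$.
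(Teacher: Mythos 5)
Your construction is a genuinely different route from the paper's (which follows Lu and takes $L_\eta=\frac{1}{\mu}\{\lambda^\mu_{\epsilon,\delta}(L_0)+\chi^\kappa_{\delta,\rho}(|v|^2_x)-\sigma-\rho\}$, a sum of two compositions with tunable large parameters $\mu,\kappa$). Your smoothness argument and the bounds (a), (b) are correct, but the verification of (L1) contains a genuine gap, and the two mechanisms you invoke to close it do not work.

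First, you cannot choose $\psi$ with ``sufficiently small $|\psi'|$ and $|\psi''|$''. Any cutoff dropping from $1$ to $0$ on a subinterval of $[0,1]$ satisfies $\sup|\psi'|\geq 1$ (indeed $\geq 4/3$ on $[1/4,1]$), and since $\int\psi''=0$ with $\psi'$ vanishing at both ends, $\int|\psi''|\geq 2\sup|\psi'|$, so $\sup|\psi''|$ is also bounded below. Second, the claim that ``$F^2-G$ is uniformly small on $A$'' after rescaling $g$ fails: the constraint $G\le F^2$ fixes the maximal admissible scaling $c^*=\inf_{(x,v)}F^2/G$, and with $G$ replaced by $c^*G$ one has $F^2-c^*G>0$ on most of the annulus with $(F^2-c^*G)/\eta$ bounded away from $0$ whenever $F$ is not Riemannian; rescaling further down only enlarges the gap. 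The deeper obstruction is scale-invariance: writing $v=\sqrt\eta\,w$ and using $2$-homogeneity, one sees $\partial_{vv}L_\eta$ at $v$ equals $\partial_{ww}\tilde L$ at $w$, where $\tilde L(x,w)=(1-\psi(F^2(x,w)))F^2(x,w)+\psi(F^2(x,w))G(x,w)$ is $\eta$-independent. So $(\mathbf{L1})$ for your $L_\eta$ is a single fixed statement about $\tilde L$ on $\{1/4\le F^2\le1\}$, with no small parameter to exploit; the error term $R$ (involving $\partial_{vv}\sigma\cdot(F^2-G)$ and $\partial_v\sigma\otimes\partial_v(F^2-G)$, both $O(1)$ in these rescaled units) is of the same order as the main term $(1-\sigma)\partial_{vv}F^2+\sigma\partial_{vv}G$ and does not have a sign. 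A more delicate analysis might show that a carefully chosen $\psi$ works for certain $F$, but as written the argument does not establish $(\mathbf{L1})$. This is precisely the difficulty the paper circumvents by using separately composed auxiliary functions $\lambda$ (convex, so $\lambda(L_0)$ stays convex) and $\chi$, with freely adjustable large slopes $\mu,\kappa$ that overwhelm the bounded negative contributions.
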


\begin{remark}
{\rm
Actually, we can construct a family of convex quadratic Lagrangians $\{L_{\eta}\}_{0<\eta<\eta_0}$ which satisfy (a) and (b), and in addition satisfy $L_{\eta_1}\geq L_{\eta_2}$ for $0<\eta_1<\eta_2$.
This can be done by picking a Lagrangian $L_{\eta_0}$ as in Lemma \ref{lem:modification}, and then
rescaling the parameters in the auxiliary functions $\lambda^\mu_{\epsilon,\delta}$  and $\chi^\kappa_{\delta,\rho}$ (see Section~\ref{sec:Pfmodification}) by $\Delta\in(0,1]$, for instance, asking $\kappa\to\kappa/\Delta$, $\mu\to \mu/\Delta$, $\delta\to\Delta\delta$, $\epsilon\to\Delta\epsilon$ and $\eta_0\to\Delta\eta_0$ without changing $\sigma$ and $\rho$.
}
\end{remark}

The strong convexity condition (\textbf{L1}) can be used to define a Hamiltonian on $T^*M$ by means of the Legendre transform
\begin{equation}\label{e:L-transform}
\mathfrak{L}:S^1\times TM\rightarrow S^1\times T^*M,\quad (t,x,v)\to\big(t,x,d_vL(t,x,v)\big),
\end{equation}
which is is a fiber-preserving diffeomorphism, cf.~\cite{Ma}.
The Hamiltonian $H:S^1\times T^*M\to\mathbb{R}$ of the form
$$
H(t,x,p):=\max\limits_{v\in T_xM}(p(v)-L(t,x,v))
$$
is called the \emph{Fenchel dual Hamiltonian} of $L$.

For each $L_\eta$ in Lemma \ref{lem:modification}, its Legendre transform $H_\eta$ coincides with $\frac{1}{2}F^{*2}$ on $T^*M\setminus D_{\sqrt{\eta}}T^*M$. Since $L_\eta\leq L_0$, by definition it holds that $H_\eta\geq \frac{1}{2}F^{*2}$ on $T^*M$. Moreover, since we have
$\partial_{pp}H_\eta(x,p)\partial_{vv}L_\eta(x,v)=I,$
we get that $H_\eta$ is also fiberwise uniformly convex since $L_\eta$ satisfies (\textbf{L1}) by our construction.

For $V\in C^\infty_0(S^1\times TM,\mathbb{R})$,
we define the Lagrangian action functional $\mathscr{L}_V^\eta$ on $\Lambda_\alpha M$ by
$$\mathscr{L}^\eta_V(x):=\int^1_0 \frac{1}{2}L_\eta(x(t),\dot{x}(t))-V(t,x(t),\dot{x}(t))dt.$$

Now following directly from Theorem \ref{Thm:AS-isomorphism} we have the corollary.

\begin{corollary}\label{coro:AS-isomorphism}
	Let $(M,F)$ be a connected closed Finsler manifold, and let $L_\eta$ ($0<\eta\leq\eta_0$) be a  Lagrangian constructed in Section~\ref{convexq}. Fix $r\in (0,\infty)\setminus\Lambda_\alpha$ with $r^2>\eta_0$. Suppose that $L_\eta/2+V$ for $V\in C_0^\infty(S^1\times D^F_rTM)$ satisfies (\textbf{L1}) and (\textbf{L2}), and that every critical point $x$ of $\mathscr{L}^\eta_V$ with $\mathscr{L}^\eta_V(x)\leq r^2/2$ is non-degenerate. Let $H_\eta+U$ be its Fenchel dual Hamiltonian with $U\in C_0^\infty(S^1\times D^F_rT^*M)$.
	\begin{enumerate}
\item[(1)] 	Then there is an isomorphism
	\begin{equation}\label{eq:AS-isomorphism}
	\Psi_*:{\rm HF}^{<r^2/2}_*(H_{\eta}+U;\alpha) \longrightarrow {\rm H}_*\big(\big\{x\in\Lambda_\alpha M\big|\mathscr{L}^{\eta}_{V}(x)<r^2/2\big\}\big)
	\end{equation}
	which preserves the action filtrations, i.e., for every $a\in (0,r^2/2]$, $\Psi_*$ is an isomorphism between ${\rm HF}^{<a}_*(H_{\eta}+U;\alpha)$ and ${\rm H}_*\big(\big\{x\in\Lambda_\alpha M\big|\mathscr{L}^{\eta}_{V}(x)<a\big\}$.
		\item[(2)]
	 For every $a\in (0,r^2/2]$ and $0<\eta_1<\eta_2<\eta_0$, the following diagram commutes:
\begin{eqnarray}
\begin{CD}\label{CD:diag3}
{\rm HF}^{(-\infty,a)}_*(H_{\eta_1}+U_1;\alpha) @>>>{\rm HF}^{(-\infty,a)}_*(H_{\eta_2}+U_2;\alpha) \\
@V{{\cong}}VV  @VV{{\cong}}V \\
{\rm H}_*\big(\{x\in\Lambda_\alpha M\big|\mathscr{L}^{\eta_1}_{V_1}(x)< a\}\big) @>>>
{\rm H}_*\big(\{x\in\Lambda_\alpha M\big|\mathscr{L}^{\eta_2}_{V_2}(x)< a\}\big)
\end{CD}
\end{eqnarray}
Here $U_1,U_2\in C_0^\infty(S^1\times T^*M,\mathbb{R})$ are chosen so that $H_{\eta_1}+U_1$ and $H_{\eta_2}+U_2$ satisfy $H_{\eta_2}+U_2\leq H_{\eta_1}+U_1$ (and hence $\mathscr{L}^{\eta_1}_{V_1}\geq \mathscr{L}^{\eta_2}_{V_2}$) and the non-degeneracy condition
\begin{enumerate}
  \item[(\textbf{H0})] All the elements $z\in \mathscr{P}_{\alpha}^{r^2/2}(H)$ with $H=H_\eta+U_i$, $i=1,2$ are \emph{non-degenerate}, that is to say, the linear map $d\phi_H^1(z(0))\in \mathrm{Sp}(T_{z(0)}T^*M)$ does not have $1$ as an eigenvalue.
\end{enumerate}
	\end{enumerate}
\end{corollary}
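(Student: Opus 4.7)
The plan is to apply Theorem~\ref{Thm:AS-isomorphism} to the Lagrangian $L := \frac{1}{2}L_\eta + V$ whose Fenchel dual is $H_\eta + U$, and then identify the resulting filtered Morse homology with singular homology of the sublevel sets of $\mathscr{L}^\eta_V$. For part (1), conditions (\textbf{L1}) and (\textbf{L2}) on $L$ are assumed, and the non-degeneracy condition (\textbf{L0}) is required only for critical points with action below the cutoff $r^2/2$, which is exactly enough to make the filtered complexes up to that level well-defined. Theorem~\ref{Thm:AS-isomorphism} then supplies a chain isomorphism $\Psi$ between the Floer complex of $H_\eta+U$ and the Morse complex of $\mathscr{L}^\eta_V$ which preserves the action filtration, so restricting to any filtration level $a\in(0,r^2/2]$ yields an isomorphism $\Psi_*:{\rm HF}^{<a}_*(H_\eta+U;\alpha)\xrightarrow{\;\cong\;} {\rm HM}^{<a}_*(\mathscr{L}^\eta_V;\alpha)$.

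To replace the Morse homology on the right by singular homology of a sublevel set, I would invoke the standard Morse theory on the Hilbert manifold $\mathcal{L}_\alpha(M)$. The functional $\mathscr{L}^\eta_V$ satisfies the Palais--Smale condition (this uses precisely (\textbf{L1}), (\textbf{L2}) together with the uniformly quadratic growth of $L_\eta$ away from the zero section; see~\cite{AS1}), so its negative gradient flow deformation retracts each $\{\mathscr{L}^\eta_V<a\}$ onto a CW skeleton built from the critical points of action $<a$, giving ${\rm HM}^{<a}_*(\mathscr{L}^\eta_V;\alpha)\cong {\rm H}_*(\{\mathscr{L}^\eta_V<a\})$. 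Since $\Lambda_\alpha M$ is homotopy equivalent to $\mathcal{L}_\alpha(M)$, the sublevel sets agree up to homotopy, producing~(\ref{eq:AS-isomorphism}).

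For part (2), the top horizontal arrow in~(\ref{CD:diag3}) is the monotone continuation map associated to $H_{\eta_2}+U_2\leq H_{\eta_1}+U_1$ built as in Section~\ref{subsec:Monotone homotopies}, and hypothesis (\textbf{H0}) keeps every orbit in the relevant action window non-degenerate so that the chain maps are well-defined. On the Morse side, $\mathscr{L}^{\eta_1}_{V_1}\geq \mathscr{L}^{\eta_2}_{V_2}$ yields the inclusion of sublevel sets $\{\mathscr{L}^{\eta_1}_{V_1}<a\}\subseteq \{\mathscr{L}^{\eta_2}_{V_2}<a\}$, whose induced map in singular homology is the bottom arrow. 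Commutativity of the square is then the naturality of the Abbondandolo--Schwarz isomorphism under monotone homotopies: one constructs parameterized hybrid moduli spaces that simultaneously interpolate the two Hamiltonians on the Floer half-cylinder and the two Lagrangians on the Morse half-line, and counts rigid hybrid trajectories to produce a chain homotopy making the square commute on chain level, hence on homology. The main obstacle is compactness of these parameterized hybrid spaces: on the Floer side, I would invoke Lemma~\ref{lem:convexity} in the quadratic case $\mu=2$ together with fiberwise uniform convexity of each $H_\eta+U_i$ (inherited from (\textbf{L1}) via Legendre duality) to get the $C^0$-bounds, while on the Morse side the Palais--Smale property and quadratic growth at infinity yield the $W^{1,2}$-bounds; bubbling is excluded automatically by exactness of $d\lambda_0$.
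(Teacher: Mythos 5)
Your proposal is correct and follows essentially the same route as the paper, which treats the corollary as an immediate consequence of Theorem~\ref{Thm:AS-isomorphism} (the Abbondandolo--Schwarz isomorphism) combined with standard infinite-dimensional Morse theory on sublevel sets. You simply spell out what the paper leaves implicit: that (\textbf{L0}) is only needed below the action cutoff $r^2/2$, that the Morse homology of sublevel sets is identified with singular homology via the Palais--Smale condition, and that naturality of the filtered chain isomorphism under monotone homotopies (via hybrid moduli spaces, with compactness from Lemma~\ref{lem:convexity} with $\mu=2$) gives the commuting square in part (2).
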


\section{The proof of Theorem~\ref{thm:convexradial}}\label{sec:FHconvexradial}
In this section we compute Floer homology of a class of convex radial Hamiltonians and prove Theorem ~\ref{thm:convexradial}. Let $(M,F)$ be a smooth connected closed Finsler manifold.
Let $\{L_\eta\}_{0<\eta<\eta_0}$ be a family of Lagrangians constructed in Section~\ref{convexq}, and $H_\eta$ the Fenchel dual Hamiltonians of $L_\eta/2$. Denote $Q_\eta:=\sqrt{2H_\eta}$. Obviously, it holds that
\begin{itemize}
  \item $Q_\eta=F^*$ on $T^*M\setminus D_{\sqrt{\eta}}T^*M$,
  \item $F^*\leq Q_\eta\leq \sqrt{{F^*}^2+\eta}$ on $T^*M$\quad and
  \item $Q_{\eta_2}\geq Q_{\eta_1}$ for $0<\eta_1\leq\eta_2<\eta_0$.
\end{itemize}
For $c\in \mathbb{R}$, denote ${\rm H}_*(\Lambda_\alpha^{\eta;c} M)$ the singular homology with $\mathbb{Z}_2$-coefficients of the sublevel set
$$\Lambda_\alpha^{\eta;c} M:=\big\{x\in\Lambda_\alpha M\big|\mathscr{L}^\eta_0(x)\leq c \big\}.$$
Hereafter, we abbreviate $\Lambda_\alpha^{0;c} M$ by $\Lambda_\alpha^c M$, and for $a\leq b$ denote by $I^\eta_{a,b}$ the natural inclusion from  $\Lambda_\alpha^{\eta,a}M$ to $\Lambda_\alpha^{\eta,b}M$. For the sake of simplicity, we also denote $[I]:{\rm H}_*(\Lambda_\alpha^{\eta;a} M)\to{\rm H}_*(\Lambda_\alpha^{\eta;b} M)$ the homomorphism induced by $I^\eta_{a,b}$.

In the proof of Theorem~\ref{thm:convexradial}, we use the following two lemmata.

\begin{lemma}(~\cite[Proposition~3.1]{CiD})\label{lem:deformlemma}
Let $\{f_t\}_{t\in [0,1]}$ be a family of $C^1$-functions from a Banach space $X$ to $\mathbb{R}$. Let $a\in\mathbb{R}$ and $\varepsilon>0$. For any $t\in[0,1]$ we denote 
\begin{equation}\notag
\begin{array}{ll}
\Sigma_t:=\{u\in X|f_t(u)\leq a\}.
\end{array}
\end{equation}
Suppose that
\begin{itemize}
  \item[(a)] $\inf\limits_u\{\|f_t'(u)\|_{X^*}:\;a-\varepsilon\leq f_t(u)\leq a\}>0,\quad\forall t\in [0,1];$
  \item[(b)] if $t_k\to t$ in $[0,1]$, then $f_{t_k}\to f_t$ uniformly on $\bigcup_{0\leq \tau\leq 1}\Sigma_\tau$.
\end{itemize}
Then $H_*(\Sigma_0)\cong H_*(\Sigma_1)$.

\end{lemma}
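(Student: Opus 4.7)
The plan is to reduce the global statement to a local one by compactness of $[0,1]$, and then to establish the local isomorphism via a short chain of set-inclusions produced by combining hypothesis (b) with the classical deformation lemma, which the non-degeneracy condition (a) makes available at each fixed $t$.

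First I would note that by compactness and connectedness of $[0,1]$, it suffices to show that every $t_0\in[0,1]$ admits a neighborhood $(t_0-\rho,t_0+\rho)\cap[0,1]$ on which $H_*(\Sigma_t)\cong H_*(\Sigma_{t_0})$; the global conclusion then follows by chaining finitely many such local isomorphisms. For each fixed $t\in[0,1]$, assumption (a) supplies a constant $c_t>0$ with $\|f'_t(u)\|_{X^*}\geq c_t$ on the strip $\{u:a-\varepsilon\leq f_t(u)\leq a\}$. Palais's construction of a locally Lipschitz pseudo-gradient vector field for $f_t$, combined with a smooth cutoff outside a neighborhood of this strip and integration of the resulting negative flow, produces a strong deformation retraction of $\Sigma_t^{\theta'}:=\{f_t\leq a-\theta'\}$ onto $\Sigma_t^{\theta}$ whenever $0\leq\theta'\leq\theta\leq\varepsilon$. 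Thus each inclusion $\Sigma_t^{\theta}\hookrightarrow\Sigma_t^{\theta'}$ induces an isomorphism on singular homology.

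Next, fix $t_0$, set $\eta=\varepsilon/2$, and use (b) to choose $\rho>0$ so small that $|f_t(u)-f_{t_0}(u)|<\eta/2$ for all $u\in\bigcup_{\tau\in[0,1]}\Sigma_\tau$ and all $t\in(t_0-\rho,t_0+\rho)\cap[0,1]$. A direct check on the definitions then produces the four-term chain of genuine set-inclusions
\[
\Sigma_{t_0}^{3\eta/2}\;\subset\;\Sigma_{t}^{\eta}\;\subset\;\Sigma_{t_0}^{\eta/2}\;\subset\;\Sigma_{t}^{0}.
\]
Let $f_1,f_2,f_3$ denote the induced maps on $H_*$. The composition $f_2\circ f_1:H_*(\Sigma_{t_0}^{3\eta/2})\to H_*(\Sigma_{t_0}^{\eta/2})$ is the inclusion-induced map, which is an isomorphism by the previous step (since $3\eta/2<\varepsilon$); similarly $f_3\circ f_2:H_*(\Sigma_{t}^{\eta})\to H_*(\Sigma_{t}^{0})$ is an isomorphism. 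From $f_2\circ f_1$ being an isomorphism I deduce that $f_2$ is surjective, and from $f_3\circ f_2$ being an isomorphism that $f_2$ is injective; hence $f_2:H_*(\Sigma_{t}^{\eta})\to H_*(\Sigma_{t_0}^{\eta/2})$ is itself an isomorphism. Composing with the canonical isomorphisms $H_*(\Sigma_{t})\cong H_*(\Sigma_{t}^{\eta})$ and $H_*(\Sigma_{t_0}^{\eta/2})\cong H_*(\Sigma_{t_0})$ from the deformation step yields $H_*(\Sigma_{t})\cong H_*(\Sigma_{t_0})$, completing the local step and hence the lemma.

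I expect the main technical obstacle to lie not in this combinatorial argument but in executing the deformation step rigorously on the Banach manifold $X$: since $f_t$ is only $C^1$, one cannot integrate $-\nabla f_t$ directly and must invoke Palais's pseudo-gradient machinery (producing a locally Lipschitz $V_t$ with $\langle f'_t,V_t\rangle\geq\frac12\|f'_t\|_{X^*}^2$ and $\|V_t\|\leq 2\|f'_t\|_{X^*}$), then arrange a cutoff so that the flow is globally defined on $\Sigma_t$ and carries $\Sigma_t^{0}$ into $\Sigma_t^{\varepsilon}$ in time at most $2\varepsilon/c_t^2$. Crucially, no uniformity in $t$ of $c_t$ is required, since only one representative $t_0$ is used at a time in the local step.
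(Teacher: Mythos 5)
The paper does not prove this lemma itself; it cites it verbatim as Proposition~3.1 of Cingolani--Degiovanni~\cite{CiD}, so there is no internal proof to compare against. Your argument is nevertheless the standard route to such a result and is essentially correct: reduce to a local statement by compactness and connectedness of $[0,1]$; at each fixed $t$ use the hypothesis (a) and a pseudo-gradient deformation to show that $\Sigma_t^{\theta}\hookrightarrow\Sigma_t^{\theta'}$ induces an isomorphism on homology for $0\leq\theta'\leq\theta\leq\varepsilon$; use hypothesis (b) to produce the four-term chain of inclusions $\Sigma_{t_0}^{3\eta/2}\subset\Sigma_t^{\eta}\subset\Sigma_{t_0}^{\eta/2}\subset\Sigma_t^{0}$ for $t$ near $t_0$; and conclude that the middle map is a homology isomorphism by the two-out-of-three sandwich argument. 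The inclusion chain itself is correct, and the appeal to $\bigcup_\tau\Sigma_\tau$ in (b) is used exactly where needed.

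The one place you should tighten is the deformation step. With the raw pseudo-gradient $V_t$ (satisfying $\|V_t\|\leq 2\|f_t'\|$ and $\langle f_t',V_t\rangle\geq\tfrac12\|f_t'\|^2$) and the stated time bound $2\varepsilon/c_t^2$, the trajectory speed $\|V_t\|$ may be unbounded on the strip, and in a Banach space a locally Lipschitz flow with unbounded speed can escape to infinity in finite time even while $f_t$ remains bounded. The standard remedy is to flow along the normalized field $W_t:=V_t/\langle f_t',V_t\rangle$ cut off to the strip $\{a-\varepsilon\leq f_t\leq a\}$: then $\tfrac{d}{ds}f_t(\eta)\equiv -1$, the speed is bounded by $4/c_t$, the flow exists globally, and the deformation from level $a-\theta'$ to level $a-\theta$ takes exactly time $\theta-\theta'$. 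With this normalization the deformation retractions you invoke are rigorous, no uniformity in $t$ is needed (as you correctly observe), and the rest of your argument goes through as written.
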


\begin{lemma}\label{lem:loophomology}
Let $\{L_\eta\}_{0<\eta<\eta_0}$ be a family of Lagrangians constructed in Section~\ref{convexq}. Suppose that $\alpha$ is a nontrivial homotopy class of free loops in $M$ and $a\in(0,+\infty)\setminus \Lambda_\alpha$. Then there exists a natural isomorphism
$$\lim
\limits_{\substack{\longleftarrow\\ \eta\in (0,\eta_0]}}
{\rm H}_*\big(\big\{x\in\Lambda_\alpha M\ \big|\ \mathscr{L}^{\eta}_0(x)\leq a^2/2\big\}\big)\longrightarrow {\rm H}_*(\Lambda_\alpha^{a^2/2} M).$$
\end{lemma}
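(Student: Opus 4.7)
The plan is to reduce the statement to showing that for all sufficiently small $\eta>0$, the inclusion $\Lambda_\alpha^{a^2/2}M\hookrightarrow \{x\in\Lambda_\alpha M:\mathscr{L}^\eta_0(x)\leq a^2/2\}$ induces an isomorphism on singular homology. Granted this, for any two small $\eta_1\leq\eta_2$ the same argument makes the corresponding transition map an isomorphism, so the inverse system $\{{\rm H}_*(\{\mathscr{L}^\eta_0\leq a^2/2\})\}_{\eta\in(0,\eta_0]}$ is eventually stationary; its inverse limit is then canonically identified with ${\rm H}_*(\Lambda_\alpha^{a^2/2}M)$ via the universal property applied to the compatible family of inclusion-induced maps ${\rm H}_*(\Lambda_\alpha^{a^2/2}M)\to{\rm H}_*(\{\mathscr{L}^\eta_0\leq a^2/2\})$.

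The starting point is property~(a) of $L_\eta$ in Lemma~\ref{lem:modification}, which yields $\mathscr{L}^0_0-\eta/2\leq\mathscr{L}^\eta_0\leq\mathscr{L}^0_0$ on $\mathcal{L}_\alpha(M)$, and therefore the sandwich
$$\Lambda_\alpha^{(a^2-\eta)/2}M\ \subseteq\ \{\mathscr{L}^\eta_0\leq a^2/2\}\ \subseteq\ \Lambda_\alpha^{(a^2+\eta)/2}M.$$
Critical points of $\mathscr{L}^0_0=\tfrac12\int_{S^1}F^2\,dt$ on $\Lambda_\alpha M$ are the constant-speed closed Finsler geodesics in class $\alpha$; along such a geodesic $F$ is constant and equal to its $F$-length, so the critical values of $\mathscr{L}^0_0$ are precisely $\{\ell^2/2:\ell\in\Lambda_\alpha\}$. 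Since $\Lambda_\alpha$ is closed (Lemma~\ref{lem:mls}) and $a\notin\Lambda_\alpha$, there is a $\delta>0$ such that $[a^2/2-\delta,a^2/2+\delta]$ contains no critical value of $\mathscr{L}^0_0$; for $\eta\leq 2\delta$ both endpoints of the sandwich lie inside this critical-value-free range.

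To upgrade the inclusions in the sandwich to homology equivalences I would apply Lemma~\ref{lem:deformlemma} to the family $f_s:=\mathscr{L}^{(1-s)\eta_1+s\eta_2}_0$, $s\in[0,1]$, for $0\leq\eta_1\leq\eta_2\leq 2\delta$ (the case $\eta_1=0$ is legitimate because $L_0=F^2$ is $C^{1,1}$, so $\mathscr{L}^0_0$ is of class $C^1$ on the Hilbert manifold $\mathcal{L}_\alpha(M)$, which is all Lemma~\ref{lem:deformlemma} requires). Hypothesis~(b) of the deformation lemma is immediate: since $L_\eta\equiv L_0$ wherever $L_0\geq\eta$ and $|L_\eta-L_0|\leq\eta$ everywhere, one has $\|f_s-f_{s'}\|_{C^0}\leq|s-s'|\eta_2/2$ on the whole loop space, which certainly gives uniform convergence on the bounded subset $\bigcup_s\Sigma_s$. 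Granting hypothesis~(a), Lemma~\ref{lem:deformlemma} yields ${\rm H}_*(\{\mathscr{L}^{\eta_1}_0\leq a^2/2\})\cong {\rm H}_*(\{\mathscr{L}^{\eta_2}_0\leq a^2/2\})$; specializing to $\eta_1=0$ gives the required isomorphism and completes the reduction.

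The main obstacle is to verify hypothesis~(a) of Lemma~\ref{lem:deformlemma}: a uniform (in $s$) positive lower bound on $\|df_s(x)\|$ on the strip $\{a^2/2-\varepsilon\leq f_s(x)\leq a^2/2\}$ for some $\varepsilon>0$ independent of $s$. This is a Palais--Smale-type estimate. By construction the Lagrangians $L_\eta/2$ satisfy the Tonelli conditions $(\mathbf{L1})$--$(\mathbf{L2})$ with constants that can be chosen uniformly in $\eta\in[0,\eta_0]$, so the classical compactness result for action functionals on the Hilbert loop space (cf.~\cite{AS1,Ma}) applies: any sequence $x_k\in\mathcal{L}_\alpha(M)$ with $f_{s_k}(x_k)\to c$ and $df_{s_k}(x_k)\to 0$ has a $W^{1,2}$-convergent subsequence whose limit is a critical point of $f_{s_\infty}$ at level $c$. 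Were hypothesis~(a) to fail for every $\varepsilon>0$, a diagonal extraction over $\varepsilon\to 0$ and $\eta\to 0$ would yield a critical point of $\mathscr{L}^0_0$ at some level in $[a^2/2-\varepsilon,a^2/2]$ for $\varepsilon$ arbitrarily small, equivalently a constant-speed closed $F$-geodesic in class $\alpha$ whose $F$-length accumulates at $a$; but $\Lambda_\alpha$ is closed and $a\notin\Lambda_\alpha$, a contradiction.
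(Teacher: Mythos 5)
Your overall strategy coincides with the paper's: reduce to showing that, for small $\eta$, the sublevel set $\{\mathscr{L}^\eta_0\leq a^2/2\}$ has the same homology as $\Lambda_\alpha^{a^2/2}M$ by applying Lemma~\ref{lem:deformlemma}, and verify its hypothesis~(a) via the Palais--Smale property. Hypothesis~(b) is checked correctly, and the sandwich inclusions and identification of the critical values of $\mathscr{L}^0_0$ with $\{\ell^2/2:\ell\in\Lambda_\alpha\}$ are fine.

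There is, however, a genuine gap in your verification of hypothesis~(a). You assert that if (a) fails, then ``a diagonal extraction over $\varepsilon\to 0$ and $\eta\to 0$ would yield a critical point of $\mathscr{L}^0_0$ at some level accumulating at $a^2/2$, equivalently a constant-speed closed $F$-geodesic.'' But failure of hypothesis~(a) for the family $f_s=\mathscr{L}^{s\eta_2}_0$ with $\eta_2$ \emph{fixed} only produces critical points of $\mathscr{L}^{\eta}_0$ for various $\eta\in[0,\eta_2]$; there is no mechanism forcing $\eta\to 0$ along the extraction, so you cannot conclude a critical point of $\mathscr{L}^0_0$ directly. Moreover, even when you do have a critical point of $\mathscr{L}^\eta_0$ for small $\eta>0$ at level near $a^2/2$, the identification ``equivalently a constant-speed closed $F$-geodesic'' is not automatic: critical points of the modified functional satisfy the Euler--Lagrange equation for $L_\eta$, not for $L_0=F^2$, and these differ on $D^F_{\sqrt\eta}TM$.

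Closing this gap is precisely the substance of the paper's proof: one uses the conservation law for the dual Hamiltonian $H_\eta$ (equation~(\ref{eq:fixed energy level}) via the Legendre transform) to show a critical point of $\mathscr{L}^\eta_0$ has constant speed, hence its velocity is either entirely inside $D^F_{\sqrt\eta}TM$ or entirely outside. The inside case is ruled out because the action would then satisfy $a^2/2=\mathscr{L}^\eta_0(x_\eta)<\eta/2$, which is impossible for small $\eta$ since $a\geq l_\alpha>0$ (one first checks $a\geq l_\alpha$ using $\mathscr{L}^\eta_0\geq\mathscr{L}^0_0-\eta/2\geq l_\alpha^2/2-\eta/2$). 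In the outside case $L_\eta\equiv L_0$ along $x_\eta$, so $x_\eta$ is a genuine critical point of $E_F$ at level $a^2/2$, i.e.\ $a\in\Lambda_\alpha$, a contradiction. Without this dichotomy your argument is incomplete: you would either need this case analysis or an explicit estimate showing $\|d\mathscr{L}^{\eta}_0-d\mathscr{L}^0_0\|\to 0$ together with a reason for $\eta\to 0$ in the extraction, neither of which you supply. (A minor additional point: the interpolation $f_s=\mathscr{L}^{(1-s)\eta_1+s\eta_2}_0$ presupposes that the family $\{L_\eta\}$ depends continuously on $\eta$, which holds by the construction in Section~\ref{convexq} but deserves a remark.)
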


\begin{remark}\label{rem0:deformlemma}
{\rm
Actually, in the proof of Lemma~\ref{lem:loophomology} we have shown that there exists a sufficiently small constant $\bar{\eta}=\bar{\eta}(a)>0$ such that for any $0\leq\eta\leq\bar{\eta}$, the natural inclusion
$$ \Lambda_\alpha^{a^2/2} M\hookrightarrow \{x\in\Lambda_\alpha M\ |\ \mathscr{L}^{\eta}_0(x)\leq a^2/2\big\}$$
induces an isomorphism between the $\mathbb{Z}_2$-singular homologies ${\rm H}_*(\{x\in\Lambda_\alpha M\ |\ \mathscr{L}^{\eta}_0(x)\leq a^2/2\})$ and ${\rm H}_*(\Lambda_\alpha^{a^2/2} M)$. Besides,
one can also see that for $\eta$ sufficiently small, the Lagrangian functional $\mathscr{L}^{\eta}_0$ has no critical points $x\in \Lambda_\alpha M$ such that $\dot{x}(\tau)\in D^F_{\sqrt{\eta}}TM$ for some $\tau\in S^1$. Hence, by the property of Legendre transform, we find that whenever $\eta$ is sufficiently small, say, not larger than $\bar{\eta}>0$, the functional $\mathscr{A}_{H_\eta}$ does not admit critical points on $\Lambda_\alpha T^*M$ intersecting $D^F_{\sqrt{\eta}}T^*M$.
}
\end{remark}

We now complete the proof of Theorem~\ref{thm:convexradial} assuming Lemma~\ref{lem:deformlemma} Lemma~\ref{lem:loophomology}.
\begin{proof}[The proof of Theorem~\ref{thm:convexradial}]
We introduce a smooth function $f^{(\lambda)}$ with slope $\lambda$ at infinity associated to $f$ as following: firstly follow the graph of $f$ until it takes on slope $\lambda$ for the first time, say, at a point $r=r(f,\lambda)$, then continue linearly with slope $\lambda$, finally smoothing near $r$ yields a $C^\infty$-function $f^{(\lambda)}$ which coincides with $f$ outside a small neighborhood of $r$. Define the function $f_0\in C^\infty([0,\infty),\mathbb{R})$ by $f_0(s):=s^2/2$. Pick $\bar{\eta}>0$ as in Remark~\ref{rem0:deformlemma}. Denote $\hat{\eta}:=\min\{\bar{\eta},\lambda^2, \epsilon_f^2/2\}$. Obviously, $f\circ Q_\eta=f\circ F^*$, $r(f_0,\lambda)=\lambda$,  and $H_\eta=f_0\circ Q_\eta$ for all $\eta\in(0,\hat{\eta}]$.

To prove Theorem~\ref{thm:convexradial}~(i) and (ii), we show that the vertical homomorphisms in diagram~(\ref{CD:diag4}) are isomorphisms, and prove that the following diagram commutes:
\begin{equation}\label{CD:diag4}
\begin{split}
\xymatrix{
     {\rm HF}_*^{(-\infty,c_{f,\mu})}
     (f\circ Q_\eta+K_1;\alpha)
     \ar[d]
     \ar[r]^{[i^F]}
    &
     {\rm HF}_*^{(-\infty,c_{f,\lambda})}
     (f\circ Q_\eta+K_1;\alpha)
     \ar[d]
     \ar[d]^{[{\rm id}]}
    \\
    {\rm HF}_*^{(-\infty,\infty)}
     (f^{(\mu)}\circ Q_\eta+K_2;\alpha)
     \ar[r]^{[i^F]}_{[\sigma]}
     \ar[d]
    &
      {\rm HF}_*^{(-\infty,\infty)}
     (f^{(\lambda)}\circ Q_\eta+K_1;\alpha)
     \ar[d]^{[\sigma]}
    \\
     {\rm HF}_*^{(-\infty,\infty)}
     (f_0^{(\mu)}\circ Q_\eta+K_3;\alpha)
     \ar[d]
     \ar[r]_{[\sigma]}^{[i^F]}
    &
     {\rm HF}_*^{(-\infty,\infty)}
     (f_0^{(\lambda)}\circ Q_\eta+K_4;\alpha)
     \ar[d]^{[{\rm id}]}
    \\
     {\rm HF}_*^{(-\infty,\mu^2/2)}
     (f_0\circ Q_\eta+K_4;\alpha)
     \ar[r]^{[i^F]}_{[\sigma]}
     \ar[d]
    &
     {\rm HF}_*^{(-\infty,\lambda^2/2)}
     (f_0\circ Q_\eta+K_4;\alpha)
     \ar[d]^{(\ref{CD:diag3})}
    \\
     {\rm H}_*(\Lambda_\alpha^{\eta;\mu^2/2} M)
     \ar[r]^{[I]}
    &
     {\rm H}_*(\Lambda_\alpha^{\eta;\lambda^2/2} M)
}
\end{split}
\end{equation}
where $K_i\in C^\infty_0(S^1\times T^*M)$, $i=1,2,3,4$, are compactly supported perturbations such that the corresponding Hamiltonian $1$-periodic orbits within the action windows are non-degeneracy.
Once this is completed,  composing of the vertical maps on the right hand side of diagram~(\ref{CD:diag4}) yields the isomorphism
$$\Psi_{f}^{\eta,\lambda}:{\rm HF}_*^{(-\infty,c_{f,\lambda})}
     (f\circ Q_\eta+K_1;\alpha)
     \longrightarrow{\rm H}_*(\Lambda_\alpha^{\eta;\lambda^2/2} M)$$
     
Composing $\Psi_{f}^{\eta,\lambda}$ with the natural isomorphism ${\rm H}_*(\Lambda_\alpha^{a^2/2} M)\to {\rm H}_*(\Lambda_\alpha^{\eta;\lambda^2/2} M)$ we obtain the desired isomorphism~$\Psi_{f}^{\lambda}$.

\noindent \textbf{The proof of (i).}\quad
Observe that for every $\eta\in(0,\hat{\eta}]$,
$f\circ Q_\eta$ is constant on $D_{\sqrt{\eta}}T^*M$ (since $Q_\eta\leq\sqrt{{F^*}^2+\eta}\leq \sqrt{2\eta}\leq \epsilon_f$ and $f\equiv f(0)$ on $[0,\epsilon_f)$), and is radial outside of $D_{\sqrt{\eta}}T^*M$.
Then by Lemma~\ref{lem:radialHamsyst}, for sufficiently small perturbations $K_1$,
the image of every $1$-periodic orbit of $f\circ Q_\eta+K_1$ with action less than $c_{f,\lambda}$ belongs to $D_rT^*M$ with $r=r(f,\lambda)$, and the connecting trajectories between two such $1$-periodic orbits are located in $D_{r}T^*M$. The same is true for $f^{(\lambda)}\circ Q_\eta+K_1$ without any restriction on action. Since in $D_rT^*M$,  Hamiltonians $f\circ Q_\eta+K_1$ and $f^{(\lambda)}\circ Q_\eta+K_1$ are the same, these two chain complexes are identical.

The second vertical homomorphism is induced by Floer's continuation map $\sigma(H_s)$ associated to a homotopy $H_s$ from $f^{(\lambda)}\circ Q_\eta+K_1$ to $f_0^{(\lambda)}\circ Q_\eta+K_4$. By the homotopy invariance (see subsection~\ref{ssec:Homotopy invariance}), $[\sigma(H_s)]$ is indeed an isomorphism because outside some compact set, say, $D^F_{\hat{r}+1}T^*M$ with $\hat{r}:=\max\{r(f_0,\lambda), r(f,\lambda)\}$, $f^{(\lambda)}$ and $f_0^{(\lambda)}$ are of slope $\lambda$ and hence belong to the same component in $\mathscr{H}_{\hat{r}+1;\alpha}^{-\infty,\lambda^2/2}$.

The third vertical homomorphism is induced by the identity map between two identical chain complexes, and hence is an isomorphism. In fact, although $f_0$ is not constant near the origin, by our choice of $\eta$ we know that $H_\eta=f_0\circ Q_\eta$ has no Hamiltonian periodic orbits $z\in\Lambda_\alpha T^*M$ intersecting $D^F_{\sqrt{\eta}}T^*M$. Thus $z\in\Lambda_\alpha T^*M$ is a $1$-periodic orbit of the Hamiltonian $f^{(\lambda)}_0\circ Q_\eta$
if and only if $z$ is a $1$-periodic orbit of the Hamiltonian $H_\eta$ with action $\mathscr{A}_{H_\eta}(z)<\lambda^2/2$. Hence, for sufficiently small perturbations $K_4$, all $1$-periodic orbits and their connecting trajectories are located in $D_{\lambda}T^*M$. Moreover, in $D_{\lambda}T^*M$ two Hamiltonians $f_0\circ Q_\eta$ and $f^{(\lambda)}_0\circ Q_\eta$ are the same. So both chain complexes are identical.

The fourth vertical isomorphism is given by Theorem \ref{Thm:AS-isomorphism}. Indeed, for $K_4\in C^\infty_0(S^1\times D_\lambda^F T^*M)$ with sufficiently small $\|K_4\|_{C^2}$, the Hamiltonian $H_\eta+K_4$ are still fiber-wise uniformly convex and quadratic at infinity. Note that the Legendre transform (see~(\ref{e:L-transform})) is involutive, namely,

$$\mathfrak{L}^{-1}:S^1\times T^*M\rightarrow S^1\times TM,\quad (t,x,p)\to\big(t,x,d_pH(t,x,v)\big).$$

For any fiber-wise uniformly convex Lagrangian $L$ and its Fenchel dual Hamiltonian $H$, we have
\begin{equation}\label{e:L-transform identities}
v=\partial_p H(t,q,\partial_v L(t,q,v)),\quad \partial_{pp}H(t,q,\partial_v L(t,q,v))\partial_{vv}L(t,q,v)=Id.
\end{equation}

Denote by $L'$ the Fenchel dual Lagrangian of the Hamiltonian $H_\eta+K_4$.
Set $$V(t,x,v):=L'(t,x,v)-\frac{1}{2}L_\eta(t,x,v)\quad \forall\  (t,x,v)\in S^1\times TM.$$

By (\ref{e:L-transform identities}) and the implicit function theorem, we have $\|V\|_{C^2}$ is also small enough.
Since $K_4$ has a compactly supported set in $D^F_\lambda T^*M$, by the Legendre transform we find that $V$ is compactly supported in the disk tangent bundle $D^F_\lambda TM:=\{(x,v)\in TM|F(x,v)\leq \lambda\} $.

For $\lambda\in (0,+\infty)\setminus \Lambda_\alpha$, if $\|V\|_{C^2}$ is sufficiently small, $\lambda^2/2$ is a regular value of $\mathscr{L}^{\eta}_{V}$ on $\Lambda_\alpha M$. Therefore, by Lemma~\ref{lem:deformlemma},  we have the isomorphism
\begin{equation}\label{e:iso5a}
{\rm H}_*\big(\big\{x\in\Lambda_\alpha M\big|\mathscr{L}^{\eta}_{V}(x)<\lambda^2/2\big\}\big) \cong
{\rm H}_*\big(\big\{x\in\Lambda_\alpha M\big|\mathscr{L}^{\eta}_{0}(x)<\lambda^2/2\big\}\big).
\end{equation}

Furthermore, if the perturbation $K_4$ is chosen such that all $z\in \mathscr{P}_{\alpha}^a(H_\eta+K_4)$ are non-degenerate,  then Corollary~\ref{coro:AS-isomorphism} implies
\begin{equation}\label{e:iso5b}
{\rm HF}^{(-\infty,\lambda^2/2)}_*(H_{\eta}+K_4;\alpha) \cong {\rm H}_*\big(\big\{x\in\Lambda_\alpha M\big|\mathscr{L}^{\eta}_{V}(x)<\lambda^2/2\big\}\big).
\end{equation}
Combining (\ref{e:iso5a}) and (\ref{e:iso5b}) we get the fourth vertical isomorphism.

\noindent \textbf{The proof of (ii).}\quad It suffices to show that for $0<\eta\leq \hat{\eta}$, the following diagram commutes:
\begin{eqnarray}
\begin{CD}\label{DC:diag1}
{\rm HF}^{(-\infty,c_{f,\mu})}_*(f\circ Q_{\eta};\alpha)  @>{\iota^F}>> {\rm HF}^{(-\infty,c_{f,\lambda})}_*(f\circ Q_{\eta};\alpha)\\
@V{\Phi_{f}^{\eta,\mu}}V\simeq V  @V\simeq V{\Phi_{f}^{\eta,\lambda}}V \\
{\rm H}_*(\Lambda_\alpha^{\eta;\mu^2/2} M) @>{[\iota]}>> {\rm H}_*(\Lambda_\alpha^{\eta;\lambda^2/2} M)
\end{CD}
\end{eqnarray}
This reduces to show the commutativity of the diagram~(\ref{CD:diag4}). Let us note that the vertical maps on the left hand side of (\ref{CD:diag4}) are similar to the ones on the right hand side. The perturbations $K_2$ and $K_3$ are the restrictions of $K_1$ and $K_4$ respectively. The first four horizontal maps are induced by inclusion of subcomplexes. The first and third blocks in diagram~(\ref{CD:diag4}) commute on the chain level. The second, third and fourth horizontal maps are induced by continuation map associated to monotone homotopies. Due to the convexity of the unperturbed Hamiltonians, no orbits could enter from infinity during the homotopy, and hence continuation maps satisfy the usual composition rule for concatenations of homotopies, see, e.g., ~\cite{Sa,SZ}. So the second block in~(\ref{CD:diag4}) commute. (\ref{CD:diag3}) implies the commutativity of block four which is already described at the end of (i).

\noindent \textbf{The proof of (iii).}\quad
Let $\tilde{\eta}:=\min\{\bar{\eta},\lambda^2,\epsilon_f^2/2,
\epsilon_g^2/2\}$. It is obvious that $f\circ Q_{\eta_1}=f\circ F^*$ and $g\circ Q_{\eta_2}=g\circ F^*$.
By Remark~\ref{rem0:deformlemma},
we only need to prove that for
$0<\eta_1,\eta_2<\tilde{\eta}$ with $\eta_1\leq\eta_2$, there exists an isomorphism $\Psi^\lambda_{gf}$ such that the following diagram commutes:
\begin{eqnarray}
\begin{CD}\label{CD:diag2}
{\rm HF}^{(-\infty,c_{f,\lambda})}_*(f\circ Q_{\eta_1};\alpha) @>\Psi^\lambda_{gf}>\cong>{\rm HF}^{(-\infty,c_{g,\lambda})}_*(g\circ Q_{\eta_2};\alpha)  \\
@V{\Psi_{f}^{\eta_1,\lambda}}VV  @VV{\Psi_{g}^{\eta_2,\lambda}}V \\ {\rm H}_*(\Lambda_\alpha^{\eta_1;\lambda^2/2} M) @>[I]>\cong>
{\rm H}_*(\Lambda_\alpha^{\eta_2;\lambda^2/2} M)
\end{CD}
\end{eqnarray}

Consider the following diagram:

\begin{equation}\label{CD:diag5}
\begin{split}
\xymatrix{
     {\rm HF}_*^{(-\infty,c_{f,\lambda})}
     (f\circ Q_{\eta_1}+K_1;\alpha)
     \ar[d]_{[{\rm id}]}
     \ar@{-->}[r]^-{\Psi^\lambda_{gf}}
    &
     {\rm HF}_*^{(-\infty,c_{g,\lambda})}
     (g\circ Q_{\eta_2}+K_5;\alpha)
     \ar[d]
     \ar[d]^{[{\rm id}]}
    \\
    {\rm HF}_*^{(-\infty,\infty)}
     (f^{(\lambda)}\circ Q_{\eta_1}+K_1;\alpha)
     \ar[r]^{[\sigma]}
     \ar[d]_{[\sigma]}
    &
      {\rm HF}_*^{(-\infty,\infty)}
     (g^{(\lambda)}\circ Q_{\eta_2}+K_5;\alpha)
     \ar[d]^{[\sigma]}
    \\
     {\rm HF}_*^{(-\infty,\infty)}
     (f_0^{(\lambda)}\circ Q_{\eta_1}+K_4;\alpha)
     \ar[d]_{[{\rm id}]}
     \ar[r]^{[\sigma]}
    &
     {\rm HF}_*^{(-\infty,\infty)}
     (f_0^{(\lambda)}\circ Q_{\eta_2}+K_6;\alpha)
     \ar[d]^{[{\rm id}]}
\\
  {\rm HF}_*^{(-\infty,\lambda^2/2)}
  (H_{\eta_1}+K_4;\alpha)
  \ar[r]^{[\sigma]}
  &
  {\rm HF}_*^{(-\infty,\lambda^2/2)}
  (H_{\eta_2}+K_6;\alpha)
}
\end{split}
\end{equation}

All homomorphisms in ~(\ref{CD:diag5}) induced by
Floer's continuation maps $\sigma$ are isomorphisms which commute on homology because
no orbits could enter from $T^*M\setminus D^F_\lambda T^*M$ during the homotopy. If the compactly supported smooth perturbations $K_1,K_4,K_5$ and $K_6$ are chosen such that $\|K_1\|_{C^2}$, $\|K_4\|_{C^2}$, $\|K_5\|_{C^2}$ and $\|K_6\|_{C^2}$ are sufficiently small, $H_{\eta_2}+K_6\leq H_{\eta_1}+K_4$,  and the associated $1$-periodic Hamiltonian orbits (after perturbing) with action in the given action windows are non-degeneracy, then different perturbations achieve the identical Floer homology groups, and by a homotopy-of-homotopies argument the last two blocks in the diagram~(\ref{CD:diag5}) commute. Using (\ref{CD:diag3}) and (\ref{CD:diag5})  we can define an isomorphism $\Psi^\lambda_{gf}$ such that diagram~(\ref{CD:diag2}) commutes.
\end{proof}
We now give the proof of Lemma \ref{lem:loophomology}.
\begin{proof}[Proof of Lemma \ref{lem:loophomology}]
We first claim that there exists $\hat{\eta}>0$ such that for all $0<\eta<\hat{\eta}$  it holds that $\|d\mathscr{L}^{\eta}_0\|>\delta$ on $\{x\in\Lambda_\alpha M|\mathscr{L}^{\eta}_0(x)= a^2/2\big\}$ for some positive number $\delta$. Arguing by contradiction, since the Lagrangian functional $\mathscr{L}^{\eta}_0$ satisfies the Palais-Smale condition (cf.~\cite{Be}) we may assume that for any  $\hat{\eta}>0$, there exists $\eta\in(0,\hat{\eta}]$ and a $x_\eta\in \Lambda_\alpha M$ such that $|d\mathscr{L}^{\eta}_0(x_\eta)|=0$ and $\mathscr{L}^{\eta}_0(x_\eta)= a^2/2$.

Suppose $a<l_\alpha$, then for $\eta$ sufficiently small,
$$\int^1_0 F^2(x(t),\dot{x}(t))dt\leq\int^1_0 L_\eta(x(t),\dot{x}(t))dt +\eta\leq a^2+\eta<l_\alpha^2.$$
This contradicts the fact that $\int^1_0 F^2(x(t),\dot{x}(t))dt\geq l_\alpha^2$ on $\Lambda_\alpha M$. So $a\geq l_\alpha$. From
(\ref{eq:fixed energy level}) and the property of Legendre transform we find that if for some $\tau\in S^1$, $\dot{x}_\eta(\tau)\notin D^F_{\sqrt{\eta}}TM$ then $\dot{x}_\eta(t)\notin D^F_{\sqrt{\eta}}TM$ for all $t\in S^1$, and in this case, $x_\eta$ is also a critical point of the energy functional
$$E_F(x)=\frac{1}{2}\int^1_0 F^2(x(t),\dot{x}(t))dt \quad \forall\  x\in \Lambda_\alpha M,$$
and $E_F(x_\eta)=\mathscr{L}^{\eta}_0(x_\eta)=a^2/2$.
This contradicts $a\in(0,+\infty)\setminus \Lambda_\alpha$. Therefore, $x_\eta(t)\in D^F_{\sqrt{\eta}}T^*M$ for any $t\in S^1$. But in this case,
$$\frac{l_\alpha^2}{2}\leq \frac{a^2}{2}=\mathscr{L}^{\eta}_0(x_\eta)\leq \mathscr{L}(x_\eta)<\frac{\eta}{2}$$
which is obviously a contradiction for $\eta$ sufficiently small.
So the above claim is proved.  Moreover, since $\Lambda_\alpha$ is a closed and nowhere dense subset of $\mathbb{R}$, there exist $\varepsilon>0$ and $\eta_0>0$ such that for all $\eta<\eta_0$ we have $\|d\mathscr{L}^{\eta}_0\|>\delta$ on $\{x\in\Lambda_\alpha M\ |\ a^2/2-\varepsilon \leq\mathscr{L}^{\eta}_0(x)\leq a^2/2\big\}$.

Recall that $L_0-\eta\leq L_\eta\leq L_0$, and for any $\eta_1,\eta_2\in (0,\eta_0]$ satisfying $\eta_1\leq \eta_2$, we have
$$\big\{x\in\Lambda_\alpha M\ \big|\ \mathscr{L}^{\eta_1}_0(x)\leq a^2/2\big\}
\subseteq \big\{x\in\Lambda_\alpha M\ \big|\ \mathscr{L}^{\eta_2}_0(x)\leq a^2/2\big\}.$$
Thus $\mathscr{L}^{\eta}_0$ converges to $\mathscr{L}$ uniformly on
$$\bigcup_{\eta\in(0,\eta_0]}\{x\in\Lambda_\alpha M\ |\ \mathscr{L}^{\eta}_0(x)\leq a^2/2\big\}=\{x\in\Lambda_\alpha M\ |\ \mathscr{L}^{\eta_0}_0(x)\leq a^2/2\big\}.$$
Therefore,by Lemma~\ref{lem:deformlemma},
for any sufficiently small $\eta_1$ and $\eta_2$ with $0\leq\eta_1\leq \eta_2<\eta_0$, the inclusions
$$\big\{x\in\Lambda_\alpha M\big|\mathscr{L}^{\eta_1}_0(x)\leq a^2/2\big\}
\hookrightarrow \big\{x\in\Lambda_\alpha M\big|\mathscr{L}^{\eta_2}_0(x)\leq a^2/2\big\}$$
are homotopy equivalences, and hence
$${\rm H}_*\big(\big\{x\in\Lambda_\alpha M\big|\mathscr{L}^{\eta_1}_0(x)\leq a^2/2\big\}\big)={\rm H}_*\big(\big\{x\in\Lambda_\alpha M\big|\mathscr{L}^{\eta_2}_0(x)\leq a^2/2\big\}\big).$$
The proof of the lemma completes.
\end{proof}

\section{Computing the BPS capacity}\label{SCap}
\setcounter{equation}{0}
Following closely~\cite{We0,BPS}, we will now define certain symplectic capacities. The finiteness of these capacities in various cases will be shown in this section. For $c>0$, denote
$$\mathscr{K}_c:=\{H\in C_0^\infty(S^1\times D^FT^*M)\big|\sup_{S^1\times M}H\leq -c\}.$$

In the following, we use the conventions $\inf\emptyset=\infty$ and $\sup\emptyset=-\infty$.

\begin{definition}
{\rm
Let $\alpha\in[S^1,M]$ be a free homotopy class. For $a\in\mathbb{R}$, define the Biran-Polterovich-Salamon (BPS) capacities of $D^FT^*M$ relative to $M$ as
$$C_{\rm BPS}(D^FT^*M,M;\alpha,a):=\inf\big\{c>0\big|\forall\  H \in \mathscr{K}_c, \;\exists z \in \mathscr{P}_{\alpha}(H)\;\hbox{such that}\;\mathscr{A}_{H}(z)\geq a \big\},$$
$$C_{\rm BPS}(D^FT^*M,M;\alpha):=\inf\big\{c>0\big|\forall\  H \in \mathscr{K}_c, \mathscr{P}_{\alpha}(H)\neq\emptyset\big\}.$$
}
\end{definition}

Indeed, for general open subsets $W\subset T^*M$ containing $M$, one can also define BPS-capacities $C_{\rm BPS}(W,M;\alpha)$ of $W$ relative to $M$. Moreover, the BPS capacity has the following property.
\begin{proposition}[{Monotonicity~\cite[Proposition~3.3.1]{BPS}}]\label{prop:BPS-monotonicity}
If $W_1\subset W_2\subset T^*M$ are open subsets containing $M$ and $\alpha\in[S^1,M]$, then
$C_{\rm BPS}(W_1,M;\alpha)\leq C_{\rm BPS}(W_2,M;\alpha)$.
\end{proposition}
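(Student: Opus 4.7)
The monotonicity inequality is a soft statement that follows from an extension-by-zero argument, so the proof is essentially a bookkeeping exercise; the only subtlety is to verify that the periodic orbit produced for the bigger domain actually lives in the smaller domain, which is where the hypothesis on $\alpha$ (and the support of $H$) plays a role.

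\textbf{Step 1: Reduce to comparing admissible classes.} The plan is to show that for every $c > 0$ we have the inclusion $\mathscr{K}_c(W_1, M) \hookrightarrow \mathscr{K}_c(W_2, M)$ obtained by extension by zero. Given $H \in \mathscr{K}_c(W_1, M)$, I would define $\tilde{H} : S^1 \times W_2 \to \mathbb{R}$ by $\tilde{H}\vert_{S^1\times W_1} = H$ and $\tilde{H} = 0$ elsewhere. Since $\mathrm{supp}(H)$ is compact in $W_1 \subset W_2$, the function $\tilde H$ is smooth and compactly supported in $S^1 \times W_2$. Because $M \subset W_1$, the normalization condition is preserved:
\[
\sup_{S^1 \times M} \tilde{H} = \sup_{S^1 \times M} H \leq -c,
\]
so $\tilde{H} \in \mathscr{K}_c(W_2, M)$.

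\textbf{Step 2: Transfer periodic orbits.} Fix any $c > C_{\mathrm{BPS}}(W_2, M; \alpha)$. For any $H \in \mathscr{K}_c(W_1, M)$, the extension $\tilde{H}$ lies in $\mathscr{K}_c(W_2, M)$, so by definition of the BPS capacity on $W_2$ there exists $z \in \mathscr{P}_{\alpha}(\tilde{H})$. The task now is to locate $z$: I claim that $z$ takes values in $\mathrm{supp}(\tilde H) \subset W_1$. Indeed, if $z(t_0)$ lay in the open set $W_2 \setminus \mathrm{supp}(\tilde{H})$, then $\tilde{H}$ would vanish in a neighborhood of $z(t_0)$, hence $X_{\tilde{H}} \equiv 0$ there, and uniqueness of the Hamiltonian flow would force $z$ to be the constant loop at $z(t_0)$. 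But a constant loop is nullhomotopic in $T^*M$, hence in $M$, contradicting the assumption that $[z] = \alpha$ is nontrivial (the trivial case $\alpha = 0$ giving $C_{\mathrm{BPS}} = 0$ for both sides can be handled separately, or excluded as is customary in the definition).

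\textbf{Step 3: Conclude.} Since $z$ lies in $W_1$ and $\tilde{H} = H$ there, the orbit $z$ also satisfies $\dot{z} = X_H(t, z)$, so $z \in \mathscr{P}_\alpha(H)$ and in particular $\mathscr{P}_\alpha(H) \neq \emptyset$. This shows $c \geq C_{\mathrm{BPS}}(W_1, M; \alpha)$; taking the infimum over such $c$ gives $C_{\mathrm{BPS}}(W_1, M; \alpha) \leq C_{\mathrm{BPS}}(W_2, M; \alpha)$, as desired. The only conceptual point worth flagging is Step 2: it is really the nontriviality of $\alpha$, combined with the elementary ODE uniqueness observation, that rules out the undesirable scenario of an orbit escaping into the region where we killed $H$. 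No hard analytic input from the Floer machinery developed earlier is needed here.
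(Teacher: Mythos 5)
Your Steps 1 and 3 give precisely the standard argument (which is what \cite{BPS} does in Proposition~3.3.1; the paper itself offers no proof, only a citation): since $W_1\subset W_2$ and compact support in $W_1$ implies compact support in $W_2$, the extension-by-zero inclusion $\mathscr{K}_c(W_1,M)\hookrightarrow\mathscr{K}_c(W_2,M)$ holds for every $c>0$, and then the set $\{c>0\ |\ \forall H\in\mathscr{K}_c,\ \mathscr{P}_\alpha(H)\neq\emptyset\}$ is upward closed, so the infimum for $W_1$ cannot exceed that for $W_2$. That is the whole proof.

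Step 2, however, is a no-op in this framework and suggests a small misreading of the definitions. In the paper, $H\in C_0^\infty(S^1\times W)$ is implicitly regarded as a Hamiltonian on all of $\widehat{X}=T^*M$ (extended by zero), and $\mathscr{P}_\alpha(H)$ is by definition the set of $1$-periodic orbits of $X_{H_t}$ \emph{in $T^*M$} with free homotopy class $\alpha$, not orbits confined to $W$. Consequently $H$ and your $\tilde H$ are literally the same function on $T^*M$, have the same Hamiltonian vector field, and hence $\mathscr{P}_\alpha(H)=\mathscr{P}_\alpha(\tilde H)$ tautologically; there is nothing to ``transfer'' or ``locate.'' Your ODE-uniqueness observation -- that a non-constant orbit of $X_H$ in a nontrivial class must stay inside $\mathrm{supp}(H)$ -- is a true and occasionally useful fact elsewhere, but here it is answering a question that the definitions never raise. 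Trimming Step~2 leaves a proof that matches the one the paper (via \cite{BPS}) relies on.
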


Given the homomorphism $T^{(a,b);c}_\alpha$ as in Proposition~\ref{prop:T-hom}, for $c>0$, we set
$$\Theta_c(D^FT^*M,M;\alpha):=\big\{a\in\mathbb{R}\;\hbox{($a>0$ if $\alpha=0$)}\;\big|
T^{(a,\infty);c}_\alpha\neq0\big\}.$$

To compute BPS-capacities, we introduce the \emph{homological relative capacity}
$$\widehat{C}_{\rm BPS}(D^FT^*M,M;\alpha,a):=\inf\big\{c>0\big|\sup \Theta_c(D^FT^*M,M;\alpha)> a \big\}$$
which bounds BPS-capacity $C_{\rm BPS}$ from above (\cite[Proposition~4.9.1]{BPS}).

The main result about calculating the BPS-capacity is as follows.
\begin{theorem}\label{thm:BPS capacities}
Let $M$ be a closed connected Finsler manifold. Then for every non-trivial free homotopy class $\alpha\in[S^1,M]$, and every $a\in\mathbb{R}$, the BPS capacities are finite and given by
$$C_{\rm BPS}(D^FT^*M,M;\alpha,a)=\max\{l_\alpha,a\},\quad C_{\rm BPS}(D^FT^*M,M;\alpha)=l_\alpha.$$
\end{theorem}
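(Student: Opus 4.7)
The plan is to bound $C_{\rm BPS}(D^FT^*M,M;\alpha,a)$ from above using the homological relative capacity $\widehat{C}_{\rm BPS}$, which dominates $C_{\rm BPS}$ and which is controlled by the non-vanishing of the transfer map $T^{(a,\infty);c}_\alpha$ of Proposition~\ref{prop:T-hom}; the matching lower bound will come from an explicit construction of radial Hamiltonians with no noncontractible $1$-periodic orbits. The unfiltered identity $C_{\rm BPS}(D^FT^*M,M;\alpha)=l_\alpha$ follows from the filtered version by sending $a\to-\infty$.

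For the upper bound, fix $c>\max\{l_\alpha,a\}$ and choose a regular slope $\lambda\in(\max\{l_\alpha,a\},c)\setminus\Lambda_\alpha$; this is possible by Lemma~\ref{lem:mls}. I will build a downward-exhausting sequence $\{H_k=f_k\circ F^*\}$ of convex radial Hamiltonians in $\mathscr{H}^{a,\infty}_\alpha$ with asymptotic slope $\lambda$ and $f_k(0)\downarrow-\infty$, arranged so that $f_k(0)<-c$ for $k$ large (so $H_k\in\mathscr{H}^{a,\infty;c}_\alpha$ for those $k$). Theorem~\ref{thm:convexradial}(i) identifies ${\rm HF}^{(-\infty,c_{f_k,\lambda})}_*(H_k;\alpha)\cong {\rm H}_*(\Lambda_\alpha^{\lambda^2/2}M)$, and the right-hand side is non-zero because $\lambda>l_\alpha$ places at least one length-minimizing closed $F$-geodesic of class $\alpha$ into $\Lambda_\alpha^{\lambda^2/2}M$. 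Pulling a non-zero class back through Lemma~\ref{lem:inv/dirlimit}(i) produces a non-zero $e\in\underleftarrow{{\rm SH}}^{(a,\infty)}_*(D^FT^*M;\alpha)$. The factorization~(\ref{e:factorization}) then gives $T^{(a,\infty);c}_\alpha(e)=\iota_{H_k}\pi_{H_k}(e)$, and the commuting squares~(\ref{DC:diag0.1}) together with the commuting triangle~(\ref{trangle:diag0.2}) of Theorem~\ref{thm:convexradial} witness that this image remains non-zero in the direct limit. Consequently, for every $H\in\mathscr{K}_c$ the class $\pi_H(e)\in{\rm HF}^{(a,\infty)}_*(H;\alpha)$ is non-zero, forcing some $z\in\mathscr{P}_\alpha(H)$ with $\mathscr{A}_H(z)\geq a$; letting $c\downarrow\max\{l_\alpha,a\}$ yields the upper bound.

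For the lower bound when $c<l_\alpha$, I construct a radial $H=f\circ F^*\in\mathscr{K}_c$ with $f(0)=-c$, $f\equiv 0$ on a neighborhood of $[1,\infty)$, $f'\geq 0$, and $\|f'\|_\infty<l_\alpha$. Such an $f$ exists because one may spread the total rise $c$ over a subinterval of $[0,1]$ whose length is arbitrarily close to $1$, keeping the maximal slope arbitrarily close to $c$. By Lemma~\ref{lem:radialHamsyst}, any noncontractible $1$-periodic orbit $z$ of $X_H$ would satisfy $F(x,\dot x)\equiv f'(F^*(z))\in\Lambda_\alpha\subseteq[l_\alpha,\infty)$, contradicting the slope bound, so $\mathscr{P}_\alpha(H)=\emptyset$ and $C_{\rm BPS}(D^FT^*M,M;\alpha)\geq l_\alpha$. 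When $a>l_\alpha$ and $c<a$, the same construction is refined using the explicit action formula $\mathscr{A}_H(z)=rf'(r)-f(r)$ of Lemma~\ref{lem:radialHamsyst} to further guarantee $\max_r(rf'(r)-f(r))<a$ by tuning $f$, which certifies $C_{\rm BPS}(D^FT^*M,M;\alpha,a)\geq a$.

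The main obstacle will be verifying that the sequence $\{H_k\}$ is \emph{simultaneously} downward-exhausting for $\mathscr{H}^{a,\infty}_\alpha$ (so that Lemma~\ref{lem:inv/dirlimit}(i) identifies the inverse limit with ${\rm H}_*(\Lambda_\alpha^{\lambda^2/2}M)$) and eventually contained in $\mathscr{H}^{a,\infty;c}_\alpha$ (so that $\iota_{H_k}$ is defined), while preserving the radial convex structure needed for Theorem~\ref{thm:convexradial}; tracking the generator of ${\rm H}_0(\Lambda_\alpha^{\lambda^2/2}M)$ through the ladder of isomorphisms and commutative squares in Theorem~\ref{thm:convexradial} to rule out cancellation in $\iota_{H_k}\circ\pi_{H_k}$ is the heart of the argument. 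This parallels the Riemannian combinatorics in~\cite{We0,BPS}, with Theorem~\ref{thm:convexradial} supplying the Finsler substitute for the Salamon--Weber heat-flow isomorphism used there.
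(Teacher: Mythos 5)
Your high-level strategy matches the paper's: bound $C_{\rm BPS}$ from above by the homological capacity $\widehat{C}_{\rm BPS}$ via the transfer map $T^{(a,\infty);c}_\alpha$, and from below by exhibiting compactly supported radial Hamiltonians in $\mathscr{K}_c$ without the relevant $1$-periodic orbits. Your lower-bound argument is sound, and the explicit action formula of Lemma~\ref{lem:radialHamsyst} does allow one to tune the profile so that $\sup_r\big(rf'(r)-f(r)\big)<a$ when $c<a$.

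However, there is a concrete gap in your upper-bound argument. You propose a ``downward-exhausting sequence $\{H_k=f_k\circ F^*\}$ of \emph{convex} radial Hamiltonians in $\mathscr{H}^{a,\infty}_\alpha$ with asymptotic slope $\lambda$.'' By definition $\mathscr{H}^{a,\infty}_\alpha\subset C_0^\infty(S^1\times D^FT^*M)$ consists of compactly supported Hamiltonians, so a nonzero asymptotic slope is impossible, and a compactly supported $f_k$ (which must satisfy $f_k\equiv 0$ near $\rho=1$) cannot be convex with $f_k(0)<0$. Consequently you cannot apply Lemma~\ref{lem:inv/dirlimit}(i), nor Theorem~\ref{thm:convexradial}, directly to the $H_k$ and have the factorization $T^{(a,\infty);c}_\alpha=\iota_{H_k}\circ\pi_{H_k}$ make sense, since $\iota_{H_k}$ and $\pi_{H_k}$ are defined only for compactly supported Hamiltonians. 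There is also an action-window mismatch: Theorem~\ref{thm:convexradial} computes ${\rm HF}^{(-\infty,c_{f,\lambda})}_*$, while you need ${\rm HF}^{(a,+\infty)}_*$. The paper resolves both issues in Theorem~\ref{thm:symhomology}: it constructs compactly supported but \emph{non-convex} profile functions $h_k$ (concave near the outer cutoff), deforms them through monotone homotopies to convex radial Hamiltonians $h_k^{(a)}$ linear at infinity, verifies no $1$-periodic orbits cross action level $a$ during these deformations (using Lemma~\ref{lem:radialHamsyst} and Lemma~\ref{lem:mls}), identifies ${\rm HF}^{(a,+\infty)}_*$ with ${\rm HF}^{(-\infty,C_{h_k,a})}_*$ via $[i^F],[j^F]$, and only then invokes Theorem~\ref{thm:convexradial}. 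Tracking the generator through this diagram is precisely what identifies $T^{(a,\infty);c}_\alpha$ with $[I_{a^2/2}]:{\rm H}_*(\Lambda_\alpha^{a^2/2}M)\to {\rm H}_*(\Lambda_\alpha M)$, which is nonzero for $a>l_\alpha$. Your proposal effectively needs Theorem~\ref{thm:symhomology} but tries to bypass it, which is where the argument breaks.
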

The proof of this theorem follows that of Theorem~3.2.1 and Theorem~3.3.4 in~\cite{BPS} respectively. The main ingredient in the proof is to use the following Theorem \ref{thm:symhomology} to compute $\widehat{C}_{\rm BPS}(D^FT^*M,M;\alpha,a)$.

\begin{figure}[H]
	\centering
	\includegraphics[scale=0.25]{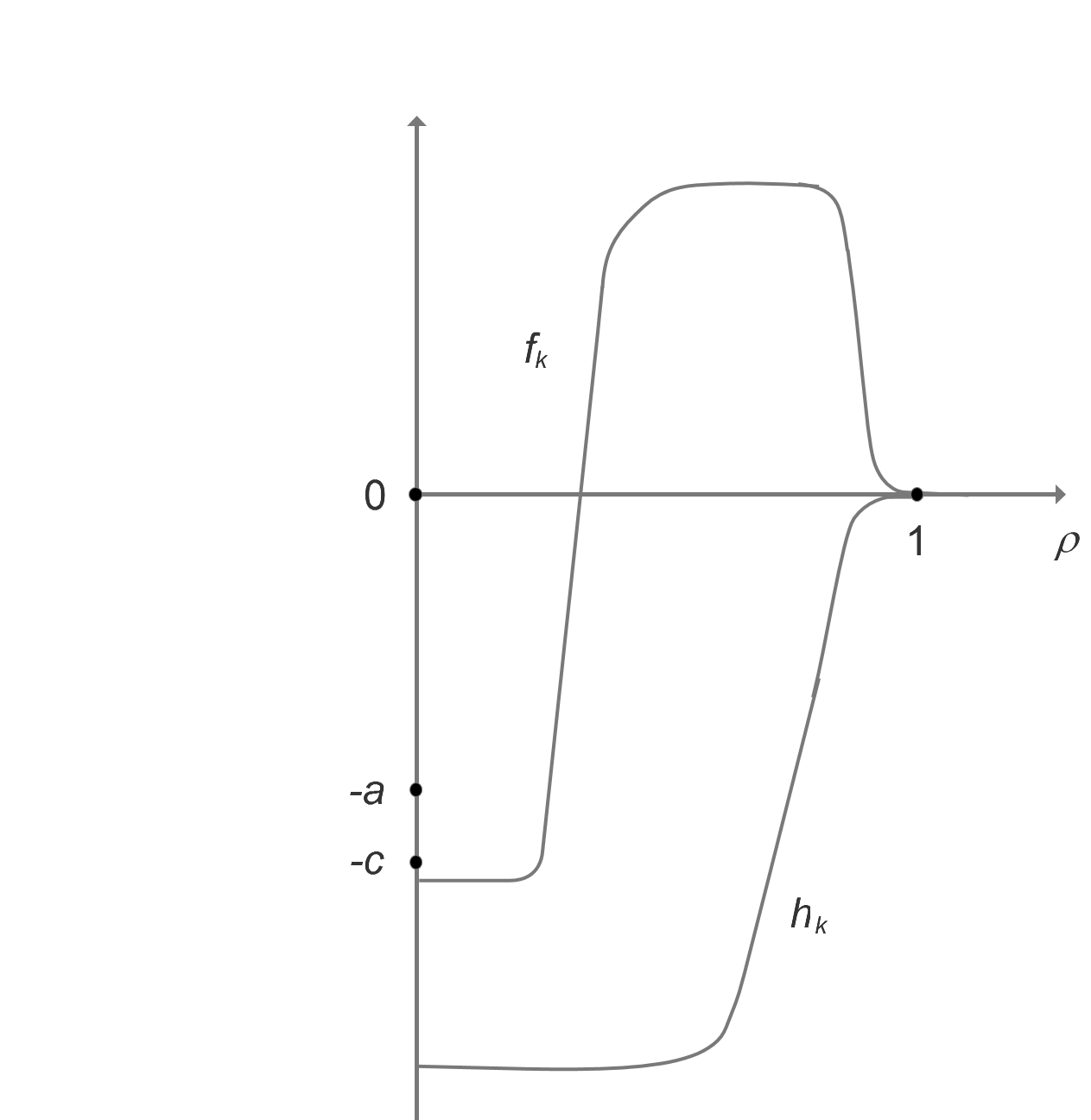}
	\caption{Two sequences of functions}\label{fig:fh}
\end{figure}


\begin{theorem}\label{thm:symhomology}
Assume that $\alpha$ is a homotopy class of free loops in $M$. Then it holds that
\begin{itemize}
  \item [(i)] if $a\in\mathbb{R}\setminus \Lambda_\alpha$, we have a natural isomorphism $\underleftarrow{{\rm SH}}
^{(a,+\infty)}_*(D^FT^*M;\alpha)\cong {\rm H}_*(\Lambda_\alpha^{a^2/2} M)$;

  \item [(ii)] for $a,c>0$, there exists a natural isomorphism
\begin{equation}\notag
\underrightarrow{{\rm SH}}^{(a,+\infty);c}_*(D^FT^*M,M;\alpha)\cong\left\{
             \begin{array}{ll}
            {\rm H}_*(\Lambda_\alpha M)&\hbox{if}\;a\in(0,c],  \\
             0 & \hbox{if}\;a>c;
             \end{array}
\right.
\end{equation}
  \item [(iii)] for any $a\in (0,c]\setminus\Lambda_\alpha$, the following diagram commutes:
\begin{equation}\label{CD:diag7}
\xymatrix{
    \\
     \underleftarrow{{\rm SH}}^{(a,+\infty)}_*(D^FT^*M;\alpha)
     \ar[r]^{\qquad\cong}
     \ar[d]_{T^{(a,\infty);c}_\alpha}
    &
     {\rm H}_*(\Lambda_\alpha^{a^2/2} M)
    \ar[d]^{[I_{a^2/2}]}
    \\
     \underrightarrow{{\rm SH}}^{(a,+\infty);c}_*(D^FT^*M,M;\alpha)
     \ar[r]^{\qquad\quad\cong}
    &
     {\rm H}_*(\Lambda_\alpha M)
}
\end{equation}
where $I_{a^2/2}$ denotes the natural inclusion $\Lambda_\alpha^{a^2/2} M\hookrightarrow \Lambda_\alpha M$.

\end{itemize}

\end{theorem}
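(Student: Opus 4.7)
The plan is to realize both $\underleftarrow{{\rm SH}}^{(a,+\infty)}_*$ and $\underrightarrow{{\rm SH}}^{(a,+\infty);c}_*$ as limits of Floer homology groups of convex radial profile Hamiltonians for which Theorem~\ref{thm:convexradial} supplies the identification with loop space homology, following closely the Riemannian strategy of Weber~\cite[Section~3]{We0}. The main inputs---the action formula of Lemma~\ref{lem:radialHamsyst}, the $C^0$-estimate of Lemma~\ref{lem:convexity}, and the closed and nowhere dense nature of $\Lambda_\alpha$ from Lemma~\ref{lem:mls}---have already been established in the Finsler context.

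For part~(i), I would construct a downward exhausting sequence $\{H_k\}\subset \mathscr{H}_\alpha^{a,+\infty}$ whose individual terms are small nondegenerate perturbations of profile pairs $(f_k,h_k)$ of the kind sketched in Figure~\ref{fig:fh}: $f_k$ is constant at a very negative value on a central disk, rises convexly through a terminal slope $\mu_k\in(0,\infty)\setminus\Lambda_\alpha$, and is then modified near $\partial D^FT^*M$ to yield a compactly supported Hamiltonian. The constants and slopes are chosen so that Lemma~\ref{lem:radialHamsyst} places the orbits corresponding to closed $F$-geodesics of length below $a$ inside the action window $(a,+\infty)$, while the extraneous orbits introduced by the boundary modification are arranged to fall outside this window; Lemma~\ref{lem:convexity} guarantees that the relevant Floer trajectories do not escape the convex radial region. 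Theorem~\ref{thm:convexradial}(i) then identifies ${\rm HF}^{(a,+\infty)}_*(H_k;\alpha)$ with ${\rm H}_*(\Lambda_\alpha^{a^2/2}M)$, and Theorem~\ref{thm:convexradial}(ii) shows that the monotone continuation maps $\sigma_{H_k H_{k+1}}$ correspond to identities on this common homology. Lemma~\ref{lem:inv/dirlimit}(i) then delivers~(i).

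For part~(ii), I would dually construct an upward exhausting sequence $\{G_k\}\subset\mathscr{H}_\alpha^{a,+\infty;c}$ of convex radial type. When $a\in(0,c]$, each $G_k$ is chosen with $\sup_M G_k\leq -c$ and terminal slope $\mu_k\to\infty$; the convexity inequality combined with Lemma~\ref{lem:radialHamsyst} forces every class-$\alpha$ orbit of $G_k$ to have action at least $c\geq a$, so the whole Floer complex lies in the window, and Theorem~\ref{thm:convexradial} identifies the resulting system with the direct system $\{{\rm H}_*(\Lambda_\alpha^{\mu_k^2/2}M)\}$ under inclusion-induced maps, whose direct limit is ${\rm H}_*(\Lambda_\alpha M)$. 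When $a>c$, I would exhibit cofinal Hamiltonians whose radial slopes stay strictly below $\min\Lambda_\alpha$ and which therefore carry no class-$\alpha$ periodic orbits; their Floer homology vanishes and the direct limit is zero by cofinality. Finally, (iii) follows from the naturality of Theorem~\ref{thm:convexradial}: choosing a common profile that serves both the downward exhausting sequence of (i) and an appropriate representative in $\mathscr{H}_\alpha^{a,+\infty;c}$, and using the factorization of $T^{(a,+\infty);c}_\alpha$ through it in equation~(\ref{e:factorization}), the commutativity of diagram~(\ref{CD:diag7}) reduces to that of diagram~(\ref{DC:diag0.1}) combined with the evident naturality of the inclusion $\Lambda_\alpha^{a^2/2}M\hookrightarrow\Lambda_\alpha M$ on homology.

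The principal technical obstacle will lie in the design of the profiles $f_k$ in part~(i): arranging that the action levels $c_{f_k,\lambda}$ cross the threshold $a$ precisely at slope $\lambda=a$ so that the window $(a,+\infty)$ selects exactly the closed geodesics of length below $a$, and verifying that the boundary modification needed to compactly support $H_k$ inside $D^FT^*M$ introduces no spurious critical points in this window. Both points depend on Lemma~\ref{lem:convexity} to localize the Floer trajectories to the convex radial region, and on Lemma~\ref{lem:mls} to vary the slope parameters freely near generic values without disturbing the filtered Floer complex.
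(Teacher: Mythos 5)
Your overall strategy---building upward and downward exhausting sequences of profile Hamiltonians, applying Theorem~\ref{thm:convexradial} to each, and passing to direct/inverse limits via Lemma~\ref{lem:inv/dirlimit}---is precisely the paper's approach, and your treatment of (i) and (iii) follows the paper's outline. However, there is a concrete error in your treatment of Case~$a>c$ in part~(ii). You propose ``cofinal Hamiltonians whose radial slopes stay strictly below $\min\Lambda_\alpha$,'' but no such sequence can be cofinal in $(\mathscr{H}_\alpha^{a,\infty;c},\preceq)$: a Hamiltonian $H\in\mathscr{H}_\alpha^{a,\infty;c}$ may take arbitrarily large positive values away from the zero section, and any radial $G\succeq H$ that is compactly supported in $D^FT^*M$ and satisfies $\sup_{S^1\times M}G\leq -c$ must then rise from a value $\leq -c$ to $\max H$ over a radial interval of length $<1$, forcing its slope to exceed $\max H + c$, which is unbounded. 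Cofinality therefore requires slopes tending to infinity. The correct construction (as in the paper) takes profiles $f_k$ with slope $\nu_k=(a-c)/2\delta_k\to\infty$ but shaped so that every tangent line to the graph of $f_k$ meets the vertical axis strictly above $-a$; by the action formula of Lemma~\ref{lem:radialHamsyst} this forces every periodic orbit to have action strictly below $a$, so ${\rm HF}^{(a,\infty)}_*(f_k\circ F^*;\alpha)=0$ even though $f_k$ carries many class-$\alpha$ orbits.

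A secondary issue: in Case~$a\in(0,c]$ you assert that convexity forces every class-$\alpha$ orbit of $G_k$ to have action $\geq c\geq a$, ``so the whole Floer complex lies in the window.'' This is not correct. A profile compactly supported in $D^FT^*M$ must plateau and descend near $\rho=1$, and the concave corner where the rising part joins the plateau carries class-$\alpha$ orbits at every slope $\lambda\in(0,\mu_k)\cap\Lambda_\alpha$; for small $\lambda$ these orbits have action close to $-T_k\geq 0$, which in general is $<a$. The convexity estimate you invoke applies only to the convex rising portion. More importantly, even if all orbits did lie in the window, $G_k$ is not itself of the convex radial form required by Theorem~\ref{thm:convexradial}, so that theorem cannot be invoked directly. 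The paper closes this gap with a chain of monotone homotopies (from $f_k$ to $\tilde f_k$ to the genuinely convex $f_k^{(\mu_k)}$) together with action-window restriction maps, each verified not to let orbits cross the level $a$; only then does Theorem~\ref{thm:convexradial} apply. Your proposal mentions Lemma~\ref{lem:convexity} for $C^0$-bounds but omits this deformation-and-bookkeeping step, which is the heart of the proof.
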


The proof is similar to that of \cite{BPS} and \cite{We0}.
We will construct two sequences of profile functions $\{f_k\}_{k\in \mathbb{N}}$ and $\{h_k\}_{k\in \mathbb{N}}$, one of which is upward exhausting and the other one is downward exhausting. The shape of the graphs of these functions are shown in Figure~\ref{fig:fh}. 
However, there is a main difference between our profile functions and those used in \cite{We0}.
Indeed, to apply our Theorem \ref{thm:convexradial}
 to compute the Floer homology groups, we require all these functions to be constant near $\rho=0$, which is dictated by the fact that the Hamiltonian $H_{F^*}={F^*}^2/2$ is not smooth on $T^*M$ in general. Therefore, we use a different construction of the upward exhausting profile functions from \cite{We0} and use some new arguments to prove Theorem~\ref{thm:symhomology}.

The proof of this theorem is postponed to Section \ref{SSProfile}.

\section{Proof of the main theorem and its applications}\label{SProofs}

\subsection{Proofs of the main Theorem~\ref{thm:mainthm} and Theorem~\ref{coro:inv.F-length}}
\begin{proof}[Proof of Theorem~\ref{thm:mainthm}]
	Consider the Hamiltonian function defined by
	$$\overline{H}(t,z):=-H(-t,z)\quad \forall\ \;(t,z)\in S^1\times D^FT^*M.$$
	
Obviously, $x(t)$ is a periodic orbit of $H$ representing $-\alpha$ if and only if $x(-t)$ is a periodic orbit of $\overline{H}$ representing $\alpha$, and it holds that
	$$\sup_{S^1\times M}\overline{H}\leq -l_\alpha.$$
	
By Theorem~\ref{thm:BPS capacities}, we have
$C_{\rm BPS}(D^FT^*M,M;\alpha,l_\alpha)=l_\alpha$. This imlies that the set
$$\big\{c>0\big|\hbox{For any}\; H\in \mathscr{K}_c,\; \hbox{there exists}\;
z \in \mathscr{P}_{\alpha}(H)\;\hbox{such that}\;\mathscr{A}_{H}(z)\geq c
\big\}$$
is nonempty. From~\cite[Proposition~3.3.4]{BPS} we know that this set is either empty or has a minimum. Therefore, $\overline{H}$ has a Hamiltonian periodic orbits whose projection on $M$ belongs to $\alpha$. This completes the proof.

\end{proof}

\begin{proof}[Proof of Theorem~\ref{coro:inv.F-length}]
Since $\psi(M\times \{0\})$ is the graph of an exact one-form on $M$ in $D^{F_2}T^*M$, there is a $C^\infty$ function $S$ on $M$ such that
$$\psi(M\times \{0\})={\rm graph}(dS)=:\Sigma_\psi.$$
Note that $D^{F_2}T^*M-\Sigma_\psi$ is a fiberwise strictly convex subset of $T^*M$ containing $M\times \{0\}$. There exists a Finsler metric on $M$, denoted by $F_\psi$, such that the unit open disk bundle $D^{F_\psi}T^*M$ equals $D^{F_2}T^*M-\Sigma_\psi$. Now we define a vertical diffeomorphism $\nu_S$ associated to the exact $1$-form $dS$
\begin{equation}\label{eq:vertDiff}
\nu_S:T^*M\to T^*M,\quad \nu_S(x,p)=(x,p-dS(x)).
\end{equation}
It follows from (\ref{eq:vertDiff}) that
$$\nu_S^*\lambda_0-\lambda_0=\pi^*dS.$$

Denote by $\alpha_2$ and $\alpha_\psi$  the restriction of the canonical $1$-form $\lambda_0$ to the unit co-sphere bundles $S^{F_2}T^*M$ and $S^{F_\psi}T^*M$ respectively. Then (\ref{eq:vertDiff}) shows that
$\nu_S(S^{F_2}T^*M)=S^{F_\psi}T^*M$, and that the contact forms $\alpha_2$ and $\alpha_\psi$ satisfy
\begin{equation}\label{eq:contactf}
\tilde{\nu}_S^*\alpha_\psi-\alpha_2=\iota^*\pi^*dS=df,
\end{equation}
where $\tilde{\nu}_S:S^{F_2}T^*M\to S^{F_\psi}T^*M$ is induced by $\nu_S$,  $\iota:S^{F_2}T^*M\hookrightarrow T^*M$ is the natural inclusion map, and $f:=S\circ\pi\circ\iota$ is a $C^\infty$ function on $S^{F_2}T^*M$. Observe that if $\gamma$ is any smooth curve with unit speed on a Finsler manifold $(M,F)$, then we have
$${\rm len}_F(\gamma)=\int_{\ell^F\circ\dot{\gamma}}\alpha_F$$
where $\ell^F$ is the Legendre transform associated to $F$, and $\alpha_F$ is the corresponding contact form on $S^FT^*M$. This, together with (\ref{eq:contactf}), implies that $\tilde{\nu}_S$ maps closed orbits of the geodesic flow on $S^{F_2}T^*M$ to closed orbits of the geodesic flow on $S^{F_\psi}T^*M$ with the same length.
Therefore, the length spectra (the set of the lengths of closed geodesics) with respect to $F_2$ and $F_\psi$ on $M$ are the same. Moreover, since $\nu_S$ is isotopic to the identity map by the isotopy $t\mapsto \nu_{tS}$, we have $l_\alpha^{F_2}=l_\alpha^{F_\psi}$ for any nontrivial free homotopy class $\alpha\in[S^1,M]$.

On the other hand, since BPS capacities are invariant under symplectomorphisms $\psi$ and $\nu_S$, we have
$$C_{\rm BPS}(D^{F_1}T^*M,M;\alpha)=C_{\rm BPS}(D^{F_2}T^*M,\Sigma_\psi;\psi_*\alpha)=C_{\rm BPS}(D^{F_\psi}T^*M,M;{\nu_S}_*\psi_*\alpha).$$
Note that $\psi$ and $\nu_S$ are isotopic to ${\rm Id}$, we have ${\nu_S}_*\psi_*\alpha=\psi_*\alpha=\alpha$. This, together with Theorem~\ref{thm:BPS capacities} and $l_\alpha^{F_2}=l_\alpha^{F_\psi}$, shows that
$l_\alpha^{F_1}=l_\alpha^{F_2}$.

\end{proof}

\subsection{The proof of Theorem~\ref{thm:Xue-Lie}}
\begin{proof}
Since ${\rm supp}\;H$ is compact and contained in $S^1\times G\times {\rm int}~\mathcal{C}^*$, there exists a compact strictly convex domain with $C^\infty$-boundary which is denoted by $K^*$ such that
$$p^*\in {\rm int}\;K^*,\quad K^*\subseteq \mathcal{C}^*,\quad {\rm supp}\;H\subseteq S^1\times G\times {\rm int}\;K^*.$$
This implies
\begin{equation}\label{eq:zerosection}
c\geq \max\{\langle v,-X \rangle |v\in K^*-p^* \}
\end{equation}
because $X\in \mathcal{C}$, and  for every $v\in K^*-p$, one has $v+p^*\in \mathcal{C}^*$ and
$$\langle v,-X \rangle =- \langle v+p^*,X \rangle +\langle p^*,X \rangle \leq \langle p^*,X \rangle \leq c.$$

Now let us take a Minkowskian metric $F$ on $\mathfrak{g}$ which induces a
left-invariant Finsler metric on $G$ (for brevity, we simply denote it by $F$)   such that for every $x\in G$
$$(D^FT^*G)_x=x\times (K^*-p^*).$$

Define the Hamiltonian $H_{p^*}:S^1\times T^*G\to \mathbb{R}$ as
$$H_{p^*}(t,x,p):=H(t,x,p^*+p)\quad \forall\  t\in S^1,\;\forall\  x\in G,\;\forall\  p\in \mathfrak{g}^*.$$
Here we have used the identification $T^*G=G\times \mathfrak{g}^*$.

Note that since $X\in\Gamma$, $\gamma_0(t)=\exp (-tX)$ with $t\in S^1$ is a loop  in $G$ representing the free homotopy class $-\alpha$.
Then (\ref{eq:zerosection}) shows
$$c\geq F(-X)={\rm len }_F(\gamma_0)\geq \inf \big\{{\rm len }_F(\gamma)\big|[\gamma]=-\alpha\big\}.$$
Then by Theorem~\ref{thm:mainthm}, there exists $S^1\to T^* G$ such that $\dot{z}(t)=X_{H_{p^*}}(t,z(t))$ and $[z]=\alpha$.
Since $p^*\in [\mathfrak{g},\mathfrak{g}]^0$,
the map
$$T^* G\longrightarrow T^*G,\quad (x,p)\mapsto (x,p+p^*)
$$
preserves the canonical symplectic form $\omega$. Therefore, $\gamma+p^*$ is a $1$-periodic orbit of $X_{H_t}$ representing $\alpha$.

\end{proof}

\subsection{Noncompact domains, proof of Theorem \ref{thm:Ncompact}}

\begin{proof}[Proof of Theorem \ref{thm:Ncompact}]
By our assumption, for every $K_i$, $i=0,1,\ldots$ the Finsler metric $F_i:TM\to\mathbb{R}$ is given by
$$F_i(x,v):=\sup_{p\in K_i\cap T_x^*M} \langle p,v\rangle.$$

Since $K_0\subseteq K_1\subseteq K_2\subseteq\ldots$,
it is obvious that $F_0\leq F_1\leq F_2,\ldots$
Let $\gamma_i\in C^\infty(S^1,M)$ be the length minimizing $F_i$-geodesic loop representing $\alpha$. Denote by $D^{F_i}T^*M$ the unit open Finslerian disk cotangent bundle. Then we find $D^{F_i}T^*M={\rm int} K_i$.
Set
$$l_\alpha^i:={\rm len}_{F_i}(\gamma_i).$$

By Theorem~\ref{thm:BPS capacities} we have
$$C_{\rm BPS}(D^{F_i}T^*M,M;\alpha)=l_\alpha^i.$$

We claim that the sequence of numbers $\{l_\alpha^i\}_{i=0}^\infty$
are bounded. In fact, by our assumption, we have
$$\ell^{F_i}(\dot{\gamma}_i)\subseteq A$$
where $\ell^{F_i}$ denotes the Legendre transform associated to $F_i$ (see
Definition~\ref{def:Ltransform}).
This implies that for every $t\in S^1$,
\begin{equation}\label{eq:Fmetric}
F_i(\dot{\gamma}_i(t))=F_i^*\circ\ell^{F_i}(\dot{\gamma}_i(t))\leq
\sup_{(x,p)\in A}F_i^*(x,p).
\end{equation}
Note that $A$ is a compact set in $T^*M$, and $F_0^*\geq F_1^*\geq F_2^*,\ldots$  Thus integrating~(\ref{eq:Fmetric}) over $S^1$ yields
$$c:=\sup_{i}l_\alpha^i\leq \sup_{(x,p)\in A}F_0^*(x,p)<\infty.$$

Since the Hamiltonian $H\in C_0^\infty(S^1\times T^*M)$ is compactly supported in $K=\lim_iK_i$, for $i$ large enough, we have ${\rm supp}~H\subset S^1\times {\rm int}K_i=S^1\times D^{F_i}T^*M$.
If  $\min_{t,q}H(t,q,0)\geq c$,
then the finiteness of the BPS capacity $C_{\rm BPS}(D^{F_i}T^*M,M;\alpha)$
shows that there there is a $1$-periodic orbit that represents $-\alpha$.

\end{proof}

\subsection{Lorentzian Hamiltonian, proof of Theorem \ref{ThmLorentz}}

To prove this theorem, we first formulate the following result.

\begin{proposition}\label{PropCone} Let $H:S^1 \times T^*\T^n\to \R$ be a $C^\infty$ Hamiltonian compactly supported in the interior of $\mathcal C^*$. Given $p^*\in \mathcal C^*$, $\al\in \mathcal C\cap H_1(\T^n,\Z)$ and $c>0$ satisfying $$c\geq \langle p^*,\al\rangle,$$ we assume
$$\min_{q,t}H(t,q,p^*)\geq c.$$
Then $H$ admits a 1-periodic orbit in the homology class $\al$.
\end{proposition}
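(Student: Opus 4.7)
The plan is to reduce Proposition \ref{PropCone} to Theorem \ref{thm:mainthm} via a fiberwise translation and the choice of a flat Finsler metric on $\T^n$, in parallel to the argument used to deduce Theorem \ref{thm:Xue'theorem} from Theorem \ref{thm:Xue-Lie}. One may assume $\alpha\neq 0$, since otherwise any constant point outside $\mathrm{supp}\,H$ is a $1$-periodic orbit in the trivial homology class.

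First, since $\mathrm{supp}\,H$ is compact inside the open convex cone $\mathcal{C}^*$ and $p^*\in\mathcal{C}^*$, pick a compact strictly convex set $K^*\subset\mathcal{C}^*$ with smooth boundary such that $p^*\in\mathrm{int}\,K^*$ and $\mathrm{supp}\,H\subset S^1\times\T^n\times\mathrm{int}\,K^*$. Let $F$ be the translation-invariant (Minkowski) Finsler metric on $\T^n$ whose unit cotangent disk at every point is the compact strictly convex neighborhood $K^*-p^*$ of the origin in $\R^n$; then $D^FT^*\T^n=\T^n\times\mathrm{int}(K^*-p^*)$.

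Next, introduce the translated Hamiltonian $\tilde H(t,x,p):=H(t,x,p+p^*)$. The fiberwise translation $(x,p)\mapsto(x,p+p^*)$ is a symplectomorphism of $(T^*\T^n,\omega_0)$ preserving the projection to $\T^n$, so it suffices to find a $1$-periodic orbit of $X_{\tilde H}$ in homology class $\alpha$. By construction $\tilde H$ is compactly supported in $D^FT^*\T^n$ and $\tilde H(t,x,0)=H(t,x,p^*)\geq c$ for every $(t,x)$.

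Finally, for the flat Minkowski metric $F$, length-minimizing closed geodesics in an integer class $\beta\in\Z^n\cong H_1(\T^n,\Z)$ are the straight lines $t\mapsto t\beta$, so $l_{-\alpha}^F=F(-\alpha)=\sup_{v\in K^*-p^*}\langle v,-\alpha\rangle$. Writing $v=\eta-p^*$ with $\eta\in K^*\subset\mathcal{C}^*$ and using $\alpha\in\mathcal{C}$, which forces $\langle\eta,\alpha\rangle\geq 0$, one obtains
$$l_{-\alpha}^F=\langle p^*,\alpha\rangle-\inf_{\eta\in K^*}\langle\eta,\alpha\rangle\leq\langle p^*,\alpha\rangle\leq c\leq\tilde H(t,x,0).$$
Theorem \ref{thm:mainthm} applied to $\tilde H$ with the nontrivial free homotopy class $-\alpha$ then yields a $1$-periodic orbit $z$ of $X_{\tilde H}$ with $[z]=-(-\alpha)=\alpha$; translating $z$ back by $p^*$ in the fiber produces the required orbit of $X_H$. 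The only nonroutine step is the $F$-length computation above; everything else is bookkeeping once the Minkowski Finsler metric is in place, and the convexity of $\mathcal{C}^*$ (the very feature flagged in the remark after Theorem \ref{ThmLorentz}) is precisely what permits $K^*-p^*$ to be realized as a Finsler unit disk.
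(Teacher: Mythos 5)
Your proposal is correct and follows essentially the same approach as the paper, which simply remarks that Proposition~\ref{PropCone} is proved ``in the same way as Theorem~\ref{thm:Xue'theorem},'' whose proof (via Theorem~\ref{thm:Xue-Lie}) likewise chooses a compact strictly convex $K^*\subset\mathcal C^*$ containing $p^*$ and $\mathrm{supp}\,H$, builds the translation-invariant Minkowski metric with unit codisk $K^*-p^*$, translates $H$ by $p^*$, bounds $l^F_{-\alpha}\leq\langle p^*,\alpha\rangle\leq c$ by the same cone-duality estimate, and invokes Theorem~\ref{thm:mainthm}. The only cosmetic difference is that you claim the straight line is the exact $F$-length minimizer (true for a Minkowski metric on $\T^n$), whereas the paper only needs the trivial inequality $l^F_{-\alpha}\leq\mathrm{len}_F(\gamma_0)=F(-\alpha)$.
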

This proposition is proved in the same way as the above Theorem~\ref{thm:Xue'theorem}, so we skip the proof.
\begin{proof}[Proof of Theorem \ref{ThmLorentz}]
We introduce a nondecreasing function $\phi:\ \R\to \R_+$ such that $\phi(x)=0$ for $x\leq 0$ and $\phi(x)=1$ for $x\geq 1$, and a function $\varphi:\ \R_+\to [0,1]$ which is nonincreasing and satisfies $\varphi(x)=1$ for $x\in [0,0.9]$ and $\varphi(x)=0$ for $x\geq 1$. We next introduce an auxiliary Hamiltonian for any given $0<a<b$
$$
G(q,p)=\begin{cases}c\phi (\frac{H(q,p)}{b-a})\varphi(\frac{\|p\|}{R}),\quad &p\in \mathcal C,\\
0,\quad &p\notin \mathcal C^*,
\end{cases}
$$
where $c$ and $R$ are to be determined later. We fix choose $p^*\in \mathcal C^*$ such that $\min_q H(q,p^*)>b$. For a given homology class $\al\in \mathcal C$, we choose $c\geq \langle p^*,\al\rangle$. Finally, we choose $R\gg \|p^*\|$ to be further determined later.

Since we have normalized $\max V=0$, we get that $\{H>a\}\subset \T^n\times \mathcal C^*$, hence supp~$G\subset \mathcal C^*$ and $G\in C^\infty$.
Applying Proposition \ref{PropCone}, we get that $G$ admits a periodic orbit in the homology class $\al$. We assume for a moment that the periodic orbit is not created by $\varphi$, i.e. the periodic orbit does not intersect supp$\varphi'(\frac{\|p\|}{R})\frac{p}{R\|p\|}$. So the periodic orbit is also a periodic orbit of the Hamiltonian $c\phi (\frac{H(q,p)}{b-a})$. Then by the energy conservation and the injectivity of $\phi$, we get that the periodic orbit is a periodic orbit of $H$ on some energy level in $(a,b)$.

It remains to prove that the periodic orbit is not created by $\varphi$. Suppose there exists such a 1-periodic orbit $\gamma$. We write down the Hamiltonian equation
$$\begin{cases}
\dot q&=\dfrac{c}{b-a} \phi' \left(\dfrac{H(q,p)}{b-a}\right)\varphi\left(\dfrac{\|p\|}{R}\right) \dfrac{\partial H}{\partial p}+c \phi\left(\dfrac{H(q,p)}{b-a}\right)\varphi' \left(\dfrac{\|p\|}{R}\right)\dfrac{p}{R\|p\|}\\
\dot p&=-\dfrac{c}{b-a} \phi' \left(\dfrac{H(q,p)}{b-a}\right)\varphi\left(\dfrac{\|p\|}{R}\right) \dfrac{\partial V}{\partial q}
\end{cases}.$$
We consider two cases depending on whether $\gamma$ intersects the region $$D:=\mathcal C^*\cap \{p_1^2-(p_2^2+\ldots+p_n^2)\leq -\min V+b\}$$ or not.

Case 1, suppose that $\gamma\cap D=\emptyset$. This implies $H>b$ hence $\phi'(\frac{H(q,p)}{b-a})=0$. We get that $$\|\dot q\|=\left\|c \phi\varphi' \frac{p}{R\|p\|}\right\|\leq \frac{c}{R}|\varphi'|,\quad \dot p=0.$$
For large $R$, the 1-periodic orbit $\gamma$ cannot have homology class $\al\neq0$.

We remark that in this case, once $\gamma(t)\cap D=\emptyset$ for some $t\in S^1$, due to the openness of $D$ and the fact $\dot p=0$, we get that $\gamma(t) \cap D=\emptyset$ for all $t\in S^1$. This implies that if $\gamma\cap D\neq\emptyset$, then $\gamma\subset D$.

Case 2, suppose $\gamma\subset D$. Since we have assumed that $\gamma\cap \mathrm{supp}\varphi'(\frac{\|p\|}{R})\neq\emptyset$, we get that for $0.9R\leq\|p(t^*)\|\leq R$ some $t^*\in S^1$. Applying the Hamiltonian equation again we have
$$\begin{cases}
\dot q&=\frac{c\phi'\varphi}{b-a} (p_1,-q_2,\ldots,-q_n)+O(1/R)\\
\dot p&=\frac{c\phi'\varphi}{b-a}O(1)
\end{cases}.$$
as $R\to \infty$. Since $\dot p$ is uniformly bounded, within time 1, we have $0.8R\leq\|p\|\leq 1.1R$ along the periodic orbit $\gamma$. Since we also know $\gamma\subset D$, denoting $r(p)=\sqrt{p_2^2+\ldots+p_n^2}, $ we get that $|p_1-r(p)|<\frac{C}{R}$ and $|p_1+r(p)|\geq R/C$ for some constant $C$ independent of $R$.
For a fixed homology class $\al\in \mathcal C$, we have $\al_1^2-r(\al)^2>0.$ hence $0<\frac{1}{C_\al}<|\al_1\pm r(\al)|\leq C_\al$ for some constant $C_\al.$
Choosing $R$ large such that $C/R\ll 1/C_\al<C_\al\ll R/C$, we see that the 1-periodic orbit $\gamma$ cannot have homology class $\al$. Indeed, since we have $|p_1+r(p)|\geq R/C$, to attain homology class $|\al_1+ r(\al)|\leq C_\al$, we must have $\phi'\varphi=O(1/R)$, hence $\dot p=O(1/R)$. Combined with $|p_1-r(p)|<\frac{C}{R}$, this will imply that $|\al_1- r(\al)|<C'/R$ contradicting $\frac{1}{C_\al}<|\al_1- r(\al)|$.
\end{proof}

\subsection{Kawasaki's conjecture, proof of Theorem \ref{ThmKawasaki}}
\begin{proof}[Proof of Theorem \ref{ThmKawasaki}]
We take a sequence of Finsler metrics $\{F_n\}$ to approximate the degenerate Finsler metric $F(q,p )=\sum_{i=1}^n R_i|p_i|: T^*\T^n\to \R$ in the $C^0$ norm (we say that it is degenerate since the disk unit tangent bundle is not strictly convex). After Legendre transform, the disk unit cotangent bundles associated to $F_n$ approximate in the $C^0$ norm the following set
$$D^FT^*\T^n=\T^n\times\left\{\max_i \frac{1}{R_i}|p_i|\leq 1\right\}=\T^n\times\bigg(\prod_{i=1}^n[-R_i,R_i]\bigg).$$

We next pick a closed geodesic $\gamma_0$ on $\T^n$ in the homology class $-\al$ in the Euclidean metric. The length of the geodesic in the metric $F$ is len$_F(\gamma_0)=\sum_i R_i|\al_i|$. Then, for any $\eps>0$, there exists $N$ such that for all $n>N$, it holds that len$_{F_n}(\gamma_0)\leq \sum_i R_i|\al_i|+\eps.$
So we get for all $n>N$
$$\sum_i R_i|\al_i|+\eps\geq \inf\{\mathrm{len}_{F_n}(\gamma):\ \ [\gamma]=\al\}.$$

For any Hamiltonian $H$ compactly supported in the interior of $D^FT^*\T^n$, there exists $N'$ such that for all $n>N'$ we have that $H$ is also compactly supported in the interior of $D^{F_n}T^*\T^n$.
Then the theorem follows directly from the main theorem.
\end{proof}

\subsection{Symplectic nonsqueezing, proof of Theorem~\ref{thm:nonsqueezing} and ~\ref{thm:nonsqueezing'}}

\begin{proof}[Proof of Theorem~\ref{thm:nonsqueezing}]
If $s\leq r$, the inclusion $P^{2n}(s)\to Y^{2n}(r)$ is a symplectic embedding. Conversely, suppose that there exists a symplectic embedding $\phi:P^{2n}(s)\to Y^{2n}(r)$.
For sufficiently small $\epsilon>0$, let $p^*\in \Delta^n(s)$ so that $|p^*|<\epsilon$. By our assumption, the image $\phi (\mathbb{T}^n\times \{p^*\})$ is a smooth section in $T^*\mathbb{T}^n$, equivalently,
$$\Sigma_{p^*}:=\phi (\mathbb{T}^n\times \{p^*\})=\{(x,\sigma(x))|x\in \mathbb{T}^n\}$$
where $\sigma$ is a $C^\infty$ closed $1$-form on $\mathbb{T}^n$ (since $\phi(\mathbb{T}^n\times \{p^*\})$ is a Lagrangian submanifold in~$T^*\mathbb{T}^n$).
Obviously, $\Sigma_{p^*}\subseteq Y^{2n}(r)$.
Now we choose a strictly convex (closed) subset $K\subseteq \Delta^n$ with $C^\infty$-boundary sufficiently close to $\partial\Delta^n$ such that $p^*\in {\rm int} K$. Since $\phi$ is an embedding from $P^{2n}(s)$ into $Y^{2n}(r)$, $\phi(\mathbb{T}^n\times K)$ is a compact set in $Y^{2n}(r)$ containing $\Sigma_{p^*}$. Hence,  one can find a strictly fiberwise convex compact subset $T\subseteq Y^{2n}(r)$ such that $\phi(\mathbb{T}^n\times K)\subset T$. Let $F_K$ and $F_T$ be the Finsler metrics on $T^*\mathbb{T}^n$ associated to $\mathbb{T}^n\times (K-p^*)$ and $T-\Sigma_{p^*}$ respectively, namely,

$$F_K(x,v):=\sup_{p\in (K-p^*)} \langle p,v\rangle,$$
$$F_T(x,v):=\sup_{p\in (T-\Sigma_{p^*})\cap T^*_x\mathbb{T}^n} \langle p,v\rangle.$$
It is obvious from the above definitions that
$D^{F_K}T^*\mathbb{T}^n=\mathbb{T}^n\times (K-p^*)$
and $D^{F_T}T^*\mathbb{T}^n=T-\Sigma_{p^*}$.
Observe that for any closed $1$-form $\sigma$ on $\mathbb{T}^n$, the map
$$\tau_{\sigma}:TT^* \mathbb{T}^n\longrightarrow TT^* \mathbb{T}^n,\quad (x,p)\mapsto (x,p-\sigma(x))
$$
preserves the canonical symplectic form $\omega_0$.
The map
$$\Phi:\mathbb{T}^n\times (K-p^*)\longrightarrow T-\Sigma_{p^*},\quad \Phi=\tau_{\sigma}\circ\phi\circ\tau_{-p^*}$$
is a symplectic embedding satisfying $\Phi(\mathbb{T}^n\times \{0\})=\mathbb{T}^n\times \{0\}$.  
Since the BPS-capacity is invariant under $\tilde{\pi}_1$-trivial symplectic diffeomorphism,
Proposition~\ref{prop:BPS-monotonicity} shows that for every non-trivial $\alpha\in [S^1,\mathbb{T}^n]\cong \mathbb{Z}^n$,
\begin{equation}\label{eq:BPScomparison}
C_{\rm BPS}(D^{F_K}T^*\mathbb{T}^n,\mathbb{T}^n;\alpha)=C_{\rm BPS}(\Phi (D^{F_K}T^*\mathbb{T}^n),\Phi(\mathbb{T}^n);\alpha)\leq C_{\rm BPS}(D^{F_T}T^*\mathbb{T}^n,\mathbb{T}^n;\alpha)
\end{equation}
Set $e_1:=(1,0,\ldots,0)\in \mathbb{Z}^n$
 and let $\gamma(t)=[e_1t]$ be a closed curve in $\mathbb{T}^n=\mathbb{R}^n/\mathbb{Z}^n$ representing $e_1$. To calculate the length of $\gamma$ with respect to the Finsler metrics $F_K$ and $F_T$ respectively, we find
$$F_K(x,e_1)=\sup_{p\in (K-p^*)} \langle p,e_1\rangle=\sup_{p\in K} \langle p,e_1\rangle-\langle p^*,e_1\rangle\geq s-2\epsilon,$$
$$F_T(x,e_1)=\sup_{p\in (T-\Sigma_{p^*})\cap T^*_x\mathbb{T}^n} \langle p,e_1\rangle\leq r.$$
So we have ${\rm len}_{F_K}(\gamma)\geq s-2\epsilon$ and ${\rm len}_{F_T}(\gamma)\leq r$.
Note that $F_K$ is invariant under translations
$$\tau_a:\mathbb{T}^n\longrightarrow \mathbb{T}^n,\quad \tau_a(y)=y+a,$$ $\gamma(t)$ is the length minimizing closed $F_K$-geodesics in class $e_1$. Theorem~\ref{thm:BPS capacities}, together with (\ref{eq:BPScomparison}), implies
$$s-2\epsilon\leq {\rm len}^{F_K}(\gamma)=l_{e_1}^{F_K}\leq l_{e_1}^{F_T}\leq {\rm len}^{F_T}(\gamma)\leq r.$$
Since $\epsilon$ is arbitrary, the proof of the theorem completes.

\end{proof}

The proof of Theorem~\ref{thm:nonsqueezing'} is essentially analogous to that of Theorem~\ref{thm:nonsqueezing}.

\begin{proof}[Proof of Theorem~\ref{thm:nonsqueezing'}] It is obvious that for $s\leq r$ the inclusion $\T^n \times B^{n}(s)\to \T^n \times Z^{n}(r)$ is a symplectic embedding. Conversely, if there is a symplectic embedding $\phi:\T^n \times B^{n}(s)\to \T^n \times Z^{n}(r)$. By the assumption, every image $\phi (\mathbb{T}^n\times \{u_i\})$ is a smooth section in $T^*\mathbb{T}^n$, meaning that
$$\Sigma_{i}:=\phi (\mathbb{T}^n\times \{u_i\})=\{(x,\sigma_i(x))|x\in \mathbb{T}^n\}$$
with closed $1$-forms $\sigma_i\in \Omega^1(\mathbb{T}^n)$. One can choose strictly convex compact subsets $K_i\subseteq B^{n}(s)$ containing $u_i$ with $C^\infty$ boundaries approximating $\partial B^{n}(s)$ and fibewise strictly convex compact subsets $ T_i\subseteq \T^n \times Z^{n}(r)$ containing $\phi (\mathbb{T}^n\times K_i)(\supseteq\Sigma_{i})$.
Pick the homotopy class $-e_{1}=(-1,0\ldots,0)\in\mathbb Z^n$. Then $\gamma(t)=[-e_1t]$ is a closed curve in $\mathbb{T}^n=\mathbb{R}^n/\mathbb{Z}^n$ representing $-e_1$. Let $F_{K_i}$ and $F_{T_i}$ be the Finsler metrics on $T^*\mathbb{T}^n$ associated to $\mathbb{T}^n\times (K_i-u_i)$ and $T_i-\Sigma_{i}$ respectively.  Finally, arguing as in the proof of Theorem~\ref{thm:nonsqueezing} by making use of the monotonicity property of BPS-capacity and the fact that $|u_i-(s,0\ldots,0)|$ is sufficiently small for large $i>N$
leads to $s\leq r$.

\end{proof}

\subsection{Symplectic squeezing, proof of Theorem \ref{ThmCBPSY}}
\begin{proof}[Proof of Theorem \ref{ThmCBPSY}]

Let $U$ be an open convex set in $\R^{n}$, not necessarily strictly convex or compact, and $\{U_i\}$ be a sequence of strictly convex sets with $C^\infty$ boundaries and satisfying $U_i\subset U_{i+1}$ and $\lim U_i=U$. Let $F_i$ be the sequence of Finsler metrics associated to $U_i$ via $F_i(q,\dot q)=\sup_{p\in U_i}\langle p,\dot q\rangle$ for any $(q,\dot q)\in T^*\T^n$.

For each $H$ compactly supported in $\T^n\times U$, we have that there exists some $I$ such that supp$(H)\subset \T^n\times U_i$ for all $i>I$. This shows that $C_{\rm BPS}(\T^n\times U,\T^n, \alpha)=\lim C_{\rm BPS}(\T^n\times U_i,\T^n, \alpha)$.

Suppose now $v$ is proportional to an integer vector $\alpha$.

We define $U_i$ to be a sequence of ellipsoids centered at the origin with minor axis being the vector $rv$ so that they are all tangent to $Y^{2n}(r,v)$ at the two points $\pm rv$.

A closed geodesic with homology class $\alpha$ has constant velocity $\dot q=\alpha$. For each $i$, the sup $ r|\alpha|$ in the definition of $F_i$ is always attained at the point $rv$ or $-rv$. In fact, for all $p$ on one boundary of $Y^{2n}(r,v)$, the inner product $\langle p,\dot q\rangle$ is constant $\pm r|\alpha|$. So in this case, $C_{\rm BPS}(Y^{2n}(r, v),\T^n,\pm\alpha)=r|\alpha|$.

Suppose $v$ is not proportional to an integer vector. In this case, for any $\alpha\in H_1(\T^n,\Z)\setminus\{0\}$, we have that $\alpha$ is not perpendicular to $v^\perp$. So we can find $p$ on the boundary of $Y^{2n}(r,v)$ such that $\langle \alpha,p\rangle$ is as large as we wish. For any $N>0$, we can find $i$ and $p_i\in \partial U_i$ such that $\langle \alpha,p_i\rangle>N$.  This shows that $C_{\rm BPS}(Y^{2n}(r, v),\T^n,\pm\alpha)=\infty$.

\end{proof}

\section{Proof of the main technical results}\label{STechnical}
\subsection{Convexity results, proof of Lemma \ref{lem:convexity}}\label{SSMP}
\begin{proof}[Proof of Lemma \ref{lem:convexity}]
Consider the function $r:\mathbb{R}\times S^1\to \mathbb{R}$ defined by
$$r(s,t):=\rho(u(s,t)).$$

Arguing by contradiction, we assume that the open subset of $\mathbb{R}\times S^1$
$$\Sigma:=r^{-1}((\rho_1,\infty))=u^{-1}((\rho_1,\infty)\times\partial X) $$
is not empty. By our assumption, $\Sigma$ is bounded, so $r$ achieves its maximum on $\Sigma$, and
\begin{equation}\label{e:convexity0}
r_0=\max_{(s,t)\in\Sigma}r(s,t)>\rho_1.
\end{equation}

It is easy to verify that
$$X_{H_{s,t}}=\frac{\partial H_{s,t}}{\partial \rho}R\quad \hbox{on}\; [\rho_0,\infty)\times\partial X.$$
Then, by~(\ref{e:contactJ}) and our assumptions concerning $H_{s,t}$ and $J_{s,t}$,
\begin{eqnarray}\label{e:convexity1}
\frac{\partial r}{\partial s}&=&d\rho(\partial_su)\notag\\
&=&d\rho\circ J_{s,t}(u)(-\partial_tu+X_{H_{s,t}}(u))
\notag\\
&=&\widehat{\lambda}(-\partial_tu+\partial_\rho H_{s,t}(u)R)
\notag\\
&=&-\widehat{\lambda}(\partial_tu)+\mu f(s)r^\mu(s,r).
\end{eqnarray}
Similarly,
\begin{equation}\label{e:convexity2}
\frac{\partial r}{\partial t}=d\rho(\partial_tu)=d\rho\circ J_{s,t}(\partial_su-J_{s,t}X_{H_{s,t}})=\widehat{\lambda}
(\partial_su-\partial_\rho H_{s,t}(u)Z(u))=\widehat{\lambda}(\partial_su).
\end{equation}

Write (\ref{e:convexity1}) and (\ref{e:convexity2}) in a less coordinate-bound way as
\begin{equation}\label{e:convexity3}
d^c r=\frac{\partial r}{\partial t}ds-\frac{\partial r}{\partial s}dt
=u^*\widehat{\lambda}-\mu f(s)r^\mu dt.
\end{equation}
Note that $dd^c r=-\triangle rds\wedge dt$ and
$$|\partial_s u|_{J_{s,t}}^2=d\widehat{\lambda}( J_{s,t}\partial_s u,\partial_s u)
=d\widehat{\lambda}( \partial_t u-X_{H_{s,t}}(u),\partial_s u)=-d\widehat{\lambda}(\partial_s u, \partial_t u)+dH_{s,t}(\partial_su).$$
Then by differentiating (\ref{e:convexity3}) we arrive at
\begin{equation}\label{e:convexity4}
\triangle r-\mu(\mu-1)f(s)r^{\mu-1}\partial_sr(s,t) =|\partial_s u|_{J_{s,t}}^2+\mu f'(s)r^\mu.
\end{equation}
Keeping in mind that $r>0$ on $\Sigma$ by definition, the left hand side of
(\ref{e:convexity4}) is nonnegative by our assumptions, then the maximum principle implies that $r$ achieves its maximum on the boundary $\partial \Sigma$ of $\Sigma$. But $r|_{\partial \Sigma}=\rho_1$,
this contradicts with (\ref{e:convexity0}). So $\Sigma$ must be empty. This completes the proof of Lemma~\ref{lem:convexity}.
\end{proof}

\subsection{The radial Hamiltonian and its action, proof of Lemma \ref{lem:radialHamsyst}}\label{SSRadial}

\begin{proof}[Proof of Lemma \ref{lem:radialHamsyst}]
For each $(x,y)\in T^*M\setminus \{0\}$, locally, we write
$$F^{*2}(x,y)=g^{ij}(x,y)y_iy_j.$$
Then,
$$dH(x,y)=h'(F^{*2}(x,y))\bigg[\frac{\partial g^{ij}}{\partial x_k}y_iy_jdx^k
+\frac{\partial g^{ij}}{\partial y_l}y_iy_jdy^l+2g^{ij}y_idy_j\bigg]$$

Using $-dH=\omega_0(X_H,\cdot)$, we get
$$X_H=h'(F^{*2}(x,y))\bigg[-\frac{\partial g^{ij}}{\partial x_k}y_iy_j\frac{\partial}{\partial y_k}
+\bigg(\frac{\partial g^{ij}}{\partial y_l}y_iy_j+2g^{il}y_i\bigg)\frac{\partial}{\partial x_l}\bigg]$$
which gives the Hamiltonian equation
\begin{equation}\label{eq:hameq}
\left\{
             \begin{array}{ll}
            \dot{x}_l=h'(F^{*2}(x,y))\big(\frac{\partial g^{ij}}{\partial y_l}y_iy_j+2g^{il}y_i\big)=2h'(F^{*2}(x,y))g^{il}y_i,  \\
            \dot{y}_k=-h'(F^{*2}(x,y))\frac{\partial g^{ij}}{\partial x_k}y_iy_j
             \end{array}
\right.
\end{equation}
Here we have use the homogeneity property of the Finsler metric $F$:
$$\frac{\partial g^{ij}}{\partial y_l}y_i=\frac{\partial^3(F^2)}{2\partial y_i\partial y_j\partial y_l}y_i=0$$

For a solution $z(t)$ of the Hamiltonian equation~(\ref{eq:hameq}) sitting in $T^*M\setminus \{0\}$, we compute

\begin{eqnarray}\label{eq:fixed energy level}
\frac{d}{dt}\big[F^{*2}(x(t),y(t))\big]&=&\frac{\partial g^{ij}}{\partial x_k}\dot{x}_ky_iy_j+\frac{\partial g^{ij}}{\partial y_l}\dot{y}_ly_iy_j+2g^{ij}(x,y)\dot{y}_iy_j\nonumber\\
&=&\frac{\partial g^{ij}}{\partial x_k}h'(F^{*2}(x,y))2g^{lk}y_ly_iy_j
-2g^{ij}(x,y)h'(F^{*2}(x,y))\frac{\partial g^{kl}}{\partial x^i}y_ky_ly_j\notag\\
&=&0
\end{eqnarray}
which implies
$$F^{*2}(x(t),y(t))\equiv constant\quad\forall\  t\in\mathbb{R}.$$

Denote $C=h'(F^{*2}(x(t),y(t)))$. The Legendre transform,
$$\tau_x:T_x^*M\rightarrow T_xM\quad (x,y)\mapsto (x,v),$$
where $v=\sum v^i\frac{\partial}{\partial x_i}$ and $v^l=\sum g^{il}y_i$, implies
\begin{eqnarray}\notag
\dot{v}^l&=&\frac{\partial g^{il}}{\partial x_k}\dot{x}_ky_i+\frac{\partial g^{il}}{\partial y_k}\dot{y}_ky_i+g^{il}(x,y)\dot{y}_i\nonumber\\
&=&\frac{\partial g^{il}}{\partial x_k}\dot{x}_ky_i+g^{il}(x,y)\dot{y}_i\nonumber\\
&=&\frac{\partial g^{il}}{\partial x_k}2Cg^{jk}y_jy_i
-Cg^{il}(x,y)g^{il}\frac{\partial g^{jk}}{\partial x_i}y_jy_k\notag\\
&=&\frac{\partial g^{il}}{\partial x_k}2Cv^kg_{it}v^t-Cg^{il}(x,y)g^{il}\frac{\partial g^{jk}}{\partial x_i}y_jy_k\notag\\
&=&-2Cg^{il}\frac{\partial g_{it}}{\partial x_k}v^kv^t+Cg^{il}\frac{\partial g_{jt}}{\partial x_i} g^{jk}g_{ks}v^s\notag\\
&=&-Cg^{il}\bigg(2\frac{\partial g_{it}}{\partial x_k}-\frac{\partial g_{kt}}{\partial x_i}\bigg)v^kv^t.
\end{eqnarray}
Hence,
\begin{eqnarray}\notag
\nabla^F_{\dot{x}}v&=&\big[ \dot{v}^l(t) +v^j(t)\dot{x}_k(t)\Gamma^l_{jk}(x(t),v(t))\big]\frac{\partial}{\partial x_i}\bigg|_{x(t)}\notag\\
&=&\big[ -Cg^{il}\bigg(2\frac{\partial g_{it}}{\partial x_k}-\frac{\partial g_{kt}}{\partial x_i}\bigg)v^kv^t +v^j(t)2Cv^k(t)\Gamma^l_{jk}(x(t),v(t))\big]\frac{\partial}{\partial x_i}\bigg|_{x(t)}.\notag
\end{eqnarray}
Here
$$\Gamma^i_{jk}=\gamma^i_{jk}-\frac{g^{il}}{F}\bigg(
A_{ljs}N_k^s-A_{jks}N_i^s+A_{kls}N_j^s\bigg)$$
are the components of the Chern connection with the Christoffel symbols
$$\gamma^l_{jk}:=\frac{1}{2}g^{ls}\bigg(\frac{\partial g_{sj}}{\partial x_k}-\frac{\partial g_{jk}}{\partial x_k}+\frac{\partial g_{ks}}{\partial x_j}\bigg),$$
the Cartan tensor
$$A_{ijk}(x,y):=\frac{F}{2}\frac{\partial g_{ij}}{\partial y_k}=\frac{F}{4}\frac{\partial^3(F^2)}{\partial y_i\partial y_j\partial y_k}$$
and the coefficients of the nonlinear connection on $TM\setminus \{0\}$
$$N^i_j(x,y):=\gamma^i_{jk}y_k-\frac{1}{F}A^i_{jk}\gamma^k_{rs}y_ry_s$$

Using the homogeneity property of Finsler metric again, we have
$$\Gamma^i_{jk}(x,v)v^jv^k=\frac{1}{2}g^{ls}\bigg(2\frac{\partial g_{sj}}{\partial x_k}-\frac{\partial g_{jk}}{\partial x_s}\bigg)v^j v^k$$
Therefore,
$$\nabla^F_{\dot{x}}v=0.$$
So, under the Legendre transform  $\ell:T^*M\rightarrow TM$ induced by the Finsler metric, the Hamiltonian equation associated to $H$ can be written as
\begin{equation}\label{e:HamEq}
\left\{
             \begin{array}{ll}
            \dot{x}=2Cv,  \\
            \nabla^F_{\dot{x}}v=0
             \end{array}
\right.
\end{equation}
Then
$$\nabla^F_{\dot{x}}\dot{x}=2C\nabla^F_{\dot{x}}v=0$$
which means that $x(t)$ is a Finsler geodesic loop on $M$. By (\ref{e:HamEq}), we have
\begin{eqnarray}\notag
F^2(x(t),\dot{x}(t))&=&g_{ij}\dot{x}_i\dot{x}_j\notag\\
&=&4C^2g_{ij}v^iv^j\notag\\
&=&4C^2g_{ij}g^{ik}y_kg^{jl}y_l\notag\\
&=&4C^2F^{*2}(x(t),y(t))\notag\\
&=&4h'(F^{*2}(x(t),y(t)))^2F^{*2}(x(t),y(t))\notag\\
&=&\big(f'(F^*(x(t),y(t)))\big)^2
\end{eqnarray}
which implies that the Finsler length of the geodesic loop $x(t)$ equals to $\pm f'(r_z)$ for some constant $r_z=F^*(x(t),y(t))\geq 0$. Then
$$r_z\ell(\dot{x})=2r_zC\ell(v)=2r_zCy= 2r_zh'(r_z^2)y= f'(r_z)y.$$
Moreover,  the action of $\mathscr{A}_{H}$ at $z(t)$ can be computed as follow:
\begin{eqnarray}\notag
\mathscr{A}_{H}(z(t))&=&\int_{S^1}\langle y_i(t),\dot{x}_i(t)\rangle-\int_{S^1}h(F^{*2}(x,y))\notag\\
&=&\int_{S^1}2Cg^{il}y_ly_i-\int_{S^1}h(F^{*2}(x,y)) \notag\\
&=&2h'(F^{*2}(z(t)))F^{*2}(z(t))-h(F^{*2}(z(t)))\notag\\
&=&f'(F^*(z(t)))F^*(z(t))-f(F^*(z(t))).
\end{eqnarray}

Conversely, if $x$ is a $F$-geodesic loop satisfying~(\ref{e:F-geo&Hamorbit}), it is easy to verify that the loop $z=(x,y)$ is a critical point of the action functional $\mathscr{A}_{H}$, we omit it here.
\end{proof}

\subsection{Quadratic modifications, proof of Lemma \ref{lem:modification}}\label{sec:Pfmodification}
\begin{proof}[Proof of Lemma \ref{lem:modification}]Set
$$a_0:=\inf\limits_{x\in M,|v|_x=1}\inf_{u\neq 0}\frac{g^F(x,v)[u,u]}{g_x(u,u)}\quad \hbox{and}\quad
a_1:=\sup\limits_{x\in M,|v|_x=1}\sup_{u\neq 0}\frac{g^F(x,v)[u,u]}{g_x(u,u)}.$$
Due to the compactness of $M$, we have $0<a_0\leq a_1$. Since $g^F$ is positively homogeneous of degree $0$,  it holds that for any $(x,v)\in TM\setminus\{0\}$ and $(x,u)\in TM$,
$$a_0g_x(u,u)\leq g^F(x,v)[u,u]\leq a_1 g_x(u,u).$$
By rescaling the metric $g$ (for instance, by choosing $\tilde{g}:=a_0g$), we may assume that
\begin{equation}\label{e:F-metric vs.g-metric}
g_x(u,u)\leq g^F(x,v)[u,u]\leq A g_x(u,u)\quad \forall\  (x,v)\in TM\setminus\{0\},\; \forall\  (x,u)\in TM
\end{equation}
for some constant $A>1$. In particular, we have
\begin{equation}\label{e:F&g}
|v|_x^2\leq L_0(x,v)\leq A |v|_x^2\quad \forall\  (x,v)\in TM.
\end{equation}

In order to construct convex quadratic Lagrangians, we follow closely the line of \cite[Section~2]{Lu} by modifying $L_0$ near the zero section of $TM$. In the following we first construct two auxiliary functions, see Figure~\ref{fig:2}.

Choose positive parameters $0<\epsilon<\delta<\frac{\eta}{A}$. Let $\lambda^\mu_{\epsilon,\delta}:(0,\infty)\to \mathbb{R}$ be a smooth function such that $\lambda^\mu_{\epsilon,\delta}(s)=0$ for $s\in[0,\epsilon)$, $\lambda^\mu_{\epsilon,\delta}(s)=\mu s+\sigma$ for $s\in(\delta,\infty)$, $\lambda^\mu_{\epsilon,\delta}$ is convex and $(\lambda_{\epsilon,\delta}^\mu)'(s)>0$ on $(\epsilon,\infty)$, where $\mu>0$ and $\sigma<0$ are suitable constants. Let $\chi^\kappa_{\delta,\rho}$ be another smooth function such that $\chi^\kappa_{\delta,\rho}(s)=\kappa (s-\delta)$ for $s\in[0,\delta]$, $\chi^\kappa_{\delta,\rho}(s)=\rho$  for $s\in[\eta/A,\infty)$, and $\chi^\kappa_{\delta,\rho}$ is concave and nondecreasing, here $\kappa>0$ and $\rho>0$ are suitable constants.

\begin{figure}[H]
  \centering
  \includegraphics[scale=0.5]{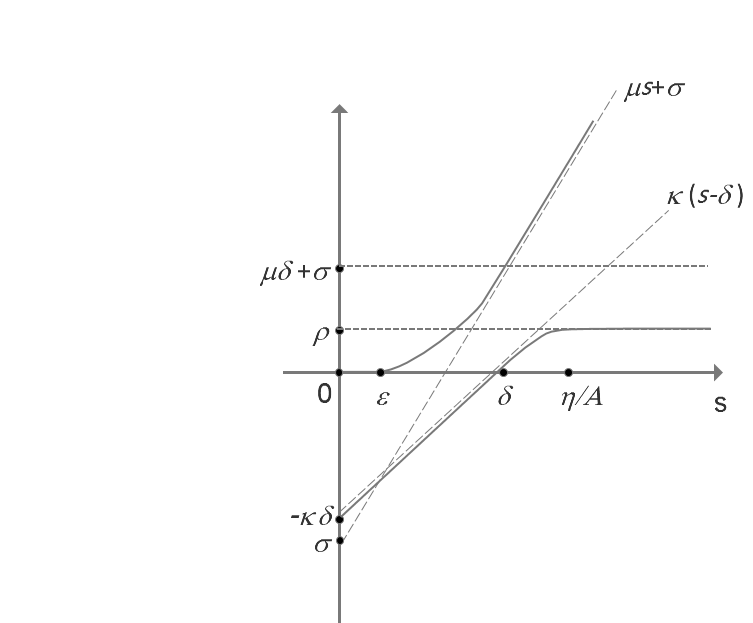}
  \caption{The auxiliary functions}\label{fig:2}
\end{figure}


Define the new Lagrangians $L_\eta$ by
$$L_\eta(x,v):=\frac{1}{\mu}\bigg\{\lambda^\mu_{\epsilon,\delta}(L_0(x,v))+  \chi^\kappa_{\delta,\rho}(|v|^2_x)-\sigma-\rho\bigg\}.$$
We check properties (a) and (b) in two steps.

\noindent \textbf{Step 1.}\quad Obviously, for every $\eta>0$, $L_\eta$ is smooth on the whole $TM$.
Suppose that $\mu\geq\kappa$ and $\kappa$ is sufficiently large such that
\begin{equation}\label{e:constrLagr1}
\delta\kappa+\sigma+\rho>0\quad\hbox{and}\quad \kappa(\delta-\epsilon)+\sigma>0.
\end{equation}

If $0\leq L_0(x,v)\leq\epsilon$ (hence $|v|^2_x\leq\epsilon$ by~(\ref{e:F&g})), by the first inequality in~(\ref{e:constrLagr1}),
\begin{equation}\label{e:constrLagr2}
L_\eta(x,v)=\frac{1}{\mu}\big\{\kappa(|v|^2_x-\delta)-\sigma-\rho\big\}=
\frac{\kappa}{\mu}|v|^2_x-\frac{\kappa\delta+\sigma+\rho}{\mu}
\leq L_0(x,v).
\end{equation}

If $\epsilon\leq L_0(x,v)\leq\delta$, observe that $\lambda^\mu_{\epsilon,\delta}(s)\leq\frac{\mu\delta+\sigma}
{\delta-\epsilon}(s-\epsilon)$ for every $s\in(\epsilon,\delta)$, we have
\begin{eqnarray}\label{e:constrLagr3}
L_\eta(x,v)&\leq&\frac{1}{\mu}\bigg\{\frac{\mu\delta+\sigma}
{\delta-\epsilon}(L_0(x,v)-\epsilon)+\kappa(|v|^2_x-\delta)-\sigma-\rho\bigg\}
\notag\\
&\leq&\frac{1}{\mu}\bigg\{\frac{\mu\delta+\sigma}
{\delta-\epsilon}(L_0(x,v)-\epsilon)+\kappa(L_0(x,v)-\delta)-\sigma-\rho\bigg\}
\notag\\
&\leq&\frac{1}{\mu}\bigg\{\bigg(\frac{\mu\delta+\sigma}
{\delta-\epsilon}+\kappa\bigg)L_0(x,v)-\frac{\mu\delta+\sigma}
{\delta-\epsilon}\epsilon-\kappa\delta-\sigma-\rho\bigg\}.
\end{eqnarray}
Then
\begin{equation}\label{e:constrLagr4}
L_\eta(x,v)-L_0\leq\frac{1}{\mu}\bigg\{\bigg(\frac{\mu\epsilon+\sigma}
{\delta-\epsilon}+\kappa\bigg)L_0(x,v)-\frac{\mu\delta+\sigma}
{\delta-\epsilon}\epsilon-\kappa\delta-\sigma-\rho\bigg\}
\end{equation}

By the second inequality in~(\ref{e:constrLagr1}),
$$\frac{\mu\epsilon+\sigma}
{\delta-\epsilon}+\kappa>\frac{\mu\epsilon}
{\delta-\epsilon}>0.$$
So, by~(\ref{e:constrLagr3}), if $\epsilon\leq L_0(x,v)\leq\delta$, then
\begin{eqnarray}\label{e:constrLagr5}
L_\eta(x,v)-L_0&\leq&\frac{1}{\mu}\bigg\{\bigg(\frac{\mu\epsilon+\sigma}
{\delta-\epsilon}+\kappa\bigg)\delta-\frac{\mu\delta+\sigma}
{\delta-\epsilon}\epsilon-\kappa\delta-\sigma-\rho\bigg\}
\notag\\
&=&-\frac{\rho}{\mu}<0.
\end{eqnarray}
Therefore, if $\epsilon\leq L_0(x,v)\leq\delta$, $L_\eta(x,v)\leq L_0$.

If $L_0(x,v)\geq\delta$,
\begin{eqnarray}\label{e:constrLagr6}
L_\eta(x,v)=\frac{1}{\mu}\big\{\mu L_0(x,v)+\sigma+\chi^\kappa_{\delta,\rho}(|v|^2_x)-\sigma-\rho\big\}
\leq L_0(x,v),
\end{eqnarray}
and in this case, if $L_0(x,v)\geq\eta$ (thus $|v|_x^2>\eta/A$ by~(\ref{e:F&g})), we have
\begin{equation}\label{e:constrLagr7}
L_\eta(x,v)=L_0(x,v).
\end{equation}

\noindent\textbf{Step 2.}\quad By our assumption, $\lambda^\mu_{\epsilon,\delta}$ is convex, so for any $s\in[0,\infty)$ we have
\begin{eqnarray}\label{e:constrLagr8}
\lambda^\mu_{\epsilon,\delta}(s)&\geq& \frac{d\lambda^\mu_{\epsilon,\delta}}{ds}\bigg|_{s=\delta}
(s-\delta)
+\lambda^\mu_{\epsilon,\delta}(\delta)\notag\\
&=&\mu(s-\delta)+\mu \delta+\sigma=\mu s+\sigma.
\end{eqnarray}
$\chi^\kappa_{\delta,\rho}(s)\geq \chi^\kappa_{\delta,\rho}(0)=-\kappa\delta$ because $\chi^\kappa_{\delta,\rho}$ is nondecreasing on $[0,\infty)$.
Combing this with (\ref{e:constrLagr8}) shows
$$L_\eta\geq\frac{1}{\mu}\bigg\{\mu L_0+\sigma-\kappa\delta-\sigma-\rho\bigg\}=L_0-\frac{\kappa\delta+\rho}{\mu}.$$

Since $0<\delta<\eta$ and $0<\kappa\leq \mu$ by our assumption, taking $\mu>0$ sufficiently large yields
$$L_\eta(x,v)\geq L_0-\eta.$$

To complete the proof, it suffices to prove that $L_\eta$ is fiberwise convex and quadratic at infinity. Due to the compactness of $M$ and  $L_0(x,v)=g^F(x,v)[v,v]$ for any $(x,v)\in TM\setminus\{0\}$,
the condition (\textbf{L2}) holds obviously. To show (\textbf{L1}), for every $v\in TM\setminus\{0\}$ and $u\in T_xM$ we compute
\begin{eqnarray}\label{e:convexL1}
\partial_{vv}L_\eta(x,v)[u,u]&=&\frac{\partial^2}{\partial s\partial t}
L_\eta(x,v+su+tu)\bigg|_{s=t=0}\nonumber\\
&=&\frac{1}{\mu}\bigg\{(\lambda^\mu_{\epsilon,\delta}){''}(L_0(x,v))
\big(\partial_vL_0(x,v)[u]\big)^2+4(\chi^\kappa_{\delta,\rho}){''}(|v|_x^2)
g_x(v,u)^2\nonumber\\
&&+(\lambda^\mu_{\epsilon,\delta})'(L_0(x,v))\partial_{vv}L_0(x,v)[u,u]
+2(\chi^\kappa_{\delta,\rho})'(|v|_x^2)|u|_x^2\bigg\}.
\end{eqnarray}

We prove (\textbf{L1}) in two cases.

\noindent \textbf{Case 1. }If $L_0(x,v)<\delta$,   $|v|_x<\delta$ by (\ref{e:F&g}), and thus
\begin{equation}\label{e:convexL2}
\partial_{vv}L_\eta(x,v)[u,u]\geq \frac{2\kappa}{\mu}|u|^2_x.
\end{equation}
Here we have use the properties that $\lambda^\mu_{\epsilon,\delta}$ is convex, $(\lambda^\mu_{\epsilon,\delta})'\geq 0$ and $(\chi^\kappa_{\delta,\rho}){''}=0$ on $[0,\delta)$, and the fact that $g^F=\frac{1}{2}\partial_{vv}L_0$ is positive definite.

\noindent \textbf{Case 2.} If $L_0(x,v)\geq\delta$, $|v|_x^2\geq\delta/A$ by (\ref{e:F&g}).  $\lambda^\mu_{\epsilon,\delta}$  equals to the affine function $\mu s+\sigma$ and $\chi^\kappa_{\delta,\rho}$ is non-decreasing on $[0,\infty)$, thus
\begin{eqnarray}\label{e:convexL3}
\partial_{vv}L_\eta(x,v)[u,u]&=&\mu\partial_{vv}L_0(x,v)[u,u]+
4(\chi^\kappa_{\delta,\rho}){''}(|v|_x^2)
g_x(v,u)^2+2(\chi^\kappa_{\delta,\rho})'(|v|_x^2)|u|_x^2\nonumber\\
&\geq&\mu\partial_{vv}L_0(x,v)[u,u]+
4(\chi^\kappa_{\delta,\rho}){''}(|v|_x^2)
|v|_x^2|u|_x^2\nonumber\\
&\geq&\mu |u|_x^2+\frac{4\delta}{A}(\chi^\kappa_{\delta,\rho}){''}(|v|_x^2)
|u|_x^2.
\end{eqnarray}

Since  $(\chi^\kappa_{\delta,\rho}){''}(s)=0$ for $s\in[\eta/A,\infty)$, and $(\chi^\kappa_{\delta,\rho}){''}(|v|_x^2)$ is bounded for $|v|_x^2\in[\delta/A,\eta/A]$, we may choose $\mu$ so large that
$$\mu+\frac{4\delta}{A}(\chi^\kappa_{\delta,\rho}){''}(|v|_x^2)>0.$$
This and (\ref{e:convexL3}) imply that $L_\eta$ satisfy (\textbf{L1}) for $L_0(x,v)\geq\delta$.

\end{proof}
\subsection{Construction of the profile functions, proof of Theorem ~\ref{thm:symhomology}}\label{SSProfile}
\begin{figure}[H]
  \centering
  \includegraphics[scale=0.25]{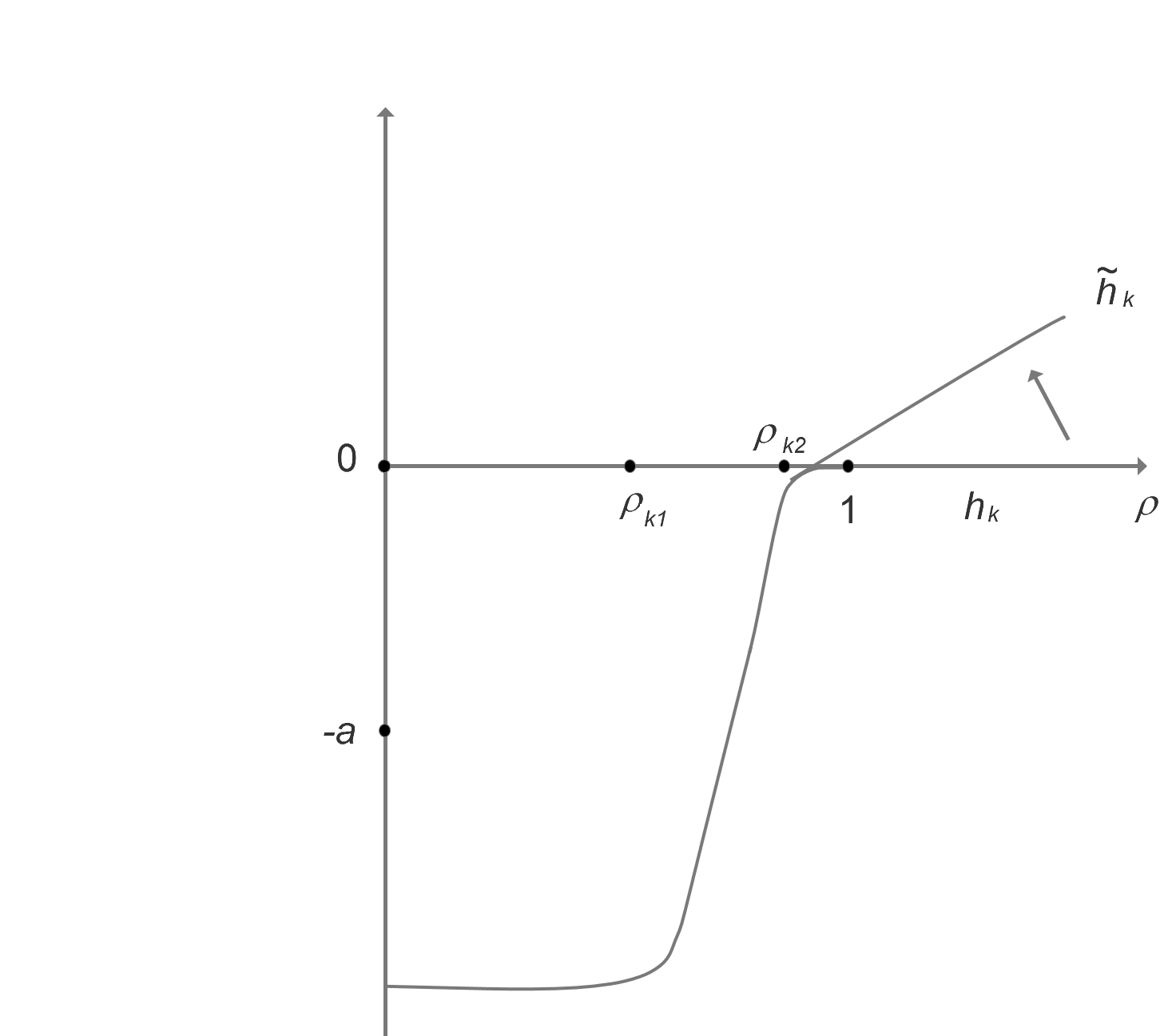}
  \caption{The monotone homotopy between $h_k$ and $\widetilde{h}_k$}\label{fig:3}
\end{figure}

\begin{proof}[Proof of Theorem~\ref{thm:symhomology}]
\noindent We prove Theorem~\ref{thm:symhomology} in three steps.

\noindent\textbf{Step 1.}\quad The idea of proving (i)  is to construct a downward exhausting sequence of profile functions $\{h_k\}_{k\in \mathbb{N}}$ and use Proposition~\ref{prop:T-hom} to compute symplectic homology. 

Fix $a\in\mathbb{R}\setminus \Lambda_\alpha$. The functions $h_k:[0,\infty)\to +\infty$ are smooth functions such that $h_k|_{[0,\rho_{k1}]}=h_k(0)$ and
$h_k|_{[1,+\infty)}=0$ with $h_k(0)\to-\infty$ and $\rho_{k1}\to 1$ as $k\to \infty$. Moreover, the derivatives are required to satisfy $h'_k\geq0$ everywhere, $h_k''\geq 0$ near $\rho_{k1}$, $h_k''\leq 0$ near $\rho_{k2}$ and $h''_k=0$ elsewhere, see Figure~\ref{fig:3}.

We prove that for each $k\in\mathbb{N}$, there is a natural isomorphism
\begin{equation}\label{e:isohk5}
\Psi_{h_k}^{a}:{\rm HF}^{(a,+\infty)}_*(h_k\circ F^*;\alpha)\longrightarrow
{\rm H}_*(\Lambda_\alpha^{a^2/2} M).
\end{equation}

We first consider a monotone homotopy, indicated in Figure~\ref{fig:3}, from $h_k\circ F^*$ to $\widetilde{h}_k\circ F^*$, where $\widetilde{h}_k$ is obtained by making the graph of $h_k$ linear with slope $a$ near $\rho_{k2}$.
By Lemma~\ref{lem:radialHamsyst}, no Hamiltonian $1$-periodic orbit with action in the action window $[a,\infty)$ appears during the homotopy, this gives the isomorphism
\begin{equation}\label{e:hmhk1}
\sigma_{\tilde{h}_kh_k}:{\rm HF}^{(a,+\infty)}_*(h_k\circ F^*;\alpha)\to
{\rm HF}^{(a,+\infty)}_*(\tilde{h}_k\circ F^*;\alpha).
\end{equation}

\begin{figure}[H]
  \centering
  \includegraphics[scale=0.25]{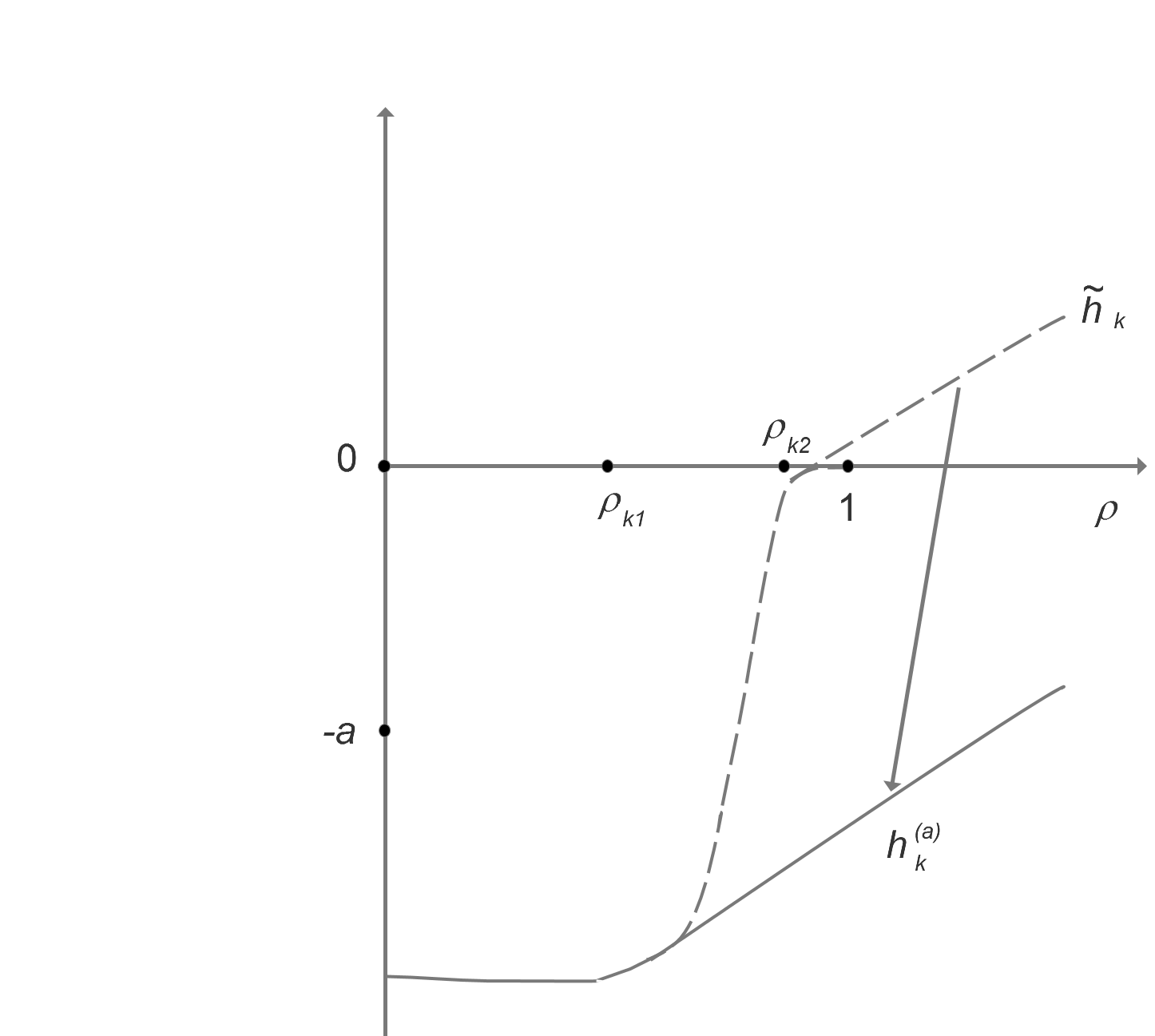}
  \caption{The monotone homotopy between $\widetilde{h}_k$ and $h_k^{(a)}$}\label{fig:4}
\end{figure}


Next, we consider another monotone homotopy from $h_k^{(a)}\circ F^*$ to $\widetilde{h}_k\circ F^*$ as shown in Figure~\ref{fig:4}. Here $h_k^{(a)}$ is obtained by making the graph of $h_k$ linear with slope $a$ near $\rho_{k1}$. This homotopy induces the monotone homomorphism

\begin{equation}\label{e:hmhk2}
\sigma_{\widetilde{h}_kh_k^{(a)}}:{\rm HF}^{(a,+\infty)}_*(h_k^{(a)}\circ F^*;\alpha)\longrightarrow {\rm HF}^{(a,+\infty)}_*(\widetilde{h}_k\circ F^*;\alpha),
\end{equation}
which is actually an isomorphism because no Hamiltonian $1$-periodic orbit has action $a$ during the homotopy. Observe that the minimal action of the Hamiltonian $1$-periodic orbit of $h_k^{(a)}\circ F^*$ is larger than $a$, and
the maximal action of the Hamiltonian $1$-periodic orbit of $h_k^{(a)}\circ F^*$ is less than $C_{h_k,a}:=c_{h_k^{(a)},a}$ (here $c_{f,\lambda}$ is  defined as in Theorem~\ref{thm:convexradial}). So we obtain the following isomorphisms

\begin{equation}\label{e:hmhk3}
{\rm HF}^{(a,+\infty)}_*(h_k^{(a)}\circ F^*;\alpha)\xrightarrow[{[j^F]^{-1}}]
{\cong}
{\rm HF}^{(-\infty,+\infty)}_*(h_k^{(a)}\circ F^*;\alpha)
\xrightarrow[{[i^F]^{-1}}]
{\cong} {\rm HF}^{(-\infty,C_{h_k,a})}_*(h_k^{(a)}\circ F^*;\alpha).
\end{equation}
For $a\notin\Lambda_\alpha$, Theorem~\ref{thm:convexradial} yields the isomorphism
\begin{equation}\label{e:isohk4}
\Psi_{h_k^{(a)}}^{a}: {\rm HF}^{(-\infty,C_{h_k,a})}_*(h_k^{(a)}\circ F^*;\alpha)\rightarrow {\rm H}_*(\Lambda_\alpha^{a^2/2} M).
\end{equation}
By (\ref{e:hmhk1}) -- (\ref{e:isohk4}), we arrive at the isomorphism~(\ref{e:isohk5}).

To conclude the proof of (i) in Theorem~\ref{thm:symhomology}, we check that the sequence $\{h_k\circ F^*\}_{k\in\mathbb{N}}$ is downward exhausting for the inverse limiting defining symplectic homology. Firstly, for any $H\in\mathscr{H}_\alpha^{a,+\infty}$, by choosing $k\in\mathbb{R}$ sufficiently large, one has $H(t,z)\geq h_k\circ F^*(z)$ for all $(t,z)\in S^1\times D^FT^*M$. Secondly, for every $k\in\mathbb{R}$, the monotone homomorphism
\begin{equation}\label{e:mhhk6}
\sigma_{h_kh_{k+1}}:{\rm HF}^{(a,+\infty)}_*(h_{k+1}\circ F^*;\alpha)\to
{\rm HF}^{(a,+\infty)}_*(h_k\circ F^*;\alpha)
\end{equation}
induced by a monotone homotopy between $h_{k+1}\circ F^*$ and $h_k\circ F^*$ is an isomorphism, since no Hamiltonian $1$-periodic orbit has action $a$ during the homotopy, for more details about this fact see~\cite[Section~3.2]{We0}. Taking the inverse limit in (\ref{e:isohk5}),
by Lemma~\ref{lem:inv/dirlimit}, we obtain the isomorphism $\underleftarrow{{\rm SH}}
^{(a,+\infty)}_*(D^FT^*M;\alpha)\cong {\rm H}_*(\Lambda_\alpha^{a^2/2} M)$.

\noindent\textbf{Step 2.}\quad To prove (ii), we construct an upward exhausting sequence of
functions $\{f_k\}_{k\in\mathbb{N}}$ to compute relative symplectic homology. Here let us emphasize that the functions $f_k$ are required to be constant near $\rho=0$, and thus are different from those functions $h_\delta$ constructed in~\cite{We0}. So we need to define $f_k$ and compute the filtered Floer homology of $f_k\circ F^*$ carefully. Fix $a,c>0$. Now we diskuss in two cases.

\begin{figure}[H]
	\centering
	\includegraphics[scale=0.25]{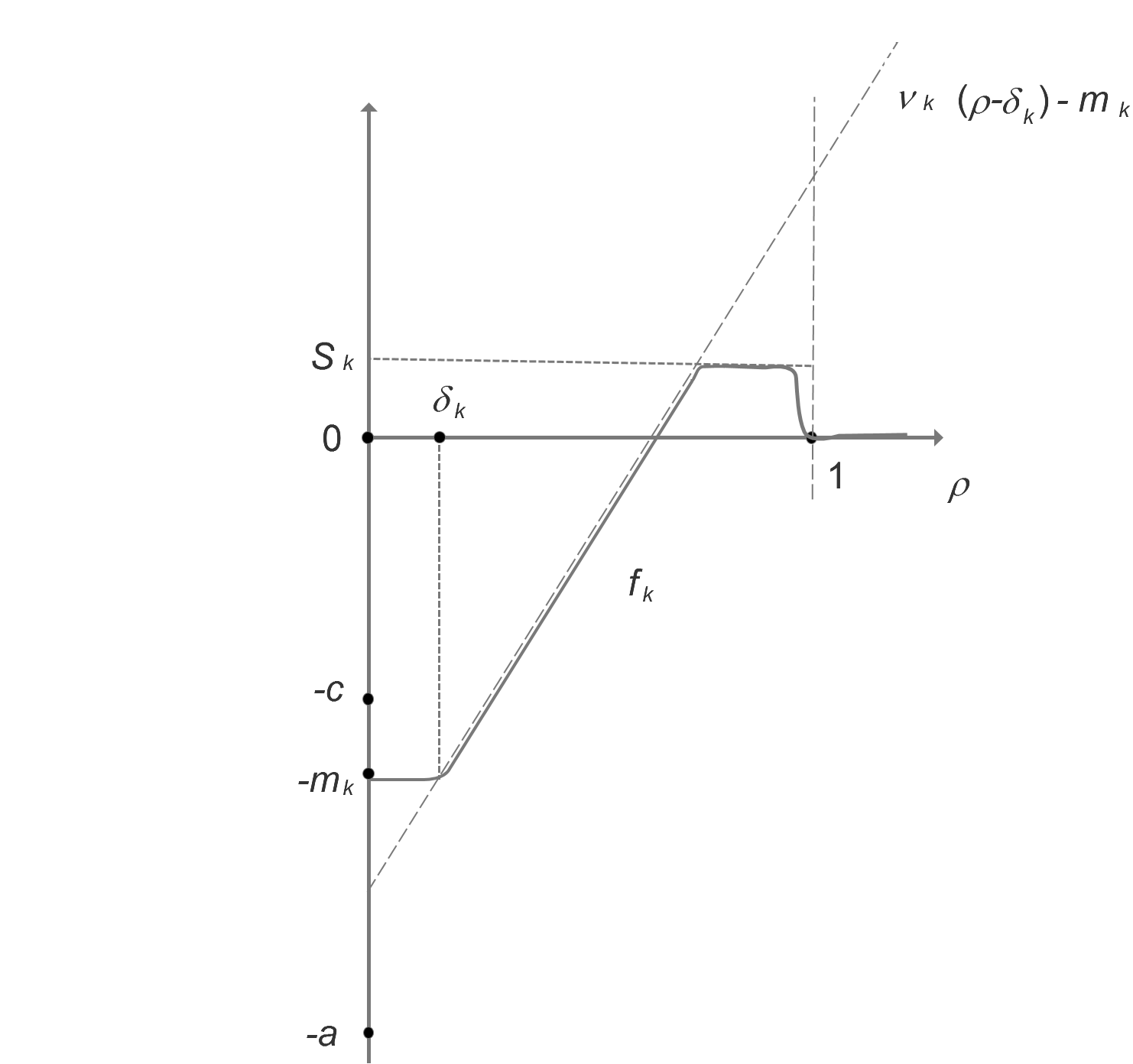}
	\caption{The function $f_k$ for $a>c>0$}\label{fig:5}
\end{figure}


\noindent \textbf{Case 1}. If $a>c>0$, we show $$\underrightarrow{{\rm SH}}^{(a,+\infty);c}_*(D^FT^*M,M;\alpha)=0.$$

Let $\{m_k\}_{k\in\mathbb{N}}$ be a monotone decreasing sequence of numbers such that $c<m_k<\frac{a+c}{2}$ and $m_k\to c$ as $k\to\infty$. Let $\{\delta_k\}_{k\in\mathbb{N}}$ be another monotone decreasing sequence of numbers such that $0<\delta_k<1/4$ and
$\delta_k\to 0$ as $k\to\infty$. For every $k\in\mathbb{N}$, set
$$\nu_k:=\frac{a-c}{2\delta_k},\quad S_k:=\max\bigg\{\frac{a-m_k}{2\sqrt{\delta_k}}-m_k,0\bigg\}.$$
Here we require in addition that the sequence $\{S_k\}_{k\in\mathbb{N}}$ is increasing and tends to $\infty$ as $k\to\infty$, which is possible by choosing a rapidly decreasing sequence $\{\delta_k\}_{k\in\mathbb{N}}$.

Next, we consider the piecewise linear curve in $\mathbb{R}^2$ obtained by beginning with a line segment with the starting point $(0,-m_k)$. Upon reaching the point $(\delta_k,-m_k)$, follow the line with slope $\nu_k$
until meeting the horizontal line through $(0,S_k)$, then follow this horizontal line to the right until it has a point of intersection with the vertical line through $(1,0)$, then go straight down to the point $(1,0)$, finally follow the horizontal coordinate axis to $+\infty$. Smooth out this piecewise linear curve near its corners so that it becomes the graph of a smooth function $f_k$, see Figure~\ref{fig:5}.

We claim that
\begin{equation}\label{e:homologfk1}
{\rm HF}^{(a,+\infty)}_*(f_k\circ F^*;\alpha)=0.
\end{equation}
In fact, since $f_k\circ F^*$ is a radial function with respect to $\rho$, and all tangents to the graph of $f_k$ intersects the vertical coordinate axis strictly above $-a$, the action of $1$-periodic orbits is strictly less than $a$. By~(\ref{e:homologfk1}), the monotone homomorphism
\begin{equation}\notag
\sigma_{f_{k+1}f_k}:{\rm HF}^{(a,+\infty)}_*(f_k\circ F^*;\alpha)\longrightarrow {\rm HF}^{(a,+\infty)}_*(f_{k+1}\circ F^*;\alpha).
\end{equation}
is obviously an isomorphism for every $k\in\mathbb{N}$. To show that $\{f_k\circ F^*\}$ is an upward exhausting sequence in
$\mathscr{H}_\alpha^{a,b;c}$, it suffices to check that for any $H\in \mathscr{H}_\alpha^{a,b;c}$, there exists $f_k$ such that $f_k\circ F^*(z)\geq H(t,z)$ for all $(t,z)\in S^1\times T^*M$. But this is clearly true by our construction. This, combining with (\ref{e:homologfk1}), completes the proof of Case 1.

\begin{figure}[H]
	\centering
	\includegraphics[scale=0.25]{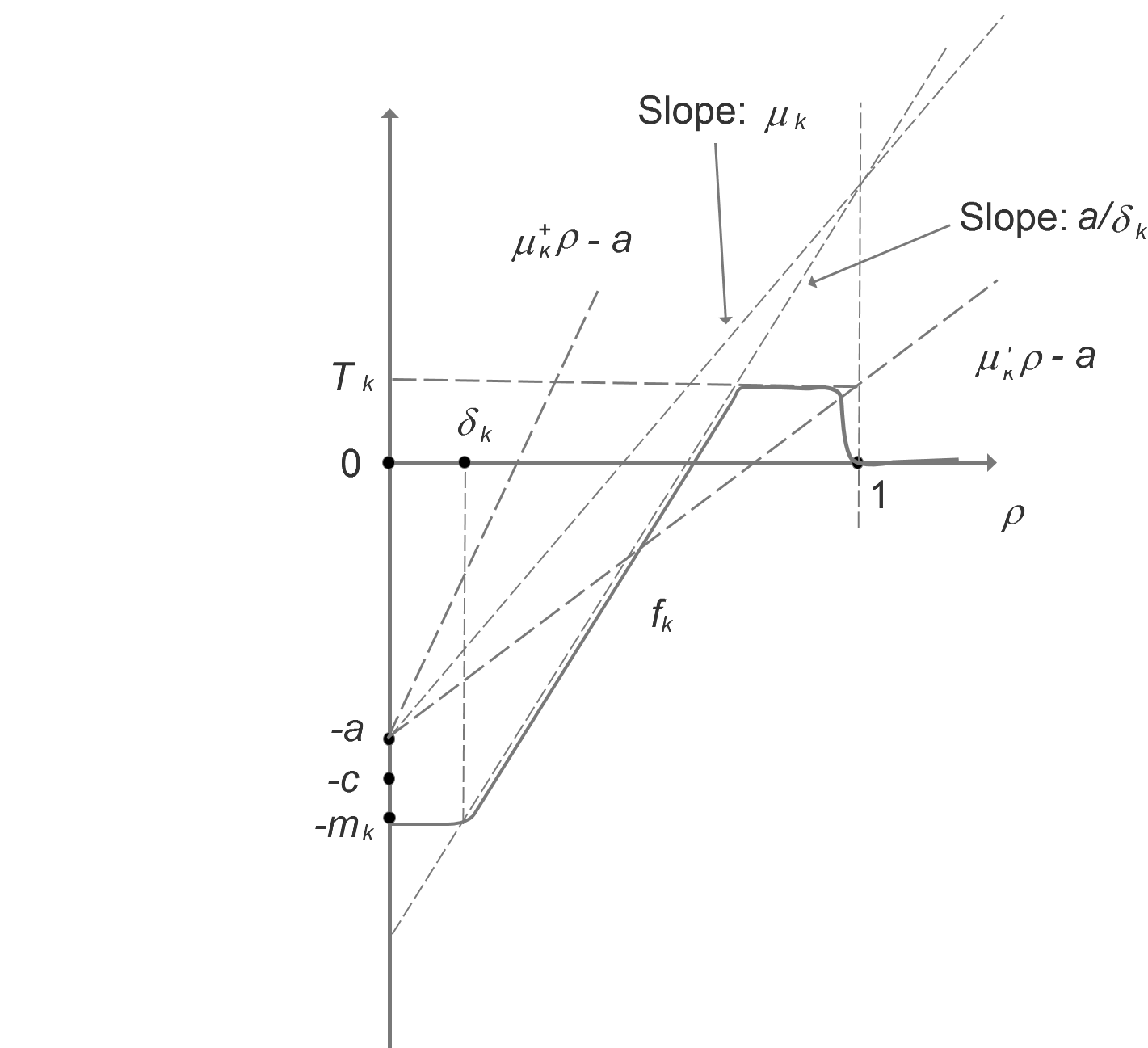}
	\caption{The function $f_k$ for $c\geq a$}\label{fig:6}
\end{figure}


\noindent \textbf{Case 2}. If $c\geq a>0$, we show
\begin{equation}\label{e:relsymplhom}
\underrightarrow{{\rm SH}}
^{(a,+\infty)}_*(D^FT^*M;\alpha)\cong {\rm H}_*(\Lambda_\alpha M).
\end{equation}

In the similar fashion to the proof of case 1, we construct an upward directed sequence to compute the relative symplectic homology.  Let $\{\delta_k\}_{k\in\mathbb{N}}$ be a decreasing sequence of numbers such that $0<\delta_k<a/c$,
$$\mu_k:=\frac{a}{\delta_k}-c\notin\Lambda_\alpha\quad\hbox{for all }k\in\mathbb{N}$$
and
$\delta_k\to 0$ as $k\to\infty$. Set
$$\mu^+_k:=\inf\big((\mu_k,\infty)\cap\Lambda_\alpha\big)
\quad \mu^-_k:=\sup\big((0,\mu_k)\cap\Lambda_\alpha\big), \quad\mu'_k:=\frac{\mu^-_k+\mu_k}{2}.$$

Let $T_k:=\min\{\mu'_k-a,0\}$ for every $k\in\mathbb{N}$, and $\{m_k\}_{k\in\mathbb{N}}$ be a decreasing sequence of numbers such that $m_k>c$ and $m_k\to c$ as $k\to\infty$. In the case of $c\geq a$, the shape of these functions $f_k$ are similar to those in case 1. Namely, $f_k$ are obtained by smoothing out a piecewise linear curve in $\mathbb{R}^2$. The difference here is to replace $S_k$ and $\nu_k$ by $T_k$ and $a/\delta_k$ respectively, see Figure~\ref{fig:6}.

Now we prove that there is a natural isomorphism
\begin{equation}\label{e:homologfk2}
\Psi_{f_k}^{\mu_k}:{\rm HF}^{(a,+\infty)}_*(f_k\circ F^*;\alpha)\longrightarrow {\rm H}_*(\Lambda_\alpha^{\mu_k^2/2} M).
\end{equation}
\begin{figure}[H]
	\centering
	\includegraphics[scale=0.25]{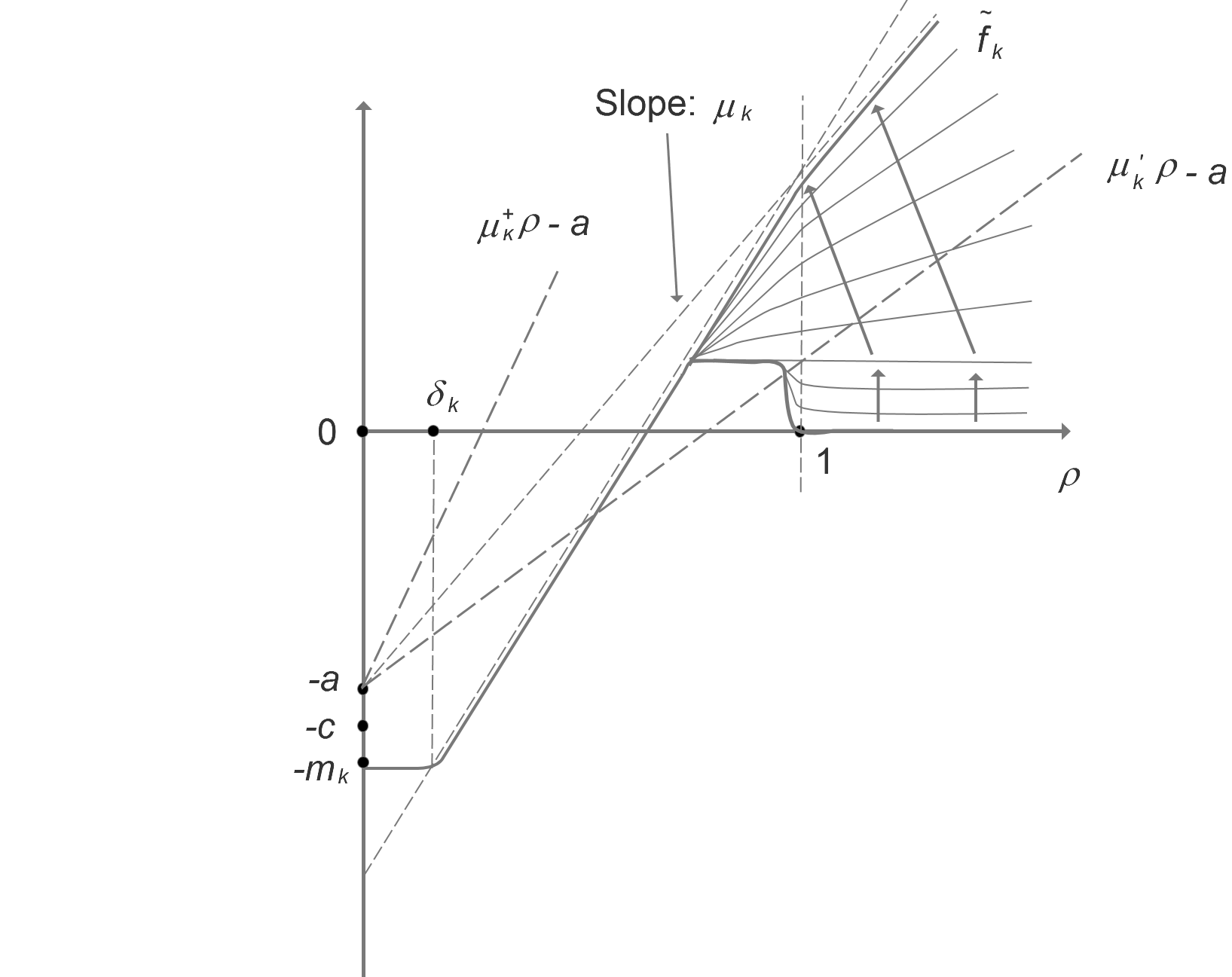}
	\caption{The function $f_k$ for $c\geq a$}\label{fig:7}
\end{figure}


To prove~(\ref{e:homologfk2}),  we initially deform $f_k$ by the monotone homotopy as showed
in Figure~\ref{fig:7} to a smooth function $\tilde{f}_k$, which is obtained by following the graph of $f_k$ until its first right turn. Then keep going linearly with  constant slope $a/\delta_k$. Upon meeting the line $\mu_k\rho-a$, turn right and follow that line closely and linearly as shown in Figure~\ref{fig:7}. Note that all points on the graph of the function during the homotopy, at which  tangential lines pass through the point $(0,-a)$, lie strictly between the lines $\mu_k\rho-a$ and $\mu_k'\rho-a$. Since $(\mu_k^-,\mu_k^+)\cap\Lambda_\alpha=\emptyset$, there are no $1$-periodic orbits of action $a$ during the monotone homotopy, and therefore, we obtain the monotone isomorphism
\begin{equation}\label{e:moiso1}
\sigma_{\tilde{f}_kf_k}:{\rm HF}^{(a,+\infty)}_*(f_k\circ F^*;\alpha)\longrightarrow {\rm HF}^{(a,+\infty)}_*(\tilde{f}_k\circ F^*;\alpha).
\end{equation}

\begin{figure}[H]
	\centering
	\includegraphics[scale=0.25]{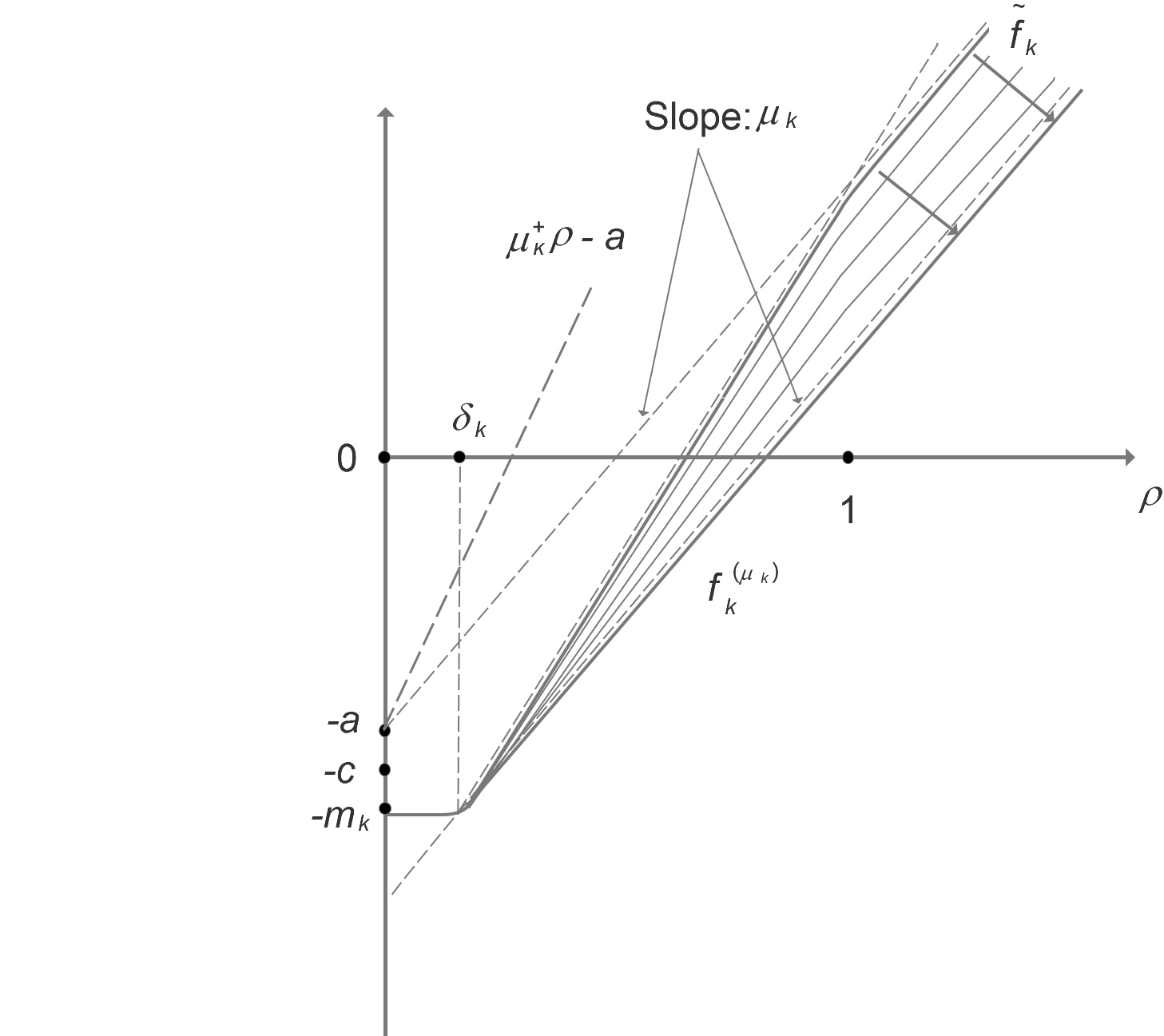}
	\caption{The function $f_k$ for $c\geq a$}\label{fig:8}
\end{figure}


Next, we deform $\tilde{f}_k$ by the monotone homotopy as indicated in Figure~\ref{fig:8} to a convex function $f^{(\mu_k)}_k$. The graph of this function is obtained by following $\tilde{f}_k$ until it takes on slope $\mu_k$ for the first time (near the point $(\delta_k,-m_k)$), and then making a smooth turn and continuing  linearly with slope $\mu_k$. Since the vertical axis intercepts of the tangential lines of the graphs during the
homotopy as showed in Figure~\ref{fig:8} are less than $-a$, we obtain the monotone isomorphism with inverse

\begin{equation}\label{e:moiso2}
\sigma_{f^{(\mu_k)}_k\tilde{f}_k}^{-1}:{\rm HF}^{(a,+\infty)}_*(\tilde{f}_k\circ F^*;\alpha)\longrightarrow {\rm HF}^{(a,+\infty)}_*(f^{(\mu_k)}_k\circ F^*;\alpha).
\end{equation}

The fact that $f^{(\mu_k)}_k\circ Q_{\eta_k}$ has neither $1$-periodic orbits of action less than $a$ nor  $1$-periodic orbits of action larger than $C_{f_k,\mu_k} :=c_{f_k^{(\mu_k)},\mu_k}$ implies the following isomorphisms

\begin{equation}\label{e:hmfk3}
{\rm HF}^{(a,+\infty)}_*(f_k^{(\mu_k)}\circ F^*;\alpha)\xrightarrow[{[j^F]^{-1}}]
{\cong}
{\rm HF}^{(-\infty,+\infty)}_*(f_k^{(\mu_k)}\circ F^*;\alpha)
\xrightarrow[{[i^F]^{-1}}]
{\cong} {\rm HF}^{(-\infty,C_{f_k,\mu_k})}_*(f_k^{(\mu_k)}\circ F^*;\alpha).
\end{equation}

Since $f_k^{(\mu_k)}\circ F^*$ is convex and radial with respect to $\rho$, Theorem~\ref{thm:convexradial} implies the isomorphism
\begin{equation}\label{e:isofk4}
\Psi_{f_k}^{\mu_k}: {\rm HF}^{(-\infty,C_{f_k,\mu_k})}_*(f_k^{(\mu_k)}\circ F^*;\alpha)\rightarrow {\rm H}_*(\Lambda_\alpha^{\mu_k^2/2} M).
\end{equation}
Composing (\ref{e:moiso1})--(\ref{e:isofk4}) yields the desired isomorphism~(\ref{e:homologfk2}).

By our construction, for any $H\in \mathscr{H}_\alpha^{a,b;c}$, we can find some $f_k\circ F^*\in \mathscr{H}_\alpha^{a,b;c}$ such that $H\preceq f_k\circ F^*$, and for any $k\in\mathbb{N}$ it holds that $f_k\circ F^*\preceq f_{k+1}\circ F^*$. Thus, $f_k\circ F^*$ is an upward directed sequence in $\mathscr{H}_\alpha^{a,b;c}$. Taking the direct limit of both side of~(\ref{e:homologfk2}) yields the desired isomorphism~(\ref{e:relsymplhom}).

\noindent\textbf{Step 3.}\quad
Given $a\in (0,c]\setminus\Lambda_\alpha$, we adopt the notation used in Step 1 and Step 2, and choose $k\in\mathbb{N}$ sufficiently large so that $\mu_k>a$, $h_k\in\mathscr{H}_\alpha^{a,b;c}$ and $f_k\geq h_k$.
Following closely~\cite[Section~3.2]{We0}, the proof of the commutativity of the diagram
\begin{equation}\label{CD:diag8}
\begin{split}
\xymatrix{
    \\
     \underleftarrow{{\rm SH}}^{(a,+\infty)}_*(D^FT^*M;\alpha)
     \ar[r]^{\qquad\cong}
     \ar[d]_{T^{(a,\infty);c}_\alpha}
    &
     {\rm H}_*(\Lambda_\alpha^{a^2/2} M)
    \ar[d]^{[I_{a^2/2}]}
    \\
     \underrightarrow{{\rm SH}}^{(a,+\infty);c}_*(D^FT^*M,M;\alpha)
     \ar[r]^{\qquad\quad\cong}
    &
     {\rm H}_*(\Lambda_\alpha M)
}
\end{split}
\end{equation}
\noindent reduces to show the commutativity of the diagram
\begin{equation}\label{CD:diag9}
\begin{split}
\xymatrix{
	{\rm HF}^{(a,\infty)}_*(h_k\circ F^*;\alpha)
	\ar[r]^{\sigma_{f_k h_k}}
	\ar[d]_{\Psi_{h_k}^{a}}^\cong
	&
	{\rm HF}^{(a,\infty)}_*
	(f_k\circ F^*;\alpha)
	\ar[d]^{\Psi_{f_k}^{\mu_k}}_\cong
	\\
	{\rm H}_*(\Lambda_\alpha^{a^2/2} M)
	\ar[r]^{[I]}
	&
	{\rm H}_*(\Lambda_\alpha^{\mu_k^2/2} M)
}
\end{split}
\end{equation}
\noindent This can be deduced from the following commutative diagram
\begin{equation*}\label{CD:diag10}
\begin{split}
\xymatrix{
	{\rm HF}^{(a,\infty)}_*(h_k\circ F^*;\alpha)
	\ar[r]^\sigma
	\ar[d]_\sigma^{(\ref{e:hmhk1})}
	&
	{\rm HF}^{(a,\infty)}_*
	(f_k\circ F^*;\alpha)
	\ar[d]^\sigma_{(\ref{e:moiso1})}
	\\
	{\rm HF}^{(a,\infty)}_*(\tilde{h}_k\circ F^*;\alpha)
	\ar[r]^\sigma
	&
	{\rm HF}^{(a,\infty)}_*(\tilde{f}_k\circ F^*;\alpha)
	\\
	{\rm HF}^{(a,\infty)}_*(h_k^{(a)}\circ F^*;\alpha)
	\ar[u]^\sigma_{(\ref{e:hmhk2})}
	\ar[r]^\sigma
	&
	{\rm HF}^{(a,\infty)}_*(f^{(\mu_k)}_k\circ F^*;\alpha)
	\ar[u]_\sigma^{(\ref{e:moiso2})}
	\\
	{\rm HF}^{(-\infty,\infty)}_*(h_k^{(a)}\circ F^*;\alpha)
	\ar[u]^{[j^F]}_{(\ref{e:hmhk3})}
	\ar[r]^\sigma
	&
	{\rm HF}^{(-\infty,\infty)}_*(f^{(\mu_k)}_k\circ F^*;\alpha)
	\ar[u]_{[j^F]}^{(\ref{e:hmfk3})}
	\\
	{\rm HF}^{(-\infty,C_{h_k,a})}_*(h_k^{(a)}\circ F^*;\alpha)
	\ar[u]^{[i^F]}_{(\ref{e:hmhk3})}
	\ar[r]^{\Psi_{fh}^a}_{(\ref{CD:diag2})}
	\ar[dr]_{\Psi_{h_k}^{a},(\ref{e:isohk4})}
	&
	{\rm HF}^{(-\infty,C_{f_k,a})}_*(f^{(\mu_k)}_k\circ F^*;\alpha)
	\ar[u]_{[i^F]}
	\ar[r]^{[i^F]}
	\ar[d]^{\Psi_{f_k}^{a}}_\cong
	&
	{\rm HF}^{(-\infty,C_{f_k,\mu_k})}_*(f^{(\mu_k)}_k\circ F^*;\alpha)
	\ar[ul]_{[i^F],(\ref{e:hmfk3})}
	\ar[d]^{\Psi_{f_k}^{\mu_k}}_{(\ref{e:isofk4})}
	\\
	&
	{\rm H}_*(\Lambda_\alpha^{a^2/2} M)
	\ar[r]^{[I]}
	&
	{\rm H}_*(\Lambda_\alpha^{\mu_k^2/2} M)
}
\end{split}.
\end{equation*}
where $\Psi_{fh}^a$ is an isomorphism by Theorem~\ref{thm:convexradial} (iii). In fact,
Lemma~\ref{lem:mh} implies the commutativity of the first two rectangular blocks. The third and fourth block commutes by the naturality of long exact sequences concerning Floer homology. The lowest rectangular block commutes by~(\ref{DC:diag0.1}), and the triangle to its left commutes by~(\ref{trangle:diag0.2}). The remaining part of the proof proceeds exactly like~\cite[Section~3.2]{We0} by taking direct and inverse limits in the diagram~(\ref{CD:diag9}).
The proof of Theorem~\ref{thm:symhomology} (iii) completes.

\end{proof}

\end{document}